\DeclareFontFamily{U}{mathx}{\hyphenchar\font45}
\DeclareFontShape{U}{mathx}{m}{n}{
      <5> <6> <7> <8> <9> <10>
      <10.95> <12> <14.4> <17.28> <20.74> <24.88>
      mathx10
      }{}
\DeclareSymbolFont{mathx}{U}{mathx}{m}{n}
\DeclareMathAccent{\widecheck}{0}{mathx}{"71}
\newcommand{\abs}[1]{\vert#1\vert}
\newcommand{\dtan}{{\mathfrak d}_{\rm tan}}
\newcommand{\dnor}{{\mathfrak d}_{\rm nor}}
\newcommand{\dpar}{{\mathfrak d}_{\parallel}}
\newcommand{\gr}{{\mathtt g}}
\newcommand{\R}{{\mathbb R}}
\newcommand{\N}{{\mathbb N}}
\newcommand{\dt}{\partial_t}
\newcommand{\cI}{{\mathcal I}}
\newcommand{\cE}{{\mathcal E}}
\newcommand{\ul}[1]{\underline{#1}}
\newcommand{\itGamma}{\mathit{\Gamma}}
\newcommand{\cpalpha}{\check{\bm{\partial}}^{\alpha} }
\newcommand{\cmfalpha}{\check{\mathfrak{d}}^{\alpha}}
\newcommand{\cmfpalphaI}{\check{\mathfrak{d}}_{\parallel}^{\alpha_I}}
\newcommand{\opnorm}{\@ifstar\@opnorms\@opnorm}
\newcommand{\@opnorms}[1]{%
  \left|\mkern-1.5mu\left|\mkern-1.5mu\left|
   #1
  \right|\mkern-1.5mu\right|\mkern-1.5mu\right|
}
\newcommand{\@opnorm}[2][]{%
  \mathopen{#1|\mkern-1.5mu#1|\mkern-1.5mu#1|}
  #2
  \mathclose{#1|\mkern-1.5mu#1|\mkern-1.5mu#1|}
}
\newtheorem{theorem}{Theorem}
\newtheorem{proposition}{Proposition}
\newtheorem{lemma}{Lemma}
\newtheorem{assumption}{Assumption}
\newtheorem{remark}{Remark}
\begin{document}
%----------------------------------------------------------------------------------------------------------------------
%----------------------------------------------------------------------------------------------------------------------
\title[The moving contact line problem for the $2D$ nonlinear shallow water equations]%
{The moving contact line problem for the $2D$ nonlinear shallow water equations with a partially immersed obstacle} 

\author{Tatsuo Iguchi and David Lannes}

\address{Tatsuo Iguchi, Department of Mathematics, Faculty of Science and Technology, Keio University, 
3-14-1 Hiyoshi, Kohoku-ku, Yokohama 223-8522, Japan}

\address{David Lannes, Institut de Math\'ematiques de Bordeaux, Universit\'e de Bordeaux et CNRS UMR 5251, 
351 Cours de la Lib\'eration, 33405 Talence Cedex, France}
\maketitle

\begin{abstract}
We consider the initial value problem for a nonlinear shallow water model in horizontal dimension $d=2$ and 
in the presence of a fixed partially immersed solid body on the water surface. 
We assume that the bottom of the solid body is the graph of a smooth function and part of it is in contact with the water. 
As a result, we have a contact line where the solid body, the water, and the air meet. 
In our setting of the problem, the projection of the contact line on the horizontal plane moves freely due to the motion of the water surface 
even if the solid body is fixed. 
This wave-structure interaction problem reduces to an initial boundary value problem for the nonlinear shallow water equations in an exterior domain 
with a free boundary, which is the projection of the contact line. 
The objective of this paper is to derive a priori energy estimates locally in time for solutions at the quasilinear regularity threshold 
under assumptions that the initial flow is irrotational and subcritical 
and that the initial water surface is transversal to the bottom of the solid body at the contact line. 
The key ingredients of the proof are the weak dissipativity of the system, the introduction of second order Alinhac good unknowns 
associated with a regularizing diffeomorphism, and a new type of hidden boundary regularity for the nonlinear shallow water equations. 
This last point is crucial to control the regularity of the contact line; it is obtained by combining the use of the characteristic fields 
related to the eigenvalues of the boundary matrix together with Rellich type identities. 
\end{abstract}

\noindent
{\bf MSC:} 35L04, 74F10

\noindent
{\bf Keywords:} 
Wave-structure interactions; nonlinear hyperbolic initial boundary value problems; moving contact lines.

%----------------------------------------------------------------------------------------------------------------------
%----------------------------------------------------------------------------------------------------------------------
\section{Introduction}
\subsection{General setting}
Interactions between waves and partially immersed structures are central to study boat navigation and marine renewable energies 
such as floating offshore wind turbines or wave energy convertors. 
Pontoons, offshore platforms, and sea-ice are other examples for which wave-structure interactions are important. 
Engineers  use essentially two kinds of approaches to assess this problem.

The first one consists in describing such interactions with CFD methods. 
Numerical computations in this context turn out to be much more complex than in other fields, such as aeronautics, where the CFD method is standard. 
As explained in \cite{Kim}, this is due to the fact that one has to deal with highly separated flows with high Reynolds number near the hull, 
that there are large scale differences between the hull and mooring systems, 
and that one has to deal with an open ocean environment with non-Gaussian stochastic wave fields. 
For these reasons, the cost of a CFD project has been estimated to be comparable to physical model tests with prototypes. 
CFD is therefore only used for very specific purposes and for projects with a high technology readiness.

The second tool used by engineers to describe wave-structure interactions is much simpler. 
Nonlinearities, viscosity, and vorticity are neglected in the equations, 
while all these effects are taken into account in the CFD approach, which is based on the Navier--Stokes equations. 
The resulting mathematical model was proposed by F. John \cite{John1,John2} and consists in the linear Bernoulli equations cast in the linearized fluid domain; 
we refer to \cite{LannesMing} for a recent mathematical analysis of this problem where the main difficulty comes from the singularities in the fluid domain: 
corners or wedges at the intersection between the surface of the water and the boundary of the object. 
For freely floating objects, Cummins \cite{Cummins} derived a set of six integro-differential equations 
that describe the motion of the object in its six degrees of freedom; 
this equation is used in the commercial softwares for wave structure interactions, such as Wamit. 
As opposed to CFD computations, numerical simulations based on this linear approximation are extremely fast; 
they can for instance be used to model the behavior of an entire wave farm. 
The price to pay is that they miss important physical phenomenons such as the impact of nonlinear effects; 
the predictions based on this approximation are therefore of poor quality in the presence of large amplitude waves, which are common in shallow water.

For these reasons, a third approach was proposed in \cite{Lannes2017}. 
Like John's model, it neglects viscosity and vorticity, but the linear approximation is replaced by an approximation of a different nature. 
It consists in replacing the equations governing the motion of the fluid by a reduced shallow water asymptotic model 
such as the nonlinear shallow water equations or the Boussinesq equations. 
Such approximations no longer depend on the vertical variable and are therefore much easier to compute numerically than the original free surface Euler equations, 
while keeping the nonlinear effets. 
They are commonly used to describe waves in coastal environments where the shallowness assumption under which they are derived is satisfied.

In \cite{Lannes2017}, a method was proposed to extend such shallow water models in the presence of a partially immersed object. 
The idea is to treat the presence of the object as a constraint on the surface elevation of the fluid; 
the pressure exerted by the fluid on the object is then understood as the Lagrange multiplier associated with this constraint; 
see Section \ref{sectpbsetting} below. 
We therefore have to deal with a partially {\it congested} flow; 
such flows also arise in biology \cite{PQV}, collective dynamics \cite{DHN,MRSV}, and other contexts. 
The pressure term is sometimes approximated by a pseudo-compressible relaxation for numerical simulations \cite{GPSW1,GPSW2}; 
this latter approach is often referred to as soft congestion \cite{Perrin}, 
as opposed to hard congestion models where such a relaxation is not performed \cite{ABP,DalibardPerrin1,DalibardPerrin2}.

A general feature for the hard congestion models arising for wave-structure interactions is that 
they can be reformulated as an initial boundary value problem for the wave model under consideration. 
This initial boundary value problem is cast in the so-called exterior region: 
the projection on the horizontal plane of the surface of the water which is contact with the air. 
When the object has vertical sidewalls, this exterior region does not depend on time, 
but it does otherwise because the projection of the contact line varies with time; see Figure \ref{Fig:config}. 
One then has to deal with a free boundary problem.

In horizontal dimension $d=1$ and the case of the nonlinear shallow water equations to describe the waves, 
the problem can be solved when the sidewalls of the object are fixed because the equations are hyperbolic and the boundary conditions are strictly dissipative; 
such a configuration has been considered in \cite{BocchiHeVergara} to model a particular type of wave energy convertor, the oscillating water column, 
and in \cite{SuTucsnak} to simulate a wave generator. 
When the sidewalls are not vertical, 
the problem becomes a free boundary problem and was solved theoretically in \cite{IguchiLannes2021} and numerically in \cite{HaidarMarcheVilar}. 
The difficulty is that the dynamics of the contact points is not kinematic, namely, they do not move at the velocity of the fluid, 
but obeys a more singular evolution equation. 
Still in dimension $d=1$ one can use Boussinesq type models instead of the nonlinear shallow water equations; 
such models are interesting because they do not neglect the dispersive effects. 
For an object with vertical sidewalls, 
the corresponding wave-structure interaction problem has been treated in \cite{BreschLannesMetivier,BeckLannes,BeckLannesWeynans}. 
The difficulty is that contrary to hyperbolic systems, there is no general theory for initial boundary value problems for nonlinear dispersive systems; 
in the case of small dispersion, phenomenons such as dispersive boundary layers must also be taken into account \cite{BreschLannesMetivier}. 
The case of a free boundary, that is, of non-vertical walls is open. 
Let us also mention \cite{MSTT} where a viscous perturbation of the $1d$ nonlinear shallow water equations is considered and \cite{GuoTice,GuoTiceZheng} 
where the dynamics of the contact points in a container with vertical sidewalls is fully solved for the $2D$ Navier--Stokes equations with surface tension.

In horizontal dimension $d=2$, radially symmetric configurations for both the object and the waves have been considered in \cite{Bocchi1,Bocchi2} 
in the case of vertical sidewalls. 
The general non-radial case was then considered for a fixed object with vertical sidewalls in \cite{IguchiLannes2023}. 
Compared with the $1D$ case considered in \cite{IguchiLannes2021}, the difficulty here is that 
the boundary conditions are not strictly dissipative nor can be brought back to that case using Kreiss symmetrizers \cite{Majda,Metivier,BenzoniSerre,Audiard}. 
Also, the boundary is not non-characteristic, 
a situation in which a loss of derivative is likely to occur in the energy estimates \cite{Gues,Secchi,MajdaOsher,CoulombelSecchi}. 
The approach proposed in \cite{IguchiLannes2023} to bypass these obstructions was to introduce a new notion of weak dissipativity 
which ensures the well-posedness of the initial boundary value problem, 
and to show that the wave-structure problem under consideration is weakly dissipative in this sense.

The goal of this paper is to address the free boundary case, that is, 
the case where the sidewalls of the object are not vertical, so that the unknown contact line moves; see Figure \ref{Fig:config}. 
The presence of the free boundary induces several difficulties. 
Firstly, the domain must be fixed by using a diffeomorphism. 
The resulting system is hyperbolic but its coefficients depend on the diffeomorphism, whose regularity is related to the regularity of the free boundary. 
The introduction of the so-called Alinhac good unknowns \cite{Alinhac} yields several cancellations that avoid derivative losses in many free boundary problems 
such as the stability of shocks for compressible gases \cite{Majda,Metivier} or the water waves equations \cite{Lannes2005,Iguchi,AlazardMetivier}. 
In the configuration considered here, this is not enough to avoid derivative losses in the elliptic equation that one has to solve in the interior region; 
we introduce therefore a second order Alinhac good unknown that yields cancellations at subprincipal order. 
In this set of variables the resulting system has the weakly dissipative structure described in \cite{IguchiLannes2023}. 
However, this structure is not sufficient to control the evolution of the free boundary. 
Contrary to related free boundary problems like the shoreline problem \cite{LannesMetivier,Poyferre,MingWang} or 
the vacuum problem for the compressible Euler equations \cite{CoutandShkoller,JangMasmoudi,IfrimTataru}, the dynamic of the contact line is not kinematic. 
This means that it does not move at the normal velocity of the fluid, in particular, it cannot be fixed by working in Lagrangian coordinates. 
The contact line dynamic is also one derivative more singular than the dynamics of shocks for compressible gases, 
inasmuch as it depends on the trace at the boundary of first order derivatives of the solution rather than on the trace of the solution itself. 
It turns out that in order to control the evolution of the free boundary, a control of part of the trace of the solution to the hyperbolic system is needed. 
Such a control would be provided if the boundary conditions where strictly dissipative, but it does not follow from the weakly dissipative structure. 
By a precise analysis of the characteristic fields and using Rellich type identities we however manage to get the desired trace estimate. 
We are then able to obtain a priori estimates at the quasilinear regularity threshold.

%-----------------------------------------------------------
\subsection{Problem setting}\label{sectpbsetting}
We consider waves propagating in shallow water in horizontal dimension $d=2$ and in the presence of a fixed partially immersed solid body on the water surface. 
Denoting by $x=(x_1,x_2)\in\R^2$ and $z\in\R$ the horizontal and vertical coordinates, we assume that the bottom of the water is flat and located at $z=-H_0$, 
that the surface of the water is represented at time $t$ by the graph of a function $Z(t,\cdot)$, and that the bottom of the solid body is represented by the 
graph of a smooth given function $Z_{\rm w}$. 
The horizontal plane $\R^2$ is decomposed at time $t$ as $\R^2=\cI(t)\cup\cE(t)\cup\itGamma(t)$, 
where $\cI(t)$ is the projection of the wetted part of the solid body on the horizontal plane, 
$\cE(t)$ is the projection of the part of the water surface which is in contact with the air, 
and $\itGamma(t)$ is the projection of the contact line, so that $\itGamma(t)=\partial\cI(t)=\partial\cE(t)$. 
We call $\cI(t)$ and $\cE(t)$ the interior and exterior regions, respectively. 
We assume also that $\itGamma(t)$ is a positively oriented Jordan curve. 
The configuration under study is described in Figure \ref{Fig:config}.

\begin{figure}[ht]
\begin{center}
\includegraphics[width=0.7\linewidth]{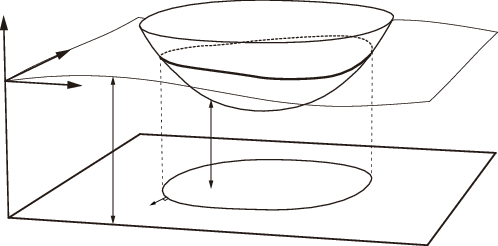}
\end{center}
\setlength{\unitlength}{1pt}
\begin{picture}(0,0)
\put(0,52){$\cI(t)$}
\put(100,50){$\cE(t)$}
\put(55,33){$\itGamma(t)$}
\put(-150,160){$z$}
\put(-105,110){$x_1$}
\put(-120,145){$x_2$}
\put(-124,80){$H_{\rm e}(t,x)$}
\put(-51,75){$H_{\rm i}(x)$}
\put(-110,130){$z=Z(t,x)$}
\put(0,92){$z=Z_{\rm w}(x)$}
\put(-72,40){$N$}
\end{picture}
\vspace{-3ex}
\caption{Waves interacting with a fixed solid body}\label{Fig:config}
\end{figure}

Let $V(t,x)$ be the vertically averaged horizontal velocity and $\ul{P}(t,x)$ denote the pressure on the water surface. 
In this paper, we adopt the shallow water model proposed in \cite{Lannes2017}, so that the motion of the water is governed by the nonlinear shallow water equations 
\begin{equation}\label{NLSWE}
\begin{cases}
 \dt Z+\nabla\cdot(HV) = 0, \\
 \dt V+(V\cdot\nabla)V+\gr\nabla Z = -\frac{1}{\rho}\nabla\ul{P},
\end{cases}
\end{equation}
where $H=H_0+Z$ is the depth of the water, $\gr$ is the acceleration of gravity, and $\rho$ is the constant density of the water per unit volume. 
We assume that the constants $H_0, \gr$, and $\rho$ are positive. 
Throughout the paper, we denote by $(Z_{\rm i},V_{\rm i},\ul{P}_{\rm i})$ the restriction of $(Z,V,\ul{P})$ to the interior region $\cI(t)$, 
while we keep the same notation to denote the restriction of $(Z,V,\ul{P})$ to the exterior region $\cE(t)$. 
This would cause no confusion. 
Similar notation will be used in the following.

In the exterior region, we assume that the pressure is continuous on the water surface so that we impose the constraint $\ul{P}=P_{\rm atm}$ in $\cE(t)$, 
where $P_{\rm atm}$ is the atmospheric pressure assumed to be constant. 
Then, the nonlinear shallow water equations \eqref{NLSWE} reduce to 
\begin{equation}\label{NLSWEe}
\begin{cases}
 \dt Z+\nabla\cdot(HV) = 0 &\mbox{in}\quad \cE(t), \ t>0, \\
 \dt V+(V\cdot\nabla)V+\gr\nabla Z = 0 &\mbox{in}\quad \cE(t), \ t>0.
\end{cases}
\end{equation}
In the interior region, the water is in contact with the solid body so that we must have the constraint $Z_{\rm i}=Z_{\rm w}$ in $\cI(t)$, 
where $Z_{\rm w}$ is independent of time $t$. 
Therefore, the nonlinear shallow water equations \eqref{NLSWE} reduce to 
\begin{equation}\label{NLSWEi}
\begin{cases}
 \nabla\cdot(H_{\rm i}V_{\rm i}) = 0 &\mbox{in}\quad \cI(t), \ t>0, \\
 \dt V_{\rm i}+(V_{\rm i}\cdot\nabla)V_{\rm i}+\gr\nabla Z_{\rm i} = -\frac{1}{\rho}\nabla\ul{P}_{\rm i} &\mbox{in}\quad \cI(t), \ t>0.
\end{cases}
\end{equation}
As matching conditions at the contact line, it is natural to impose the continuity of the unknowns $(Z,V,\ul{P})$ in our setting of the problem so that 
\begin{equation}\label{MC}
Z=Z_{\rm i}, \quad V=V_{\rm i}, \quad \ul{P}_{\rm i}=P_{\rm atm} \quad\mbox{on}\quad \itGamma(t), \ t>0.
\end{equation}
This system of equations \eqref{NLSWEe}--\eqref{MC} is the shallow water model that we are going to investigate in this paper. 
Here, we emphasize that in this problem the projection $\itGamma(t)$ of the contact line is also an unknown quantity so that this is a free boundary problem. 
In this paper we consider the case where the water surface is transversal to the bottom of the solid body. 
This situation can be expressed by 
\begin{equation}\label{TransCond}
|N\cdot\nabla Z-N\cdot\nabla Z_{\rm i}| \geq c_0 \quad\mbox{on}\quad \itGamma(t), \ t>0,
\end{equation}
with a positive constant $c_0$, where $N$ is the unit outward normal vector to $\itGamma(t)$ pointing from $\cI(t)$ to $\cE(t)$. 
Under this transversality condition, the continuity of the water surface at the contact line determines the curve $\itGamma(t)$ from $Z$ and $Z_{\rm i}$. 
Therefore, the matching conditions \eqref{MC} include an equation to determine the unknown curve $\itGamma(t)$.

This wave-structure interaction problem in the case of horizontal dimension $d=1$ has been analyzed in \cite{IguchiLannes2021} and 
the well-posedness of its initial value problem was proved at the quasilinear regularity threshold under assumptions that 
the initial flow is subcritical and that the transversality condition at the contact points is satisfied. 
Our main objective is to extend this result into the case of horizontal dimension $d=2$. 
Toward this goal, in this paper we will derive a priori energy estimates for solutions to \eqref{NLSWEe}--\eqref{MC} under the additional irrotational conditions 
\begin{equation}\label{IrroCond}
\begin{cases}
 \nabla^\perp\cdot V=0 &\mbox{in}\quad \cE(t), \ t>0, \\
 \nabla^\perp\cdot V_{\rm i}=0 &\mbox{in}\quad \cI(t), \ t>0.
\end{cases}
\end{equation}
As we will see in Section \ref{sect:BP}, these irrotational conditions are consistent with the problem \eqref{NLSWEe}--\eqref{MC} in the sense that 
if these conditions are initially satisfied at $t=0$, then regular solutions of \eqref{NLSWEe}--\eqref{MC} satisfy \eqref{IrroCond} for all time $t$ 
as long as they exist.

%-----------------------------------------------------------
\subsection{Coordinate transformation}\label{subsect:CT}
To parameterize the unknown curve $\itGamma(t)$, we will use normal-tangential coordinates related to a reference curve $\ul{\itGamma}$, 
which is a positively oriented Jordan curve of $C^\infty$-class and may be a smooth approximation of the initial curve $\itGamma(0)$. 
We denote by $\ul{\cI}$ the interior domain enclosed by the curve $\ul{\itGamma}$ and put $\ul{\cE}=\R^2\setminus(\ul{\cI}\cup\ul{\itGamma})$, 
which is the exterior domain. 
Suppose that $\ul{\itGamma}$ is parametrized by the arc length $s$ as $x = \ul{x}(s)$ for $0\leq s<L$, where $L$ is the length of the curve $\ul{\itGamma}$. 
As usual, we can regard $\ul{x}(s)$ as a function of $s \in \mathbb{T}_L \simeq \R/(L\mathbb{Z})$. 
The normal-tangential coordinates $(r,s)\in(-r_0,r_0)\times\mathbb{T}_L$ is defined in a tubular neighborhood $U_{\ul{\itGamma}}$ of $\ul{\itGamma}$ 
by the relation $x=\ul{x}(s)+r\ul{n}(s)$, 
where $\ul{n}(s)=-\ul{x}'(s)^\perp$ is the unit outward normal vector to $\ul{\itGamma}$ at the point $\ul{x}(s)$ and pointing from $\ul{\cI}$ to $\ul{\cE}$. 
We denote by $\ul{N}$ the unit outward normal vector to $\ul{\itGamma}$, that is, $\ul{N}(\ul{x}(s))=\ul{n}(s)$. 
We assume that the initial curve $\itGamma(0)$ is a graph in the normal direction, that is, 
it is parameterized in the normal-tangential coordinates as $r=\gamma(0,s)$. 
Then, we see that the unknown curve $\itGamma(t)$ can also be a graph in the normal direction as $r=\gamma(t,s)$ at least locally in time. 
In fact, by the transversality condition \eqref{TransCond} we can apply the implicit function theorem to ensure that the equation 
$Z(t,\ul{x}(s)+r\ul{n}(s)) = Z_{\rm i}(\ul{x}(s)+r\ul{n}(s))$ for $r$ can be solved uniquely in a neighborhood of $(t,r)=(0,\gamma(0,s))$. 
The solution $r$ is nothing but the function $\gamma(t,s)$, which is also an unknown function. 
We will use this parametrization for the unknown curve $\itGamma(t)$ throughout this paper.

As usual in the analysis of free boundary problems, we transform the free boundary problem \eqref{NLSWEe}--\eqref{IrroCond} into a problem 
in time independent domains by using a diffeomorphism $\varphi(t,\cdot):\R^2\to\R^2$, which should have the properties that 
$\varphi(t,\cdot)_{\vert_{\ul{\cE}}} : \ul{\cE}\to \cE(t)$, $\varphi(t,\cdot)_{\vert_{\ul{\cI}}} : \ul{\cI}\to \cI(t)$, 
and $\varphi(t,\cdot)_{\vert_{\ul{\itGamma}}} : \ul{\itGamma}\to \itGamma(t)$ are all diffeomorphisms and that it does not change the orientation. 
Such a diffeomorphism can be constructed from the unknown function $\gamma(t,\cdot)$. 
By using this diffeomorphism, we can transform the free boundary problem \eqref{NLSWEe}--\eqref{IrroCond} into a problem in time independent domains. 
This kind of coordinate transformations in analyses of free boundary problems was first employed in \cite{Hanzawa1981} for the Stefan problem 
and is called the Hanzawa transformation. 
Nowadays, such a transformation is widely used for free boundary problems arising in fluid mechanics.

For a function $F=F(t,x)$ we denote $f=F\circ\varphi$, that is, $f(t,y)=F(t,\varphi(t,y))$. 
We also use the notation 
\begin{equation}\label{DiffOP}
\nabla^\varphi f=(\nabla F)\circ\varphi, \quad \partial_j^\varphi f=(\partial_j F)\circ\varphi,
 \quad \dt^\varphi f=(\dt F)\circ\varphi. 
\end{equation}
We see easily that $\nabla^\varphi$ and $\dt^\varphi$ commute, that is, $\partial_j^\varphi \dt^\varphi = \dt^\varphi \partial_j^\varphi$ for $j=1,2$. 
Then, the free boundary problem \eqref{NLSWEe}--\eqref{IrroCond} is transformed equivalently into 
\begin{equation}\label{nlswe}
\begin{cases}
 \dt^\varphi\zeta+\nabla^\varphi\cdot(hv) = 0 &\mbox{in}\quad (0,T)\times\ul{\cE}, \\
 \dt^\varphi v+(v\cdot\nabla^\varphi)v+\gr\nabla^\varphi \zeta = 0 &\mbox{in}\quad (0,T)\times\ul{\cE}, \\
 (\nabla^\varphi)^\perp\cdot v=0 &\mbox{in}\quad (0,T)\times\ul{\cE}, 
\end{cases}
\end{equation}
\begin{equation}\label{nlswi}
\begin{cases}
 \nabla^\varphi\cdot(h_{\rm i}v_{\rm i}) = 0 &\mbox{in}\quad (0,T)\times\ul{\cI}, \\
 \dt^\varphi v_{\rm i}+(v_{\rm i}\cdot\nabla^\varphi)v_{\rm i}+\gr\nabla^\varphi \zeta_{\rm i}
   = -\frac{1}{\rho}\nabla\ul{p}_{\rm i} &\mbox{in}\quad (0,T)\times\ul{\cI}, \\
 (\nabla^\varphi)^\perp\cdot v_{\rm i}=0 &\mbox{in}\quad (0,T)\times\ul{\cI}, 
\end{cases}
\end{equation}
and 
\begin{equation}\label{mc}
\zeta=\zeta_{\rm i}, \quad v=v_{\rm i}, \quad \ul{p}_{\rm i}=P_{\rm atm} \quad\mbox{on}\quad (0,T)\times\ul{\itGamma},
\end{equation}
where $\zeta=Z\circ\varphi$, $\zeta_{\rm i}=Z_{\rm i}\circ\varphi$, $\zeta_{\rm w}=Z_{\rm w}\circ\varphi$, and so on, 
and the constraint $\zeta_{\rm i}=\zeta_{\rm w}$ is also supposed. 
Although there are infinitely many possible choices for the diffeomorphism $\varphi(t,\cdot)$, we will fix the choice; 
in particular, the diffeomorphism with which we work is regularizing to avoid the loss of half a derivative when taking its trace at the boundary. 
The precise construction of this diffeomorphism from the function $\gamma(t,\cdot)$ will be given in Section \ref{subsect:ChoiceD}. 
For later use, we introduce the $\R^2$-valued functions $w$ and $w_{\rm i}$ defined in $(0,T)\times\ul{\cE}$ and $(0,T)\times\ul{\cI}$, respectively, by 
\begin{equation}\label{defw}
w=v-\dt\varphi, \quad w_{\rm i}=v_{\rm i}-\dt\varphi,
\end{equation}
which represent, roughly speaking, the horizontal velocity of the water related to the motion of the curve $\itGamma(t)$; 
if we work with the diffeomorphism $\varphi$ constructed in Section \ref{subsect:ChoiceD}, 
which coincides with the identity mapping outside a tubular neighborhood $U_{\ul{\itGamma}}$ of $\ul{\itGamma}$, 
then we have also $w=v$ and $w_{\rm i}=v_{\rm i}$ outside $U_{\ul{\itGamma}}$.

%-----------------------------------------------------------
\subsection{Main result}\label{subsect:result}
To state our main result in this paper, we introduce norms of several function spaces. 
We denote by $L^p(\ul{\cE})$, $L^p(\ul{\cI})$, and $L^p(\mathbb{T}_L)$ with $1\leq p\leq \infty$, 
the standard Lebesgue spaces on $\ul{\cE}$, $\ul{\cI}$, and $\mathbb{T}_L$, respectively. 
Double bars are used to denote norms on the two-dimensional domains $\ul{\cE}$ or $\ul{\cI}$ and simple bars on the one-dimensional torus $\mathbb{T}_L$, 
for instance, 
\[
\|u\|_{L^2(\Omega)}=\left(\int_\Omega |u(x)|^2 {\rm d}x \right)^{1/2}, \qquad
|g|_{L^2(\mathbb{T}_L)}=\left(\int_{\mathbb{T}_L} |f(s)|^2 {\rm d}s \right)^{1/2}
\]
with $\Omega=\ul{\cE}$ or $\ul{\cI}$. 
We define $L^p$ Sobolev spaces of order $m\in \N$ as 
\[
W^{m,p}(\Omega)=\{ u\in L^p(\Omega) \,|\, \partial_1^{\alpha_1}\partial_2^{\alpha_2} u \in L^p(\Omega), \alpha_1+\alpha_2\leq m\}
 \qquad (\Omega=\ul{\cE} \mbox{ or } \ul{\cI})
\]
endowed with its canonical norm $\|\cdot\|_{W^{m,p}(\Omega)}$, where $\partial_j=\partial_{x_j}$ $(j=1,2)$. 
We put $H^m(\Omega)=W^{m,2}(\Omega)$. 
We define also the fractional order Sobolev spaces of order $s$ on $\mathbb{T}_L$ endowed with its canonical norm $|\cdot|_{H^s(\mathbb{T}_L)}$, 
which is defined through Fourier series. 
Recalling that $\ul{x}(\cdot):{\mathbb T}_L\to {\mathbb R}^2$ is a smooth parameterization of $\ul{\itGamma}$, 
we classically say that $u\in H^s(\ul{\itGamma})$ if and only if $u\circ\ul{x}\in H^s({\mathbb T}_L)$ with its canonical norm.

For functions $u$ depending also on time $t$, we use the norms 
\[
\|u(t)\|_{m,{\rm e}}=\sum_{j=0}^m \|\dt^j u (t)\|_{H^{m-j}(\ul{\cE})}, \quad
\|u(t)\|_{m,{\rm i}}=\sum_{j=0}^m \|\dt^j u (t)\|_{H^{m-j}(\ul{\cI})},
\]
and 
\[
|u(t)|_m = \sum_{j=0}^m |\dt^j u(t)|_{H^{m-j}(\ul{\itGamma})}.
\]
Using these norms, for a regular solution $u=(\zeta,v^{\rm T})^{\rm T}$, $v_{\rm i}$, $\ul{p}_{\rm i}$, and $\gamma$ for the shallow water model 
\eqref{nlswe}--\eqref{mc}, we define an energy function $E_m(t)$ of order $m\in\mathbb{N}$ by 
\begin{align*}
E_m(t)
&= \sum_{|\alpha|=m} \bigl( \|(\bm{\partial}^\alpha u-(\bm{\partial}^\alpha\varphi\cdot\nabla^\varphi)u)(t)\|_{L^2(\ul{\cE})}
 + \|(\bm{\partial}^\alpha v_{\rm i}-(\bm{\partial}^\alpha\varphi\cdot\nabla^\varphi)v_{\rm i})(t)\|_{L^2(\ul{\cI})} \bigr) \\
&\quad\;
 + \|u(t)\|_{m-1,{\rm e}} + \|v_{\rm i}(t)\|_{m-1,{\rm i}},
\end{align*}
where $\bm{\partial}=(\dt,\partial)=(\dt,\partial_1,\partial_2)$ and $\bm{\partial}^\alpha=\dt^{\alpha_0}\partial_1^{\alpha_1}\partial_2^{\alpha_2}$ 
with a multi-index $\alpha=(\alpha_0,\alpha_1,\alpha_2)$. 
Here, we note that $\bm{\partial}^\alpha u-(\bm{\partial}^\alpha\varphi\cdot\nabla^\varphi)u$ with $|\alpha|=m$ correspond to Alinhac's good unknowns 
for the $m$-th order derivatives of $u$.

As for the given function $Z_{\rm w}$, which represents the bottom of the solid body, we impose the following assumption.

\begin{assumption}\label{ass:BSB}
There exist positive constants $c_0$ and $M_0$, a non-negative integer $m$, and an open set $\ul{\cI}_{0}$ in $\R^2$ such that the followings hold. 
\begin{enumerate}
\item[{\rm (i)}]
$\overline{\cI(0)}\subset\ul{\cI}_{0}$ and $Z_{\rm w} \in C^{m+1}(\overline{\ul{\cI}_0})$. 
\item[{\rm (ii)}]
$H_{\rm w}(x)=H_0+Z_{\rm w}(x) \geq c_0$ for $x\in\ul{\cI}_0$. 
\item[{\rm (iii)}]
$\|Z_{\rm w}\|_{W^{m+1,\infty}(\ul{\cI}_0)} \leq M_0$.
\end{enumerate}
\end{assumption}

The following theorem is our main result in this paper, which gives a priori estimates locally in time for any regular solution to the nonlinear shallow water model 
\eqref{nlswe}--\eqref{mc} at the quasilinear regularity threshold under assumptions that the initial flow is irrotational and subcritical 
and that the initial water surface is transversal to the bottom of the solid body at the contact line. 
We recall that the diffeomorphism $\varphi$ that appears in \eqref{nlswe}--\eqref{mc} is constructed from $\gamma$ as in Section \ref{subsect:ChoiceD} 
and that $r_0$ is the width of the tubular neighborhood $U_{\ul{\itGamma}}$ of $\ul{\itGamma}$ in which normal-tangential coordinates are defined.

\begin{theorem}\label{th:main}
Let $m\geq3$ be an integer and $c_0, M_0, \eta_0, \eta_0^{\rm in}$ positive constants satisfying $\eta_0^{\rm in}<\eta_0<1$. 
Then, there exist a sufficiently small positive time $T$ and a large constant $C$ such that under Assumption \ref{ass:BSB} for any regular solution 
$u=(\zeta,v^{\rm T})^{\rm T},v_{\rm i},\ul{p}_{\rm i}$, and $\gamma$ to \eqref{nlswe}--\eqref{mc},  satisfying initially the conditions 
\begin{equation}\label{EstIni}
 \begin{cases}
  \inf_{x\in\ul{\cE}}(\gr h(0,x)-|w(0,x)|^2) \geq 2c_0, \\
  \inf_{x\in\ul{\itGamma}}| \ul{N}\cdot(\nabla\zeta-\nabla\zeta_{\rm i})(0,x)| \geq 2c_0, \\
  |\gamma(0)|_{L^\infty(\mathbb{T}_L)} \leq \eta_0^{\rm in}r_0, \\
  E_m(0) + |\gamma(0)|_{m-1} \leq M_0,
 \end{cases}
\end{equation}
we have the following a priori bounds 
\begin{equation}\label{APE}
 \begin{cases}
  \inf_{(t,x)\in(0,T)\times\ul{\cE}}(\gr h(t,x)-|w(t,x)|^2) \geq c_0, \\
  \inf_{(t,x)\in(0,T)\times\ul{\itGamma}}| \ul{N}\cdot(\nabla\zeta-\nabla\zeta_{\rm i})(t,x)| \geq c_0, \\
  \sup_{0<t<T}|\gamma(t)|_{L^\infty(\mathbb{T}_L)} \leq \eta_0r_0, \\
  \sup_{0<t<T}(E_m(t) + |\gamma(t)|_{m-1}) + \int_0^T|\gamma(t)|_m^2{\rm d}t \leq C,
 \end{cases}
\end{equation}
where $w$ is defined by \eqref{defw}. 
\end{theorem}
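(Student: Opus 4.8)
The plan is to set up a bootstrap (continuous induction) argument on the interval $[0,T]$. Assume that the a priori bounds \eqref{APE} hold on some maximal subinterval with the constant $C$ replaced by $2C$ (and the gravity/transversality lower bounds by $c_0/2$), and then improve them to the stated form on a possibly shorter interval, provided $T$ is chosen small. The core of the argument is a closed differential inequality of the form $\frac{d}{dt}\mathcal{E}_m(t) \leq C(\mathcal{E}_m(t))$ together with a control $\int_0^T |\gamma(t)|_m^2\,dt \leq C$, where $\mathcal{E}_m(t)$ is a suitable energy equivalent to $E_m(t)^2 + |\gamma(t)|_{m-1}^2$. The first step is to record the basic structural facts from the earlier sections: the system \eqref{nlswe}--\eqref{mc} recast in terms of the first order Alinhac good unknowns $\bm{\partial}^\alpha u - (\bm{\partial}^\alpha\varphi\cdot\nabla^\varphi)u$ has the weakly dissipative hyperbolic structure of \cite{IguchiLannes2023}; the elliptic problem in the interior region $\ul{\cI}$ for $v_{\rm i}$ (from the divergence and curl constraints in \eqref{nlswi}, with the trace of $v$ as boundary data) is solved using the \emph{second order} good unknown so that no derivative is lost; and the regularizing diffeomorphism $\varphi$ is controlled in terms of $\gamma$ with a half-derivative gain, so that $\|\varphi(t)\|$-type norms are bounded by $|\gamma(t)|_{m-1}$ plus lower order, and $\|\dt\varphi\|$, hence $w$, are likewise controlled.

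Next I would run the higher order energy estimate. Apply $\bm{\partial}^\alpha$ with $|\alpha|=m$ to \eqref{nlswe}, commute through $\nabla^\varphi$ and $\dt^\varphi$ using that they commute, and pass to the good unknowns to cancel the most singular commutator terms (those involving $m+1$ derivatives of $\varphi$, hence of $\gamma$); the remaining commutators are genuinely of order $\leq m$ and estimated by $E_m$ via product and commutator estimates in $H^{m-j}(\ul{\cE})$, $H^{m-j}(\ul{\cI})$, together with Assumption \ref{ass:BSB} on $Z_{\rm w}$ and the subcriticality bound $\gr h - |w|^2 \geq c_0/2$, which makes the symmetrized system uniformly hyperbolic and its natural energy coercive. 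The weak dissipativity is what makes the boundary contribution in the integration by parts have a favorable sign (or at least be absorbable), exactly as in \cite{IguchiLannes2023}; here one also has to carry the interior-region elliptic estimate for $v_{\rm i}$ and the pressure $\ul{p}_{\rm i}$ as an elliptic gain, feeding the trace of $v$ on $\ul{\itGamma}$ as data. This yields $\frac{d}{dt}E_m(t)^2 \lesssim P(E_m(t), |\gamma(t)|_{m-1})\bigl(1 + |\gamma(t)|_m^2 \bigr)$ for a polynomial $P$, where the $|\gamma|_m^2$ factor multiplies only lower order energy (it comes from $m$ normal derivatives hitting $\varphi$ inside otherwise lower order terms), so it is integrable in time by the last bound in \eqref{APE}.

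The evolution of the free boundary is then closed as follows. Differentiating the contact-line identity $Z(t,\ul{x}(s)+\gamma(t,s)\ul{n}(s)) = Z_{\rm i}(\dots)$ in $t$ and using the transversality lower bound gives an evolution equation $\dt\gamma = \mathfrak{G}[\gamma]$ whose right-hand side depends on the trace at $\ul{\itGamma}$ of the \emph{first order} derivatives of $\zeta$ and $\zeta_{\rm i}$, i.e.\ it is one derivative more singular than a kinematic law. Estimating $|\gamma(t)|_{m-1}$ therefore requires controlling $|\nabla\zeta|_{m-1} \sim |u|_{m}$ on the boundary, and this is precisely the new hidden boundary regularity: because the boundary conditions are only weakly dissipative, the energy estimate alone does not give $\int_0^T |u(t)|_m^2\,dt$; one recovers it by decomposing along the characteristic fields of the boundary matrix (the outgoing/incoming/characteristic modes associated with its eigenvalues) and using Rellich type multiplier identities — multiplying the interior equations by a well-chosen vector field transverse to $\ul{\itGamma}$ and integrating — to trade an interior bound for the missing boundary norm. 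From this trace estimate and the structure of $\mathfrak{G}$ one gets $\frac{d}{dt}|\gamma(t)|_{m-1}^2 \lesssim P(E_m, |\gamma|_{m-1}) + |\gamma(t)|_m^2$ and, separately, $\int_0^T |\gamma(t)|_m^2\,dt \lesssim \int_0^T |u(t)|_m^2\,dt + \dots \leq C$. Combining the two differential inequalities with the time integrability of $|\gamma|_m^2$ and a Gronwall argument, then choosing $T$ small enough that the gravity, transversality, and $|\gamma|_{L^\infty} \leq \eta_0 r_0$ thresholds are not crossed (each of these only degrades at speed controlled by $\sup_t E_m$), closes the bootstrap and yields \eqref{APE}.

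\textbf{Main obstacle.} The delicate point is the hidden boundary regularity step: establishing $\int_0^T |u(t)|_m^2\,dt \leq C$ despite the lack of strict dissipativity. Getting the characteristic-field decomposition at the boundary to interact correctly with the Rellich identities — so that the multiplier's boundary term reproduces exactly the norm one needs, with the interior remainder absorbable by $E_m$ and the commutators with $\bm{\partial}^\alpha$ and with $\varphi$ under control — is where the real work lies, and it is what forces the quasilinear threshold $m\geq 3$ rather than allowing an easier high-regularity argument.
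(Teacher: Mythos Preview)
Your overall architecture is right --- bootstrap, good unknowns, weakly dissipative energy estimate, hidden trace regularity feeding the evolution of $\gamma$ --- and matches the paper's Section \ref{sect:APE}. But two points are genuinely off.

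\textbf{The trace estimate is only for $\zeta$, not for $u$.} You write that the hidden boundary regularity yields $\int_0^T |u(t)|_m^2\,dt \leq C$. It does not: the characteristic-field computation combined with the Rellich identities in the interior domain (Proposition~\ref{prop:ABR}) only controls $\int_0^T |\check{\zeta}^{(\alpha_I)}(t)|_{L^2(\ul{\itGamma})}^2\,dt$, i.e.\ the $\zeta$-component of the good unknown, not the velocity traces. This is exactly enough because the equation \eqref{EqPhi2} that determines $N^\varphi\cdot\dpar^{\alpha_I}\varphi$ --- and hence $|\gamma|_m$ via \eqref{gamma eq2} --- involves only $\check{\zeta}^{(\alpha_I)}$. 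Your sketch would have you chasing a trace estimate for $\check{v}$ that is neither available nor needed; the mechanism in Section~\ref{sect:AddBR} is more delicate and more specific than a generic multiplier argument.

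\textbf{The transversality threshold does not propagate by a naive speed bound at $m=3$.} You assert that the transversality condition ``only degrades at speed controlled by $\sup_t E_m$''. At the critical regularity $m=3$ this is false: one would need $\dt\nabla\zeta\in L^\infty$, but the energy only places $\dt\nabla\zeta$ in $L^2$ in space (and not even uniformly in time, since $|\gamma|_m$ is only in $L^2_t$). The paper handles this in Section~\ref{secttransv} by a different route: it uses the embedding $H^2((0,T)\times\Omega)\hookrightarrow C^{1/2}$ in space-time (Lemmas~\ref{lem:Embedding1}--\ref{lem:Embedding2}) to show $\|\nabla\zeta(t)-\nabla\zeta(0)\|_{L^\infty}\lesssim \|\nabla\zeta\|_{H^2((0,T)\times\ul{\cE})}$, and then bounds the right-hand side using Lemma~\ref{lem:EstDuv}~(iv), which gives $\|\partial\bm{\partial}^{m-1}u\|_{L^2}\leq C_0(1+|\gamma|_m^{1/2})E_m$; integrating in time brings in $(M_3T)^{1/2}$ and closes for $T$ small. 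Without this step your bootstrap does not close at the quasilinear threshold.

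Two smaller structural remarks. First, the paper does not run a differential inequality $\frac{d}{dt}E_m^2\lesssim\ldots$; it applies the integrated estimate of Proposition~\ref{prop:BEE} directly, with source terms in $L^1_t$ --- and in the critical case the commutator sources $R_3$ scale like $|\gamma|_m^{3/2}$ (Lemma~\ref{lem:EstR3}), not $|\gamma|_m^2$, so the H\"older exponent in the time integration matters. Second, the energy estimate is only applied to tangential (in space-time) good unknowns $\check{u}^{(\alpha_I)}$; normal derivatives are recovered algebraically from the equations via \eqref{QLSWEe3} and \eqref{QLSWEi2}, using the irrotationality constraint to make the boundary matrix invertible. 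Your sketch applies $\bm{\partial}^\alpha$ for all $|\alpha|=m$ indiscriminately, which would force you to confront the characteristic nature of the boundary head-on.
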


\begin{remark}\label{re:th} \ 
{\bf i.} \ 
The first conditions in \eqref{EstIni} and \eqref{APE} represent that the flow is subcritical relative to the motion of the curve $\itGamma(t)$, 
the second ones represent that the water surface is transversal to the bottom of the solid body at the contact line, 
and the third ones ensure that the curve $\itGamma(t)$ is in the tubular neighborhood $U_{\ul{\itGamma}}$, 
in which the normal-tangential coordinates $(r,s)$ are defined. 

{\bf ii.} \ 
By definition, we have $w(0,x)=v(0,x)-(\dt\varphi)(0,x)$, where $(\dt\varphi)(0,\cdot)$ is determined from $(\dt\gamma)(0,\cdot)$, 
which is the initial velocity of the curve $\itGamma(0)$ and can be written explicitly in terms of the initial data 
$u(0,\cdot)$, $\gamma(0,\cdot)$, and $Z_{\rm w}$. 
For more details, we refer to Remark \ref{re:EEg} in Section \ref{subsect:Eqg}. 

{\bf iii.} \ 
The $m$-th order derivatives of $\gamma$ do not have the continuity in time but have only the square integrability. 
This situation is the same as in the one-dimensional case analyzed in \cite{IguchiLannes2021}. 
\end{remark}

%-----------------------------------------------------------
\subsection{Organization of this article}
In Section \ref{sect:BP}, we present basic properties of the shallow water model \eqref{NLSWEe}--\eqref{MC}. 
We show that the total energy, that is, the sum of the kinetic and potential energies is conserved. 
Then, we analyze the vorticity of the flow and show that if the initial vorticity is identically zero, 
then it continues to zero for all time as long as the regular solution exists. 
We also present two equivalent reformulations of the problem. 
In both reformulations, we eliminate the pressure $\ul{P}_{\rm i}$ from the equations. 
In the first reformulation, we introduce single valued velocity potentials both in the interior and in the exterior regions. 
By using the Dirichlet-to-Neumann map in the interior region, we can reformulate the problem as a free boundary problem for the nonlinear shallow water equations 
in the exterior region with nonlocal boundary conditions. 
These boundary conditions consist of two scalar equations; 
one of them is used to determine the unknown free boundary and the other one is essentially used as the boundary condition to the exterior problem. 
In the second reformulation, contrary to the first one, we use the velocity potential only in the interior region. 
The reformulated problem can be viewed as a transmission problem: 
the nonlinear shallow water equations in the exterior region and the second order elliptic equation in the interior region 
together with two scalar matching conditions: the continuity of the depth of water and of the normal component of the velocity. 
We prove in Proposition \ref{prop:equiv} that the continuity of the tangential component of the velocity across the free boundary is automatically satisfied 
if the velocity is initially continuous on the whole plane. 
In this paper, we follow the spirit of this second reformulation.

In Section \ref{sect:EE}, we derive and study a linearization of the shallow water model \eqref{nlswe}--\eqref{mc}, which is equivalent to 
\eqref{NLSWEe3}--\eqref{BC2} and to \eqref{NLSWEe5}--\eqref{ODE5} as we will see in Section \ref{sect:BP}. 
Both formulations of the problem are cast on time independent domains by using the diffeomorphism $\varphi(t,\cdot)$, 
which includes all the information of the unknown free boundary. 
We first derive the linearized equations. 
Thanks to the introduction of good unknowns, the linearized equations have a nice structure and the equation for the variation of the diffeomorphism is completely 
decoupled from the other equations. 
The linearized problem has a similar structure to the shallow water model in the presence of a fixed partially immersed object with vertical sidewalls 
analyzed in \cite{IguchiLannes2023}. 
In particular, the boundary condition is weakly dissipative so that 
we can derive a basic energy estimate for the linearized problem by modifying slightly the calculations in \cite{IguchiLannes2023}. 
However, in view of the application to the nonlinear problem, we have to pay much more attention to the dependence on the coefficients and the source terms 
than  in \cite{IguchiLannes2023}; 
indeed, these terms involve derivatives of the diffeomorphism $\varphi$ whose regularity is limited and slaved to the regularity of the free boundary.

In Section \ref{sect:AddBR}, we continue to consider the linearized problem derived in Section \ref{sect:EE}. 
Contrary to the case in horizontal dimension $d=1$ analyzed in \cite{IguchiLannes2021}, 
the boundary conditions are not strictly dissipative but only weakly dissipative in the sense \cite{IguchiLannes2023}, 
so that we do not have enough control of the boundary integrals by the general theory. 
Here, we derive an additional boundary regularity estimate for the linearized problem by taking the best advantage of the structure of the boundary conditions. 
More precisely, we evaluate the square integral in space-time on the boundary for the good unknown related to the surface elevation. 
Such an estimate is crucial to control the free boundary and therefore the diffeomorphism $\varphi(t,\cdot)$, 
and to obtain an a priori estimate for the nonlinear problem. 
Since the boundary conditions are weakly dissipative, the boundary integral of some quantity could be controlled, 
but it is not enough to obtain a control of the desired boundary integral. 
To compensate this, we first introduce characteristic fields related to eigenvalues of the boundary matrix, 
derive equations for them, and calculate a corresponding energy function. 
However, the boundary term in the energy estimate does not have good sign as it is. 
We furthermore use Rellich type identities for the solution in the interior domain, which give some relations for the boundary integrals. 
Combining the resulting boundary integrals, we establish the desired additional boundary regularity estimate.

In Section \ref{sect:diffeo}, we first explain the details on the normal-tangential coordinates $(r,s)$ in a tubular neighborhood 
of the reference curve $\ul{\itGamma}$, and then construct the regularizing diffeomorphism $\varphi(t,\cdot)$ from the unknown function $\gamma(t,\cdot)$. 
Here, it is assumed that the unknown curve $\itGamma(t)$, which is the projection of the contact line on the horizontal plane, 
is a graph in the normal direction as $r=\gamma(t,s)$ for $s\in\mathbb{T}_L$. 
We also derive $L^p$-estimates for the derivatives of the diffeomorphism $\varphi$ in terms of norms of $\gamma$.

In Section \ref{sect:GU}, we first introduce good unknowns for the $m$-th order derivatives of the solution. 
We use the normal-tangential coordinates to calculate the derivatives near the boundary $\ul{\itGamma}$, 
while we use the standard coordinates away from the boundary. 
Therefore, we need to introduce good unknowns both near the boundary and away from the boundary by using a partition of unity. 
The good unknowns for the velocities and the surface elevation may be defined in the same way as those of the linearized problem. 
However, if we adopt such a definition to the good unknowns for derivatives of the velocity potential in the interior domain, 
then remainder terms cannot be regarded as lower order terms, so that the a priori estimates for the nonlinear problem would exhibit a derivative loss. 
To bypass  this difficulty, we introduce second order good unknowns that include subprincipal terms to compensate these remaining singular terms. 
We then derive equations for the good unknowns in the exterior and the interior domains near the boundary and away from the boundary, respectively, 
and boundary conditions for them. 
Such equations have essentially the same form as those in the linearized problem considered in Sections \ref{sect:EE} and \ref{sect:AddBR}. 
We also derive equations for the derivatives of $\gamma$, particularly, we obtain an evolution equation for $\gamma$.

In Section \ref{sect:APE}, we prove Theorem \ref{th:main}. 
We first introduce another two energy functions in addition to $E_m(t)$. 
It turns out that these three energy functions are all equivalent. 
We then apply the basic energy estimate and the additional boundary regularity estimate established in Sections \ref{sect:EE} and \ref{sect:AddBR} 
to the equations for the good unknowns derived in Section \ref{sect:GU}. 
Through detailed calculations for the lower order terms and analysis for the transversality condition in the critical case $m=3$, 
we complete the proof of Theorem \ref{th:main}.

\bigskip
\noindent
{\bf Acknowledgement} \\ 
T. I. is partially supported by JSPS KAKENHI Grant Number JP23K22404. 
D. L. is partially supported by the BOURGEONS project, grant ANR-23-CE40-0014-01 of the French National Research Agency (ANR) and 
the project Climath of the PEPR Math-VivEs, ANR-23-EXMA-0003.

%----------------------------------------------------------------------------------------------------------------------
\section{Basic properties of the shallow water model}\label{sect:BP}
We study in this section some properties of the wave-structure interaction model \eqref{nlswe}--\eqref{mc} and 
propose some reformulations that are convenient for the mathematical analysis. 
Conservation of energy is first proved in Section \ref{sectconsNRJ}; 
we then show in Section \ref{sectvort} that the total enstrophy is conserved, 
from which we deduce that the flow remains irrotational if the initial velocity is irrotational. 
Using velocity potentials in the exterior and interior domains, 
a first equivalent formulation of the wave-structure interaction problem is proposed in Section \ref{subsect:EF1}; 
in Section \ref{subsect:EF2} a second reformulation is derived, that does not need the use of a velocity potential in the exterior domain. 
Finally, using the regularizing diffeomorphism, these two formulations are recast in Section \ref{sectfixing} 
as initial boundary value problems on a time independent domain.

%-----------------------------------------------------------
\subsection{Conservation of energy}\label{sectconsNRJ}
As in the case of the shallow water model with a fixed solid body with vertical sidewalls considered in \cite{IguchiLannes2023}, 
the conservation of the total energy is equivalent to the conservation of the mechanical energy, 
which is the sum of the kinetic and the potential energies of the water, denoted by $\mathfrak{E}_\mathrm{fluid}(t)$ at time $t$. 
The mechanical energy $\mathfrak{E}_\mathrm{fluid}(t)$ is the sum of the mechanical energies of the water below the water surface and below the fixed solid body 
\[
\mathfrak{E}_\mathrm{fluid}(t) = \int_{\cE(t)}\mathfrak{e}(t,\cdot) + \int_{\cI(t)}\mathfrak{e}_{\rm i}(t,\cdot),
\]
where the densities of the energies are given by $\mathfrak{e} = \frac12\rho H|V|^2 + \frac12\rho\gr Z^2$ 
and $\mathfrak{e}_{\rm i} = \frac12\rho H_{\rm i}|V_{\rm i}|^2+\frac12\rho\gr Z_{\rm i}^2$. 
Contrary to the case with vertical sidewalls, the interior and exterior regions depend on time $t$. 
Therefore, we use the diffeomorphism $\varphi(t,\cdot)$ introduced in Section \ref{subsect:CT} to calculate the time derivative of the energy. 
Let $Z,V,Z_{\rm i},V_{\rm i},\ul{P}_{\rm i}$, and $\itGamma(t)$ be a regular solution to \eqref{NLSWEe}--\eqref{MC}. 
Then, by the Reynolds transport theorem we have 
\[
\frac{\rm d}{{\rm d}t}\mathfrak{E}_\mathrm{fluid}
 = \int_{\cE(t)}\dt\mathfrak{e} + \int_{\cI(t)}\dt\mathfrak{e}_{\rm i} - \int_{\itGamma(t)}(\mathfrak{e}-\mathfrak{e}_{\rm i})(\dt\varphi\circ\varphi^{-1})\cdot N,
\]
where $(\dt\varphi\circ\varphi^{-1})\cdot N$ represents the deformation speed of the curve $\itGamma(t)$ in the normal direction. 
By the nonlinear shallow water equations \eqref{NLSWEe} and \eqref{NLSWEi}, we have 
\[
\begin{cases}
 \dt\mathfrak{e} + \nabla\cdot\mathfrak{F} = 0 &\mbox{in}\quad \cE(t), \ t>0, \\
 \dt\mathfrak{e}_{\rm i} + \nabla\cdot\mathfrak{F}_{\rm i} = 0 &\mbox{in}\quad \cI(t), \ t>0, 
\end{cases}
\]
where $\mathfrak{F}$ and $\mathfrak{F}_{\rm i}$ are the energy fluxes in the exterior and interior regions, respectively, and given by 
$\mathfrak{F}=\rho(\gr Z+\frac12|V|^2)HV$ and $\mathfrak{F}=(\ul{P}_{\rm i}-P_{\rm atm}+\rho(\gr Z_{\rm i}+\frac12|V_{\rm i}|^2))H_{\rm i}V_{\rm i}$. 
Therefore, we obtain 
\begin{align*}
\frac{\rm d}{{\rm d}t}\mathfrak{E}_\mathrm{fluid}
&= \int_{\itGamma(t)}(\mathfrak{F}-\mathfrak{F}_{\rm i}-(\mathfrak{e}-\mathfrak{e}_{\rm i})(\dt\varphi\circ\varphi^{-1}))\cdot N \\
&= 0,
\end{align*}
where we used the matching conditions \eqref{MC}. 
Therefore, we have the conservation of the total energy for the shallow water model \eqref{NLSWEe}--\eqref{MC}.

%-----------------------------------------------------------
\subsection{Analysis of the vorticity}\label{sectvort}
Let $Z,V,V_{\rm i},\ul{P}_{\rm i}$, and $\itGamma(t)$ be a regular solution to \eqref{NLSWEe}--\eqref{MC} and define the vorticity $\Omega$ 
and $\Omega_\mathrm{i}$ in $\cE(t)$ and $\cI(t)$ by 
\[
\Omega = \nabla^\perp\cdot V, \qquad \Omega_\mathrm{i} = \nabla^\perp\cdot V_\mathrm{i},
\]
respectively. 
We will check that the irrotationality is conserved in the time evolution by \eqref{NLSWEe}--\eqref{MC}, that is, 
if $\Omega(0,\cdot)=0$ in $\cE(0)$ and $\Omega_\mathrm{i}(0,\cdot)=0$ in $\cI(0)$, 
then $\Omega(t,\cdot)=0$ in $\cE(t)$ and $\Omega_\mathrm{i}(t,\cdot)=0$ in $\cI(t)$ for all time $t$, as long as the solution exists. 
We follow the calculation given in \cite[Section 2.4]{IguchiLannes2023} with a slight modification caused by the time evolution of the curve $\itGamma(t)$. 
We use again the diffeomorphism $\varphi(t,\cdot)$ introduced in Section \ref{subsect:CT}. 
It follows from \eqref{NLSWEe} and \eqref{NLSWEi} that 
\[
\begin{cases}
 \dt\bigl(\frac{\Omega^2}{H}\bigr) + \nabla\cdot\bigl(\frac{\Omega^2}{H}V\bigr)=0 &\mbox{in}\quad \cE(t), \ t>0, \\
 \dt\bigl(\frac{\Omega_{\rm i}^2}{H_{\rm i}}\bigr)
  + \nabla\cdot\bigl(\frac{\Omega_{\rm i}^2}{H_{\rm i}}V_{\rm i}\bigr)=0 &\mbox{in}\quad \cI(t), \ t>0.
\end{cases}
\]
Therefore, by the Reynolds transport theorem and the matching conditions \eqref{MC} we have 
\begin{align}\label{vorticity}
\frac{\rm d}{{\rm d}t}\left( \int_{\cE(t)}\frac{\Omega^2}{H} + \int_{\cI(t)}\frac{\Omega_{\rm i}^2}{H_{\rm i}} \right)
&= \int_{\itGamma(t)} \left( \frac{\Omega^2}{H}V - \frac{\Omega_{\rm i}^2}{H_{\rm i}}V_{\rm i}
 - \left( \frac{\Omega^2}{H} - \frac{\Omega_{\rm i}^2}{H_{\rm i}} \right)(\dt\varphi\circ\varphi^{-1}) \right)\cdot N \\
&= \int_{\itGamma(t)} \frac{1}{H}(\Omega+\Omega_{\rm i})(\Omega-\Omega_{\rm i})(V-(\dt\varphi\circ\varphi^{-1}))\cdot N. \nonumber
\end{align}
Differentiating the identity $V(t,\varphi(t,\cdot))=V_{\rm i}(t,\varphi(t,\cdot))$ on $\ul{\itGamma}$ with respect to $t$ and taking the inner product of the 
resulting equations with the tangential vector $N^\perp$, we see that on $\itGamma(t)$ 
\begin{align*}
N^\perp\cdot( \dt V - \dt V_\mathrm{i} )
&= -N^\perp\cdot\bigl( ((\dt\varphi \circ \varphi^{-1})\cdot\nabla)V
    - ((\dt\varphi \circ \varphi^{-1})\cdot\nabla)V_\mathrm{i} \bigr) \\
&= -(\dt\varphi \circ \varphi^{-1})\cdot\bigl(   (N^\perp\cdot\nabla)V - (N^\perp\cdot\nabla)V_\mathrm{i} \bigr)
 - (\Omega - \Omega_\mathrm{i} )(\dt\varphi \circ \varphi^{-1}) \cdot N \\
&= - (\Omega - \Omega_\mathrm{i} )(\dt\varphi \circ \varphi^{-1}) \cdot N,
\end{align*}
where we used the identity 
\begin{equation}\label{F1}
F \cdot (G\cdot\nabla)V = G \cdot (F\cdot\nabla)V + (F\cdot G^\perp)\nabla^\perp \cdot V
\end{equation}
and the matching conditions \eqref{MC}. 
On the other hand, the second equations in \eqref{NLSWEe} and \eqref{NLSWEi} can be written in the form 
\[
\begin{cases}
 \dt V + \Omega V^\perp + \nabla( \frac12|V|^2 + \gr Z ) = 0 & \mbox{in}\quad \cE(t), \ t>0, \\
 \dt V_\mathrm{i} + \Omega_\mathrm{i} V_\mathrm{i}^\perp
  + \nabla( \frac12|V_\mathrm{i}|^2 + \gr Z_\mathrm{i} + \frac{1}{\rho}(\ul{P}_\mathrm{i}-P_\mathrm{atm}) ) = 0 & \mbox{in}\quad \cI(t), \ t>0.
\end{cases}
\]
This together with the matching conditions \eqref{MC} implies $N^\perp\cdot( \dt V - \dt V_\mathrm{i} )=-(\Omega-\Omega_{\rm i})N\cdot V$ on $\itGamma(t)$. 
Therefore, we have 
\[
(\Omega - \Omega_\mathrm{i})(V - \dt\varphi \circ \varphi^{-1})\cdot N = 0 \quad\mbox{on}\quad \itGamma(t), \ t>0.
\]
Plugging this into \eqref{vorticity} we obtain finally that the total enstrophy is conserved, namely, 
\[
\frac{\rm d}{{\rm d}t}\left( \int_{\cE(t)}\frac{\Omega^2}{H} + \int_{\cI(t)}\frac{\Omega_{\rm i}^2}{H_{\rm i}} \right) = 0,
\]
which yields easily the desired conservation of the irrotationality under the positivity of the depth of the water.

Alternatively, we have also 
\[
\frac{\rm d}{{\rm d}t}\left( \int_{\cE(t)}\Omega^2 + \int_{\cI(t)}\Omega_{\rm i}^2\right)
= - \int_{\cE(t)}(\nabla\cdot V)\Omega^2 - \int_{\cI(t)}(\nabla\cdot V_\mathrm{i})\Omega_\mathrm{i}^2,
\]
which together with Gronwall's inequality implies the desired conservation of the irrotationality.

%-----------------------------------------------------------
\subsection{An equivalent formulation I}\label{subsect:EF1}
Let $Z,V,V_{\rm i},\ul{P}_{\rm i}$, and $\itGamma(t)$ be a regular solution to \eqref{NLSWEe}--\eqref{MC} satisfying the irrotational conditions \eqref{IrroCond}. 
Since the interior region $\cI(t)$ is simply connected, the irrotational condition ensures the existence of a single valued potential $\Phi_{\rm i}$ 
of the velocity $V_{\rm i}$, that is, $V_{\rm i}=\nabla\Phi_{\rm i}$. 
Although the exterior domain $\cE(t)$ is not simply connected, by the matching conditions \eqref{MC} we have 
$\int_{\itGamma(t)}N^\perp\cdot V=\int_{\itGamma(t)}N^\perp\cdot\nabla\Phi_{\rm i}=0$, 
so that the irrotational condition ensures also the existence of a single valued potential $\Phi$ of the velocity $V$. 
Then, by adding appropriate functions of time $t$ to $\Phi$ and $\Phi_{\rm i}$, 
the nonlinear shallow water equations \eqref{NLSWEe} and \eqref{NLSWEi} can be transformed equivalently as 
\begin{equation}\label{NLSWEe2}
\begin{cases}
 \dt Z+\nabla\cdot(H\nabla\Phi)=0 &\mbox{in}\quad \cE(t), \ t>0, \\
 \dt\Phi + \frac12|\nabla\Phi|^2 + \gr Z = 0 &\mbox{in}\quad \cE(t), \ t>0, 
\end{cases}
\end{equation}
and 
\begin{equation}\label{NLSWEi2}
\begin{cases}
 \nabla\cdot(H_{\rm i}\nabla\Phi_{\rm i})=0 &\mbox{in}\quad \cI(t), \ t>0, \\
 \dt\Phi_{\rm i} + \frac12|\nabla\Phi_{\rm i}|^2 + \gr Z_{\rm i} = -\frac{1}{\rho}(\ul{P}_{\rm i}-P_{\rm atm}) &\mbox{in}\quad \cI(t), \ t>0.
\end{cases}
\end{equation}
It follows from the second equations in \eqref{NLSWEe2} and \eqref{NLSWEi2} together with the matching conditions \eqref{MC} that 
$\dt\Phi=\dt\Phi_{\rm i}$ on $\itGamma(t)$, so that $\dt(\Phi_{\vert_{\itGamma(t)}})=\dt({ \Phi_{\rm i} }_{\vert_{\itGamma(t)}})$. 
By the matching conditions \eqref{MC}, we have also $N^\perp\cdot\nabla\Phi=N^\perp\cdot\nabla\Phi_{\rm i}$ on $\itGamma(t)$, 
where $N^\perp\cdot\nabla$ is a tangential derivative on the curve $\itGamma(t)$. 
Therefore, by adding an appropriate constant to $\Phi$ or $\Phi_{\rm i}$, we see that $\Phi$ and $\Phi_{\rm i}$ coincide on $\itGamma(t)$, 
and denote by $\Psi_{\rm i}$ their common value; 
\begin{equation}\label{ContPhi}
\Psi_{\rm i}(t,\cdot):= (\Phi)_{\vert_{ \itGamma(t)}}=(\Phi_{\rm i})_{\vert_{ \itGamma(t)}} \quad\mbox{for}\quad  \ t>0.
\end{equation}
In view of this, we introduce the Dirichlet-to-Neumann (DN) map $\Lambda_{\itGamma(t)}$ associated with the elliptic equation 
$\nabla\cdot(H_{\rm w}\nabla\Phi_{\rm i})=0$ in the interior region $\cI(t)$ by 
\begin{equation}\label{defDN}
\Lambda_{\itGamma(t)} \Psi_{\rm i}:= (N\cdot (H_{\rm w}\nabla\Phi_{\rm i}))_{\vert_{\itGamma(t)}},
\end{equation}
where $\Phi_{\rm i}$ is a unique solution to the boundary value problem 
\begin{equation}\label{BVP}
\begin{cases}
 \nabla\cdot(H_{\rm w}\nabla\Phi_{\rm i})=0 &\mbox{in}\quad \cI(t), \\
 \Phi_{\rm i}=\Psi_{\rm i} &\mbox{on}\quad \partial\cI(t).
\end{cases}
\end{equation}
We note that the DN map $\Lambda_{\itGamma(t)}$ is symmetric and non-negative in $L^2(\itGamma(t))$, and depends on the unknown curve $\itGamma(t)$. 
Then, \eqref{NLSWEe}--\eqref{IrroCond} is transformed equivalently to \eqref{NLSWEe2} under the boundary conditions 
\begin{equation}\label{BC}
Z=Z_{\rm w}, \quad N\cdot(H\nabla\Phi)=\Lambda_{\itGamma(t)}\Psi_{\rm i} \quad\mbox{on}\quad \partial\cE(t), \ t>0,
\end{equation}
where $\Psi_{\rm i}$ is the trace of $\Phi$ on $\itGamma(t)$. 
In fact, once we obtain a solution $(\itGamma(t),Z,\Phi)$ of \eqref{NLSWEe2} and \eqref{BC}, then the other quantities can be recovered easily as follows. 
We recover $\Phi_\mathrm{i}$ as a unique solution of the boundary value problem \eqref{BVP} with $\Psi_{\rm i}=\Phi_{\vert_{\itGamma(t)}}$. 
Then, the second boundary condition in \eqref{BC} implies $N\cdot\nabla\Phi=N\cdot\nabla\Phi_{\rm i}$ on $\itGamma(t)$, 
so that $\nabla\Phi=\nabla\Phi_{\rm i}$ on $\itGamma(t)$. 
Using this and the second equation in \eqref{NLSWEe2}, we have $\dt\Phi_{\rm i} + \frac12|\nabla\Phi_{\rm i}|^2 + \gr Z_{\rm i} = 0$ on $\itGamma(t)$. 
In view of the second equation in \eqref{NLSWEi2}, we define 
\begin{equation}\label{defPi}
\ul{P}_{\rm i}=P_{\rm atm}-\rho(\dt\Phi_{\rm i} + \tfrac12|\nabla\Phi_{\rm i}|^2 + \gr Z_{\rm i}), 
\end{equation}
which satisfies $\ul{P}_{\rm i}=P_{\rm atm}$ on $\itGamma(t)$. 
Finally, putting $V=\nabla\Phi$ and $V_{\rm i}=\nabla\Phi_{\rm i}$, 
we see that $Z,V,V_{\rm i},\ul{P}_{\rm i}$ and $\itGamma(t)$ satisfy \eqref{NLSWEe}--\eqref{IrroCond}.

%-----------------------------------------------------------
\subsection{An equivalent formulation II}\label{subsect:EF2}
We can reformulate the problem in a different way without using the velocity potential $\Phi$ in the exterior region similar to the reformulation used in 
\cite[Section 5.1]{IguchiLannes2023} in the case with vertical sidewalls. 
More precisely, we show here that we can reduce the problem to a free boundary problem for the shallow water equations in the exterior domain 
with boundary conditions that are nonlocal in space and in time. 
By the second equation in \eqref{NLSWEi2} and the matching conditions \eqref{MC}, we have $\dt\Phi_{\rm i}+\frac12|V|^2+\gr Z=0$ on $\itGamma(t)$. 
Using a parametrization $x=X(t,s)$ for $s\in {\mathbb T}_L$ of the curve $\itGamma(t)$, 
we can define the functions $\Psi_{{\rm i},t}(t,\cdot)$ and $V_{\itGamma}$ on $\itGamma(t)$ as 
\[
\Psi_{{\rm i},t}(t,X(t,s)):=\partial_t \bigl[ \Psi_{\rm i}(t,X(t,s))\bigr] \quad\mbox{and}\quad V_\itGamma(t,X(t,s))=\dt X(t,s)
\quad\mbox{for}\quad s\in {\mathbb T}_L, \ t>0,
\]
where we recall that $\Psi_{\rm i}$ denotes the common value of $\Phi_{\rm i}$ and $\Phi$ on $\itGamma(t)$. 
In particular, using also the matching conditions \eqref{MC}, we get $\dt \Phi_{\rm i}=\Psi_{{\rm i},t} - V_\itGamma\cdot V$ on $\itGamma(t)$, 
so that the relation $\Psi_{\rm i,t} - V_\itGamma\cdot V+\tfrac12|V|^2+\gr Z=0$ holds on $\itGamma(t)$. 
Therefore, the problem reduces to 
\begin{equation}\label{NLSWEe4}
\begin{cases}
 \dt Z+\nabla\cdot(HV) = 0 &\mbox{in}\quad \cE(t), \ t>0, \\
 \dt V+\nabla(\frac12|V|^2+\gr Z) = 0 &\mbox{in}\quad \cE(t), \ t>0, \\
 \nabla^\perp\cdot V = 0 &\mbox{in}\quad \cE(t), \ t>0,
\end{cases}
\end{equation}
under the boundary conditions 
\begin{equation}\label{BC4}
Z=Z_{\rm i}, \quad (HN\cdot V) = \Lambda_{\itGamma(t)}\Psi_{\rm i} \quad\mbox{on}\quad \itGamma(t), \ t>0,
\end{equation}
with the DN map $\Lambda_{\it\Gamma(t)}$ as defined in \eqref{defDN}--\eqref{BVP}, and coupled with 
\begin{equation}\label{ODE}
\Psi_{{\rm i},t} - V_\itGamma\cdot V+\tfrac12 |V|^2+\gr Z=0 \quad\mbox{on}\quad \itGamma(t), \ t>0.
\end{equation}

\begin{remark}
The quantity $\Psi_{{\rm i},t} - V_\itGamma\cdot V$ is equal to the trace of $\dt\Phi_{\rm i}$ on $\itGamma(t)$ and is therefore intrinsically defined; 
however, both $\Psi_{{\rm i},t} $ and $V_\itGamma$ depend on the chosen parametrization for $\it\Gamma(t)$; 
in particular, any reparametrization of $\it\Gamma(t)$ changes the tangential component of $V_\itGamma$. 
The fact that the tangential velocity of the boundary depends on the chosen parametrization is standard in the analysis of free boundary problems 
where the choice of an appropriate parametrization can be important \cite{AmbroseMasmoudi,CordobaGancedo}.
\end{remark}

The following proposition ensures the equivalence between \eqref{NLSWEe}--\eqref{IrroCond} and \eqref{NLSWEe4}--\eqref{ODE} under appropriate conditions 
on the initial data.

\begin{proposition}\label{prop:equiv}
Let $Z,V,\Psi_{\rm i}$, and $\itGamma(t)$ be a smooth solution of \eqref{NLSWEe4}--\eqref{ODE}, 
and define $V_{\rm i}=\nabla\Phi_{\rm i}$ with $\Phi_{\rm i}$ given by \eqref{BVP}. 
If $V(0,\cdot)=V_{\rm i}(0,\cdot)$ on $\itGamma(0)$, then we have $V(t,\cdot)=V_{\rm i}(t,\cdot)$ on $\itGamma(t)$ for all $t$. 
Particularly, by defining $\ul{P}_{\rm i}$ by \eqref{defPi} we see that $Z,V,V_{\rm i},\ul{P}_{\rm i}$, and $\itGamma(t)$ satisfy \eqref{NLSWEe}--\eqref{IrroCond}. 
\end{proposition}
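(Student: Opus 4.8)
The plan is to show that the tangential component of the jump $V-V_{\rm i}$ along $\itGamma(t)$ satisfies a linear transport-type ordinary differential equation along the curve, and that the normal component of the jump vanishes identically by construction, so that vanishing data forces the jump to remain zero. The normal part is free: the second boundary condition in \eqref{BC4} gives $HN\cdot V=\Lambda_{\itGamma(t)}\Psi_{\rm i}=N\cdot(H_{\rm w}\nabla\Phi_{\rm i})=HN\cdot V_{\rm i}$ on $\itGamma(t)$ (using $H=H_{\rm i}=H_{\rm w}$ there since $\zeta_{\rm i}=\zeta_{\rm w}$ and the first matching condition $Z=Z_{\rm i}$), hence $N\cdot(V-V_{\rm i})=0$ on $\itGamma(t)$ for all $t$, so only the tangential component $N^\perp\cdot(V-V_{\rm i})$ needs to be controlled.

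For the tangential part I would differentiate the defining relation for $\Psi_{\rm i}$ in time. From \eqref{BVP}, $\Phi_{\rm i}=\Psi_{\rm i}$ on $\itGamma(t)$, and $V_{\rm i}=\nabla\Phi_{\rm i}$; differentiating $\Psi_{\rm i}(t,X(t,s))$ in $t$ and comparing with $\dt\Phi_{\rm i}$ evaluated on the moving curve gives $\dt\Phi_{\rm i}=\Psi_{{\rm i},t}-V_\itGamma\cdot V_{\rm i}$ on $\itGamma(t)$, exactly as in Section \ref{subsect:EF2} but with $V_{\rm i}$ in place of $V$. Since $\Phi_{\rm i}$ solves $\nabla\cdot(H_{\rm w}\nabla\Phi_{\rm i})=0$, it satisfies the Bernoulli-type identity $\dt\Phi_{\rm i}+\frac12|\nabla\Phi_{\rm i}|^2+\gr Z_{\rm i}=-\frac1\rho(\ul{P}_{\rm i}-P_{\rm atm})$ pointwise in $\cI(t)$ (this is the definition \eqref{defPi}), but on the boundary I instead use \eqref{ODE}, namely $\Psi_{{\rm i},t}-V_\itGamma\cdot V+\frac12|V|^2+\gr Z=0$. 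Subtracting these two evaluations of $\Psi_{{\rm i},t}$ on $\itGamma(t)$ — one coming from \eqref{ODE} written in terms of $V,Z$, and one coming from the Bernoulli identity for $\Phi_{\rm i}$ rewritten via $\dt\Phi_{\rm i}=\Psi_{{\rm i},t}-V_\itGamma\cdot V_{\rm i}$ plus the fact that $\ul{P}_{\rm i}=P_{\rm atm}$ on $\itGamma(t)$ by \eqref{defPi} — yields, after using $Z=Z_{\rm i}$ on the boundary, a relation of the form
\begin{equation*}
V_\itGamma\cdot(V-V_{\rm i})=\tfrac12\bigl(|V|^2-|V_{\rm i}|^2\bigr)=\tfrac12(V+V_{\rm i})\cdot(V-V_{\rm i})\quad\text{on }\itGamma(t),
\end{equation*}
so that $\bigl(V_\itGamma-\tfrac12(V+V_{\rm i})\bigr)\cdot(V-V_{\rm i})=0$ on $\itGamma(t)$. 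Combined with $N\cdot(V-V_{\rm i})=0$, this reads $\bigl(N^\perp\cdot(V_\itGamma-\tfrac12(V+V_{\rm i}))\bigr)\,\bigl(N^\perp\cdot(V-V_{\rm i})\bigr)=0$, which is one algebraic constraint but not yet an evolution equation; to close it I would instead differentiate the full boundary matching $N^\perp\cdot(V-V_{\rm i})$ along the curve using the momentum equations in \eqref{NLSWEe4} and the gradient structure $V_{\rm i}=\nabla\Phi_{\rm i}$, obtaining $\partial_t$ of the tangential jump expressed as (tangential derivative of a scalar that vanishes on $\itGamma(t)$) $+$ (bounded coefficient)$\times$(tangential jump), a closed linear ODE in $s$ for fixed $t$ — or rather a transport equation along $\itGamma(t)$ — whose only solution with zero initial data is zero, by Gronwall.

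\textbf{Main obstacle.} The delicate point is the bookkeeping of the moving-boundary time derivatives: $\Psi_{{\rm i},t}$ and $V_\itGamma$ are parametrization-dependent (as the Remark stresses), and one must be careful that the two expressions for $\dt\Phi_{\rm i}$ on $\itGamma(t)$ — the intrinsic one $\Psi_{{\rm i},t}-V_\itGamma\cdot V_{\rm i}$ and the one coming from the elliptic Bernoulli identity — are compared with the \emph{same} normal speed of the curve, which here is legitimate because $\itGamma(t)$ is determined by the transversality condition and the diffeomorphism, not by the fluid velocity. Equivalently, the real content is showing that the scalar whose tangential derivative drives the jump actually vanishes on $\itGamma(t)$, which requires invoking $\ul{P}_{\rm i}=P_{\rm atm}$ on $\itGamma(t)$ (true by \eqref{defPi}) together with the third equation of \eqref{NLSWEe4} and $\nabla^\perp\cdot V_{\rm i}=\nabla^\perp\cdot\nabla\Phi_{\rm i}=0$ to convert the transport of $N^\perp\cdot(V-V_{\rm i})$ into a homogeneous linear equation. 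Once the algebra is organized so that all the inhomogeneous terms carry a factor of the tangential jump itself, Gronwall (or uniqueness for the linear transport equation on the compact curve $\itGamma(t)$) finishes the argument, and the last sentence — that $Z,V,V_{\rm i},\ul{P}_{\rm i},\itGamma(t)$ then solve \eqref{NLSWEe}--\eqref{IrroCond} — is immediate by retracing the recovery procedure of Section \ref{subsect:EF1}.
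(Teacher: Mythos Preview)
Your overall plan --- the normal component of $V-V_{\rm i}$ vanishes by \eqref{BC4}, so control the tangential component by differentiating it in time along the moving curve --- is the paper's approach. The gap is in how you close the resulting equation. You invoke $\ul{P}_{\rm i}=P_{\rm atm}$ on $\itGamma(t)$ ``by \eqref{defPi}'' both to derive your algebraic relation and, in your Main obstacle paragraph, to make the driving scalar vanish. But \eqref{defPi} is merely the \emph{definition} of $\ul{P}_{\rm i}$; the boundary identity $\ul{P}_{\rm i}=P_{\rm atm}$ means $\partial_t\Phi_{\rm i}+\tfrac12|V_{\rm i}|^2+\gr Z_{\rm i}=0$ on $\itGamma(t)$, and combining $\partial_t\Phi_{\rm i}=\Psi_{{\rm i},t}-V_\itGamma\cdot V_{\rm i}$ with \eqref{ODE} shows this is exactly your relation $V_\itGamma\cdot(V-V_{\rm i})=\tfrac12(V+V_{\rm i})\cdot(V-V_{\rm i})$. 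So you are assuming the conclusion. (The derivation of $\ul{P}_{\rm i}=P_{\rm atm}$ on the boundary in Section~\ref{subsect:EF1} already used $\nabla\Phi=\nabla\Phi_{\rm i}$ there, i.e.\ $V=V_{\rm i}$.)

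The paper closes the equation without any appeal to $\ul{P}_{\rm i}$. With $F(t,s)=(\partial_sX)\cdot(V-V_{\rm i})\circ X$, the momentum equation and $V_{\rm i}=\nabla\Phi_{\rm i}$ give $(\partial_sX)\cdot(\partial_tV-\partial_tV_{\rm i})\circ X=-\partial_s\bigl[(\partial_t\Phi_{\rm i}+\tfrac12|V|^2+\gr Z)\circ X\bigr]$. The bracketed scalar does \emph{not} vanish on $\itGamma(t)$; instead, by \eqref{ODE} and the chain rule it equals $\partial_tX\cdot(V-V_{\rm i})\circ X$, i.e.\ it is linear in the jump. The remaining advection term is rewritten via the identity \eqref{F1} (using $\nabla^\perp\cdot V=\nabla^\perp\cdot V_{\rm i}=0$) as $(\partial_tX)\cdot\partial_s\bigl((V-V_{\rm i})\circ X\bigr)$, and the three contributions to $\partial_tF$ then cancel \emph{exactly}: $\partial_tF\equiv0$. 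No Gronwall is needed; the closing mechanism is not a vanishing scalar but a scalar proportional to the jump that produces a telescoping cancellation.
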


\begin{proof}
Let $x=X(t,s)$ for $s\in\mathbb{T}_L\simeq\R/(L\mathbb{Z})$ be a parametrization of the curve $\itGamma(t)$. 
It is sufficient to show that $N^\perp\cdot V=N^\perp\cdot V_{\rm i}$ on $\itGamma(t)$, which is equivalent to 
$(\partial_s X)\cdot(V\circ X)=(\partial_s X)\cdot(V_{\rm i}\circ X)$. 
We put 
\[
F(t,s)=(\partial_s X(t,s))\cdot(V-V_{\rm i})(t,X(t,s)).
\]
Then, we see that 
\begin{align*}
\dt F 
&= (\dt\partial_s X)\cdot ((V-V_{\rm i})\circ X) 
 + (\partial_s X)\cdot\{ (\dt V-\dt V_{\rm i})\circ X + (((\dt X)\cdot\nabla)(V-V_{\rm i}))\circ X \}.
\end{align*}
Here, it follows from the second equation in \eqref{NLSWEe4} and the relation $V_{\rm i}=\nabla\Phi_{\rm i}$ that 
\begin{align*}
(\partial_s X)\cdot \{ (\dt V-\dt V_{\rm i})\circ X \}
&= - (\partial_s X)\cdot \{ \nabla(\tfrac12|V|^2+\gr Z+\dt\Phi_{\rm i})\circ X \}  \\
&= - \partial_s \{ (\dt\Phi_{\rm i}+\tfrac12|V|^2+\gr Z)\circ X \}.
\end{align*}
Since $\dt \Phi_{\rm i}=\Psi_{\rm i,t}+V_{\itGamma}\cdot V_{\rm i}$ on $\itGamma(t)$, 
we deduce from \eqref{ODE} that  $(\dt\Phi_{\rm i}+\tfrac12|V|^2+\gr Z)\circ X=\partial_t X\cdot(V-V_{\rm i})\circ X$, so that 
\[
(\partial_s X)\cdot \{ (\dt V-\dt V_{\rm i})\circ X \}=  - \partial_s \{ \partial_t X\cdot(V-V_{\rm i})\circ X\}.
\]
In view of the identity \eqref{F1}, we see that 
\begin{align*}
(\partial_s X)\cdot\{ (((\dt X)\cdot\nabla)(V-V_{\rm i}))\circ X \}
&= (\dt X)\cdot\{ (((\partial_s X)\cdot\nabla)(V-V_{\rm i}))\circ X \} \\
&= (\dt X)\cdot \partial_s( (V-V_{\rm i})\circ X ).
\end{align*}
Therefore, we obtain $\dt F=0$; since $F(0,\cdot)=0$ by assumption, this implies $F(t,\cdot)=0$ for all $t$. 
\end{proof}

%-----------------------------------------------------------
\subsection{Fixing the boundary}\label{sectfixing}
We have reformulated the wave-structure interaction model \eqref{NLSWEe}--\eqref{MC} under the irrotational condition \eqref{IrroCond} into two different ways; 
\eqref{NLSWEe2} and \eqref{BC} in the first formulation, and \eqref{NLSWEe4}--\eqref{ODE} in the second formulation. 
We proceed to transform these free boundary problems into problems cast on a fixed domain. 
In order to do so, we use the diffeomorphism $\varphi(t,\cdot)$ introduced in Section \ref{subsect:CT}. 
We recall the definition \eqref{DiffOP} of the differential operators $\nabla^\varphi$ and $\dt^\varphi$. 
By the definition, we have 
\begin{equation}\label{DiffOP2}
\nabla^\varphi f
 = ((\partial\varphi)^{-1})^\mathrm{T}\nabla f
 = \frac{1}{J}
  \begin{pmatrix}
   \partial_2\varphi_2 & -\partial_1\varphi_2 \\
   -\partial_2\varphi_1 & \partial_1\varphi_1
  \end{pmatrix}
  \nabla f,
\end{equation}
where we used the notation 
\[
\partial\varphi
 = \begin{pmatrix}
    \partial_1\varphi_1 & \partial_2\varphi_1 \\
    \partial_1\varphi_2 & \partial_2\varphi_2 \\
   \end{pmatrix}, \quad 
J = \det(\partial\varphi).
\]
We also have the relation 
\begin{equation}\label{DiffOP3}
\dt^\varphi f = \dt f - (\dt \varphi)\cdot\nabla^\varphi f.
\end{equation}
Let $\ul{N} = (\ul{N}_1,\ul{N}_2)^\mathrm{T}$ be the unit outward normal vector to $\ul{\itGamma}=\partial\ul{\cI}$. 
In view of \eqref{DiffOP2} we introduce a notation 
\begin{equation}\label{Nphi}
N^\varphi = J((\partial\varphi)^{-1})^\mathrm{T}\ul{N} = 
  \begin{pmatrix}
    \partial_2\varphi_2 & -\partial_1\varphi_2 \\
   -\partial_2\varphi_1 &  \partial_1\varphi_1
  \end{pmatrix}
  \ul{N}.
\end{equation}
Then, we have $N^\varphi = -((\ul{N}^{\perp}\cdot\nabla)\varphi)^\perp$, 
which implies that $N^\varphi\circ\varphi^{-1}$ is an outward normal vector to $\itGamma(t)= \partial\cI(t)$. 
Then, denoting $\phi_{\rm i}=\Phi_{\rm i}\circ \varphi$ and $\psi_{\rm i}=\Psi_{\rm i}\circ\varphi$, the boundary value problem \eqref{BVP} is transformed into 
\begin{equation}\label{BVP2}
\begin{cases}
\nabla\cdot(A_{\rm i}\nabla\phi_\mathrm{i}) = 0 & \mbox{in} \quad \ul{\cI}, \\
\phi_\mathrm{i} = \psi_{\rm i} & \mbox{on} \quad \partial \ul{\cI},
\end{cases}
\end{equation}
where the coefficient matrix $A=A(\varphi)$ is defined by 
\[
A_{\rm i} = J h_\mathrm{w} (\partial\varphi)^{-1}((\partial\varphi)^{-1})^\mathrm{T}
  = \frac{h_\mathrm{w}}{J}
  \begin{pmatrix}
    \partial_2\varphi_2 & -\partial_2\varphi_1 \\
   -\partial_1\varphi_2 &  \partial_1\varphi_1
  \end{pmatrix}
  \begin{pmatrix}
    \partial_2\varphi_2 & -\partial_1\varphi_2 \\
   -\partial_2\varphi_1 &  \partial_1\varphi_1
  \end{pmatrix}.
\]
The corresponding DN map is defined by 
\begin{equation}\label{defDNbis}
\Lambda_\varphi \psi_{\rm i} = (\ul{N}\cdot A_{\rm i}\nabla\phi_{\rm i})_{\vert_{\ul{\itGamma}}}
 = N^\varphi \cdot (h_{\rm w} \nabla^\varphi \phi_{\rm i})_{\vert_{\ul{\itGamma}}},
\end{equation}
which is also symmetric and non-negative in $L^2(\ul{\itGamma})$.

%-----------------------------------------------------------
\subsubsection{In the case of the formulation I}
By using the diffeomorphism $\varphi(t,\cdot)$ introduced in Section \ref{subsect:CT}, 
we see that the free boundary problem \eqref{NLSWEe2} and \eqref{BC} in the first reformulation is transformed into 
\begin{equation}\label{NLSWEe3}
\begin{cases}
 \dt^\varphi \zeta + \nabla^\varphi \cdot ( h\nabla^\varphi \phi ) = 0 &\mbox{in}\quad (0,T)\times\ul{\cE}, \\
 \dt^\varphi \phi + \frac12|\nabla^\varphi \phi|^2 + g \zeta = 0 &\mbox{in}\quad (0,T)\times\ul{\cE},
\end{cases}
\end{equation}
under the boundary conditions 
\begin{equation}\label{BC2}
\zeta = \zeta_{\rm i}, \quad  N^\varphi \cdot (h \nabla^\varphi \phi) = \Lambda_\varphi \phi \quad\mbox{on}\quad (0,T)\times\partial \ul{\cE},
\end{equation}
with the DN map $\Lambda_\varphi$ given by \eqref{defDNbis}.

%-----------------------------------------------------------
\subsubsection{In the case of the formulation II}
Similarly, we see that the free boundary problem \eqref{NLSWEe4}--\eqref{ODE} in the second reformulation is transformed into 
\begin{equation}\label{NLSWEe5}
\begin{cases}
 \dt^\varphi\zeta+\nabla^\varphi\cdot(hv) = 0 &\mbox{in}\quad (0,T)\times\ul{\cE}, \\
 \dt^\varphi v+\nabla^\varphi(\frac12|v|^2+\gr\zeta) = 0 &\mbox{in}\quad (0,T)\times\ul{\cE}, \\
 (\nabla^\varphi)^\perp\cdot v = 0 &\mbox{in}\quad (0,T)\times\ul{\cE},
\end{cases}
\end{equation}
under the boundary conditions 
\begin{equation}\label{BC5}
\zeta=\zeta_{\rm i}, \quad N^\varphi\cdot (h v) = \Lambda_\varphi \psi_{\rm i} \quad\mbox{on}\quad (0,T)\times\ul{\itGamma},
\end{equation}
where $\psi_{\rm i}$ is found by solving the following equation 
\begin{equation}\label{ODE5}
\dt \psi_{\rm i} -\dt \varphi\cdot v + \tfrac12|v|^2+\gr\zeta = 0 \quad\mbox{on}\quad  (0,T)\times\ul{\itGamma}.
\end{equation}

%----------------------------------------------------------------------------------------------------------------------
\section{Energy estimate of a linearized problem}\label{sect:EE}
In this section we consider the problem \eqref{NLSWEe5}--\eqref{ODE5}, where the unknowns are $\zeta,v,v_{\rm i}, \phi_{\rm i}$, and the diffeomorphism $\varphi$. 
We recall that $\zeta_{\rm i}$ is given by $\zeta_{i}=\zeta_{\rm w}=Z_{\rm w}\circ\varphi$ with a given function $Z_{\rm w}$. 
We first derive linearized equations. 
To this end, we use an abstract linearization operator denoted with a dot; one can typically think of it as a tangential or time derivative. 
In the derivation, we need to introduce so-called Alinhac's good unknowns to exclude a derivative loss caused by the dependence on the diffeomorphism $\varphi$ 
in the problem \eqref{NLSWEe5}--\eqref{ODE5}. 
Then, we derive an energy estimate for the linearized problem. 
It turns out that the boundary conditions are weakly dissipative in the sense \cite{IguchiLannes2023}.

%-----------------------------------------------------------
\subsection{Derivation of a linearized problem}\label{subsect:DLP}
We consider here $\zeta,v,\psi_{\rm i}$, and $\varphi$ a solution to \eqref{NLSWEe5}--\eqref{ODE5} satisfying $v(0,\cdot)=v_{\rm i}(0,\cdot)$ on $\ul{\itGamma}$, 
where we defined $v_{\rm i}=\nabla \phi_{\rm i}$ with $\phi_{\rm i}$ given by \eqref{BVP2}. 
Using Proposition \ref{prop:equiv}, one can replace the boundary conditions \eqref{BC5} by $\zeta=\zeta_{\rm i}$ and $v=v_{\rm i}$ on $(0,T)\times\ul{\itGamma}$. 
This is what we do in this section for the derivation of the linearized equations.

%-----------------------------------------------------------
\subsubsection{Linearization rules and vectorial identities}
For an unknown function $f$, we denote the variation in the linearization by $\dot{f}$. 
Then, we have 
\begin{equation}\label{linearization 1}
\begin{cases}
(\nabla^\varphi \cdot \mbox{\boldmath$f$})\,\dot{}
= \nabla^\varphi \cdot \dot{\mbox{\boldmath$f$}}
 - ( (\partial^\varphi \dot{\varphi})^\mathrm{T}\nabla^\varphi ) \cdot \mbox{\boldmath$f$}, \\
(\nabla^\varphi f)\,\dot{}
= \nabla^\varphi \dot{f} - (\partial^\varphi \dot{\varphi})^\mathrm{T}\nabla^\varphi f, \\
(\dt^\varphi f)\,\dot{}
= \dt^\varphi\dot{f} - (\dt^\varphi\dot{\varphi}) \cdot \nabla^\varphi f,
\end{cases}
\end{equation}
where for $v = (v_1,v_2)^\mathrm{T}$ we use the notation 
\[
\partial^\varphi v = 
 \begin{pmatrix}
  \partial_1^\varphi v_1 & \partial_2^\varphi v_1 \\
  \partial_1^\varphi v_2 & \partial_2^\varphi v_2
 \end{pmatrix}.
\]
Now, we introduce the good unknown $\check{f}$ associated with a function $f$ by 
\begin{equation}\label{GoodUnknown}
\check{f} = \dot{f} - \dot{\varphi} \cdot \nabla^\varphi f.
\end{equation}
Then, we have the commutation rules $(\nabla^\varphi\cdot  \mbox{\boldmath$f$})\,\check{} =\nabla^\varphi\cdot  \check{\mbox{\boldmath$f$}}$, 
$(\nabla^\varphi f)\,\check{} =\nabla^\varphi \check{f}$, and $(\dt^\varphi f)\,\check{} =\dt^\varphi  \check{f}$, or equivalently, 
\begin{equation}\label{linearization 2}
\begin{cases}
(\nabla^\varphi \cdot \mbox{\boldmath$f$})\,\dot{}
= \nabla^\varphi \cdot \check{\mbox{\boldmath$f$}}
 + (\dot{\varphi} \cdot \nabla^\varphi)(\nabla^\varphi \cdot \mbox{\boldmath$f$}), \\
(\nabla^\varphi f)\,\dot{}
= \nabla^\varphi \check{f} + (\dot{\varphi} \cdot \nabla^\varphi)\nabla^\varphi f, \\
(\dt^\varphi f)\,\dot{}
= \dt^\varphi\check{f} + (\dot{\varphi} \cdot \nabla^\varphi)\dt^\varphi f.
\end{cases}
\end{equation}
We will also use several vectorial identities in the computations of this section. By the identity \eqref{F1}, for any $\R^2$-valued functions $f$ and $g$ we have 
\begin{equation}\label{F2}
f\cdot(g\cdot\nabla^\varphi)v = g\cdot(f\cdot\nabla^\varphi)v+(f\cdot g^\perp)(\nabla^\varphi)^\perp\cdot v.
\end{equation}
We note also the identities 
\begin{equation}\label{F3}
\begin{cases}
 \nabla^\varphi(f\cdot g) = (\partial^\varphi f)^\mathrm{T}g + (\partial^\varphi g)^\mathrm{T}f, \\
 (\partial^\varphi f)^\mathrm{T}g = (g\cdot\nabla^\varphi)f - ((\nabla^\varphi)^\perp\cdot f)g^\perp.
\end{cases}
\end{equation}

%-----------------------------------------------------------
\subsubsection{Linearization of the equation in the exterior domain $\ul{\cE}$}
According to the linearization rules of the previous subsection, the linearized equations of \eqref{NLSWEe5} have the form, in terms of the good unknowns, 
\[\begin{cases}
 \dt^\varphi\check{\zeta}+\nabla^\varphi\cdot(h\check{v}+v\check{\zeta}) = 0 &\mbox{in}\quad (0,T)\times\ul{\cE}, \\
 \dt^\varphi\check{v}+\nabla^\varphi(v\cdot\check{v}+\gr\check{\zeta}) = 0 &\mbox{in}\quad (0,T)\times\ul{\cE}, \\
   (\nabla^\varphi)^\perp\cdot\check{v} = 0 &\mbox{in}\quad (0,T)\times\ul{\cE}.
\end{cases}
\]
Recalling that $w=v-\dt\varphi$ and using \eqref{F3}, these equations are equivalent to 
\begin{equation}\label{LSWEe1}
\begin{cases}
 \dt\check{\zeta}+\nabla^\varphi\cdot(h\check{v}+w\check{\zeta}) = -\check{\zeta}(\nabla^\varphi\cdot\dt\varphi) &\mbox{in}\quad (0,T)\times\ul{\cE}, \\
 \dt\check{v}+\nabla^\varphi(w\cdot\check{v}+\gr\check{\zeta}) = -(\check{v}\cdot\nabla^\varphi)\dt\varphi+((\nabla^\varphi)^\perp\cdot\dt\varphi)\check{v}^\perp
  &\mbox{in}\quad (0,T)\times\ul{\cE}, \\
  (\nabla^\varphi)^\perp\cdot\check{v} = 0 &\mbox{in}\quad (0,T)\times\ul{\cE},
\end{cases}
\end{equation}
where we used the last equation to derive the second equation in \eqref{LSWEe1}. 
We can regard the right-hand sides of these equations as lower order terms.

%-----------------------------------------------------------
\subsubsection{Linearization of the equation in the interior domain $\ul{\cI}$}
We recall that $v_{\rm i}=\nabla^\varphi\phi_{\rm i}$ where $\phi_{\rm i}$ solves \eqref{BVP2}, so that $\nabla^\varphi\cdot(h_{\rm i}v_{\rm i})=0$. 
Using the above linearization rules, this directly implies that 
\begin{equation}\label{LSWEi1}
\begin{cases}
 \nabla^\varphi\cdot(h_{\rm i}\check{v}_{\rm i}+\check{\zeta}_{\rm i} v_{\rm i}) = 0 &\mbox{in}\quad (0,T)\times\ul{\cI}, \\
 \check{v}_{\rm i}=\nabla^\varphi\check{\phi}_{\rm i} &\mbox{in}\quad (0,T)\times\ul{\cI}.
\end{cases}
\end{equation}

%-----------------------------------------------------------
\subsubsection{Linearization of the boundary condition $\zeta=\zeta_{\rm i}$}
Linearizing the boundary condition $\zeta=\zeta_{\rm i}$ on $\ul{\itGamma}$, we have $\dot{\zeta}=\dot{\zeta}_{\rm i}$, or equivalently, 
\[
\check{\zeta}-\check{\zeta}_{\rm i} = \dot{\varphi}\cdot\nabla^\varphi(\zeta_{\rm i}-\zeta) \quad\mbox{on}\quad (0,T)\times\ul{\itGamma}.
\]
Since $(N^\varphi)^\perp\cdot\nabla^\varphi=\ul{N}^\perp\cdot\nabla$ is a tangential derivative on $\ul{\itGamma}$, 
this linearized boundary condition can be written as 
\begin{equation}\label{LBC1}
\check{\zeta}=\check{\zeta}_{\rm i}-\frac{N^\varphi\cdot\nabla^\varphi(\zeta-\zeta_{\rm i})}{|N^\varphi|^2} N^\varphi\cdot\dot{\varphi}
 \quad\mbox{on}\quad (0,T)\times\ul{\itGamma}.
 \end{equation}
In this paper we are considering the case where the transversality condition \eqref{TransCond} is satisfied. 
In terms of the transformed variables, 
this situation is equivalent to $|N^\varphi\cdot\nabla^\varphi(\zeta-\zeta_{\rm i})| \geq c_0>0$ on $(0,T)\times\ul{\itGamma}$. 
Therefore, \eqref{LBC1} yields 
\begin{equation}\label{EqPhi}
N^\varphi\cdot\dot{\varphi} = -\frac{|N^\varphi|^2}{N^\varphi\cdot\nabla^\varphi(\zeta-\zeta_{\rm i})}( \check{\zeta} -\check{\zeta}_{\rm i})
 \quad\mbox{on}\quad (0,T)\times\ul{\itGamma}. 
\end{equation}
This is the equation that determines essentially the variation of the unknown curve $\itGamma(t)$.

%-----------------------------------------------------------
\subsubsection{Linearization of the boundary condition $v=v_{\rm i}$}
Proceeding as for \eqref{LBC1}, the linearization of the boundary condition $v=v_{\rm i}$ yields 
\[
\check{v}=\check{v}_{\rm i}-\frac{(N^\varphi\cdot\nabla^\varphi)(v-v_{\rm i})}{|N^\varphi|^2} N^\varphi\cdot\dot{\varphi}
 \quad\mbox{on}\quad (0,T)\times\ul{\itGamma}.
\]
Plugging \eqref{EqPhi} into this identity to eliminate $\dot{\varphi}$, we obtain 
\begin{equation}\label{LBC2}
\check{v} = \check{v}_{\rm i}
 + \frac{(N^\varphi\cdot\nabla^\varphi)(v-v_{\rm i})}{(N^\varphi\cdot\nabla^\varphi)(\zeta-\zeta_{\rm i})} (\check{\zeta}-\check{\zeta}_{\rm i}).
\end{equation}
On the other hand, it follows from the first equations in \eqref{NLSWEe5}, 
and the fact that $\dt^\varphi\zeta_{\rm i}=0$ and $\nabla^\varphi\cdot(h_{\rm i}v_{\rm i})=0$, that 
\[
\begin{cases}
 \dt\zeta+w\cdot\nabla^\varphi\zeta+h\nabla^\varphi\cdot v=0 &\mbox{in}\quad (0,T)\times\ul{\cE}, \\
 \dt\zeta_{\rm i}+w_{\rm i}\cdot\nabla^\varphi\zeta_{\rm i}+h_{\rm i}\nabla^\varphi\cdot v_{\rm i}=0 &\mbox{in}\quad (0,T)\times\ul{\cI},
\end{cases}
\]
where $w$ and $w_{\rm i}$ are defined by \eqref{defw}. 
Now, taking the trace of these equations on $\ul{\itGamma}$ and using the matching conditions $(\zeta,v)=(\zeta_{\rm i},v_{\rm i})$ on $\ul{\itGamma}$, 
we obtain $w\cdot\nabla^\varphi(\zeta-\zeta_{\rm i})+h\nabla^\varphi\cdot(v-v_{\rm i})=0$ on $\ul{\itGamma}$, which is equivalent to 
$(N^\varphi\cdot w)N^\varphi\cdot\nabla^\varphi(\zeta-\zeta_{\rm i})+hN^\varphi\cdot(N^\varphi\cdot\nabla^\varphi)(v-v_{\rm i})=0$ on $\ul{\itGamma}$. 
We also note that $(N^\varphi)^\perp\cdot(N^\varphi\cdot\nabla^\varphi)(v-v_{\rm i})=N^\varphi\cdot((N^\varphi)^\perp\cdot\nabla^\varphi)(v-v_{\rm i})=0$. 
Therefore, \eqref{LBC2} is equivalent to 
\begin{equation}\label{LBC3}
\begin{cases}
 N^\varphi\cdot(h\check{v}+w\check{\zeta})=N^\varphi\cdot(h_{\rm i}\check{v}_{\rm i}+w_{\rm i}\check{\zeta}_{\rm i}) &\mbox{on}\quad (0,T)\times\ul{\itGamma}, \\
 (N^\varphi)^\perp\cdot\check{v}=(N^\varphi)^\perp\cdot\check{v}_{\rm i} &\mbox{on}\quad (0,T)\times\ul{\itGamma};
\end{cases}
\end{equation}
the term $w_{\rm i}\check{\zeta}_{\rm i}$ in the right-hand side of the first equation can be viewed as a lower order term. 
We note that these conditions are decoupled from the variation of the diffeomorphism $\dot{\varphi}$.

%-----------------------------------------------------------
\subsubsection{Linearization of the evolution equation for $\psi_{\rm i}$}
Finally, we linearize \eqref{ODE5}.
In view of $v=v_{\rm i}$ on $\ul{\itGamma}$, we have $\dt^\varphi\phi_{\rm i}=\dt\phi_{\rm i}-(\dt\varphi)\cdot v$ and 
$\check{\phi}_{\rm i}=\dot{\phi}_{\rm i}-\dot{\varphi}\cdot v$ on $\ul{\itGamma}$. 
We note also that the equation \eqref{ODE5} for $\psi_{\rm i}$ can be written as $\dt^\varphi\phi_{\rm i}+\tfrac12|v|^2+\gr\zeta=0$. 
Therefore, we see that on $\ul{\itGamma}$ 
\begin{align*}
(\dt^\varphi\phi_{\rm i}+\tfrac12|v|^2+\gr\zeta)\,\dot{}
&= \dt\dot{\phi}_{\rm i}-(\dt\dot{\varphi})\cdot v + w\cdot\dot{v}+\gr\dot{\zeta} \\
&= \dt\check{\phi}_{\rm i}+w\cdot\check{v}+\gr\check{\zeta} + \dot{\varphi}\cdot(\dt v+\gr\nabla^\varphi\zeta) + w\cdot(\dot{\varphi}\cdot\nabla^\varphi)v. 
\end{align*}
Using the identity \eqref{F2} we notice that 
\begin{align*}
\dot{\varphi}\cdot(\dt v+\gr\nabla^\varphi\zeta) + w\cdot(\dot{\varphi}\cdot\nabla^\varphi)v
&= \dot{\varphi}\cdot( \dt v+(w\cdot\nabla^\varphi)v+\gr\nabla^\varphi\zeta - ((\nabla^\varphi)^\perp\cdot v)w^\perp ) \\
&= \dot{\varphi}\cdot( \dt^\varphi v+\nabla^\varphi(\tfrac12|v|^2+\gr\zeta) + ((\nabla^\varphi)^\perp\cdot v)(\dt\varphi)^\perp ) \\
&= 0,
\end{align*}
where we used the last two equations in \eqref{NLSWEe5}. 
Denoting by $\check{\psi}_{\rm i}$ the trace of $\check{\phi}_{\rm i}$ on $\ul{\itGamma}$, we obtain 
\begin{equation}\label{LODE1}
\dt\check{\psi}_{\rm i}+w\cdot\check{v}+\gr\check{\zeta}=0 \quad\mbox{on}\quad (0,T)\times\ul{\itGamma}.
\end{equation}

%-----------------------------------------------------------
\subsubsection{The linearized system in general form}
Gathering the above results, we obtain the linearized system associated with \eqref{NLSWEe5}--\eqref{ODE5}. 
We shall actually deal with a slightly more general version that include various perturbations, 
and that will allow us to handle commutator terms in the high-order energy estimates in Section \ref{sect:APE}. 
More precisely, the linearized equations for the unknowns $\check{\zeta},\check{v}$, and $\check{\psi}_{\rm i}$ 
that we are looking for consist of the linearized irrotational shallow water equations 
\begin{equation}\label{LSWEe2}
\begin{cases}
 \dt\check{\zeta}+\nabla^\varphi\cdot(h\check{v}+w\check{\zeta}) = f_1 &\mbox{in}\quad (0,T)\times\ul{\cE}, \\
 \dt\check{v}+\nabla^\varphi(w\cdot\check{v}+\gr\check{\zeta}) = f_2 &\mbox{in}\quad (0,T)\times\ul{\cE}, \\
 (\nabla^\varphi)^\perp\cdot\check{v} = f_3 &\mbox{in}\quad (0,T)\times\ul{\cE},
\end{cases}
\end{equation}
under the boundary conditions 
\begin{equation}\label{LBC4}
\begin{cases}
 N^\varphi\cdot(h\check{v}+w\check{\zeta}) = N^\varphi\cdot(h_{\rm i}\check{v}_{\rm i}+f_{{\rm i},4}) &\mbox{on}\quad (0,T)\times\ul{\itGamma}, \\
 (N^\varphi)^\perp\cdot(h\check{v})=(N^\varphi)^\perp\cdot(h_{\rm i}\check{v}_{\rm i}) &\mbox{on}\quad (0,T)\times\ul{\itGamma},
\end{cases}
\end{equation}
where $\check{v}_{\rm i}$ is given in terms of $\check{\psi}_{\rm i}$ by 
\begin{equation}\label{LSWEi2}
\begin{cases}
 \nabla^\varphi\cdot(h_{\rm i}\check{v}_{\rm i}+f_{{\rm i},2})=f_{{\rm i},1} &\mbox{in}\quad (0,T)\times\ul{\cI}, \\
 \check{v}_{\rm i}=\nabla^\varphi\check{\phi}_{\rm i} + f_{{\rm i},3} &\mbox{in}\quad (0,T)\times\ul{\cI}, \\
 \check{\phi}_{\rm i}=\check{\psi}_{\rm i}  &\mbox{on}\quad (0,T)\times\ul{\itGamma},
\end{cases}
\end{equation}
while $\check{\psi}_{\rm i}$ solves 
\begin{equation}\label{LBC4bis}
\dt\check{\psi}_{\rm i}+w\cdot\check{v}+\gr\check{\zeta}=f_{{\rm i},5} \quad \mbox{on}\quad (0,T)\times\ul{\itGamma},
\end{equation}
and where $f_1,f_2,f_3$ are given functions in $(0,T)\times\ul{\cE}$, whereas $f_{{\rm i},1},\ldots,f_{{\rm i},5}$ are given functions in $(0,T)\times\ul{\cI}$. 
In addition, the evolution of the curve $\itGamma(t)$ is governed by 
\begin{equation}\label{EqPhibis}
N^\varphi\cdot\dot{\varphi} = \frac{|N^\varphi|^2}{N^\varphi\cdot\nabla^\varphi(\zeta-\zeta_{\rm i})}( \check{\zeta} -\check{\zeta}_{\rm i})
 \quad\mbox{on}\quad (0,T)\times\ul{\itGamma}. 
\end{equation}

\begin{remark}
If $f_{{\rm i},1}=0$ and $f_{{\rm i},2}=f_{{\rm i},3}=f_{{\rm i},4}=0$, then the first boundary condition in \eqref{LBC4} can be written as 
\[
N^\varphi\cdot(h\check{v}+w\check{\zeta}) =  \Lambda_\varphi \check{\psi}_{\rm i}\quad \mbox{on}\quad (0,T)\times \ul{\itGamma},
\]
so that the first equation in \eqref{LBC4} can be seen as a perturbation of this boundary condition by lower order terms. 
This perturbation consists in replacing the elliptic problem \eqref{BVP2} for $\check{\phi}_{\rm i}$ by \eqref{LSWEi2}. 
\end{remark}

%-----------------------------------------------------------
\subsection{Energy estimate}
We proceed in this section to derive an energy estimate for solutions $\check{\zeta},\check{v},\check{\psi}_{\rm i}$ to the linearized equations 
\eqref{LSWEe2}--\eqref{LBC4bis}. 
We consider throughout this section that the diffeomorphism $\varphi$ is given, and therefore do not consider the evolution equation \eqref{EqPhibis}. 
We note that the first two equations in \eqref{LSWEe2} for the unknowns $\check{u}=(\check{\zeta},\check{v}^\mathrm{T})^\mathrm{T}$ 
can be written in a matrix form as 
\begin{equation}\label{LSWEe3}
\dt\check{u}+\partial_1^\varphi(G_1\Sigma(u)\check{u})+\partial_2^\varphi(G_2\Sigma(u)\check{u})=f,
\end{equation}
where $G_1,G_2$, and $\Sigma(u)$ with $u=(h,w^\mathrm{T})^\mathrm{T}$ are $3\times3$ symmetric matrices given by 
\[
G_j=
 \begin{pmatrix}
  0 & \mathbf{e}_j^\mathrm{T} \\
  \mathbf{e}_j & 0_{2\times2}
 \end{pmatrix}, \quad
\Sigma(u)=
 \begin{pmatrix}
  \gr & w^\mathrm{T} \\
  w & h\mathrm{Id}_{2\times2}
 \end{pmatrix}
\]
for $j=1,2$ with the standard basis $\mathbf{e}_j$ in $\R^2$ and $f=(f_1,f_2^\mathrm{T})^\mathrm{T}$. 
We note that the matrix $\Sigma(u)$ is positive definite if and only if $\gr h-|w|^2>0$, that is, 
the flow is subcritical related to the motion of the curve $\itGamma(t)$. 
Throughout this paper, we assume this condition. 
Under the subcritical condition of the flow, \eqref{LSWEe3} forms a symmetrizable hyperbolic system, with symmetrizer $\Sigma(u)$; 
the density of the corresponding energy function is therefore given by $\check{u}\cdot\Sigma(u)\check{u}$, 
and one readily checks that the energy conservation law 
\begin{equation}\label{localCE}
 \dt(J(\tfrac12\check{u}\cdot\Sigma(u)\check{u}))
  + J\nabla^\varphi\cdot((\gr\check{\zeta}+w\cdot\check{v})(h\check{v}+w\check{\zeta})) = F_{\rm e,1}
    \quad\mbox{in}\quad (0,T)\times\ul{\cE}.
\end{equation}
holds with 
\[
F_{\rm e,1}=\tfrac{1}{2}\check{u}\cdot\dt (J\Sigma(u))\check{u} + f\cdot \Sigma(u)\check{u}.
\]
In view of this, we assume the following conditions on the coefficients $h,w,h_{\rm i}$ and the diffeomorphism $\varphi$. 
\begin{assumption}\label{ass:LP}
There exit positive constants $c_0, M_0,M_1$, and $T$ such that $h,w,h_{\rm i}$, and $\varphi$ satisfy the following properties: 
\begin{enumerate}
\item[{\rm (i)}]
$\gr h(t,x)-|w(t,x)|^2\geq c_0$ and $h(t,x)\leq M_0$ for $(t,x)\in[0,T]\times\ul{\cE}$. 
\item[{\rm (ii)}]
$c_0 \leq h_{\rm i}(t,x) \leq M_0$ for $(t,x)\in[0,T]\times\ul{\cI}$. 
\item[{\rm (iii)}]
$J(t,x)=\det(\partial\varphi(t,x))\geq c_0$ and $|\partial\varphi(t,x)|\leq M_0$ for $(t,x)\in[0,T]\times\R^2$. 
\item[{\rm (iv)}]
$\|\bm{\partial}(h,w)\|_{L^1(0,T;L^\infty(\ul{\cE}))} + \|\bm{\partial} h_{\rm i}\|_{L^1(0,T;L^\infty(\ul{\cI}))}
 + \|\bm{\partial}\partial\varphi\|_{L^1(0,T;L^\infty(\R^2))} \leq M_1$.
\end{enumerate}
\end{assumption}

\begin{remark}
If we work in the space $H^m$ with $m\geq4$ for the nonlinear problem \eqref{NLSWEe5}--\eqref{ODE5}, we may replace the above condition {\rm (iv)} 
with a standard and stronger assumption $\|\bm{\partial}(h,w)\|_{L^\infty((0,T)\times\ul{\cE})} + \|\bm{\partial} h_{\rm i}\|_{L^\infty((0,T)\times\ul{\cI})}
 + \|\bm{\partial}\partial\varphi\|_{L^\infty((0,T)\times\R^2)} \leq M_1$. 
However, in the critical case $m=3$ at the quasilinear regularity threshold, the assumption $\|\bm{\partial}\partial\varphi\|_{L^\infty((0,T)\times\R^2)} \leq M_1$ 
cannot be acceptable for the nonlinear problem due to the lack of enough regularity of the contact line. 
\end{remark}

Noting that by the relation $J\nabla^\varphi\cdot f=\nabla\cdot(J(\partial\varphi)^{-1}f)$, we have 
\begin{equation}\label{DivTh}
\int_{\ul{\cE}}J\nabla^\varphi\cdot f = -\int_{\ul{\itGamma}}N^\varphi\cdot f, \quad
\int_{\ul{\cI}}J\nabla^\varphi\cdot f_{\rm i} = \int_{\ul{\itGamma}}N^\varphi\cdot f_{\rm i},
\end{equation}
so that one can integrate \eqref{localCE} in space and time to obtain the energy identity 
\begin{align}\label{weakdissip0}
\tfrac{1}{2} \big(\check{u},\Sigma(u)\check{u}\big)_{L^2(\ul{\cE})}(t)
&= \tfrac{1}{2} \big(\check{u},\Sigma(u)\check{u}\big)_{L^2(\ul{\cE})}(0)
 + \int_0^t (F_{{\rm e},1},\Sigma(u)\check{u})_{L^2(\ul{\cE})} \\
&\quad
 + \int_0^t \int_{\ul\itGamma} (\gr\check{\zeta}+w\cdot\check{v}) N^\varphi\cdot (h\check{v}+w\check{\zeta}), \nonumber
\end{align}
where $(\cdot,\cdot)_{L^2(\ul{\cE})}$ stands for the standard $L^2(\ul{\cE})$ scalar product. 
Now, under Assumption \ref{ass:LP}, $(\check{u},\Sigma(u)\check{u})_{L^2(\ul{\cE})}^{1/2}$ is equivalent to $\Vert \cdot \Vert_{L^2(\ul{\cE})}$, 
so that one could deduce from the above identity a control of $\big(\check{u},\Sigma(u)\check{u}\big)_{L^2(\ul{\cE})}$ by a Gronwall type argument 
if the last term were not present. 
The control of this boundary integral is the central point in the analysis of hyperbolic initial boundary value problems. 
The general idea is that an energy estimate is possible if this boundary integral is non-positive, up to terms that depend only on the data of the problem. 
When the boundary conditions have such a property, they are called {\it dissipative}. 
There actually exist various notions of dissipativity that focus on the sign properties of the integrand of the boundary integral, that is, 
in the present situation, of $(\gr\check{\zeta}+w\cdot\check{v}) N^\varphi\cdot (h\check{v}+w\check{\zeta})$. 
As discussed in \cite{IguchiLannes2023}, none of them applies here. 
For this reason, the notion of {\it weak dissipativity} was proposed in \cite{IguchiLannes2023}; as opposed to the other notions of dissipativity, 
it deals with the positivity of the double integral in space and time of the boundary term, rather than on the integrand. 
The following proposition gives a basic energy estimate for solutions to the linearized problem \eqref{LSWEe2}--\eqref{LBC4bis}; 
the key point of the proof is to show that the boundary conditions \eqref{LBC4}--\eqref{LBC4bis} are weakly dissipative in the sense discussed above, 
and under the subcriticality of the flow.

\begin{proposition}\label{prop:BEE}
Under Assumption \ref{ass:LP}, there exists a positive constant $C_0=C(c_0,M_0)$ such that any regular solution 
$\check{u}=(\check{\zeta},\check{v}), \check{\psi}_{\rm i}$ to \eqref{LSWEe2}--\eqref{LBC4bis} satisfies, 
with $\check{\phi}_{\rm i}$ and $\check{v}_{\rm i}$ given by \eqref{LSWEi2}, 
\begin{align*}
\|\check{u}(t)\|_{L^2(\ul{\cE})}^2 + \|\check{v}_{\rm i}(t)\|_{L^2(\ul{\cI})}^2
&\leq C_0 e^{C_0M_1}\bigl\{ \|\check{u}(0)\|_{L^2(\ul{\cE})}^2 + \|(\check{v}_{\rm i},f_{{\rm i},2},f_{{\rm i},3},f_{{\rm i},4})(0)\|_{L^2(\ul{\cI})}^2  \\
&\quad\;
 + \left( \int_0^t \bigl( \|(f_1,f_2)(t')\|_{L^2(\ul{\cE})}
 + \|\bm{\partial}(f_{{\rm i},2},f_{{\rm i},3},f_{{\rm i},4},f_{{\rm i},5})(t')\|_{L^2(\ul{\cI})} \bigr){\rm d}t' \right)^2\\
&\quad\;
  + \int_0^t\|(\dt\check{\phi}_{\rm i},f_{i,5})(t')\|_{L^2(\ul{\cI})}\|f_{{\rm i},1}(t')\|_{L^2(\ul{\cI})}{\rm d}t' \bigr\}
\end{align*}
for any $t\in[0,T]$. 
\end{proposition}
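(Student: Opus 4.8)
The plan is to start from the energy identity \eqref{weakdissip0} for the exterior unknown $\check u$ and to control the boundary integral there using the boundary conditions \eqref{LBC4}, coupled to an elliptic energy estimate for the interior unknown $\check v_{\rm i}$ obtained from \eqref{LSWEi2}, plus the evolution equation \eqref{LBC4bis} for $\check\psi_{\rm i}$. First I would rewrite the boundary integrand: on $\ul\itGamma$ we have $N^\varphi\cdot(h\check v+w\check\zeta)=N^\varphi\cdot(h_{\rm i}\check v_{\rm i}+f_{{\rm i},4})$ from the first line of \eqref{LBC4}, so the boundary term becomes $\int_0^t\int_{\ul\itGamma}(\gr\check\zeta+w\cdot\check v)\,N^\varphi\cdot(h_{\rm i}\check v_{\rm i}+f_{{\rm i},4})$. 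Using the second line of \eqref{LBC4} to replace the tangential component of $h\check v$ by that of $h_{\rm i}\check v_{\rm i}$, and \eqref{LBC4bis} to express $\gr\check\zeta+w\cdot\check v=f_{{\rm i},5}-\dt\check\psi_{\rm i}$ on the boundary, I would arrange the principal part of the boundary term as $-\int_0^t\int_{\ul\itGamma}\dt\check\psi_{\rm i}\,(N^\varphi\cdot h_{\rm i}\check v_{\rm i})$ plus controllable contributions from $f_{{\rm i},4},f_{{\rm i},5}$.

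The heart of the argument is then to recognize the principal boundary term as (minus) a time derivative of the Dirichlet energy of the interior elliptic problem. Since $\check v_{\rm i}=\nabla^\varphi\check\phi_{\rm i}+f_{{\rm i},3}$ with $\check\phi_{\rm i}=\check\psi_{\rm i}$ on $\ul\itGamma$ and $\nabla^\varphi\cdot(h_{\rm i}\check v_{\rm i}+f_{{\rm i},2})=f_{{\rm i},1}$, an integration by parts via \eqref{DivTh} gives $\int_{\ul\itGamma}N^\varphi\cdot(h_{\rm i}\check v_{\rm i})\,\check\psi_{\rm i}=\int_{\ul\cI}J\nabla^\varphi\cdot(h_{\rm i}\check v_{\rm i})\check\phi_{\rm i}+\int_{\ul\cI}Jh_{\rm i}\check v_{\rm i}\cdot\nabla^\varphi\check\phi_{\rm i}$, and up to the $f_{{\rm i},j}$-terms the last integral is essentially $\int_{\ul\cI}Jh_{\rm i}|\check v_{\rm i}|^2$, i.e. the coercive interior energy. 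Differentiating the boundary relation $\check\phi_{\rm i}=\check\psi_{\rm i}$ in time and using that $\dt$ commutes with $\nabla^\varphi$ (but not with the $\varphi$-dependent coefficients, which produces lower-order commutators controlled by Assumption \ref{ass:LP}(iv)), one shows $\frac{\rm d}{{\rm d}t}\big(\tfrac12\int_{\ul\cI}Jh_{\rm i}|\check v_{\rm i}|^2\big)$ equals the principal boundary term $\int_{\ul\itGamma}\dt\check\psi_{\rm i}(N^\varphi\cdot h_{\rm i}\check v_{\rm i})$ modulo terms involving $\dt\check\phi_{\rm i}$ paired with $f_{{\rm i},1}$ (whence the term $\int_0^t\|(\dt\check\phi_{\rm i},f_{{\rm i},5})\|_{L^2(\ul\cI)}\|f_{{\rm i},1}\|_{L^2(\ul\cI)}$ in the statement) and the commutator/source terms involving $\bm\partial(f_{{\rm i},2},f_{{\rm i},3},f_{{\rm i},4},f_{{\rm i},5})$. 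Adding this interior energy to $\tfrac12(\check u,\Sigma(u)\check u)_{L^2(\ul\cE)}$ yields a combined quantity whose time derivative has no boundary contribution at leading order; this is precisely the weak dissipativity of \eqref{LBC4}--\eqref{LBC4bis}.

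Having obtained $\frac{\rm d}{{\rm d}t}\big(\tfrac12(\check u,\Sigma(u)\check u)_{L^2(\ul\cE)}+\tfrac12\int_{\ul\cI}Jh_{\rm i}|\check v_{\rm i}|^2\big)\le(\text{lower order})$, I would close the estimate by Gronwall's inequality. The "lower order" right-hand side consists of: $F_{{\rm e},1}=\tfrac12\check u\cdot\dt(J\Sigma(u))\check u+f\cdot\Sigma(u)\check u$, bounded by $M_1\|\check u\|_{L^2(\ul\cE)}^2+\|(f_1,f_2)\|_{L^2(\ul\cE)}\|\check u\|_{L^2(\ul\cE)}$ using Assumption \ref{ass:LP}; the interior commutator terms bounded by $M_1(\|\check v_{\rm i}\|_{L^2(\ul\cI)}^2+\|\check\psi_{\rm i}\|_{\ldots}^2)$; and the genuinely inhomogeneous contributions $\|\bm\partial(f_{{\rm i},2},f_{{\rm i},3},f_{{\rm i},4},f_{{\rm i},5})\|_{L^2(\ul\cI)}\cdot\|\check v_{\rm i}\|_{L^2(\ul\cI)}$ together with $\|(\dt\check\phi_{\rm i},f_{{\rm i},5})\|_{L^2(\ul\cI)}\|f_{{\rm i},1}\|_{L^2(\ul\cI)}$. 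The elliptic estimate for \eqref{LSWEi2} also gives $\|\check v_{\rm i}\|_{L^2(\ul\cI)}\lesssim\|\check\psi_{\rm i}\|_{H^{1/2}(\ul\itGamma)}+\|(f_{{\rm i},1},f_{{\rm i},2},f_{{\rm i},3})\|_{\ldots}$, with the boundary trace $\check\psi_{\rm i}$ itself controlled through \eqref{LBC4bis} by $\|\check u\|$ on the boundary — but to avoid losing a trace of $\check u$ here, one keeps $\int_{\ul\cI}Jh_{\rm i}|\check v_{\rm i}|^2$ as part of the energy rather than re-estimating it, which is exactly why the interior term appears on the left-hand side of the stated inequality. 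Integrating in time and absorbing, one arrives at the claimed bound with constant $C_0e^{C_0M_1}$, the initial data of $(\check v_{\rm i},f_{{\rm i},2},f_{{\rm i},3},f_{{\rm i},4})$ appearing because the interior energy at $t=0$ must be expressed through $\check v_{\rm i}(0)$ and the source terms. The main obstacle is the second step: identifying the boundary integral as a perfect time derivative of the interior Dirichlet energy and carefully tracking which commutators between $\dt$ and the $\varphi$-dependent coefficients are genuinely lower order under the weak hypothesis \ref{ass:LP}(iv) rather than the stronger $L^\infty$ one — this is where the precise placement of $\bm\partial$ on the $f_{{\rm i},j}$'s and the appearance of $\dt\check\phi_{\rm i}$ (rather than $\check\phi_{\rm i}$) matter.
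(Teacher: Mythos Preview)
Your overall strategy is the paper's: add to the exterior energy $\tfrac12(\check u,\Sigma(u)\check u)_{L^2(\ul\cE)}$ the interior Dirichlet energy $\tfrac12\int_{\ul\cI}Jh_{\rm i}|\check v_{\rm i}|^2$, and use \eqref{LBC4}--\eqref{LBC4bis} so that the boundary contributions cancel. Two points deserve correction.

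First, the second line of \eqref{LBC4} plays no role here; only the normal component $N^\varphi\cdot(h\check v+w\check\zeta)$ enters the boundary integrand $(\gr\check\zeta+w\cdot\check v)N^\varphi\cdot(h\check v+w\check\zeta)$. The paper explicitly does not use that condition (nor the irrotationality equation) in this proof.

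Second, and more substantively, your phrase ``plus controllable contributions from $f_{{\rm i},4},f_{{\rm i},5}$'' hides the only real difficulty. After substituting the boundary conditions you are left with a residual boundary term of the form $\int_{\ul\itGamma}(\gr\check\zeta+w\cdot\check v)\,N^\varphi\cdot f_{{\rm i},4}$ (equivalently $-\int_{\ul\itGamma}\dt\check\psi_{\rm i}\,N^\varphi\cdot f_{{\rm i},4}$ up to data). This is \emph{not} directly controllable: it involves a trace of $\check u$ (or of $\dt\check\psi_{\rm i}$) on $\ul\itGamma$, and weak dissipativity gives no such trace bound. The paper's device is to extend $f_{{\rm i},2}-f_{{\rm i},4}$ from $\ul\itGamma$ to a function $f_4$ on $\ul\cE$ and rewrite the boundary condition as $N^\varphi\cdot(h\check v+w\check\zeta+f_4)=N^\varphi\cdot(h_{\rm i}\check v_{\rm i}+f_{{\rm i},2})$. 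One then augments the energy densities by $J\check v\cdot f_4$ on the exterior and $J\check v_{\rm i}\cdot f_{{\rm i},2}$ on the interior (plus $\lambda_0|f_4|^2$, $\lambda_0|f_{{\rm i},2}|^2$, $\lambda_0|f_{{\rm i},3}|^2$ to keep positivity). With these compensations the modified exterior flux $(\gr\check\zeta+w\cdot\check v)(h\check v+w\check\zeta+f_4)$ and the modified interior flux $(f_{{\rm i},5}-\dt\check\phi_{\rm i})(h_{\rm i}\check v_{\rm i}+f_{{\rm i},2})$ have \emph{exactly} matching normal traces on $\ul\itGamma$, so the boundary terms cancel identically and nothing is left to estimate there. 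The price is only bulk terms involving $\bm\partial f_4$, hence $\bm\partial(f_{{\rm i},2},f_{{\rm i},4})$, which is where those derivatives enter the statement.

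A smaller issue: your bound ``$M_1(\|\check v_{\rm i}\|^2+\|\check\psi_{\rm i}\|_{\ldots}^2)$'' on the interior commutators is not usable, since no norm of $\check\psi_{\rm i}$ appears in the energy. In the paper's computation the commutator $[\dt,\nabla^\varphi]\check\phi_{\rm i}=-(\partial^\varphi\dt\varphi)^{\rm T}\nabla^\varphi\check\phi_{\rm i}=-(\partial^\varphi\dt\varphi)^{\rm T}(\check v_{\rm i}-f_{{\rm i},3})$ produces only $\check v_{\rm i}$ and $f_{{\rm i},3}$, never $\check\phi_{\rm i}$ or $\check\psi_{\rm i}$ alone; the aside about an elliptic estimate for $\|\check v_{\rm i}\|$ in terms of $\|\check\psi_{\rm i}\|_{H^{1/2}}$ is neither needed nor helpful. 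Finally, to obtain the $L^1$-in-time structure with the factor $e^{C_0M_1}$, the paper runs Gronwall with an integrating factor $e^{-A(t)}$, $A(t)=C_0\int_0^t(\|\dt u\|_{L^\infty}+\|\dt h_{\rm i}\|_{L^\infty}+\|\dt\partial\varphi\|_{L^\infty})$, which is exactly tailored to Assumption~\ref{ass:LP}(iv).
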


\begin{remark} \ 
{\bf i.} \ 
In the proof of this proposition, we do not use the last equation in \eqref{LSWEe2} nor the second boundary condition in \eqref{LBC4}. 
These will be used in the next section to derive an additional boundary regularity. 

{\bf ii.} \ 
The norm for $(f_1,f_2)$ and $\bm{\partial}(f_{{\rm i},2},f_{{\rm i},3},f_{{\rm i},4},f_{{\rm i},5})$ in the energy estimate is $L^1$ with respect to time. 
This is important for application to the nonlinear problem. 
In fact, if we replace the $L^1$-norm with $L^2$-norm, then a difficulty would arise. 

{\bf iii.} \ 
In the case $f_{{\rm i},1}\ne0$, there is a term $\|\dt\check{\phi}_{\rm i}\|_{L^2(\ul{\cI})}$ in the right-hand side of the above energy estimate. 
However, in the applications to the nonlinear problem, this term does not cause any difficulties, 
because the order of derivatives of $\dt\check{\phi}_{\rm i}$ is the same as that of $\check{v}_{\rm i}$. 
See also Remark \ref{re:GU2} in Section \ref{sectnearbdry}. 
\end{remark}

\begin{proof}[Proof of Proposition \ref{prop:BEE}]
Let $\check{u}=(\check{\zeta},\check{v}), \check{\psi}_{\rm i}$ be a regular solution to \eqref{LSWEe2}--\eqref{LBC4bis}, 
and let $\check{\phi}_{\rm i}$ and $\check{v}_{\rm i}$ be given by \eqref{LSWEi2}. 
For the sake of clarity, let us first prove the proposition in the case where $f_{{\rm i},j}=0$ for $1\leq j\leq 5$ 
in order to make the mechanism for weak dissipativity more apparent. 
Under such an assumption, one can use the boundary condition \eqref{LBC4} to write the boundary integral in \eqref{weakdissip0} as 
\begin{equation}\label{weakdissip1}
\int_{\ul{\itGamma}} (\gr \check{\zeta}+w\cdot\check{v})N^\varphi\cdot (h \check{v}+w\check{\zeta})
 = -\int_{\ul{\itGamma}} \dt \check{\psi}_{\rm i} N^\varphi\cdot (h_{\rm i}\check{v}_{\rm i}).
\end{equation}
In order to control the right-hand side, let us remark that 
\begin{align*}
\dt (\tfrac{1}{2}Jh_{\rm i}\abs{\check{v}_{\rm i}}^2)
&= \tfrac{1}{2}\dt( J h_{\rm i}) \abs{\check{v}_{\rm i}}^2+J h_{\rm i}\check{v}_{\rm i}\cdot \dt \check{v}_{\rm i} \\
&= \tfrac{1}{2}\dt( J h_{\rm i}) \abs{\check{v}_{\rm i}}^2+J h_{\rm i}\check{v}_{\rm i}\cdot \nabla^\varphi \dt \check{\phi}_{\rm i}
 + J h_{\rm i}\check{v}_{\rm i}\cdot [\dt,\nabla^\varphi] \check{\phi}_{\rm i},
\end{align*}
where we used the second equation in \eqref{LSWEi2} to derive the second equality. 
Using now the first equation in \eqref{LSWEi2} as well as the observation that $[\dt,\nabla^\varphi]=-(\partial^\varphi\dt\varphi)^\mathrm{T}\nabla^\varphi$, 
we deduce that 
\begin{equation}\label{weakdissip2}
\dt(J(\tfrac12h_{\rm i}|\check{v}_{\rm i}|^2 )
 + J\nabla^\varphi\cdot((-\dt\check{\phi}_{\rm i})h_{\rm i}\check{v}_{\rm i}) = F_{{\rm i},1} \quad\mbox{in}\quad (0,T)\times\ul{\cI}
\end{equation}
with 
\[
F_{{\rm i},1} = \tfrac12(\dt(Jh_{\rm i}))|\check{v}_{\rm i}|^2 - J(h_{\rm i}\check{v}_{\rm i})\cdot  (\partial^\varphi\dt\varphi)^\mathrm{T}\check{v}_{\rm i}.
\]
Integrating this conservation law over $\ul{\cI}$, and using the fact that $\check{\phi}_{\rm i}=\check{\psi}_{\rm i}$ on $\ul{\itGamma}$, yields 
\[
\frac{{\rm d}}{{\rm d}t} \int_{\ul{\cI}} \tfrac12 J h_{\rm i}\abs{\check{v}_{\rm i}}^2
 - \int_{\ul{\itGamma}} \dt \check{\psi}_{\rm i} N^\varphi\cdot (h_{\rm i}\check{v}_{\rm i}) = \int_{\ul\cI}F_{\rm i,1}.
\]
Plugging this into \eqref{weakdissip1} and integrating it in time, we see that the boundary term in \eqref{weakdissip0} satisfies 
\[
\int_0^t \int_{\ul\itGamma} (\gr\check{\zeta}+w\cdot\check{v}) N^\varphi\cdot (h\check{v}+w\check{\zeta})
 = - \bigl(\int_{\ul{\cI}} h_{\rm i}\abs{\check{v}_{\rm i}}^2\bigr)(t)
  + \bigl(\int_{\ul{\cI}} h_{\rm i}\abs{\check{v}_{\rm i}}^2\bigr)(0)+\int_0^t \int_{\ul{\cI}}F_{\rm i,1};
\]
it is therefore non-positive, up to terms that depend only on the data of the problem and lower order terms. 
This is the weak dissipative property that allows us to derive the energy estimate of the proposition.

We now derive the energy estimate of the proposition with full details and without assuming that $f_{{\rm i},j}=0$ for $1\leq j\leq 5$. 
In the above argument, that is, without these terms, a key point was that the boundary integral obtained when integrating \eqref{localCE} over $\ul{\cE}$ 
corresponds exactly to the boundary integral obtained when integrating \eqref{weakdissip2} over $\ul{\cI}$, 
thanks to the boundary conditions \eqref{LBC4}--\eqref{LBC4bis}. 
Since now $f_{{\rm i},4}$ may differ from $f_{{\rm i},2}$ on $\ul{\itGamma}$, this is no longer the case. 
However, by using a standard smooth extension operator which maps a function defined in the interior domain $\ul{\cI}$ into that in the whole space $\R^2$, 
we can construct a function $f_4$ defined in $(0,T)\times\ul{\cE}$ satisfying 
\begin{equation}\label{Ext1}
\begin{cases}
 f_4 = f_{{\rm i},2}-f_{{\rm i},4} &\mbox{on}\quad (0,T)\times\ul{\itGamma}, \\
 \|\dt^jf_4(t)\|_{H^k(\ul{\cE})} \lesssim \|\dt^j(f_{{\rm i},2}-f_{{\rm i},4})(t)\|_{H^k(\ul{\cI})} &\mbox{for}\quad t\in[0,T], \ j,k=0,1.
\end{cases}
\end{equation}
Then, the first boundary condition in \eqref{LBC4} can be written as 
\begin{equation}\label{LBC5}
N^\varphi\cdot(h\check{v}+w\check{\zeta}+f_4) = N^\varphi\cdot(h_{\rm i}\check{v}_{\rm i}+f_{{\rm i},2}) \quad\mbox{on}\quad (0,T)\times\ul{\itGamma}
\end{equation}
and we can also modify the conservation laws \eqref{localCE} and \eqref{weakdissip2} by adding lower order terms, 
so that \eqref{LBC5} can be used to match the boundary contributions. 
More precisely, by \eqref{LSWEe2} and \eqref{LSWEi2}, we have 
\[
\begin{cases}
 \dt(J(\frac12\check{u}\cdot\Sigma(u)\check{u}+\check{v}\cdot f_4))
  + J\nabla^\varphi\cdot((\gr\check{\zeta}+w\cdot\check{v})(h\check{v}+w\check{\zeta}+f_4)) = F_{{\rm e},1} &\mbox{in}\quad (0,T)\times\ul{\cE}, \\
 \dt(J(\frac12h_{\rm i}|\check{v}_{\rm i}|^2 + \check{v}_{\rm i}\cdot f_{{\rm i},2}))
  + J\nabla^\varphi\cdot((f_{{\rm i},5}-\dt\check{\phi}_{\rm i})(h_{\rm i}\check{v}_{\rm i}+f_{{\rm i},2})) = F_{{\rm i},1} &\mbox{in}\quad (0,T)\times\ul{\cI},
\end{cases}
\]
where $F_{{\rm e},1}$ and $F_{{\rm i},1}$ are collections of lower order terms and given by 
\begin{align*}
F_{{\rm e},1}
&= \tfrac12\check{u}\cdot(\dt(J\Sigma(u)))\check{u} + \check{v}\cdot\dt(Jf_4)
 + J\Sigma(u)\check{u}\cdot\bigl( \begin{smallmatrix} f_1+\nabla^\varphi\cdot f_4 \\ f_2 \end{smallmatrix} \bigr) + Jf_2\cdot f_4, \\
F_{{\rm i},1}
&= \tfrac12(\dt(Jh_{\rm i}))|\check{v}_{\rm i}|^2 + \check{v}_{\rm i}\cdot\dt(Jf_{{\rm i},2})
 + J(f_{{\rm i},5}-\dt\check{\phi}_{\rm i})f_{{\rm i},1} \\
&\quad\;
 + J(h_{\rm i}\check{v}_{\rm i}+f_{{\rm i},2}))\cdot(\nabla^\varphi f_{{\rm i},5}+\dt f_{{\rm i},3}
 + (\partial^\varphi\dt\varphi)^\mathrm{T}(f_{{\rm i},3}-\check{v}_{\rm i}) ), 
\end{align*}
and we compensated the densities of the energy by adding the terms $J\check{v}\cdot f_4$ and $J\check{v}_{\rm i}\cdot f_{{\rm i},2}$ to control boundary terms. 
Here, in the calculation of $F_{{\rm i},1}$ we used the identity $[\dt,\nabla^\varphi]=-(\partial^\varphi\dt\varphi)^\mathrm{T}\nabla^\varphi$. 
In view of these equations, we define an energy function $\mathscr{E}(t)=\mathscr{E}_{\rm e}(t)+\mathscr{E}_{\rm i}(t)$ by 
\begin{align*}
\mathscr{E}_{\rm e}(t) 
&= \int_{\ul{\cE}}\{ J(\tfrac12\check{u}\cdot\Sigma(u)\check{u}+\check{v}\cdot f_4) + \lambda_0|f_4|^2 \}, \\
\mathscr{E}_{\rm i}(t)
&= \int_{\ul{\cI}}\{ J(\tfrac12h_{\rm i}|\check{v}_{\rm i}|^2 + \check{v}_{\rm i}\cdot f_{{\rm i},2}) + \lambda_0( |f_{{\rm i},2}|^2+|f_{{\rm i},3}|^2) \},
\end{align*}
where $\lambda_0$ is a large positive constant depending on $c_0$ and $M_0$ such that the energy function $\mathscr{E}(t)$ is equivalent to 
\[
E(t)=\|(\check{u},f_4)(t)\|_{L^2(\ul{\cE})}^2+\|(\check{v}_{\rm i},f_{{\rm i},2},f_{{\rm i},3})(t)\|_{L^2(\ul{\cE})}^2.
\]
Such a choice of $\lambda_0$ is possible thanks to Assumption \ref{ass:LP} (i)--(iii). 
By the relation \eqref{DivTh} we obtain 
\begin{equation}\label{BEI}
\begin{cases}
 \frac{\rm d}{{\rm d}t}\mathscr{E}_{\rm e}(t) = \int_{\ul{\itGamma}}(\gr\check{\zeta}+w\cdot\check{v})N^\varphi\cdot(h\check{v}+w\check{\zeta}+f_4)
  + \int_{\ul{\cE}}F_{{\rm e},2}, \\
 \frac{\rm d}{{\rm d}t}\mathscr{E}_{\rm i}(t) = -\int_{\ul{\itGamma}}(f_{{\rm i},5}-\dt\check{\phi}_{\rm i})N^\varphi\cdot(h_{\rm i}\check{v}_{\rm i}+f_{{\rm i},2})
  + \int_{\ul{\cI}}F_{{\rm i},2},
\end{cases}
\end{equation}
where $F_{{\rm e},2}=F_{{\rm e},1}+2\lambda_0f_4\cdot\dt f_4$ and 
$F_{{\rm i},2}=F_{{\rm i},1}+2\lambda_0( f_{{\rm i},2}\cdot\dt f_{{\rm i},2} + f_{{\rm i},3}\cdot\dt f_{{\rm i},3} )$. 
Adding these two identities we obtain $\frac{\rm d}{{\rm d}t}\mathscr{E}(t)=\int_{\ul{\cE}}F_{{\rm e},2}+\int_{\ul{\cI}}F_{{\rm i},2}$, 
because the boundary terms are cancelled due to the boundary conditions \eqref{LBC4bis} and \eqref{LBC5}. 
Let $a(t)$ be a non-negative function, which will be determined later, and put $A(t)=\int_0^t a(t')\mathrm{d}t'$. 
Then, we have 
\[
\frac{\rm d}{{\rm d}t}\{ \mathscr{E}(t)e^{-A(t)} \} + a(t)\mathscr{E}(t)e^{-A(t)}
= \left( \int_{\ul{\cE}}F_{{\rm e},2}+\int_{\ul{\cI}}F_{{\rm i},2} \right) e^{-A(t)},
\]
so that 
\[
\sup_{0\leq t'\leq t}E(t')e^{-A(t')} + \int_0^t a(t')E(t')e^{-A(t')}{\rm d}t'
\lesssim E(0) + \int_0^t \left( \int_{\ul{\cE}}|F_{{\rm e},2}|+\int_{\ul{\cI}}|F_{{\rm i},2}| \right) e^{-A(t')} {\rm d}t'.
\]
We can easily evaluate the term $\int_{\ul{\cE}}|F_{{\rm e},2}|$ and $\int_{\ul{\cI}}|F_{{\rm i},2}|$ and obtain 
\[
\int_{\ul{\cE}}|F_{{\rm e},2}|+\int_{\ul{\cI}}|F_{{\rm i},2}| 
\lesssim \mathscr{N}(t)E(t)+\sqrt{E(t)}\mathscr{F}_1(t)+\mathscr{F}_2(t),
\]
where 
\begin{align*}
\mathscr{N}(t)
&= \|\dt u(t)\|_{L^\infty(\ul{\cE})}+\|\dt h_{\rm i}(t)\|_{L^\infty(\ul{\cI})}+\|\dt\partial\varphi(t)\|_{L^\infty(\R^2)}, \\
\mathscr{F}_1(t)
&= \|(f_1,f_2)(t)\|_{L^2(\ul{\cE})} + \|\bm{\partial}(f_{{\rm i},2},f_{{\rm i},3},f_{{\rm i},4},f_{{\rm i},5})(t)\|_{L^2(\ul{\cI})}, \\
\mathscr{F}_2(t)
&= \|(\dt\check{\phi}_{\rm i},f_{i,5})\|_{L^2(\cI)}\|f_{{\rm i},1}\|_{L^2(\cI)}.
\end{align*}
Now, we choose the function $a(t)$ as $a(t)=C_0\mathscr{N}(t)$ with a sufficiently large positive constant $C_0$ depending on $c_0$ and $M_0$. 
Then, the term related to $\mathscr{N}(t)E(t)$ in the right-hand side can be absorbed in the left-hand side. 
As for the term related to $\sqrt{E(t)}\mathscr{F}_1(t)$, we see that 
\begin{align*}
\int_0^t \sqrt{E(t')}\mathscr{F}_1(t')e^{-A(t')} {\rm d}t'
&\leq \left( \sup_{0\leq t'\leq t} E(t')e^{-A(t')} \right)^{1/2}\int_0^t \mathscr{F}_1(t')e^{-\frac12A(t')}{\rm d}t' \\
&\leq \epsilon \sup_{0\leq t'\leq t} E(t')e^{-A(t')} + \frac{1}{4\epsilon}\left( \int_0^t \mathscr{F}_1(t')e^{-\frac12A(t')}{\rm d}t' \right)^2
\end{align*}
for any $\epsilon>0$.
By choosing $\epsilon$ sufficiently small, then the first term in the right-hand side of the above equation can also be absorbed in the left-hand side. 
Therefore, we obtain 
\begin{align*}
E(t) &\lesssim e^{A(t)} \bigl\{ E(0) + \left( \int_0^t \mathscr{F}_1(t')e^{-\frac12A(t')}{\rm d}t' \right)^2
 + \int_0^t \mathscr{F}_2(t')e^{-A(t')} {\rm d}t' \bigr\}.
\end{align*}
Moreover, thanks to Assumption \ref{ass:LP} (iv), we have $0\leq A(t)\leq C_0M_1$. 
These estimates together with \eqref{Ext1} to evaluate $f_4$ give the desired energy estimate. 
\end{proof}

%----------------------------------------------------------------------------------------------------------------------
\section{Additional boundary regularity for the linearized problem}\label{sect:AddBR}
In this section, we continue to consider the linearized problem \eqref{LSWEe2}--\eqref{LBC4bis}. 
In view of \eqref{EqPhibis}, which is essentially the equation for the variation of the unknown curve $\itGamma(t)$, 
we see easily the importance of obtaining a boundary regularity for $\check{\zeta}$ on $\ul{\itGamma}$. 
Contrary to the case in horizontal dimension $d=1$ analyzed in \cite{IguchiLannes2021}, 
in the case $d=2$ the boundary conditions \eqref{LBC4}--\eqref{LBC4bis} are not strictly dissipative but only weakly dissipative 
in the sense of \cite{IguchiLannes2023}. 
As a result, one cannot control the boundary integral $\int_0^T|\check{u}(t)|_{L^2(\ul{\itGamma})}^2 {\rm d}t$ in terms of the natural energy function $E(t)$ 
used in the previous section by the general theory such as the trace theorem or more sharper estimate in \cite{Tataru1998}. 
Nevertheless, one can actually control the component $\int_0^T|\check{\zeta}(t)|_{L^2(\ul{\itGamma})}^2 {\rm d}t$ by taking a better advantage of the structure 
of the boundary conditions \eqref{LBC4} and obtain the following proposition.

\begin{proposition}\label{prop:ABR}
Under Assumption \ref{ass:LP}, there exists a positive constant $C_0=C(c_0,M_0)$ such that any regular solution 
$\check{u}=(\check{\zeta},\check{v}), \check{v}_{\rm i}, \check{\phi}_{\rm i}$ to \eqref{LSWEe2}--\eqref{LSWEi2} satisfies 
\begin{align*}
\int_0^t|\check{\zeta}(t')|_{L^2(\ul{\itGamma})}^2\mathrm{d}t'
&\leq C_0 \bigl\{ (1+t+M_1)\sup_{0\leq t'\leq t}\bigl( \|\check{u}(t')\|_{L^2(\ul{\cE})}^2
  + \|(\check{v}_{\rm i},f_{{\rm i},2},f_{{\rm i},4})(t')\|_{L^2(\ul{\cI})}^2 \bigr) \\
&\quad\;
 + \left( \int_0^t \bigl( \|(f_1,f_2,f_3)(t')\|_{L^2(\ul{\cE})}
  + \|(f_{{\rm i},1},\bm{\partial}(f_{{\rm i},2},f_{{\rm i},3},f_{{\rm i},4}))(t')\|_{L^2(\ul{\cI})}  \bigr){\rm d}t' \right)^2 \bigr\}
\end{align*}
for any $t\in[0,T]$. 
\end{proposition}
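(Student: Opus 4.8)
The plan is to reconstruct the trace $\check{\zeta}|_{\ul{\itGamma}}$ from a characteristic‑field decomposition of the exterior solution combined with a Rellich identity for the interior potential, in the spirit outlined in the introduction. Set $\hat{n}=N^\varphi/|N^\varphi|$. The symmetric hyperbolic system \eqref{LSWEe3} has boundary matrix $\big(N_1^\varphi G_1+N_2^\varphi G_2\big)\Sigma(u)$, whose eigenvalues are $0$ and $w\cdot N^\varphi\pm|N^\varphi|\sqrt{\gr h}$; the vanishing eigenvalue reflects that the boundary is characteristic, while under the subcriticality $\gr h-|w|^2\geq c_0$ one has $|\hat{n}\cdot w|<\sqrt{\gr h}$, so the two nonzero eigenvalues have opposite signs and are bounded away from $0$ uniformly (using also $h\leq M_0$). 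I would therefore introduce the acoustic unknowns $\check{r}_\pm=\sqrt{\gr}\,\check{\zeta}\pm\sqrt{h}\,(\hat{n}\cdot\check{v})$, noting $\check{\zeta}=(\check{r}_++\check{r}_-)/(2\sqrt{\gr})$, so that it is enough to bound $\int_0^t\big(|\check{r}_+(t')|_{L^2(\ul{\itGamma})}^2+|\check{r}_-(t')|_{L^2(\ul{\itGamma})}^2\big)\,\mathrm{d}t'$. From \eqref{LSWEe2} each $\check{r}_\pm$ satisfies a transport‑type equation whose normal characteristic speed along $\ul{\itGamma}$ is $\hat{n}\cdot w\pm\sqrt{\gr h}$ and whose remainder couples to $(\nabla^\varphi)^\perp\cdot\check{v}=f_3$; this is why $f_3$, which did not appear in Proposition \ref{prop:BEE}, enters here.

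First I would derive the ``outgoing'' estimate. Multiplying the equation for $\check{r}_-$ by $J\omega\,\check{r}_-$ with a suitable positive weight and integrating over $(0,t)\times\ul{\cE}$ produces the boundary integral $\int_0^t\int_{\ul{\itGamma}}(\sqrt{\gr h}-\hat{n}\cdot w)|N^\varphi|\,\check{r}_-^2$, which carries a favorable sign since $\sqrt{\gr h}-\hat{n}\cdot w\geq\sqrt{\gr h}-|w|\geq c_0/(2\sqrt{\gr M_0})>0$; the remaining terms are bulk quantities quadratic in $\check{u}$ (absorbed by $\sup_{t'}\|\check{u}\|_{L^2(\ul{\cE})}^2$ after a Gronwall argument) and source terms in the stated norms. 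This gives the free trace bound $\int_0^t|\check{r}_-(t')|_{L^2(\ul{\itGamma})}^2\,\mathrm{d}t'\lesssim C_0\{\cdots\}$. For the incoming mode $\check{r}_+$ the analogous boundary term has the wrong sign — the ``bad sign'' of the introduction — so instead I would use the weakly dissipative identity \eqref{weakdissip0} together with the boundary conditions \eqref{LBC4}: in the $(\hat{n},\hat{n}^\perp)$ frame the first condition writes $|N^\varphi|(h\,\hat{n}\cdot\check{v}+(\hat{n}\cdot w)\check{\zeta})=|N^\varphi|h_{\rm i}(\hat{n}\cdot\check{v}_{\rm i})+N^\varphi\cdot f_{{\rm i},4}$ and the second writes $h(\hat{n}^\perp\cdot\check{v})=h_{\rm i}(\hat{n}^\perp\cdot\check{v}_{\rm i})$, with all coefficients uniformly bounded and bounded away from $0$ by subcriticality and Assumption \ref{ass:LP}. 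These let one re‑express, on $\ul{\itGamma}$, both $\check{r}_+$ and the indefinite boundary density $(\gr\check{\zeta}+w\cdot\check{v})N^\varphi\cdot(h\check{v}+w\check{\zeta})$ in terms of $\check{r}_-$, of the interior traces $\hat{n}\cdot\check{v}_{\rm i}$ and $\hat{n}^\perp\cdot\check{v}_{\rm i}$, and of the data — reducing the whole matter to controlling $\int_0^t\big(|\hat{n}\cdot\check{v}_{\rm i}|_{L^2(\ul{\itGamma})}^2+|\hat{n}^\perp\cdot\check{v}_{\rm i}|_{L^2(\ul{\itGamma})}^2\big)$.

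For these interior traces I would use a Rellich identity. Writing $\check{v}_{\rm i}=\nabla^\varphi\check{\phi}_{\rm i}+f_{{\rm i},3}$ with $\check{\phi}_{\rm i}=\check{\psi}_{\rm i}$ on $\ul{\itGamma}$ solving $\nabla^\varphi\cdot(h_{\rm i}\nabla^\varphi\check{\phi}_{\rm i})=g$, where $g$ is built from $f_{{\rm i},1}$ and $\bm{\partial}$‑derivatives of $f_{{\rm i},2},f_{{\rm i},3}$, and passing to the fixed coordinates gives $\nabla\cdot(A_{\rm i}\nabla\check{\phi}_{\rm i})=\widetilde{g}$ with $A_{\rm i}$ symmetric positive definite. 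Multiplying by $\theta\cdot\nabla\check{\phi}_{\rm i}$, where $\theta$ is a fixed smooth extension of $\ul{N}$ off $\ul{\itGamma}$, and integrating by parts over $\ul{\cI}$ (whose only boundary is $\ul{\itGamma}$) produces the Rellich–Pohozaev identity
\[
\int_{\ul{\itGamma}}\!\Bigl[(A_{\rm i}\ul{N}\!\cdot\!\ul{N})(\partial_\nu\check{\phi}_{\rm i})^2-(A_{\rm i}\ul{N}^\perp\!\cdot\!\ul{N}^\perp)(\partial_\tau\check{\phi}_{\rm i})^2\Bigr]
 = 2\!\int_{\ul{\cI}}\bigl(\text{quadratic form in }\nabla\check{\phi}_{\rm i}\bigr)+2\!\int_{\ul{\cI}}\widetilde{g}\,(\theta\!\cdot\!\nabla\check{\phi}_{\rm i}),
\]
whose right‑hand side is $\lesssim\|\check{v}_{\rm i}\|_{L^2(\ul{\cI})}^2+\|(f_{{\rm i},1},\bm{\partial}f_{{\rm i},2},\bm{\partial}f_{{\rm i},3})\|_{L^2(\ul{\cI})}^2$ — all controlled. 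Since $\partial_\nu\check{\phi}_{\rm i}$ governs the normal trace $\hat{n}\cdot\check{v}_{\rm i}$ (up to the metric and $f_{{\rm i},3}$) and $\partial_\tau\check{\phi}_{\rm i}=\partial_\tau\check{\psi}_{\rm i}$ governs the tangential one, this identity trades one interior trace against the other with a controlled remainder. Finally one superimposes all the boundary identities obtained so far — the weakly dissipative identity, the outgoing‑mode estimate for $\check{r}_-$, the two relations in \eqref{LBC4}, and the Rellich relation — into a single closed inequality for $\int_0^t(|\check{\zeta}|^2+|\hat{n}\cdot\check{v}_{\rm i}|^2+|\hat{n}^\perp\cdot\check{v}_{\rm i}|^2)_{L^2(\ul{\itGamma})}$; the subcriticality $\gr h-|w|^2\geq c_0$ is exactly what makes every coefficient multiplying a retained square uniformly positive, so that the remaining indefinite and bad‑sign contributions are strictly dominated and can be absorbed, yielding the bound of the proposition with the announced $(1+t+M_1)$ factor coming from the Gronwall/absorption bookkeeping.

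The step I expect to be the main obstacle is precisely this last superposition: individually none of the ingredients gives a bound — weak dissipativity controls only the indefinite quadratic density, the characteristic energy controls only the outgoing mode $\check{r}_-$, the Rellich identity is a mere relation between the normal and tangential interior traces (not a bound on either), and elliptic regularity alone places $\check{\psi}_{\rm i}$ only in $H^{1/2}(\ul{\itGamma})$, so $\partial_\tau\check{\psi}_{\rm i}\notin L^2$ a priori. Getting the constants in the combined inequality to line up — i.e.\ making the ``bad'' terms genuinely absorbable — is delicate and is where the quantitative use of subcriticality (and of the geometry of $A_{\rm i}$) is essential. A second, pervasive difficulty is regularity bookkeeping: the normal–tangential frame, the operator $\nabla^\varphi$, the Rellich multiplier, and the coefficient matrix $A_{\rm i}$ all involve $\partial\varphi$, whose regularity is only that of Assumption \ref{ass:LP}(iii)–(iv); the whole argument must be organized — in particular by working throughout with traces of the elliptic unknown $\check{\phi}_{\rm i}$ rather than with $\check{v}_{\rm i}$ directly — so that nothing beyond $\bm{\partial}\partial\varphi\in L^1(0,T;L^\infty)$ and the $\bm{\partial}$‑derivatives of the interior data (in the stated $L^2$‑in‑time norm) is ever used.
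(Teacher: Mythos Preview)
Your proposal identifies the right ingredients---characteristic fields, the standard energy identity, the boundary conditions, and a Rellich identity---but the way you organize them contains a genuine gap, and the paper's resolution is different in an essential way.

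\textbf{The gap in the ``outgoing mode'' step.} You claim that multiplying the transport equation for $\check{r}_-=\sqrt{\gr}\,\check{\zeta}-\sqrt{h}\,(\hat n\cdot\check{v})$ by $J\omega\check{r}_-$ and integrating yields a favorable boundary term and that ``the remainder couples to $(\nabla^\varphi)^\perp\cdot\check{v}=f_3$''. But the coupling is not to the curl: the equation for $\check{r}_-$ contains the \emph{tangential divergence} term $\hat n^\perp\cdot\nabla^\varphi(\hat n^\perp\cdot\check{v})$ (in the toy model, $\partial_2\check{v}_2$), which cannot be rewritten as $f_3$ and, after integration by parts, leaves a bulk term of the type $\int(\partial_2\check{r}_-)\check{v}_2$ that is one derivative beyond $\|\check{u}\|_{L^2}^2$. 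So the outgoing-mode estimate, as you describe it, does not close at the $L^2$ level. Consequently the chain ``control $\check{r}_-$ $\Rightarrow$ control $\hat n\cdot\check{v}_{\rm i}$ via (\ref{LBC4}) $\Rightarrow$ control $\hat n^\perp\cdot\check{v}_{\rm i}$ via Rellich'' never gets started, and even if it did, the second step of (\ref{LBC4}) would reintroduce the uncontrolled tangential trace $\hat n^\perp\cdot\check{v}$.

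\textbf{What the paper does instead.} The key is to pair one acoustic mode not with the other acoustic mode but with the \emph{null} mode $\check{\beta}=\chi_{\rm b}(\hat n)^\perp\cdot(h\check{v})$ associated with the zero eigenvalue of the boundary matrix (the paper explicitly remarks that using both $\check{r}_\pm$ gives nothing more than using one). Writing $\check{\alpha}=\chi_{\rm b}(\sqrt{\gr h}\,\check{\zeta}+\hat n\cdot(h\check{v}+f_4))$, the combined energy $\widetilde{\mathscr E}_{\rm e}=\tfrac12\int_{\ul{\cE}}aJ(\check{\alpha}^2+\check{\beta}^2)$ has the crucial property that the cross-coupling terms assemble into a pure divergence: the resulting boundary flux is exactly $\tfrac12 a|N^\varphi|(\lambda_+\check{\alpha}^2-\lambda_-\check{\beta}^2)$ with $\lambda_\pm=\sqrt{\gr h}\pm\hat n\cdot w>0$. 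This flux does \emph{not} have a sign, but subtracting the standard energy boundary flux $(\gr\check{\zeta}+w\cdot\check{v})N^\varphi\cdot(h\check{v}+w\check{\zeta}+f_4)$, and choosing the weight $a=1/h$ precisely so that the cross term $\check{\zeta}\,\hat n\cdot(h\check{v}+w\check{\zeta}+f_4)$ cancels, yields the identity
\[
\int_{\ul{\itGamma}}\frac{\lambda_+\lambda_-^2|N^\varphi|}{2h}\check{\zeta}^2
=\frac{{\rm d}}{{\rm d}t}(\widetilde{\mathscr E}_{\rm e}-\mathscr E_{\rm e})+\int_{\ul{\cE}}(\cdots)
+\int_{\ul{\itGamma}}\frac{\lambda_-|N^\varphi|}{2h}\Bigl[(\hat n^\perp\cdot(h_{\rm i}\check{v}_{\rm i}))^2-(\hat n\cdot(h_{\rm i}\check{v}_{\rm i}))^2\Bigr]+I_2+I_3,
\]
where all bulk terms are controlled and the remaining boundary integrals involve only the \emph{difference} $(\hat n^\perp\cdot\check{v}_{\rm i})^2-(\hat n\cdot\check{v}_{\rm i})^2$ (plus $w$-weighted variants), which is \emph{exactly} the combination the Rellich identity converts into a $\ul{\cI}$-volume integral. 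There is no absorption; it is an exact algebraic design, and subcriticality enters only to ensure $\lambda_\pm>0$ so that the coefficient of $\check{\zeta}^2$ is positive.

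In short: replace your pair $(\check{r}_+,\check{r}_-)$ by $(\check{\alpha},\check{\beta})$, do not try to estimate either characteristic energy separately, and look for the specific weight $a=1/h$ that makes the difference of the two boundary fluxes collapse to $\check{\zeta}^2$ plus a Rellich-controllable remainder.
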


\begin{remark}
In the proof of this proposition, we do not use the evolution equation \eqref{LBC4bis} for $\check{\psi}_{\rm i}$, 
which was used to derive the basic energy estimate in Proposition \ref{prop:BEE}. 
\end{remark}

By this proposition together with the energy estimate in Proposition \ref{prop:BEE}, we can control the boundary integral 
$|\check{\zeta}|_{L^2((0,T)\times\ul{\itGamma})}$. 
Such an additional boundary regularity estimate is crucial to control the unknown curve $\itGamma(t)$ in the nonlinear problem. 
In the rest of this section, 
we will prove this Proposition \ref{prop:ABR} by using characteristic fields related to eigenvalues of the boundary matrix for the exterior problem \eqref{LSWEe2} 
together with Rellich type identities for solutions to the interior problem \eqref{LSWEi2}. 
For the sake of clarity, the main ideas of the proof are first explained on a simple toy model.

%-----------------------------------------------------------
\subsection{The trace estimate for a toy model}
We prove here the new trace estimate of Proposition \ref{prop:ABR} on a much simpler model, in order to better expose the main ideas. 
The toy model consists in taking $w=0$, $h=h_{\rm i}=h_0$, $f_j=0$ ($1\leq j\leq 3$), $f_{\rm i,2}=f_{\rm i,4}=0$, 
and $\varphi={\rm Id}$ in \eqref{LSWEe2}--\eqref{LBC4}; 
moreover, we consider a simpler geometry in which $\ul{\mathcal E}=\R^2_+=\R_+\times \R$ and $\ul{\mathcal I}=\R^2_-=\R_-\times \R$, 
so that $\ul{\itGamma}=\{x_1=0\}$. 
The toy model we consider in this section is therefore 
\begin{equation}\label{toy1}
\begin{cases}
 \dt \check{\zeta}+h_0\nabla\cdot \check{v}=0 &\mbox{in}\quad (0,T)\times \R^2_+, \\
 \dt \check{v}+\gr \nabla \check{\zeta}=0 &\mbox{in}\quad (0,T)\times \R^2_+, \\
 \nabla^\perp\cdot\check{v}=0 &\mbox{in}\quad (0,T)\times \R^2_+, 
\end{cases}
\end{equation}
under the boundary condition 
\begin{equation}\label{toy2}
\check{v}=\check{v_{\rm i}} \quad\mbox{ on }\quad (0,T)\times \{x_1=0\},
\end{equation}
where $\check{v}_{\rm i}$ solves 
\begin{equation}\label{toy3}
\begin{cases}
 \nabla\cdot \check{v}_{\rm i}=g_{\rm i,1} &\mbox{in}\quad (0,T)\times \R^2_-, \\
 \nabla^\perp\cdot \check{v}_{\rm i}=g_{\rm i,2}  &\mbox{in}\quad (0,T)\times \R^2_-.
\end{cases}
\end{equation}
Here, $g_{\rm i,1} $ and $g_{\rm i,2} $ are given functions. 
The equivalent of Proposition \ref{prop:ABR} for this toy model is stated in the proposition below.

\begin{proposition}\label{prop:ABRtoy}
Any regular solution $(\check{\zeta},\check{v}, \check{v}_{\rm i})$ to \eqref{toy1}--\eqref{toy3} satisfies 
\begin{align*}
\int_0^t|\check{\zeta}(t')|_{L^2(\{x_1=0\})}^2\mathrm{d}t'
&\leq C \bigl\{  (1+t)\sup_{0\leq t'\leq t}\bigl( \|(\check{\zeta},\check{v})(t')\|_{L^2({\R^2_+})}^2
  + \|\check{v}_{\rm i}(t')\|_{L^2(\R^2_-)}^2 \bigr) \\
&\quad\;
 + \left( \int_0^t
   \|(g_{{\rm i},1},g_{{\rm i},2})(t')\|_{L^2(\R^2_-)} {\rm d}t' \right)^2 \bigr\}
\end{align*}
for any $t\in[0,T]$. 
\end{proposition}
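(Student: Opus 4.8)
The plan is to reduce the exterior system to a scalar wave equation, apply a Rellich (Morawetz type) multiplier to it, derive a companion Rellich identity for the elliptic interior system, and then combine the two resulting boundary contributions so that everything cancels except the desired trace $\int_0^t|\check{\zeta}|_{L^2(\{x_1=0\})}^2\,{\rm d}t'$.

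First I would introduce a velocity potential in the exterior. Since $\nabla^\perp\cdot\check{v}=0$ in the simply connected half-plane $\R^2_+$, we may write $\check{v}=\nabla\check{\phi}$; the second equation of \eqref{toy1} gives $\nabla(\dt\check{\phi}+\gr\check{\zeta})=0$, so after normalizing $\check{\phi}$ by an appropriate function of $t$ we may assume $\dt\check{\phi}=-\gr\check{\zeta}$ in $(0,T)\times\R^2_+$, and then the first equation of \eqref{toy1} yields $\dt^2\check{\phi}=\mathfrak{c}^2\Delta\check{\phi}$ with $\mathfrak{c}=\sqrt{\gr h_0}$. On $\{x_1=0\}$ one then has $\dt\check{\phi}=-\gr\check{\zeta}$, while $\partial_1\check{\phi}=\check{v}_1=\check{v}_{{\rm i},1}$ and $\partial_2\check{\phi}=\check{v}_2=\check{v}_{{\rm i},2}$ by the matching condition \eqref{toy2}. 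Next I would multiply $\dt^2\check{\phi}-\mathfrak{c}^2\Delta\check{\phi}=0$ by $\chi(x_1)\partial_1\check{\phi}$, where $\chi$ is smooth, nonnegative, $\chi(0)=1$, $\chi'\in L^\infty(\R_+)$ and $\chi(x_1)\to0$ as $x_1\to+\infty$ (e.g. $\chi(x_1)=e^{-x_1}$), and integrate over $(0,t)\times\R^2_+$. Integrating by parts in $t$ and in $x_1$, and discarding the $x_2$-divergence terms (which vanish by decay for a regular solution), this gives
\[ \tfrac12\int_0^t\!\int_{\{x_1=0\}}\bigl((\dt\check{\phi})^2+\mathfrak{c}^2(\partial_1\check{\phi})^2-\mathfrak{c}^2(\partial_2\check{\phi})^2\bigr)\,{\rm d}t' = -\Bigl[\int_{\R^2_+}\chi\,\dt\check{\phi}\,\partial_1\check{\phi}\Bigr]_0^t - \tfrac12\int_0^t\!\int_{\R^2_+}\chi'\bigl((\dt\check{\phi})^2+\mathfrak{c}^2(\partial_1\check{\phi})^2-\mathfrak{c}^2(\partial_2\check{\phi})^2\bigr)\,{\rm d}t'. \]
The right-hand side is controlled by the energy: the time-slice term is $\lesssim\sup_{t'}\|(\check{\zeta},\check{v})(t')\|_{L^2(\R^2_+)}^2$ by Cauchy--Schwarz, and --- the key observation --- since $|(\dt\check{\phi})^2+\mathfrak{c}^2(\partial_1\check{\phi})^2-\mathfrak{c}^2(\partial_2\check{\phi})^2|\le(\dt\check{\phi})^2+\mathfrak{c}^2|\nabla\check{\phi}|^2=\gr^2|\check{\zeta}|^2+\mathfrak{c}^2|\check{v}|^2$, the bulk term is $\lesssim t\,\sup_{t'}\|(\check{\zeta},\check{v})(t')\|_{L^2(\R^2_+)}^2$. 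Using the boundary relations above, this yields
\[ \int_0^t\!\int_{\{x_1=0\}}\bigl(\gr^2|\check{\zeta}|^2+\mathfrak{c}^2|\check{v}_{{\rm i},1}|^2-\mathfrak{c}^2|\check{v}_{{\rm i},2}|^2\bigr)\,{\rm d}t' \le C(1+t)\sup_{0\le t'\le t}\|(\check{\zeta},\check{v})(t')\|_{L^2(\R^2_+)}^2. \]

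In parallel, in the interior I would use \eqref{toy3} to get the pointwise Rellich identity $\partial_1(|\check{v}_{{\rm i},1}|^2-|\check{v}_{{\rm i},2}|^2)=2(\check{v}_{{\rm i},1}g_{{\rm i},1}-\check{v}_{{\rm i},2}g_{{\rm i},2})-2\partial_2(\check{v}_{{\rm i},1}\check{v}_{{\rm i},2})$ in $\R^2_-$; integrating over $(0,t)\times\R^2_-$ (boundary $\{x_1=0\}$, the $x_2$-divergence vanishing by decay) and using Cauchy--Schwarz and Young's inequality gives
\[ \int_0^t\!\int_{\{x_1=0\}}\bigl(|\check{v}_{{\rm i},1}|^2-|\check{v}_{{\rm i},2}|^2\bigr)\,{\rm d}t' = 2\int_0^t\!\int_{\R^2_-}\bigl(\check{v}_{{\rm i},1}g_{{\rm i},1}-\check{v}_{{\rm i},2}g_{{\rm i},2}\bigr)\,{\rm d}t' \le \sup_{0\le t'\le t}\|\check{v}_{\rm i}(t')\|_{L^2(\R^2_-)}^2 + \Bigl(\int_0^t\|(g_{{\rm i},1},g_{{\rm i},2})(t')\|_{L^2(\R^2_-)}\,{\rm d}t'\Bigr)^2. \]
Subtracting $\mathfrak{c}^2$ times this identity from the exterior estimate eliminates the indefinite boundary combination $|\check{v}_{{\rm i},1}|^2-|\check{v}_{{\rm i},2}|^2$ and leaves $\gr^2\int_0^t\int_{\{x_1=0\}}|\check{\zeta}|^2$ bounded by the right-hand side of Proposition~\ref{prop:ABRtoy}, which is the desired estimate.

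The hard part is getting from the Rellich multiplier a boundary contribution that is actually useful: the multiplier $\chi\partial_1\check{\phi}$ does not produce $|\check{\zeta}|^2$ alone on $\{x_1=0\}$ but the combination $\gr^2|\check{\zeta}|^2+\mathfrak{c}^2(|\check{v}_1|^2-|\check{v}_2|^2)$, and the last two terms --- traces of the exterior solution not controlled by the energy --- must be absorbed. What makes the argument close is the (at first surprising) fact that the elliptic Rellich identity for the interior div--curl system produces exactly the same indefinite combination $|\check{v}_{{\rm i},1}|^2-|\check{v}_{{\rm i},2}|^2$, equal to $|\check{v}_1|^2-|\check{v}_2|^2$ on the boundary by the matching condition \eqref{toy2}, so the two dangerous contributions cancel and only the sign-definite term $|\check{\zeta}|^2$ survives. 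A secondary technical point, already visible above, is that the bulk term in the Rellich identity is sign-indefinite but is dominated by the energy-controlled density $(\dt\check{\phi})^2+\mathfrak{c}^2|\nabla\check{\phi}|^2$, which is where the factor $(1+t)$ enters. For the full Proposition~\ref{prop:ABR} the same scheme applies, with the constant-coefficient wave operator replaced by the variable-coefficient operator built from $\varphi$ and $\Sigma(u)$, the multiplier adapted accordingly, and the coefficient variations and commutators generating the additional $M_1$- and source-dependent terms.
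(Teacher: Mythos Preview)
Your proof is correct and the underlying mechanism---exterior identity producing $\check\zeta^2$ plus the indefinite combination $\check v_1^2-\check v_2^2$ on the boundary, interior Rellich identity producing exactly that same indefinite combination, then cancellation---is the same as the paper's. The interior Step (your Rellich identity for the div--curl system) is literally identical to the paper's Step~3.

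Where you differ is in the exterior part. The paper stays at the first-order level: it introduces the characteristic fields $\check\alpha=\sqrt{\gr h_0}\,\check\zeta+h_0\check v_1$ and $\check\beta=h_0\check v_2$ of the boundary matrix, writes down the conservation law for $\tfrac12(\check\alpha^2+\check\beta^2)$, and subtracts the standard energy identity to isolate the boundary term $\tfrac12(\gr h_0)^{3/2}\check\zeta^2+\tfrac12\sqrt{\gr h_0}\,h_0^2(\check v_2^2-\check v_1^2)$. You instead pass to a velocity potential, reduce to the scalar wave equation, and use a Morawetz multiplier $\chi(x_1)\partial_1\check\phi$. These are two faces of the same identity; your Morawetz flux $(\dt\check\phi)^2+\mathfrak c^2(\partial_1\check\phi)^2-\mathfrak c^2(\partial_2\check\phi)^2$ is, after substituting $\dt\check\phi=-\gr\check\zeta$ and $\nabla\check\phi=\check v$, a constant multiple of the paper's boundary integrand.

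Two remarks on what each buys. First, your cutoff $\chi$ is not actually needed for the toy model: without it the bulk $\chi'$ term disappears and the only time-slice contribution is $[-\gr\int\check\zeta\check v_1]_0^t$, bounded directly by the energy without a factor of $t$ (the $(1+t)$ in the statement is there for consistency with the full Proposition~\ref{prop:ABR}, not because the toy case requires it). Second, and more importantly for the full problem, the paper's first-order characteristic-field route generalizes more transparently: with the nontrivial background $(h,w)$, the diffeomorphism $\varphi$, and the curved boundary $\ul\itGamma$, working directly with the characteristic fields of the boundary matrix avoids having to introduce a potential in the exterior (which in the full paper is possible but awkward) and keeps all terms at the level of $(\check\zeta,\check v)$. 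Your closing remark that ``the same scheme applies'' to the full proposition is correct in spirit, but the characteristic-field formulation is the more robust vehicle for that generalization.
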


\begin{proof}
{\bf Step 1.} Characteristic fields. 
The boundary matrix for the hyperbolic system in \eqref{toy1} is given by 
$\begin{pmatrix} 0 & h_0\bf{e}_1^{\rm T} \\ \gr\bf{e}_1 & O\end{pmatrix}$, whose eigenvalues are $0, \pm\sqrt{\gr h_0}$. 
Denoting $\check{v}=(\check{v}_1,\check{v}_2)^{\rm T}$, we put $\check{\alpha}=\sqrt{\gr h_0}\check{\zeta}+h_0\check{v}_1$ and $\check{\beta} = h_0\check{v}_2$, 
which are the characteristic fields of the boundary matrix related to the eigenvalues $\sqrt{\gr h_0}$ and $0$, respectively. 
Then, we see readily that 
\[
\begin{cases}
 \dt \check{\alpha}+\sqrt{\gr h_0}\partial_1 \check{\alpha}+\sqrt{\gr h_0}\partial_2 \check{v}_2=0, \\
 \dt \check{\beta}+\gr h_0\partial_2 \zeta=0,
\end{cases}
\]
so that 
$\tfrac12\dt(\check{\alpha}^2+\check{\beta}^2)+\tfrac12\sqrt{\gr h_0}\partial_1(\check{\alpha}^2-\check{\beta}^2)+\gr h_0^2\partial_2(\check{\zeta}\check{v}_2)=0$. 
Therefore, introducing $\widetilde{\mathscr E}_{\rm e}(t)=\frac{1}{2}\int_{\R^2_+} (\check{\alpha}^2+\check{\beta}^2)$, we deduce that 
\[
\frac{\rm d}{{\rm d}t}\widetilde{\mathscr E}_{\rm e}(t)-\int_{\{x_1=0\}}\tfrac12\sqrt{\gr h_0}(\check{\alpha}^2-\check{\beta}^2)=0.
\]

\noindent
{\bf Step 2.} Boundary integrals. 
Denoting ${\mathscr E}_{\rm e}(t)=\frac{1}{2}\int_{\R_+^2} (\gr \check{\zeta}^2 +h_0 |\check{v}|^2)$, 
the standard energy estimate for the hyperbolic system in \eqref{toy1} yields 
\[
\frac{\rm d}{{\rm d}t} {\mathscr E}_{\rm e}(t) -  \int_{\{x_1=0\}}{\gr h_0}\check{\zeta}\check{v}_1 = 0.
\]
Remarking further that 
\[
\tfrac{1}{2}\sqrt{\gr h_0}(\check{\alpha}^2-\check{\beta}^2)
 = \tfrac{1}{2}(\gr h_0)^{3/2} \check{\zeta}^2 + \tfrac{1}{2}\sqrt{\gr h_0}\big( (h_0 \check{v}_1)^2-(h_0 \check{v}_2)^2\big)+\gr h_0 \check{\zeta}\check{v}_1,
\]
we have 
\[
\tfrac{1}{2}(\gr h_0)^{3/2} \int_{\{x_1=0\}} \check{\zeta}^2=\frac{\rm d}{{\rm d}t}\bigl( \widetilde{\mathscr E}_{\rm e}(t) - {\mathscr E}_{\rm e}(t) \bigr)
+\tfrac{1}{2}\sqrt{\gr h_0}h_0^2  \int_{\{x_1=0\}} \bigl( \check{v}_2^2-\check{v}_1^2 \bigr).
\]

\noindent
{\bf Step 3.} Rellich type identity. 
Using the boundary condition \eqref{toy2} and Green's identity, we can rewrite the last boundary integral as 
\begin{align*}
\int_{\{x_1=0\}}\big(  \check{v}_2^2- \check{v}_1^2\big)
&= \int_{\{x_1=0\}}\big(  \check{v}_{\rm i,2}^2- \check{v}_{\rm i,1}^2\big)\\
&= \int_{\R^2_-}\nabla\cdot \big[\big(  \check{v}_{\rm i,2}^2- \check{v}_{\rm i,1}^2\big) {\bf e}_1- 2 \check{v}_{\rm i,1}\check{v}_{\rm i,2}{\bf e}_2\big].
\end{align*}
Here, one see easily that 
$\frac{1}{2}\nabla\cdot \big[\big( \check{v}_{\rm i,2}^2-\check{v}_{\rm i,1}^2\big) {\bf e}_1- 2 \check{v}_{\rm i,1}\check{v}_{\rm i,2}{\bf e}_2\big]
=-(\nabla\cdot \check{v}_{\rm i})\check{v}_{\rm i,1}+(\nabla^\perp \cdot \check{v}_{\rm i})\check{v}_{\rm i,2}$. 
Therefore, by \eqref{toy3} we get 
\[
\tfrac{1}{2}\sqrt{\gr h_0}h_0^2  \int_{\{x_1=0\}} \big( \check{v}_2^2-\check{v}_1^2\big)=\int_{\R^2_-}G_{\rm i}
\]
with $G_{\rm i}=\sqrt{\gr h_0}h_0^2  (-g_{\rm i,1}\check{v}_{\rm i,1}+g_{\rm i,2}\check{v}_{\rm i,2}\big)$.

\noindent
{\bf Step 4} Conclusion. 
Integrating the identity obtained in Step 2 and using the Rellich type identity obtained in Step 3, we get 
\[
\tfrac{1}{2}(\gr h_0)^{3/2}\int_0^t \vert \check{\zeta}(t')\vert^2_{L^2(\{x_1=0\})}{\rm d}t'
\leq \widetilde{\mathscr E}_{\rm e}(t) + {\mathscr E}_{\rm e}(0)+\int_0^t \Vert G_{\rm i}(t')\Vert_{L^1(\R^2_-)}{\rm d}t',
\]
from which the estimate of the proposition follows directly. 
\end{proof}

%-----------------------------------------------------------
\subsection{Characteristic fields}
This section corresponds to Step 1 of the proof of Proposition \ref{prop:ABRtoy} for the original system \eqref{LSWEe2}. 
The difference is that one has to handle the non trivial background $(h,v)$ and the lower order terms, to track the dependance on the diffeomorphism $\varphi$, 
and to take into account the influence of the curvature of the boundary $\ul{\itGamma}$. 
By \eqref{LSWEe2}, we have 
\begin{equation}\label{LSWEe5}
\begin{cases}
 \dt\check{\zeta}+w\cdot\nabla^\varphi\check{\zeta}+h\nabla^\varphi\cdot\check{v} = f_1-\nabla^\varphi h\cdot\check{v}-(\nabla^\varphi\cdot w)\check{\zeta}
  &\mbox{in}\quad (0,T)\times\ul{\cE}, \\
 \dt\check{v}+(w\cdot\nabla^\varphi)\check{v}+\gr\nabla^\varphi\check{\zeta} = f_2 + f_3w^\perp-(\partial^\varphi w)^\mathrm{T}\check{v}
  &\mbox{in}\quad (0,T)\times\ul{\cE}. \\
\end{cases}
\end{equation}
Here, we used the last equation in \eqref{LSWEe2} to derive the second equation. 
The corresponding boundary matrix is given by $J^{-1}|N^\varphi| M$, where 
\[
M = 
\begin{pmatrix}
 n^\varphi\cdot w & h(n^\varphi)^\mathrm{T} \\
 \gr n^\varphi & (n^\varphi\cdot w)\mathrm{Id}_{2\times2}
\end{pmatrix}
\]
with $n^\varphi = \frac{N^\varphi}{|N^\varphi|}$. 
The eigenvalues of this matrix $M$ are $\pm\lambda_{\pm}=n^\varphi\cdot w \pm \sqrt{\gr h}$ and $\lambda_0=n^\varphi\cdot w$, and related characteristic fields are 
$\sqrt{\gr h}\check{\zeta}\pm h(n^\varphi\cdot\check{v})$ and $h(n^\varphi)^\perp\cdot\check{v}$, respectively. 
We note that under Assumption \ref{ass:LP} (i) it holds that $\lambda_+\geq\lambda_-\gtrsim1$. 
It is natural to expect that these characteristic fields would be useful, so that we introduce the following quantities: 
we first extend the unit outward normal vector $\ul{N}$ to a vector field in $C_0^\infty(\R^2)$ and take a cut-off function $\chi_{\rm b}\in C_0^\infty(\R^2)$ 
such that $\chi_{\rm b}=1$ near the boundary $\ul{\itGamma}$ and $|\ul{N}|\geq\frac12$ on $\operatorname{supp}\chi_{\rm b}$. 
Using this extended vector field $\ul{N}$, 
we can extend naturally $N^\varphi=J((\partial\varphi)^{-1})^\mathrm{T}\ul{N}$ to a vector field in the whole space $\R^2$; see \eqref{Nphi}. 
We note that under Assumption \ref{ass:LP} (iii) it holds that $|N^\varphi| \gtrsim 1$ on $\operatorname{supp}\chi_{\rm b}$ so that 
$\chi_{\rm b}n^\varphi$ is defined as a vector field in the whole space $\R^2$. 
As in the previous section, we use again the function $f_4$ defined in $(0,T)\times\ul{\cE}$ satisfying \eqref{Ext1}. 
Then, we introduce new quantities $\check{\alpha}$ and $\check{\beta}$ by 
\begin{equation}\label{defCF}
\begin{cases}
 \check{\alpha}=\chi_{\rm b}( \sqrt{\gr h}\check{\zeta} + n^\varphi\cdot(h\check{v}+f_4) ), \\
 \check{\beta}=\chi_{\rm b}(n^\varphi)^\perp\cdot(h\check{v}),
\end{cases}
\end{equation}
where we compensated the characteristic field $\check{\alpha}$ by adding the term $n^\varphi\cdot f_4$ in order to make the following calculations simpler.

\begin{remark}
In the above definition of $\check{\alpha}$, we used a characteristic field related to the eigenvalue $\lambda_+$. 
Instead of this, we may use a characteristic field related to the eigenvalue $\lambda_-$. 
Even if we use both of them, one cannot obtain any further estimates. 
\end{remark}

It follows from \eqref{LSWEe5} that 
\[
\begin{cases}
 (\dt + w\cdot\nabla^\varphi)\check{\alpha} + \nabla^\varphi\cdot(
  \chi_\mathrm{b}(\sqrt{\gr h}h\check{v} + \gr h \check{\zeta}n^\varphi) ) = g_1, \\
 (\dt+w\cdot\nabla^\varphi)\check{\beta}
  + \nabla^\varphi\cdot( \chi_\mathrm{b}\gr h\check{\zeta}(n^\varphi)^\perp ) = g_2,
\end{cases}
\]
where 
\begin{align*}
g_1
&= ( (\dt+w\cdot\nabla^\varphi)(\chi_{\rm b}\sqrt{\gr h}) )\check{\zeta} + ( (\dt+w\cdot\nabla^\varphi)(\chi_{\rm b}h n^\varphi) )\cdot\check{v} \\
&\quad\;
 + (\dt+w\cdot\nabla^\varphi)(\chi_{\rm b}n^\varphi\cdot f_4)
 + \nabla^\varphi(\chi_\mathrm{b}\sqrt{\gr h}h)\cdot\check{v} + ( \nabla^\varphi\cdot(\chi_{\rm b}\gr h n^\varphi) )\check{\zeta} \\
&\quad\;
 + \chi_\mathrm{b}\{ \sqrt{\gr h}( f_1-\nabla^\varphi h\cdot\check{v}-(\nabla^\varphi\cdot w)\check{\zeta} )
  + h n^\varphi\cdot( f_2 + f_3w^\perp-(\partial^\varphi w)^\mathrm{T}\check{v} ) \}, \\
g_2
&= ( (\dt+w\cdot\nabla^\varphi)(\chi_{\rm b}h(n^\varphi)^\perp) )\cdot\check{v} + ( (\nabla^\varphi\cdot(\chi_\mathrm{b}\gr h(n^\varphi)^\perp) )\check{\zeta} \\
&\quad\;
 + \chi_\mathrm{b}h(n^\varphi)^\perp\cdot( f_2 + f_3w^\perp-(\partial^\varphi w)^\mathrm{T}\check{v} ).
\end{align*}
These equations imply that 
\begin{equation}\label{EqCF1}
\begin{cases}
\frac12\dt(\check{\alpha}^2) + \nabla^\varphi\cdot( \frac12\check{\alpha}^2w )
 + \check{\alpha}\nabla^\varphi\cdot( \chi_\mathrm{b}(\sqrt{\gr h}h\check{v} + \gr h\check{\zeta}n^\varphi) )
 = \check{\alpha}g_1 + \frac12(\nabla^\varphi\cdot w)\check{\alpha}^2, \\
\frac12\dt(\check{\beta}^2) + \nabla^\varphi\cdot( \frac12\check{\beta}^2w )
 + \check{\beta}\nabla^\varphi\cdot( \chi_\mathrm{b}\gr h\check{\zeta}(n^\varphi)^\perp )
= \check{\beta}g_2 + \frac12(\nabla^\varphi\cdot w)\check{\beta}^2.
\end{cases}
\end{equation}
Here, we see that 
\begin{align*}
& \check{\alpha}\nabla^\varphi\cdot( \chi_\mathrm{b}(\sqrt{\gr h}h\check{v} + \gr h\check{\zeta}n^\varphi) )
 + \check{\beta}\nabla^\varphi\cdot( \chi_\mathrm{b}\gr h(n^\varphi)^\perp\check{\zeta} ) \\
&= \check{\alpha}\nabla^\varphi\cdot( \sqrt{\gr h}( (\check{\alpha}-\chi_{\rm b}n^\varphi\cdot f_4)n^\varphi + \check{\beta}(n^\varphi)^\perp ) ) 
 + \check{\beta}\nabla^\varphi\cdot( \sqrt{\gr h}( \check{\alpha}-\chi_\mathrm{b}n^\varphi\cdot(h\check{v}+f_4) ) (n^\varphi)^\perp ) \\
&= \nabla^\varphi\cdot( \sqrt{\gr h}( \tfrac12\check{\alpha}^2n^\varphi + \check{\alpha}\check{\beta}(n^\varphi)^\perp ) )
 - \check{\beta}\chi_\mathrm{b}\sqrt{\gr h}h n^\varphi\cdot( (n^\varphi)^\perp\cdot\nabla^\varphi )\check{v} + G_1,
\end{align*}
where 
\begin{align*}
G_1 
&= \tfrac12( \nabla^\varphi\cdot(\sqrt{\gr h}n^\varphi) )\check{\alpha}^2 + ( \nabla^\varphi\cdot(\sqrt{\gr h}(n^\varphi)^\perp) )\check{\alpha}\check{\beta}
 - ( \nabla^\varphi\cdot(\chi_{\rm b}\sqrt{\gr h}h n^\varphi\otimes(n^\varphi)^\perp) )\check{\beta}\check{v} \\
&\quad\;
 - \check{\alpha}\nabla^\varphi\cdot( \sqrt{\gr h}\chi_{\rm b}(n^\varphi\cdot f_4)n^\varphi )
 - \check{\beta}\nabla^\varphi\cdot( \sqrt{\gr h}\chi_{\rm b}(n^\varphi\cdot f_4)(n^\varphi)^\perp ). 
\end{align*}
Moreover, by the identity \eqref{F2} together with the last equation in \eqref{LSWEe2} we have 
\[
- \check{\beta}\chi_\mathrm{b}\sqrt{\gr h}h n^\varphi\cdot( (n^\varphi)^\perp\cdot\nabla^\varphi )\check{v}
= -\nabla^\varphi(\tfrac12\sqrt{\gr h}\check{\beta}^2n^\varphi) + G_2,
\]
where 
\[
G_2 = \tfrac12( \nabla^\varphi\cdot(\sqrt{\gr h} n^\varphi) )\check{\beta}^2
 + \check{\beta}\{ \chi_{\rm b}\sqrt{\gr h}hf_3 + \sqrt{\gr h}( (n^\varphi\cdot\nabla^\varphi)( \chi_\mathrm{b}h(n^\varphi)^\perp) )\cdot\check{v} \}.
\]
Therefore, we obtain 
\begin{align}\label{EqCF2}
& \check{\alpha}\nabla^\varphi\cdot( \chi_\mathrm{b}(\sqrt{\gr h}h\check{v} + \gr h\check{\zeta}n^\varphi) )
 + \check{\beta}\nabla^\varphi\cdot( \chi_\mathrm{b}\gr h(n^\varphi)^\perp\check{\zeta} ) \\
&= \nabla^\varphi\cdot( \sqrt{\gr h}( \tfrac12(\check{\alpha}^2-\check{\beta}^2)n^\varphi + \check{\alpha}\check{\beta}(n^\varphi)^\perp ) ) + G_1 + G_2.
 \nonumber
\end{align}
Furthermore, we have 
\begin{equation}\label{EqCF3}
\tfrac12w(\check{\alpha}^2+\check{\beta}^2) + \sqrt{\gr h}( \tfrac12(\check{\alpha}^2-\check{\beta}^2)n^\varphi + \check{\alpha}\check{\beta}(n^\varphi)^\perp )
= \tfrac12(\lambda_+\check{\alpha}^2-\lambda_-\check{\beta}^2)n^\varphi + \check{\mu}(n^\varphi)^\perp,
\end{equation}
where 
\[
\begin{cases}
\lambda_{\pm} = \sqrt{\gr h}\pm n^\varphi\cdot w, \\
\check{\mu} = \tfrac12((n^\varphi)^\perp\cdot w)(\check{\alpha}^2+\check{\beta}^2) + \sqrt{\gr h}\check{\alpha}\check{\beta}.
\end{cases}
\]
Adding two equations in \eqref{EqCF1} and using \eqref{EqCF2} and \eqref{EqCF3}, we obtain 
\begin{equation}\label{EqCF4}
\dt(\tfrac12aJ(\check{\alpha}^2+\check{\beta}^2))
 + J\nabla^\varphi\cdot(a(\tfrac12(\lambda_+\check{\alpha}^2-\lambda_-\check{\beta}^2)n^\varphi + \check{\mu}(n^\varphi)^\perp) ) = G_3,
\end{equation}
where $a$ is an arbitrary function which will be determined later by \eqref{defa} and 
\begin{align*}
G_3
&= aJ( \check{\alpha}g_1+\check{\beta}g_2-(G_1+G_2) ) + \tfrac12( \dt(aJ)+aJ(\nabla^\varphi\cdot w) )( \check{\alpha}^2+\check{\beta}^2 ) \\
&\quad\;
 + J\nabla^\varphi a \cdot ( \tfrac12(\lambda_+\check{\alpha}^2-\lambda_-\check{\beta}^2)n^\varphi + \check{\mu}(n^\varphi)^\perp ).
\end{align*}
In view of this, we define another energy function $\widetilde{\mathscr{E}}_{\rm e}(t)$ by 
\begin{equation}\label{defEF2}
\widetilde{\mathscr{E}}_{\rm e}(t) = \frac12\int_{\ul{\cE}}aJ(\check{\alpha}^2+\check{\beta}^2),
\end{equation}
which can be controlled by the energy function $E(t)$ defined in the previous section. 
Then, by \eqref{EqCF4} we have 
\begin{equation}\label{EqCF5}
\frac{\rm d}{{\rm d}t}\widetilde{\mathscr{E}}_{\rm e}(t) - \frac12\int_{\ul{\itGamma}} a|N^\varphi|( \lambda_+\check{\alpha}^2-\lambda_-\check{\beta}^2 )
= \int_{\ul{\cE}}G_3.
\end{equation}

%-----------------------------------------------------------
\subsection{Boundary integrals}
This section corresponds to Step 2 of the proof of Proposition \ref{prop:ABRtoy}. 
Integrating the identity \eqref{EqCF5} with respect to time $t$, we see that the boundary integral 
$\int_0^t \bigl( \int_{\ul{\itGamma}} a|N^\varphi|( \lambda_+\check{\alpha}^2-\lambda_-\check{\beta}^2 ) \bigr){\rm d}t'$ can be controlled by $E(t)$. 
On the other hand, by the first identity in \eqref{BEI} the boundary integral 
$\int_0^t\bigl( \int_{\ul{\itGamma}}(\gr\check{\zeta}+w\cdot\check{v})N^\varphi\cdot(h\check{v}+w\check{\zeta}+f_4) \bigr){\rm d}t'$ 
has also been controlled by $E(t)$. 
Now, we proceed to relate these two boundary integrals to another one, which is equivalent to 
$\int_0^t \bigl( \int_{\ul{\itGamma}}\check{\zeta}^2 \bigr){\rm d}t'$.

By the definition \eqref{defCF} of the characteristic field $\check{\alpha}$ and $\check{\beta}$, on the boundary $\ul{\itGamma}$ we have 
$\check{\alpha}=\lambda_-\check{\zeta}+n^\varphi\cdot(h\check{v}+w\check{\zeta}+f_4)$, so that 
\begin{align*}
\tfrac12a|N^\varphi|( \lambda_+\check{\alpha}^2-\lambda_-\check{\beta}^2 )
&= a|N^\varphi|\{ \tfrac12\lambda_+\lambda_-^2\check{\zeta}^2 + \lambda_+\lambda_-\check{\zeta}n^\varphi\cdot(h\check{v}+w\check{\zeta}+f_4) \\
&\qquad
 + \tfrac12\lambda_+(n^\varphi\cdot(h\check{v}+w\check{\zeta}+f_4))^2 - \tfrac12\lambda_-((n^\varphi)^\perp\cdot(h\check{v}))^2 \}.
\end{align*}
Similarly, on $\ul{\itGamma}$ we also have 
\begin{align*}
& (\gr\check{\zeta}+w\cdot\check{v})N^\varphi\cdot(h\check{v}+w\check{\zeta}+f_4) \\
&= \tfrac{|N^\varphi|}{h}\{ \lambda_+\lambda_-\check{\zeta}n^\varphi\cdot(h\check{v}+w\check{\zeta}+f_4)
 + \tfrac12(\lambda_+-\lambda_-)(n^\varphi\cdot(h\check{v}+w\check{\zeta}+f_4))^2 \\
&\qquad
 + \bigl( ((n^\varphi)^\perp\cdot w)((n^\varphi)^\perp\cdot(h\check{v}) )-(n^\varphi\cdot w)(n^\varphi\cdot f_4) \bigr)
  n^\varphi\cdot(h\check{v}+w\check{\zeta}+f_4) \}.
\end{align*}
In view of these identities, we define the function $a$ used in the definition \eqref{defEF2} of the energy function $\widetilde{\mathscr{E}}_{\rm e}(t)$ by 
\begin{equation}\label{defa}
a=\frac{1}{h}
\end{equation}
in order to equalize the coefficients of the term $\check{\zeta}n^\varphi\cdot(h\check{v}+w\check{\zeta}+f_4)$ in the right-hand sides of the above two identities. 
In other words, we can cancel out this term from the two boundary integrals. 
In the following, we fix the function $a$ by \eqref{defa}. 
Then, by the above two identities together with the boundary conditions \eqref{LBC4} and \eqref{LBC5} we have 
\begin{align*}
& \tfrac12a|N^\varphi|( \lambda_+\check{\alpha}^2-\lambda_-\check{\beta}^2 )
 - (\gr\check{\zeta}+w\cdot\check{v})N^\varphi\cdot(h\check{v}+w\check{\zeta}+f_4) \\
&= \tfrac{|N^\varphi|}{h}\bigl\{ \tfrac12\lambda_+\lambda_-^2\check{\zeta}^2
 + \tfrac12\lambda_-\bigl( (n^\varphi\cdot(h_{\rm i}\check{v}_{\rm i}+f_{{\rm i},2}))^2 - ((n^\varphi)^\perp\cdot(h_{\rm i}\check{v}_{\rm i}))^2 \bigr) \\
&\qquad
 - \bigl( ((n^\varphi)^\perp\cdot w)((n^\varphi)^\perp\cdot(h_{\rm i}\check{v}_{\rm i}))
  - (n^\varphi\cdot w)(n^\varphi\cdot f_4) \bigr) ( n^\varphi\cdot(h_{\rm i}\check{v}_{\rm i}+f_{{\rm i},2}) ) \bigr\} \\
&= \tfrac{|N^\varphi|}{h}\bigl\{ \tfrac12\lambda_+\lambda_-^2\check{\zeta}^2
 + \tfrac12\lambda_-\bigl( (n^\varphi\cdot(h_{\rm i}\check{v}_{\rm i}+f_{{\rm i},2}))^2
  - ((n^\varphi)^\perp\cdot(h_{\rm i}\check{v}_{\rm i}+f_{{\rm i},2}))^2 \bigr) \\
&\qquad
 -  ((n^\varphi)^\perp\cdot w)( (n^\varphi)^\perp\cdot(h_{\rm i}\check{v}_{\rm i}+f_{{\rm i},2}) )( n^\varphi\cdot(h_{\rm i}\check{v}_{\rm i}+f_{{\rm i},2}) ) \\
&\qquad
 + \lambda_- ( (n^\varphi)^\perp\cdot f_{{\rm i},2} )( (n^\varphi)^\perp\cdot(h_{\rm i}\check{v}_{\rm i}+f_{{\rm i},2}) )
 - \tfrac12\lambda_- ( (n^\varphi)^\perp\cdot f_{{\rm i},2} )^2 \\
&\qquad
 + \bigl( w\cdot f_{{\rm i},2} - (n^\varphi\cdot w)(n^\varphi\cdot f_{{\rm i},4}) \bigr) ( n^\varphi\cdot(h_{\rm i}\check{v}_{\rm i}+f_{{\rm i},2}) ) \bigr\}
\end{align*}
on $\ul{\itGamma}$, where we used the first condition in \eqref{Ext1}. 
Therefore, we obtain 
\begin{equation}\label{EqCF6}
\int_{\ul{\itGamma}}\frac{\lambda_+\lambda_-^2|N^\varphi|}{2h}\check{\zeta}^2
= \frac{\rm d}{{\rm d}t}( \widetilde{\mathscr{E}}_{\rm e}(t)-\mathscr{E}_{\rm e}(t) ) + \int_{\ul{\cE}}(F_{{\rm e},2}-G_3) + I_1(t) + I_2(t) + I_3(t),
\end{equation}
where 
\begin{align*}
I_1(t)
&= \int_{\ul{\itGamma}}\frac{\lambda_-|N^\varphi|}{2h}
 \bigl( ((n^\varphi)^\perp\cdot(h_{\rm i}\check{v}_{\rm i}+f_{{\rm i},2}))^2 - (n^\varphi\cdot(h_{\rm i}\check{v}_{\rm i}+f_{{\rm i},2}))^2 \bigr), \\
I_2(t)
&= \int_{\ul{\itGamma}}\frac{|N^\varphi|}{h}
 ((n^\varphi)^\perp\cdot w)( (n^\varphi)^\perp\cdot(h_{\rm i}\check{v}_{\rm i}+f_{{\rm i},2}) )( n^\varphi\cdot(h_{\rm i}\check{v}_{\rm i}+f_{{\rm i},2}) ), \\
I_3(t)
&= \textcolor{blue}{-}\int_{\ul{\itGamma}}\frac{|N^\varphi|}{h}\bigl\{
 \lambda_- ( (n^\varphi)^\perp\cdot f_{{\rm i},2} )( (n^\varphi)^\perp\cdot(h_{\rm i}\check{v}_{\rm i}+f_{{\rm i},2}) )
  - \tfrac12\lambda_- ( (n^\varphi)^\perp\cdot f_{{\rm i},2} )^2 \\
&\qquad
 + \bigl( w\cdot f_{{\rm i},2} - (n^\varphi\cdot w)(n^\varphi\cdot f_{{\rm i},4}) \bigr) ( n^\varphi\cdot(h_{\rm i}\check{v}_{\rm i}+f_{{\rm i},2}) ) \bigr\}.
\end{align*}

%-----------------------------------------------------------
\subsection{Rellich type identities}
This section corresponds to Step 3  of the proof of Proposition  \ref{prop:ABRtoy}. 
We proceed to evaluate the above boundary integrals $I_1(t),I_2(t)$, and $I_3(t)$. 
In view of the second equation in \eqref{LSWEi2}, $n^\varphi\cdot \check{v}_{\rm i}$ and $(n^\varphi)^\perp\cdot \check{v}_{\rm i}$ are, roughly speaking, 
normal and tangential derivatives of $\check{\phi}_{\rm i}$, which satisfies a second order elliptic equation in the interior domain $\ul{\cI}$. 
Rellich type identities give some relations of boundary integrals for such derivatives. 
We first prepare the following identity. 
The important point here is that the derivative on $q$ in the right-hand side only appear through a dependence on $\nabla^\varphi\cdot q$ 
and $(\nabla^\varphi)^\perp\cdot q$.

\begin{lemma}\label{lem:Rellich1}
For any $\R^2$-valued functions $f$ and $q$, we have 
\begin{align*}
& \nabla^\varphi\cdot\bigl\{ \bigl( (f\cdot q)^2-(f^\perp\cdot q)^2 \bigr)f + 2(f\cdot q)(f^\perp\cdot q)f^\perp \bigr\} \\
&= (\nabla^\varphi\cdot f)\bigl( (f\cdot q)^2-(f^\perp\cdot q)^2 \bigr) + 2(\nabla^\varphi\cdot f^\perp)(f\cdot q)(f^\perp\cdot q) \\
&\quad\;
 + 2|f|^2\bigl\{ \bigl( (q\cdot\nabla^\varphi)f + (q^\perp\cdot\nabla^\varphi)f^\perp \bigr)\cdot q
  + (f\cdot q)\nabla^\varphi\cdot q - (f^\perp\cdot q)(\nabla^\varphi)^\perp\cdot q \bigr\}.
\end{align*}
\end{lemma}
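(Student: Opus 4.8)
The plan is to prove the identity by pure algebra, using only the product rule for $\nabla^\varphi\cdot$ together with the vectorial identities \eqref{F1}--\eqref{F3}; no information about $\varphi$ enters beyond the validity of those rules.

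First I would write the vector field inside the divergence on the left as $Af+Bf^\perp$ with the scalar coefficients $A=(f\cdot q)^2-(f^\perp\cdot q)^2$ and $B=2(f\cdot q)(f^\perp\cdot q)$, so that
\[
\nabla^\varphi\cdot(Af+Bf^\perp)=A(\nabla^\varphi\cdot f)+B(\nabla^\varphi\cdot f^\perp)+(\nabla^\varphi A)\cdot f+(\nabla^\varphi B)\cdot f^\perp.
\]
The first two terms are precisely the first line of the claimed right-hand side, so the whole statement reduces to identifying the remaining gradient contributions with $2|f|^2$ times the bracketed expression.

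Next I would differentiate $A$ and $B$ by the chain rule, regroup, and use the two elementary planar decompositions $|f|^2q=(f\cdot q)f+(f^\perp\cdot q)f^\perp$ and $|f|^2q^\perp=(f\cdot q)f^\perp-(f^\perp\cdot q)f$ to collapse the result to the clean intermediate identity
\[
(\nabla^\varphi A)\cdot f+(\nabla^\varphi B)\cdot f^\perp
=2|f|^2\bigl([\nabla^\varphi(f\cdot q)]\cdot q+[\nabla^\varphi(f^\perp\cdot q)]\cdot q^\perp\bigr).
\]
It then remains to check that the quantity in parentheses equals $\bigl((q\cdot\nabla^\varphi)f+(q^\perp\cdot\nabla^\varphi)f^\perp\bigr)\cdot q+(f\cdot q)\nabla^\varphi\cdot q-(f^\perp\cdot q)(\nabla^\varphi)^\perp\cdot q$. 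For this I would expand $\nabla^\varphi(f\cdot q)$ and $\nabla^\varphi(f^\perp\cdot q)$ by the first identity in \eqref{F3}, rewrite each $(\partial^\varphi a)^\mathrm{T}b$ using the second identity in \eqref{F3}, and, where needed, transfer $\nabla^\varphi$ off $q^\perp$ and onto $q$ by means of \eqref{F2}; throughout, one relies on the pointwise relations $a^\perp\cdot b=-a\cdot b^\perp$, $a^\perp\cdot b^\perp=a\cdot b$, and $(\nabla^\varphi)^\perp\cdot f^\perp=\nabla^\varphi\cdot f$. The terms carrying a derivative of $f$ then collapse to $\bigl((q\cdot\nabla^\varphi)f+(q^\perp\cdot\nabla^\varphi)f^\perp\bigr)\cdot q$ (using $q^\perp\cdot q=0$ and the cancellation of a spurious $|q|^2\nabla^\varphi\cdot f$ term after applying \eqref{F2}), while the terms carrying a derivative of $q$ collapse to $(f\cdot q)\nabla^\varphi\cdot q-(f^\perp\cdot q)(\nabla^\varphi)^\perp\cdot q$ after invoking the two-by-two identity $q\otimes f+f^\perp\otimes q^\perp=(f\cdot q)\mathrm{Id}_{2\times2}$, which one checks directly in components.

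The computation is entirely routine; the only place that requires genuine care — and where I expect to spend most of the effort — is the sign bookkeeping in the various $\perp$-identities and making sure that the $|q|^2$-type terms produced in the intermediate steps cancel, as they must since no such quantity can survive in the final formula. As stressed after the statement, the point of recording the identity at all is that every derivative of $q$ on the right-hand side enters solely through $\nabla^\varphi\cdot q$ and $(\nabla^\varphi)^\perp\cdot q$, which is exactly what the last collapse above produces.
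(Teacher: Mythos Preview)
Your proposal is correct and follows essentially the same approach as the paper: a direct algebraic expansion using the product rule together with the identities \eqref{F2}--\eqref{F3} and the planar decomposition $|f|^2q=(f\cdot q)f+(f^\perp\cdot q)f^\perp$. The only difference is organizational---you factor out $|f|^2$ early via the clean intermediate identity $(\nabla^\varphi A)\cdot f+(\nabla^\varphi B)\cdot f^\perp=2|f|^2\bigl[(\nabla^\varphi(f\cdot q))\cdot q+(\nabla^\varphi(f^\perp\cdot q))\cdot q^\perp\bigr]$ and then expand, whereas the paper expands the divergence completely first and only afterwards regroups the $f$-derivative and $q$-derivative contributions; both routes rely on the same cancellations (including the $|q|^2\nabla^\varphi\cdot f$ term you anticipated).
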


\begin{proof}
By a direct calculation, we have 
\begin{align*}
& \nabla^\varphi\cdot\bigl\{ \bigl( (f\cdot q)^2-(f^\perp\cdot q)^2 \bigr)f + 2(f\cdot q)(f^\perp\cdot q)f^\perp \bigr\} \\
&= (\nabla^\varphi\cdot f)\bigl( (f\cdot q)^2-(f^\perp\cdot q)^2 \bigr) + 2(\nabla^\varphi\cdot f^\perp)(f\cdot q)(f^\perp\cdot q) \\
&\quad\;
 + 2\bigl\{ (f\cdot q)\bigl( (f\cdot\nabla^\varphi)f + (f^\perp\cdot\nabla^\varphi)f^\perp \bigr)
  + (f^\perp\cdot q)\bigl( (f^\perp\cdot\nabla^\varphi)f - (f\cdot\nabla^\varphi)f^\perp \bigr) \bigr\}\cdot q \\
&\quad\;
 + 2(f\cdot q)\bigl( f\cdot(f\cdot\nabla^\varphi)q + f^\perp\cdot(f^\perp\cdot\nabla^\varphi)q \bigr)
 + 2(f^\perp\cdot q)\bigl( f\cdot(f^\perp\cdot\nabla^\varphi)q - f^\perp\cdot(f\cdot\nabla^\varphi)q \bigr). 
\end{align*}
Here, as for the second line in the right-hand side we see that 
\begin{align*}
& (f\cdot q)\bigl( (f\cdot\nabla^\varphi)f + (f^\perp\cdot\nabla^\varphi)f^\perp \bigr)
  + (f^\perp\cdot q)\bigl( (f^\perp\cdot\nabla^\varphi)f - (f\cdot\nabla^\varphi)f^\perp \bigr) \\
&= \bigl( (f\cdot q)(f\cdot\nabla^\varphi) + (f^\perp\cdot q)(f^\perp\cdot\nabla^\varphi) \bigr)f
 + \bigl( (f^\perp\cdot q^\perp)(f^\perp\cdot\nabla^\varphi) + (f\cdot q^\perp)(f\cdot\nabla^\varphi) \bigr)f^\perp \\
&= |f|^2 \bigl( (q\cdot\nabla^\varphi)f + (q^\perp\cdot\nabla^\varphi)f^\perp \bigr). 
\end{align*}
As for the third line, we use the identity \eqref{F2} with $g=f^\perp$ and that obtained by replacing $\nabla^\varphi$ with $(\nabla^\varphi)^\perp$ to obtain 
\[
\begin{cases}
 f\cdot(f\cdot\nabla^\varphi)q + f^\perp\cdot(f^\perp\cdot\nabla^\varphi)q = |f|^2\nabla^\varphi\cdot q, \\
 f\cdot(f^\perp\cdot\nabla^\varphi)q - f^\perp\cdot(f\cdot\nabla^\varphi)q = -|f|^2(\nabla^\varphi)^\perp\cdot q.
\end{cases}
\]
Therefore, we obtain the desired identity. 
\end{proof}

Now, we transform the boundary integrals $I_j(t)$ into integrals over the interior domain $\ul{\cI}$, which can be controlled by the energy function $E(t)$ 
by using the identity in Lemma \ref{lem:Rellich1}. 
To this end, we need to extend the functions $h$ and $w$ defined in the exterior domain $\ul{\cE}$ smoothly to those in the whole space $\R^2$. 
By using a standard smooth extension operator, we can construct functions $h_{{\rm i},*}$ and $w_{\rm i}$ defined in $(0,T)\times\ul{\cI}$ satisfying 
\begin{equation}\label{Ext2}
\begin{cases}
 h_{{\rm i},*}=h, \quad w_{\rm i}=w &\mbox{on}\quad (0,T)\times\ul{\itGamma}, \\
 \inf_{x\in\ul{\cI}}h_{{\rm i},*}(t,x) \gtrsim \inf_{x\in\ul{\cE}}h(t,x) &\mbox{for}\quad t\in[0,T], \\
 \|\dt^jh_{{\rm i},*}(t)\|_{W^{k,\infty}(\ul{\cI})} \lesssim \|\dt^jh(t)\|_{W^{k,\infty}(\ul{\cE})} &\mbox{for}\quad t\in[0,T], \ j,k=0,1, \\
 \|\dt^jw_{\rm i}(t)\|_{W^{k,\infty}(\ul{\cI})} \lesssim \|\dt^jw(t)\|_{W^{k,\infty}(\ul{\cE})} &\mbox{for}\quad t\in[0,T], \ j,k=0,1.
\end{cases}
\end{equation}
Accordingly, we can extend $\lambda_{\pm}$ smoothly by 
\[
\lambda_{{\rm i},\pm} = \chi_{\rm b}( \sqrt{\gr h_{{\rm i},*}}\pm n^\varphi\cdot w_{\rm i} ).
\]
Then, we have the following lemma.

\begin{lemma}\label{lem:Rellich2}
It holds that $I_j(t)=\int_{\ul{\cI}}JG_{{\rm i},j}$ for $j=1,2,3$, where 
\begin{align*}
G_{{\rm i},1}
&= \Bigl( \nabla^\varphi\cdot\Bigl( \chi_{\rm b}\tfrac{\lambda_{{\rm i},-}}{2h_{{\rm i},*}}n^\varphi \Bigr) \Bigr)
 ( ((n^\varphi)^\perp\cdot\check{q}_{\rm i})^2 - (n^\varphi\cdot\check{q}_{\rm i})^2 ) \\
&\quad\;
 - \Bigl( \nabla^\varphi\cdot\Bigl( \chi_{\rm b}\tfrac{\lambda_{{\rm i},-}}{h_{{\rm i},*}}(n^\varphi)^\perp \Bigr) \Bigr)
  ((n^\varphi)^\perp\cdot\check{q}_{\rm i})(n^\varphi\cdot\check{q}_{\rm i}) \\
&\quad\;
 - \chi_{\rm b}\tfrac{\lambda_{{\rm i},-}}{h_{{\rm i},*}}\bigl\{
  \bigl( (\check{q}_{\rm i}\cdot\nabla^\varphi)n^\varphi + (\check{q}_{\rm i}^\perp\cdot\nabla^\varphi)(n^\varphi)^\perp \bigr)\cdot\check{q}_{\rm i} 
 + (n^\varphi\cdot\check{q}_{\rm i})f_{{\rm i},1} \\
&\makebox[3em]{}
 - ((n^\varphi)^\perp\cdot\check{q}_{\rm i})( (\nabla^\varphi h_{\rm i})^\perp\cdot\check{v}_{\rm i} + h_{\rm i}(\nabla^\varphi)^\perp\cdot f_{{\rm i},3}
  + (\nabla^\varphi)^\perp\cdot f_{{\rm i},2} ) \bigr\}, \\
G_{{\rm i},2}
&= \Bigl( \nabla^\varphi\cdot\Bigl( \chi_{\rm b}\tfrac{(n^\varphi)^\perp\cdot w_{\rm i}}{h_{{\rm i},*}}n^\varphi \Bigr) \Bigr)
  ((n^\varphi)^\perp\cdot\check{q}_{\rm i})(n^\varphi\cdot\check{q}_{\rm i}) \\
&\quad\;
 + \Bigl( \nabla^\varphi\cdot\Bigl( \chi_{\rm b}\tfrac{(n^\varphi)^\perp\cdot w_{\rm i}}{2h_{{\rm i},*}}(n^\varphi)^\perp \Bigr) \Bigr)
  ( ((n^\varphi)^\perp\cdot\check{q}_{\rm i})^2 - (n^\varphi\cdot\check{q}_{\rm i})^2 ) \\
&\quad\;
 + \chi_{\rm b}\tfrac{(n^\varphi)^\perp\cdot w_{\rm i}}{h_{{\rm i},*}}\bigl\{
  \bigl( (\check{q}_{\rm i}\cdot\nabla^\varphi)(n^\varphi)^\perp - (\check{q}_{\rm i}^\perp\cdot\nabla^\varphi)(n^\varphi) \bigr)\cdot\check{q}_{\rm i} 
 + (n^\varphi\cdot\check{q}_{\rm i})f_{{\rm i},1} \\
&\makebox[3em]{}
 + ((n^\varphi)^\perp\cdot\check{q}_{\rm i})( (\nabla^\varphi h_{\rm i})^\perp\cdot\check{v}_{\rm i} + h_{\rm i}(\nabla^\varphi)^\perp\cdot f_{{\rm i},3}
  + (\nabla^\varphi)^\perp\cdot f_{{\rm i},2} ) \bigr\}, \\
G_{{\rm i},3}
&= \Bigl( \nabla^\varphi\Bigl( \chi_{\rm b}\tfrac{\lambda_{{\rm i},-}}{h_{{\rm i},*}}(n^\varphi)^\perp\cdot f_{{\rm i},2} \Bigr) \Bigr)^\perp\cdot
  (\check{q}_{\rm i}-\tfrac12f_{{\rm i},2}) \\
&\quad\;
 + \Bigl( \nabla^\varphi\Bigl( \chi_{\rm b}\tfrac{1}{h_{{\rm i},*}}
  \bigl( w_{\rm i}\cdot f_{{\rm i},2}-(n^\varphi\cdot w_{\rm i})(n^\varphi\cdot f_{{\rm i},4}) \bigr) \Bigr) \Bigr)\cdot\check{q}_{\rm i} \\
&\quad\;
 + \chi_{\rm b}\tfrac{1}{h_{{\rm i},*}}\bigl\{
  \lambda_{{\rm i},-}((n^\varphi)^\perp\cdot f_{{\rm i},2})\bigl( (\nabla^\varphi h_{\rm i})^\perp\cdot\check{v}_{\rm i}
   + h_{\rm i}(\nabla^\varphi)^\perp\cdot f_{{\rm i},3} + \tfrac12(\nabla^\varphi)^\perp\cdot f_{{\rm i},2} \bigr) \\
&\makebox[3em]{}
 + \bigl( w_{\rm i}\cdot f_{{\rm i},2}-(n^\varphi\cdot w_{\rm i})(n^\varphi\cdot f_{{\rm i},4}) \bigr)f_{{\rm i},1} \bigr\}
\end{align*}
with $\check{q}_{\rm i}=h_{\rm i}\check{v}_{\rm i}+f_{{\rm i},2}$. 
\end{lemma}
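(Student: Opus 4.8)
The plan is to prove each identity $I_j(t)=\int_{\ul{\cI}}JG_{{\rm i},j}$ ($j=1,2,3$) by exhibiting an $\R^2$-valued vector field $\mathbf{F}_{{\rm i},j}$ on $\ul{\cI}$ whose normal flux $N^\varphi\cdot\mathbf{F}_{{\rm i},j}$ coincides on $\ul{\itGamma}$ with the integrand of $I_j$, and then applying the second divergence identity in \eqref{DivTh}, which gives $I_j(t)=\int_{\ul{\itGamma}}N^\varphi\cdot\mathbf{F}_{{\rm i},j}=\int_{\ul{\cI}}J\nabla^\varphi\cdot\mathbf{F}_{{\rm i},j}$; it then only remains to compute $\nabla^\varphi\cdot\mathbf{F}_{{\rm i},j}$. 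In this construction I use freely that near $\ul{\itGamma}$ one has $\chi_{\rm b}=1$, $h_{{\rm i},*}=h$, $w_{\rm i}=w$ and $\lambda_{{\rm i},\pm}=\lambda_{\pm}$ (by \eqref{Ext2} and the definition of $\lambda_{{\rm i},\pm}$) as well as $N^\varphi=|N^\varphi|\,n^\varphi$, together with the two elementary facts $N^\varphi\cdot(\psi\,n^\varphi)=|N^\varphi|\psi$ and $N^\varphi\cdot(\psi\,(n^\varphi)^\perp)=0$ valid for any scalar $\psi$; the second of these shows that the tangential component of $\mathbf{F}_{{\rm i},j}$ does not affect the boundary flux and can therefore be chosen so as to make the interior divergence take a convenient form.

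For $I_1$ and $I_2$, whose integrands are quadratic in $\check{q}_{\rm i}=h_{\rm i}\check{v}_{\rm i}+f_{{\rm i},2}$, the mechanism is Lemma \ref{lem:Rellich1}: taking $f=n^\varphi$ and $q=\check{q}_{\rm i}$ there produces the vector field $A\,n^\varphi+B\,(n^\varphi)^\perp$ with $A=(n^\varphi\cdot\check{q}_{\rm i})^2-((n^\varphi)^\perp\cdot\check{q}_{\rm i})^2$ and $B=2(n^\varphi\cdot\check{q}_{\rm i})((n^\varphi)^\perp\cdot\check{q}_{\rm i})$, whose pairing with $N^\varphi$ equals $|N^\varphi|A$ (the difference of squares appearing in $I_1$), while interchanging the roles of $n^\varphi$ and $(n^\varphi)^\perp$ in Lemma \ref{lem:Rellich1} produces a vector field with normal component $B$, whose pairing with $N^\varphi$ equals $|N^\varphi|B$ (the cross term appearing in $I_2$). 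Accordingly, $\mathbf{F}_{{\rm i},1}$ is $-\chi_{\rm b}\tfrac{\lambda_{{\rm i},-}}{2h_{{\rm i},*}}$ times the bracket of Lemma \ref{lem:Rellich1} with $f=n^\varphi$, and $\mathbf{F}_{{\rm i},2}$ is $\chi_{\rm b}\tfrac{(n^\varphi)^\perp\cdot w_{\rm i}}{2h_{{\rm i},*}}$ times the corresponding bracket with $n^\varphi$ and $(n^\varphi)^\perp$ interchanged, i.e. $\chi_{\rm b}\tfrac{(n^\varphi)^\perp\cdot w_{\rm i}}{2h_{{\rm i},*}}(Bn^\varphi-A(n^\varphi)^\perp)$. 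Computing $\nabla^\varphi\cdot\mathbf{F}_{{\rm i},j}$ by the Leibniz rule and regrouping so that the divergence acts on $\chi_{\rm b}\tfrac{\lambda_{{\rm i},-}}{2h_{{\rm i},*}}n^\varphi$, $\chi_{\rm b}\tfrac{\lambda_{{\rm i},-}}{h_{{\rm i},*}}(n^\varphi)^\perp$ (and the analogous quantities with $(n^\varphi)^\perp\cdot w_{\rm i}$ in place of $\lambda_{{\rm i},-}$) yields the first two lines of $G_{{\rm i},j}$, while the remaining divergence is exactly the right-hand side of Lemma \ref{lem:Rellich1}; there one substitutes $\nabla^\varphi\cdot\check{q}_{\rm i}=f_{{\rm i},1}$ and $(\nabla^\varphi)^\perp\cdot\check{q}_{\rm i}=(\nabla^\varphi h_{\rm i})^\perp\cdot\check{v}_{\rm i}+h_{\rm i}(\nabla^\varphi)^\perp\cdot f_{{\rm i},3}+(\nabla^\varphi)^\perp\cdot f_{{\rm i},2}$, which both follow from the first two relations in \eqref{LSWEi2} together with $(\nabla^\varphi)^\perp\cdot\nabla^\varphi\check{\phi}_{\rm i}=0$, to obtain the last line of $G_{{\rm i},j}$; the curvature contributions $(\check{q}_{\rm i}\cdot\nabla^\varphi)n^\varphi+(\check{q}_{\rm i}^\perp\cdot\nabla^\varphi)(n^\varphi)^\perp$ (and its analogue with $n^\varphi$ and $(n^\varphi)^\perp$ interchanged) are precisely the $((q\cdot\nabla^\varphi)f+(q^\perp\cdot\nabla^\varphi)f^\perp)\cdot q$ term of the identity.

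For $I_3$, whose integrand is only linear in $\check{q}_{\rm i}$, Lemma \ref{lem:Rellich1} is not needed. First I rewrite $\lambda_-((n^\varphi)^\perp\cdot f_{{\rm i},2})((n^\varphi)^\perp\cdot\check{q}_{\rm i})-\tfrac12\lambda_-((n^\varphi)^\perp\cdot f_{{\rm i},2})^2=\lambda_-((n^\varphi)^\perp\cdot f_{{\rm i},2})\bigl((n^\varphi)^\perp\cdot(\check{q}_{\rm i}-\tfrac12 f_{{\rm i},2})\bigr)$, which explains the combination $\check{q}_{\rm i}-\tfrac12 f_{{\rm i},2}$ appearing in $G_{{\rm i},3}$. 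Then each term of $I_3$ of the form $\tfrac{|N^\varphi|}{h}\psi\,(n^\varphi\cdot\check{q}_{\rm i})$ is written as $N^\varphi\cdot\bigl(\chi_{\rm b}\tfrac{\psi}{h_{{\rm i},*}}\check{q}_{\rm i}\bigr)$ and each term of the form $\tfrac{|N^\varphi|}{h}\psi\,((n^\varphi)^\perp\cdot\check{q}_{\rm i})$ as $N^\varphi\cdot\bigl(\chi_{\rm b}\tfrac{\psi}{h_{{\rm i},*}}\check{q}_{\rm i}^\perp\bigr)$ up to sign, with $\psi=\lambda_{{\rm i},-}((n^\varphi)^\perp\cdot f_{{\rm i},2})$ (paired with $\check{q}_{\rm i}-\tfrac12 f_{{\rm i},2}$) and $\psi=w_{\rm i}\cdot f_{{\rm i},2}-(n^\varphi\cdot w_{\rm i})(n^\varphi\cdot f_{{\rm i},4})$; taking $\nabla^\varphi\cdot$ of the resulting fields and using once more $\nabla^\varphi\cdot\check{q}_{\rm i}=f_{{\rm i},1}$ together with the formula for $(\nabla^\varphi)^\perp\cdot\check{q}_{\rm i}$ above gives $G_{{\rm i},3}$.

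The part I expect to be the real work is not conceptual but the bookkeeping: keeping track of the cut-off $\chi_{\rm b}$ and of the smooth extensions $h_{{\rm i},*},w_{\rm i},\lambda_{{\rm i},\pm}$ so that the fields $\mathbf{F}_{{\rm i},j}$ are globally defined on $\ul{\cI}$ while their traces on $\ul{\itGamma}$ are left unchanged, and, above all, fixing all the signs arising when passing between $n^\varphi$- and $(n^\varphi)^\perp$-components and their $90^\circ$ rotations and when distributing the scalar prefactors across the divergence, so that the interior integrands match the stated $G_{{\rm i},1},G_{{\rm i},2},G_{{\rm i},3}$ on the nose. Once the vector fields $\mathbf{F}_{{\rm i},j}$ are correctly identified, no ingredient beyond Lemma \ref{lem:Rellich1}, the divergence identity \eqref{DivTh}, and the interior relations \eqref{LSWEi2} is required.
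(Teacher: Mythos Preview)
Your proposal is correct and follows essentially the same approach as the paper: for each $I_j$ you exhibit an interior vector field whose normal flux on $\ul{\itGamma}$ matches the boundary integrand, apply the divergence identity \eqref{DivTh}, and then for $I_1,I_2$ invoke Lemma~\ref{lem:Rellich1} (with $f=n^\varphi$ and $f=(n^\varphi)^\perp$ respectively) together with the relations \eqref{LSWEi2} for $\nabla^\varphi\cdot\check{q}_{\rm i}$ and $(\nabla^\varphi)^\perp\cdot\check{q}_{\rm i}$, while $I_3$ is handled by a direct Leibniz expansion. The only cosmetic difference is that for the $((n^\varphi)^\perp\cdot\check{q}_{\rm i})$-terms in $I_3$ the paper writes the boundary integrand as $(N^\varphi)^\perp\cdot(\psi\,\check{q}_{\rm i})$ and passes to $\int_{\ul{\cI}}J(\nabla^\varphi)^\perp\cdot(\cdots)$, whereas you write it as $N^\varphi\cdot(\psi\,\check{q}_{\rm i}^\perp)$; since $N^\varphi\cdot p^\perp=-(N^\varphi)^\perp\cdot p$ and $\nabla^\varphi\cdot p^\perp=-(\nabla^\varphi)^\perp\cdot p$, these are equivalent.
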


\begin{proof}
In view of \eqref{DivTh}, for any function $f_{\rm i}$ and $g_{\rm i}$ defined in $\ul{\cI}$ we have 
\begin{align*}
\int_{\ul{\itGamma}}|N^\varphi|f_{\rm i}
&= \int_{\ul{\itGamma}}N^\varphi\cdot( f_{\rm i}n^\varphi + g_{\rm i}(n^\varphi)^\perp ) \\
&= \int_{\ul{\cI}}J\nabla^\varphi\cdot\bigl(\chi_{\rm b}( f_{\rm i}n^\varphi + g_{\rm i}(n^\varphi)^\perp ) \bigr).
\end{align*}
Therefore, we can choose $G_{{\rm i},1}$ as 
\[
G_{{\rm i},1} = \nabla^\varphi\cdot\Bigl( \chi_{\rm b}\tfrac{\lambda_{{\rm i},-}}{2h_{{\rm i},*}}\bigl\{
 \bigl( ((n^\varphi)^\perp\cdot\check{q}_{\rm i})^2 - (n^\varphi\cdot\check{q}_{\rm i})^2 \bigr)n^\varphi
 - 2(n^\varphi\cdot\check{q}_{\rm i})((n^\varphi)^\perp\cdot\check{q}_{\rm i}) (n^\varphi)^\perp \bigr\} \Bigr).
\]
Here, we apply Lemma \ref{lem:Rellich1} with $f=n^\varphi$ and $q=\check{q}_{\rm i}$. 
Moreover, by \eqref{LSWEi2} we have 
\begin{equation}\label{DivRot}
\begin{cases}
 \nabla^\varphi\cdot\check{q}_{\rm i}=f_{{\rm i},1}, \\
 (\nabla^\varphi)^\perp\cdot\check{q}_{\rm i}
  =(\nabla^\varphi h_{\rm i})^\perp\cdot\check{v}_{\rm i} + h_{\rm i}(\nabla^\varphi)^\perp\cdot f_{{\rm i},3} + (\nabla^\varphi)^\perp\cdot f_{{\rm i},2}.
\end{cases}
\end{equation}
Hence, we obtain the expression of $G_{{\rm i},1}$ in the lemma.

Similarly, we can choose $G_{{\rm i},2}$ as 
\[
G_{{\rm i},2 } = \nabla^\varphi\cdot\Bigl( \chi_{\rm b}\tfrac{(n^\varphi)^\perp\cdot w_{\rm i}}{2h_{{\rm i},*}}\bigl\{
 2((n^\varphi)^\perp\cdot\check{q}_{\rm i})(n^\varphi\cdot\check{q}_{\rm i}) n^\varphi 
 + \bigl( ((n^\varphi)^\perp\cdot\check{q}_{\rm i})^2 - (n^\varphi\cdot\check{q}_{\rm i})^2 \bigr) (n^\varphi)^\perp \bigr\} \Bigr),
\]
which together with Lemma \ref{lem:Rellich1} with $f=(n^\varphi)^\perp$ and $q=\check{q}_{\rm i}$ 
and \eqref{DivRot} yields the expression of $G_{{\rm i},2}$ in the lemma.

Finally, it is straightforward to calculate $G_{{\rm i},3}$. 
In fact, we rewrite $I_3(t)$ as 
\begin{align*}
I_3(t) 
&= \int_{\ul{\itGamma}}\Bigl\{
 (N^\varphi)^\perp\cdot \Bigl( \tfrac{\lambda_{{\rm i},-}}{h_{{\rm i},*}} ((n^\varphi)^\perp\cdot f_{{\rm i},2})(\check{q}_{\rm i}-\tfrac12 f_{{\rm i},2}) \Bigr) \\
&\makebox[3em]{}
 + N^\varphi\cdot \Bigl( \tfrac{1}{h_{{\rm i},*}}\bigl( w_{\rm i}\cdot f_{{\rm i},2}-(n^\varphi\cdot w_{\rm i})(n^\varphi\cdot f_{{\rm i},4})
 \bigr) \check{q}_{\rm i} \Bigr) \Bigr\}.
\end{align*}
Therefore, we can choose $G_{{\rm i},3}$ as 
\begin{align*}
G_{{\rm i},3}
&= (\nabla^\varphi)^\perp\cdot \Bigl( \tfrac{\lambda_{{\rm i},-}}{h_{{\rm i},*}} ((n^\varphi)^\perp\cdot f_{{\rm i},2})
  (\check{q}_{\rm i}-\tfrac12 f_{{\rm i},2}) \Bigr) + \nabla^\varphi\cdot \Bigl( \tfrac{1}{h_{{\rm i},*}}
 \bigl( w_{\rm i}\cdot f_{{\rm i},2}-(n^\varphi\cdot w_{\rm i})(n^\varphi\cdot f_{{\rm i},4}) \bigr)\check{q}_{\rm i} \Bigr),
\end{align*}
which together with \eqref{DivRot} yields the expression of $G_{{\rm i},3}$ in the lemma. 
\end{proof}

%-----------------------------------------------------------
\subsection{Completion of the additional regularity estimate}
It follows from \eqref{EqCF6} together with Lemma \ref{lem:Rellich2} that 
\begin{align*}
\int_0^t|\check{\zeta}(t')|_{L^2(\ul{\itGamma})}^2\mathrm{d}t'
&\lesssim \widetilde{\mathscr{E}}_{\rm e}(t) + \mathscr{E}_{\rm e}(0) 
 + \int_0^t \bigl( \|(F_{\rm e},G_3)(t')\|_{L^1(\ul{\cE})} + \|(G_{{\rm i},1},G_{{\rm i},2},G_{{\rm i},3})(t')\|_{L^1(\ul{\cI})} \bigr){\rm d}t'.
\end{align*}
Here, under Assumption \ref{ass:LP} we see that 
\[
\begin{cases}
 \widetilde{\mathscr{E}}_{\rm e}(t) + \mathscr{E}_{\rm e}(t) \lesssim \|(\check{u},f_4)(t)\|_{L^2(\ul{\cE})}^2, \\
 |(F_{\rm e},G_3)| \lesssim (1+|\bm{\partial}(h,w,\partial\varphi)|)|(\check{u},f_4)|^2 + |(\check{u},f_4)||(f_1,f_2,f_3,\bm{\partial}f_4)|, \\
 \sum_{j=1}^3|G_{{\rm i},j}| \lesssim (1+|\partial(h_{\rm i},h_{{\rm i},*},w_{\rm i},\partial\varphi)|)|(\check{v}_{\rm i},f_{{\rm i},2},f_{{\rm i},4})|^2 \\
 \makebox[6em]{}
  + |(\check{v}_{\rm i},f_{{\rm i},2},f_{{\rm i},4})||(f_{{\rm i},1},\partial(f_{{\rm i},2},f_{{\rm i},3},f_{{\rm i},4}))|.
\end{cases}
\]
These estimates together with those in \eqref{Ext1} and \eqref{Ext2} imply the desired one. 
\hfill$\Box$

%----------------------------------------------------------------------------------------------------------------------
\section{Construction of a regularizing diffeomorphism}\label{sect:diffeo}
In this section, we construct the diffeomorphism $\varphi(t,\cdot)$ from the unknown function $\gamma(t,\cdot)$ as was mentioned in Section \ref{subsect:CT}. 
In order to close the estimates for the solutions to the nonlinear problem, it is important to use a regularizing diffeomorphism, whose regularity index is 
$\frac12$ larger than that of $\gamma$. 
Let $\ul{\itGamma}$ be a positively oriented Jordan curve of $C^\infty$-class and suppose that $\ul{\itGamma}$ is parameterized by the arc length $s$ as 
$x = \ul{x}(s) = (\ul{x}_1(s),\ul{x}_2(s))^\mathrm{T}$ for $s \in \mathbb{T}_L$. 
Then, it holds that $|\ul{x}'(s)| \equiv 1$ and that $\ul{n}(s)=-\ul{x}'(s)^\perp$, where $\ul{n}$ is the unit outward normal vector to $\ul{\itGamma}$ 
pointing from $\ul{\cI}$ to $\ul{\cE}$. 
We also have $\ul{x}''(s)=\kappa(s)\ul{x}'(s)^\perp$, where $\kappa(s)$ is the scalar curvature of the curve $\ul{\itGamma}$ at $\ul{x}(s)$.

%-----------------------------------------------------------
\subsection{Normal-tangential coordinate system}\label{subsect:NTCS}
For $r_0>0$, we define a map $\theta: (-r_0,r_0)\times\mathbb{T}_L\ni(r,s) \mapsto \theta(r,s)\in\R^2$ by 
\begin{equation}\label{diffeo-theta}
\theta(r,s) = \ul{x}(s) + r\ul{n}(s) = 
\begin{pmatrix}
 \ul{x}_1(s) + r\ul{x}_2'(s) \\
 \ul{x}_2(s) - r\ul{x}_1'(s)
\end{pmatrix},
\end{equation}
and a tubular neighborhood $U_{\ul{\itGamma}}$ of $\ul{\itGamma}$ by 
\[
U_{\ul{\itGamma}} = \{ x=\theta(r,s)\in\R^2 \,|\, (r,s)\in (-r_0,r_0)\times\mathbb{T}_L \}.
\]
Then, the Jacobian determinant of the map $\theta(r,s)$ is given by $\det\bigl( \frac{\partial\theta(r,s)}{\partial(r,s)} \bigr) = 1+r\kappa(s)$. 
Therefore, if we take $r_0>0$ so small that $r_0 |\kappa|_{L^\infty(\mathbb{T}_L)}<1$, 
then the map $\theta: (-r_0,r_0)\times\mathbb{T}_L \to U_{\ul{\itGamma}}$ is a $C^\infty$-diffeomorphism. 
In the following, we fix such a constant $r_0>0$ so that $U_{\ul{\itGamma}}$ is also fixed. 
Each point of the neighborhood $U_{\ul{\itGamma}}$ can therefore be uniquely determined by its normal-tangential coordinates $(r, s)$. 
Associated with these normal-tangential coordinate system, 
we can define normal and tangential derivatives of functions $f$ defined in the tubular neighborhood $U_{\ul{\itGamma}}$ by 
\begin{equation}\label{NorTanD}
(\dnor f)\circ\theta = \partial_r(f\circ\theta), \quad
(\dtan f)\circ\theta = \partial_s(f\circ\theta). 
\end{equation}
Then, we have 
\begin{equation}\label{NorTanD2}
\dnor = \ul{N}\cdot\nabla,\quad
\dtan = \ul{T}\cdot\nabla,
\end{equation}
where $\ul{T}\circ\theta = (1+r\kappa(s))\ul{x}'(s)$ and $\ul{N}\circ\theta = (-\ul{x}'(s))^\perp$. 
We note that $\dnor$ and $\dtan$ commute with each other. 
Conversely, we have a decomposition 
\begin{equation}\label{diff deco}
\nabla = \ul{N}\dnor + \frac{1}{|\ul{T}|^2}{\ul{T}}\dtan.
\end{equation}

Now, we impose the following assumptions on the unknown curve $\itGamma(t)$.

\begin{assumption}\label{ass:CL}
In the normal-tangential coordinate system $(r,s)$, 
the curve $\itGamma(t)$ is parameterized as a graph in the form $\itGamma(t): r=\gamma(t,s)$ for $s\in\mathbb{T}_L$. 
Moreover, there exist positive constants $\delta_0,M_0$, and $T$ and real number $m_0>\frac12$ such that $\gamma$ satisfies the following properties: 
\begin{enumerate}
\item[{\rm (i)}]
$|\gamma(t,s)| \leq \frac{r_0}{(1+\delta_0)^2}$ for $(t,s)\in[0,T]\times\mathbb{T}_L$.

\item[{\rm (ii)}]
$|\partial_s\gamma(t,\cdot)|_{H^{m_0}(\mathbb{T}_L)} \leq 2 M_0$ for $t\in[0,T]$.
\end{enumerate}
\end{assumption}

%-----------------------------------------------------------
\subsection{Choice of a regularizing diffeomorphism}\label{subsect:ChoiceD}
Under this Assumption \ref{ass:CL}, we are going to construct the diffeomorphism $\varphi(t,\cdot)$. 
We first extend the function $\gamma(t,s)$ on $[0,T]\times\mathbb{T}_L$ to a function $\gamma^{\rm ext}(t,r,s)$ on $[0,T]\times\R\times\mathbb{T}_L$, 
which should satisfies $\gamma^{\rm ext}|_{r=0}=\gamma$. 
We choose such an extension by $\gamma^{\rm ext}(t,r,s)=(\chi(\varepsilon r\langle D\rangle)\gamma(t,\cdot))(s)$ with a small parameter $\varepsilon>0$ 
which will be defined below, where $\chi \in C_0^\infty(\R)$ is a cut-off function satisfying $\mbox{supp}\,\chi \subset (-1,1)$, 
$0\leq \chi(r)\leq 1$, $\chi(r)=1$ for $|r|\ll1$, and $|\chi'(r)| \leq 1+\delta_0$, and $\langle D\rangle=(1-\partial_s^2)^{1/2}$. 
We define a scalar function $R(t,r,s)$ by 
\[
R(t,r,s) = r + \gamma^{\rm ext}(t,r,s)\chi\bigl(\tfrac{r}{r_0}\bigr).
\]
By Assumption \ref{ass:CL} we see that 
\begin{align*}
\partial_r R(t,r,s)
&= 1 + \frac{\gamma(t,s)}{r_0}\chi'\bigl(\tfrac{r}{r_0}\bigr) + \frac{\gamma^{\rm ext}(t,r,s)-\gamma(t,s)}{r_0}\chi'\bigl(\tfrac{r}{r_0}\bigr)
 + (\partial_r\gamma^{\rm ext})(t,r,s)\chi\bigl(\tfrac{r}{r_0}\bigr) \\
&\geq \frac{\delta_0}{1+\delta_0} - \varepsilon C(m_0)(1+\delta_0)M_0,
\end{align*}
where $C(m_0)$ is a positive constant depending only on $m_0$. 
Therefore, if we take $\varepsilon>0$ sufficiently small, then we have $\partial_r R(t,r,s) \geq \frac{\delta_0}{2(1+\delta_0)}$, 
so that $R(t,\cdot,s): \R\to\R$ is a diffeomorphism for each $(t,s)$. 
In the following, we fix the parameter $\varepsilon>0$ in this way. 
We note also that $R(t,\cdot,s): (-r_0,r_0) \to (-r_0,r_0)$ is a diffeomorphism. 
In order to construct a regularizing diffeomorphism, we use this diffeomorphism $R$ to modify the map $\theta$ by 
$\tilde{\theta}(t,\cdot): (-r_0,r_0)\times\mathbb{T}_L \to U_{\ul{\itGamma}}$ with
\begin{equation}\label{theta-modify}
\tilde{\theta}(t,r,s) = \ul{x}(s) + R(t,r,s)\ul{n}(s).
\end{equation}
Then, we see that 
\begin{align*}
\det\biggl( \frac{\partial\tilde{\theta}(t,r,s)}{\partial(r,s)} \biggr)
&= (\partial_rR(t,r,s))(1+R(t,r,s)\kappa(s)) \\
&\geq \frac{\delta_0}{2(1+\delta_0)}(1-r_0|\kappa|_{L^\infty(\mathbb{T}_L)})>0,
\end{align*}
so that $\tilde{\theta}(t,\cdot): (-r_0,r_0)\times\mathbb{T}_L \to U_{\ul{\itGamma}}$ is a diffeomorphism, 
and we also have $\tilde{\theta}(t,r,s)=\theta(r,s)$ for $|r|\simeq r_0$. 
Now, we define the diffeomorphism $\varphi(t,\cdot) : \R^2 \to \R^2$ by 
\begin{equation}\label{const phi}
\varphi(t,y) = 
\begin{cases}
 \tilde{\theta}(t,\theta^{-1}(y)) &\mbox{for}\quad y \in U_{\ul{\itGamma}}, \\
 y & \mbox{for}\quad y \notin U_{\ul{\itGamma}}.
\end{cases}
\end{equation}
We can easily check that this diffeomorphism $\varphi(t,\cdot)$ satisfies the desired properties, that is, 
$\varphi(t,\cdot)_{\vert_{\ul{\cE}}} : \ul{\cE}\to \cE(t)$, $\varphi(t,\cdot)_{\vert_{\ul{\cI}}} : \ul{\cI}\to \cI(t)$, and 
$\varphi(t,\cdot)_{\vert_{\ul{\itGamma}}} : \ul{\itGamma}\to \itGamma(t)$ are all diffeomorphisms and that it does not change the orientation. 
Throughout this paper, we use this diffeomorphism.

This diffeomorphism is decomposed as $\varphi(t,y)=y+\widetilde{\varphi}(t,y)$ with 
\[
\widetilde{\varphi}(t,\theta(r,s))=\gamma^{\rm ext}(t,r,s)\chi\bigl(\tfrac{r}{r_0}\bigr)\ul{n}(s).
\]
We proceed to evaluate this perturbation term $\widetilde{\varphi}$ in terms of $\gamma$. 
Here, we recall the norm $|\gamma(t,\cdot)|_m$ for a non-negative integer $m$, which was introduced in Section \ref{subsect:result}. 
The fact that the diffeomorphism $\varphi(t,\cdot)$ is regularizing allows us to control $m$-th order derivatives of $\varphi(t,\cdot)$ 
not only in $L^2(\R^2)$ but also in $L^4(\R^2)$ by $|\gamma(t,\cdot)|_m$.

\begin{lemma}\label{lem:EstDM}
For any multi-index $\alpha=(\alpha_0,\alpha_1,\alpha_2)$ satisfying $|\alpha|=m$ and any $p\in[4,\infty]$, we have 
\begin{enumerate}
\item[{\rm (i)}]
$\|\bm{\partial}^\alpha\widetilde{\varphi}(t,\cdot)\|_{L^2\cap L^4(\R^2)} + |\bm{\partial}^\alpha\widetilde{\varphi}(t,\cdot)|_{L^2(\ul{\itGamma})}
 \leq C|\gamma(t,\cdot)|_m;$
\item[{\rm (ii)}]
$\|\bm{\partial}^\alpha\widetilde{\varphi}(t,\cdot)\|_{L^p(\R^2)}\leq C|\gamma(t,\cdot)|_m^{1/2+2/p}|\gamma(t,\cdot)|_{m+1}^{1/2-2/p};$
\item[{\rm (iii)}]
$\|\bm{\partial}^\alpha\widetilde{\varphi}(t,\cdot)\|_{L^2(\R^2)}\leq C|\gamma(t,\cdot)|_{m-1}^{1/2}|\gamma(t,\cdot)|_m^{1/2}$ if $\alpha_1+\alpha_2\geq1;$
\end{enumerate}
where $\bm{\partial}^\alpha=\dt^{\alpha_0}\partial_1^{\alpha_1}\partial_2^{\alpha_2}$ and $C>0$ is a constant depending on $r_0$ and $\varepsilon$. 
\end{lemma}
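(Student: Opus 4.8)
The plan is to reduce everything to estimates on the one–variable regularizing operator $\chi(\eps r\langle D\rangle)$ acting on $\gamma(t,\cdot)$, since by construction $\widetilde\varphi(t,\theta(r,s))=\gamma^{\rm ext}(t,r,s)\chi(r/r_0)\ul n(s)$ with $\gamma^{\rm ext}(t,r,s)=(\chi(\eps r\langle D\rangle)\gamma(t,\cdot))(s)$. First I would note that $\theta$ is a fixed $C^\infty$-diffeomorphism of the (bounded) tubular neighborhood $U_{\ul\itGamma}$ onto $(-r_0,r_0)\times\mathbb T_L$, and $\chi(r/r_0)$ is supported in $|r|\le r_0$; hence all Cartesian derivatives $\bm\partial^\alpha=\dt^{\alpha_0}\partial_1^{\alpha_1}\partial_2^{\alpha_2}$ of $\widetilde\varphi$, composed with $\theta$, are finite linear combinations — with $C^\infty$, compactly supported in $r$ coefficients depending only on $\ul x,\kappa,\chi,r_0$ — of $\partial_r^{j}\partial_s^{k}$ applied to $\gamma^{\rm ext}(t,r,s)$ with $\alpha_0$ time derivatives still sitting on $\gamma$, and with $j+k+\alpha_0\le m$. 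Since $\|\cdot\|_{L^p(\R^2)}$ and $\|\cdot\|_{L^p((-r_0,r_0)\times\mathbb T_L)}$ are equivalent on $U_{\ul\itGamma}$ (Jacobian bounded above and below), it suffices to bound $\|\partial_r^j\partial_s^k\dt^{\alpha_0}\gamma^{\rm ext}\|_{L^p_{r,s}}$ and, for the boundary term, $|\partial_s^k\dt^{\alpha_0}\gamma|_{L^2(\mathbb T_L)}$.

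The core is then a Littlewood–Paley / Bernstein computation. Writing $\dt^{\alpha_0}\gamma=\sum_N P_N g$ with $g=\dt^{\alpha_0}\gamma(t,\cdot)$ and $P_N$ Fourier projection onto frequencies $\sim N$, we have $\partial_r^j\partial_s^k\gamma^{\rm ext}(t,r,s)=\sum_N \eps^j N^j (\chi^{(j)})(\eps r\langle D\rangle)(\partial_s^k P_N g)(s)$; each term is supported in $|r|\lesssim 1/(\eps N)$ in the $r$ variable, so squaring and integrating in $r$ gains a factor $(\eps N)^{-1}$. This is exactly the mechanism that trades $L^2_{r,s}$ regularity against $H^{s}(\mathbb T_L)$ regularity of $g$ with a $1/2$ gain: $\|\partial_r^j\partial_s^k\gamma^{\rm ext}\|_{L^2_{r,s}}^2\lesssim \sum_N N^{2j-1}N^{2k}\|P_N g\|_{L^2(\mathbb T_L)}^2\lesssim |g|_{H^{j+k-1/2}(\mathbb T_L)}^2$, and similarly a Bernstein step $\|P_N\cdot\|_{L^\infty}\lesssim N^{1/2}\|P_N\cdot\|_{L^2}$ in $s$ together with the $r$-support bound upgrades $L^2_{r,s}$ to $L^p_{r,s}$ for $p\in[4,\infty]$ at the cost of $N^{2/p\to0}$; interpolating between the endpoint exponents yields the stated exponents $1/2+2/p$ and $1/2-2/p$ in (ii). For (i) one uses $j+k\le m$ with at most the full count of $s$- and $t$-derivatives landing on $\gamma$, giving $|g|_{H^{m-1/2}}\le |\gamma(t,\cdot)|_m$ for the $L^2\cap L^4$ bound (the $1/2$-gain absorbing the half-derivative loss from the trace/extension), and for the boundary norm one simply sets $r=0$, where $\gamma^{\rm ext}=g$ directly. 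For (iii), the assumption $\alpha_1+\alpha_2\ge1$ means at least one genuine spatial derivative, which through the decomposition \eqref{diff deco} produces at least one factor of $\dnor$ or $\dtan$, i.e. at least one unit of $(j,k)$-order hitting $\gamma^{\rm ext}$ beyond the time derivatives; combined with the $1/2$-gain this lands us at $|g|_{H^{m-1}}$ rather than $|g|_{H^{m-1/2}}$, and interpolating $|g|_{H^{m-1}}\le |\gamma(t,\cdot)|_{m-1}^{1/2}|\gamma(t,\cdot)|_{m}^{1/2}$ in the style of $\|u\|_{m-1,\mathrm e}$ notation gives (iii). Throughout, the bound $|\partial_s\gamma(t,\cdot)|_{H^{m_0}(\mathbb T_L)}\le 2M_0$ from Assumption \ref{ass:CL} (with $m_0>1/2$) and the smallness of $\eps$ fixed in Section \ref{subsect:ChoiceD} ensure that the cutoff $\chi(r/r_0)$ and its derivatives, together with the coefficients coming from $\theta$, contribute only constants depending on $r_0,\eps$.

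The main obstacle I anticipate is bookkeeping rather than a deep difficulty: one has to verify that when converting $\bm\partial^\alpha$ into $(r,s)$-coordinates via $\theta$ and the decomposition \eqref{diff deco}, the coefficients that multiply $\partial_r^j\partial_s^k\gamma^{\rm ext}$ are themselves smooth and compactly supported (so that they do not spoil the $L^p(\R^2)$ norms and in particular cause no trouble at $\partial U_{\ul\itGamma}$, where $\chi(r/r_0)$ and its derivatives vanish anyway), and that the number of derivatives is correctly conserved — in particular that time derivatives $\dt$ commute cleanly with $\chi(\eps r\langle D\rangle)$ (they do, since the operator is $t$-independent). A secondary subtlety is the endpoint $p=\infty$ in (ii): there the Bernstein step gives $\sum_N \eps^{j}N^{j}N^{k}N^{1/2}\|P_N g\|_{L^2}$ without an $r$-gain, and one needs Cauchy–Schwarz in $N$ against $\sum_N N^{-\delta}$, i.e. a tiny bit of extra regularity, which is supplied by splitting $|g|_{H^{m-1/2}}^{1/2}|g|_{H^{m+1/2}}^{1/2}$ — matching $|\gamma(t,\cdot)|_m^{1/2}|\gamma(t,\cdot)|_{m+1}^{1/2}$, consistent with the formula at $p=\infty$. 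Once these combinatorial points are checked, each of (i)–(iii) follows from the single Littlewood–Paley lemma above.
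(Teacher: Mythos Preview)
Your approach is correct and would work, but it differs from the paper's in organization and economy. You re-derive all three estimates from scratch via a Littlewood--Paley/Bernstein analysis of the multiplier $\chi(\eps r\langle D\rangle)$; the paper instead proves only (i) this way---citing the well-known fact that the extension maps $L^2(\mathbb{T}_L)\to H^{1/2}(\R\times\mathbb{T}_L)$ together with the embedding $H^{1/2}\hookrightarrow L^4$---and then deduces (ii) and (iii) from (i) by two clean applications of Gagliardo--Nirenberg in $\R^2$: for (ii), $\|f\|_{L^p}\lesssim\|f\|_{L^4}^{1/2+2/p}\|\nabla f\|_{L^4}^{1/2-2/p}$ applied to $\bm\partial^\alpha\widetilde\varphi$, using (i) at orders $m$ and $m+1$; for (iii), writing $\alpha=\alpha'+\alpha''$ with $\alpha'$ a single spatial index and using $\|\nabla f\|_{L^2}\lesssim\|\nabla f\|_{L^4}^{1/2}\|f\|_{L^4}^{1/2}$ with $f=\bm\partial^{\alpha''}\widetilde\varphi$, invoking (i) at orders $m$ and $m-1$. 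This is shorter and sidesteps the endpoint $p=\infty$ bookkeeping you flag. Your route has the merit of being fully self-contained. One remark on your (iii): the sentence ``lands us at $|g|_{H^{m-1}}$ rather than $|g|_{H^{m-1/2}}$'' does not parse---the $1/2$-gain always gives $|\dt^{\alpha_0}\gamma|_{H^{m-\alpha_0-1/2}}$ regardless of whether $\alpha_1+\alpha_2\ge1$; the role of that hypothesis is only that $\alpha_0\le m-1$, so that $m-\alpha_0-1\ge0$ and the interpolation $|\dt^{\alpha_0}\gamma|_{H^{m-\alpha_0-1/2}}\le|\dt^{\alpha_0}\gamma|_{H^{m-\alpha_0}}^{1/2}|\dt^{\alpha_0}\gamma|_{H^{m-\alpha_0-1}}^{1/2}\le|\gamma|_m^{1/2}|\gamma|_{m-1}^{1/2}$ is legitimate. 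Once that is said correctly, your argument goes through.
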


\begin{proof}
It is well-known that the extension $\gamma^{\rm ext}(t,\cdot)$ of $\gamma(t,\cdot)$ gains $\frac12$-regularity in the sense that 
it maps $L^2(\mathbb{T}_L)$ into $H^{1/2}(\R\times\mathbb{T}_L)$ continuously. 
This fact together with the embedding $H^{1/2}(\R\times\mathbb{T}_L) \hookrightarrow L^4(\R\times\mathbb{T}_L)$ implies 
$\|\dt^j\gamma^{\rm ext}(t,\cdot)\|_{W^{k,4}(\R^2)} \lesssim |\dt^j\gamma(t,\cdot)|_{H^k(\mathbb{T}_L)}$. 
Using this, we obtain the estimate for $\|\bm{\partial}^\alpha\widetilde{\varphi}(t,\cdot)\|_{L^2\cap L^4(\R^2)}$ in (i). 
On the other hand, evaluation for $|\bm{\partial}^\alpha\widetilde{\varphi}(t,\cdot)|_{L^2(\ul{\itGamma})}$ in (i) is straightforward. 
By the Gagliardo--Nirenberg interpolation inequality $\|f\|_{L^p(\R^2)}\lesssim \|f\|_{L^4(\R^2)}^{1/2+2/p}\|\nabla f\|_{L^4(\R^2)}^{1/2-2/p}$, 
the estimate in (ii) follows from (i). 
As for (iii), we note that $\alpha$ can be decomposed as $\alpha=\alpha'+\alpha''$ with $\alpha'=(0,1,0)$ or $(0,0,1)$. 
Therefore, by the Gagliardo--Nirenberg interpolation inequality $\|\nabla f\|_{L^2(\R^2)} \lesssim \|\nabla f\|_{L^4(\R^2)}^{1/2} \|f\|_{L^4(\R^2)}^{1/2}$, 
we see that 
\begin{align*}
\|\bm{\partial}^\alpha\widetilde{\varphi}(t)\|_{L^2(\R^2)}
&\lesssim \|\nabla\bm{\partial}^{\alpha''}\widetilde{\varphi}(t)\|_{L^4(\R^2)}^{1/2} \|\bm{\partial}^{\alpha''}\widetilde{\varphi}(t)\|_{L^4(\R^2)}^{1/2} \\
&\lesssim |\gamma(t)|_m^{1/2} |\gamma(t)|_{m-1}^{1/2},
\end{align*}
where we used (i). 
This shows (iii). 
\end{proof}

%----------------------------------------------------------------------------------------------------------------------
\section{Good unknowns and their equations}\label{sect:GU}
In this section, we consider the nonlinear shallow water model \eqref{NLSWEe5}--\eqref{ODE5} for the unknowns $\zeta,v$ and $\psi_{\rm i}$, 
and the diffeomorphism $\varphi$, which was constructed from the unknown curve $\itGamma(t)$ by \eqref{const phi}. 
We first introduce in Section \ref{subsect:defGU} the good unknowns associated with the highest order derivatives of order $m$ of 
$\zeta,v,$ and $\psi_{\rm i}$, as well as for $\phi_{\rm i}$ and $v_{\rm i}=\nabla^\varphi \phi_{\rm i}$, with $\phi_{\rm i}$ given by \eqref{BVP2}; 
we insist on the fact that the good unknowns associated with $\phi_{\rm i}$ are of second order, in the sense that they contain subprincipal terms. 
In Section \ref{subsect:EGUe} we derive the equations satisfied by the good unknowns in the exterior region $\ul{\cE}$ near the boundary, 
and therefore in normal-tangential coordinates. 
When only time and tangential derivatives are involved, the procedure is similar to the linearization performed in Section \ref{subsect:DLP}, 
but one has to keep track of additional source terms due to the fact that high order derivatives are not exactly linearization operators. 
When normal derivatives are involved, despite the fact that the problem is partially characteristic, 
we manage to express them in terms of quantities that contain only time and tangential derivatives. 
In Section \ref{sectnearbdry}, the equations for the good unknowns are derived near the boundary, but this time in the interior region $\ul{\cI}$. 
Since the equations satisfied by $\phi_{\rm i}$ is of second order, it is crucial to use here the second order Alinhac good unknowns. 
In Section \ref{secttangdiff} we proceed to apply time and tangential derivatives to the boundary conditions on $\ul{\itGamma}$ 
and to rewrite the resulting equations in terms of the good unknowns. 
Using the Cartesian coordinates, equations for the good unknowns far from the boundary are derived in Section \ref{subsect:EqGUb}. 
Finally, we derive in Section \ref{subsect:Eqg} equations for the highest order derivatives of the parametrization $\gamma$ of the free boundary $\itGamma(t)$; 
see Assumption \ref{ass:CL}.

%-----------------------------------------------------------
\subsection{Definition of good unknowns and differentiation rules}\label{subsect:defGU}
In Section \ref{subsect:DLP}, 
we provided derivation rules that allowed us to write the linearization of the equations \eqref{NLSWEe5}--\eqref{ODE5} in a convenient way. 
We showed that for an unknown function $f$, 
we could associate its variation in the linearization, denoted by $\dot{f}$, but also a good unknown $\check{f}=\dot{f}-\dot{\varphi}\cdot\nabla^\varphi f$. 
The key observation was \eqref{linearization 2}, that expressed the fact that we have the commutation rules 
$(\nabla^\varphi\cdot  \mbox{\boldmath$f$})\,\check{} = \nabla^\varphi\cdot \check{\mbox{\boldmath$f$}}$, $(\nabla^\varphi f)\,\check{}
 = \nabla^\varphi \check{f}$, and $(\dt^\varphi f)\,\check{} = \dt^\varphi  \check{f}$. 
In this section, instead of linearizing the equations, 
we want to derive the system deduced from \eqref{NLSWEe5}--\eqref{ODE5} after multiple differentiation in space and time. 
We first introduce in Section \ref{sectdefGU} the associated good unknowns and then derive some useful differentiation rules in Section \ref{sectdiffrules}.

%-----------------------------------------------------------
\subsubsection{Definition of the good unknowns}\label{sectdefGU}
Since it is natural to use the normal and the tangential derivatives $\dnor$ and $\dtan$ defined by \eqref{NorTanD} 
instead of the Cartesian derivatives $\partial_1$ and $\partial_2$ near the boundary $\ul{\itGamma}$, we are led to introduce the following notations. 
For a multi-index $\alpha=(\alpha_0,\alpha_1,\alpha_2)=(\alpha_I,\alpha_2)$, we write 
\[
\bm{\partial}^\alpha=\dt^{\alpha_0}\partial_1^{\alpha_1}\partial_2^{\alpha_2}, \qquad 
\mathfrak{d}^\alpha=\dt^{\alpha_0}\dtan^{\alpha_1}\dnor^{\alpha_2},
\quad\mbox{  and } \quad
\dpar^{\alpha_I}=\dt^{\alpha_0}\dtan^{\alpha_1}.
\]
Note that if $\alpha_1=\alpha_2=0$, then $\bm{\partial}^\alpha=\mathfrak{d}^\alpha=\dpar^{\alpha_I}=\dt^{\alpha_0}$. 
Inspired by the considerations of  Section \ref{subsect:DLP}, we also define
\[
\cpalpha f=\bm{\partial}^\alpha f -\bm{\partial}^\alpha\varphi\cdot \nabla^\varphi f, \qquad
\cmfalpha f=\mathfrak{d}^\alpha f -\mathfrak{d}^\alpha\varphi\cdot \nabla^\varphi f, 
\quad\mbox{ and }\quad
\cmfpalphaI f=\dpar^{\alpha_I} f -\dpar^{\alpha_I}\varphi\cdot \nabla^\varphi f.
\]
Let $u=(\zeta,v^\mathrm{T})^\mathrm{T},\psi_{\rm i}$, and $\varphi$ be a solution of \eqref{NLSWEe5}--\eqref{ODE5}, 
and let $v_{\rm i}=\nabla^\varphi\phi_{\rm i}$, with $\phi_{\rm i}$ solving the elliptic system \eqref{BVP2}. 
We now introduce the good unknowns associated with these quantities.

If $\alpha=(j,0,0)$, which means that we only differentiate in time, then we define 
\[
\check{u}^{(\alpha)} = \cpalpha u
\quad\mbox{ and }\quad
\check{v}_{\rm i}^{(\alpha)} = \cpalpha v_{\rm i}.
\]
As for the good unknown for $\dt^j\phi_{\rm i}$, 
we need to compensate a lower order correction to the standard definition of good unknowns due to technical reasons; see Remark \ref{re:GU2} below. 
Here, for $\alpha=(j,0,0)$ we adopt the following definition 
\begin{align*}
\check{\phi}_{\rm i}^{(\alpha)}
&= \dt^j \phi_{\rm i}-(\dt^j\varphi)\cdot\nabla^\varphi\phi_{\rm i}-j(\dt^{j-1}\varphi)\cdot\dt\nabla^\varphi\phi_{\rm i} \\
&= \dt^j \phi_{\rm i}-(\dt^j\varphi)\cdot v_{\rm i}-j(\dt^{j-1}\varphi)\cdot\dt v_{\rm i}.
\end{align*}

If $\alpha=(\alpha_0,\alpha_1,\alpha_2)$, with $\alpha_1+\alpha_2\geq 1$, that is, if the derivatives include spatial ones, 
then we need to separate the good unknowns into those supported near the boundary and far from boundary. 
Let $\chi_{\rm b}\in C_0^\infty(\R^2)$ be a cut-off function such that $\chi_{\rm b}=1$ near the boundary $\ul{\itGamma}$ and that its support is in the 
tubular neighborhood $U_{\ul{\itGamma}}$ of the reference curve $\ul{\itGamma}$ defined in Section \ref{subsect:NTCS}. 
We also introduce other cut-off functions $\chi_{\rm e},\chi_{\rm i}\in C^\infty(\R^2)$ such that their supports are in $\ul{\cE}$ and $\ul{\cI}$, respectively, 
and that $\chi_{\rm b}+\chi_{\rm e}+\chi_{\rm i}\equiv1$ holds. 
\begin{itemize}
\item
Near the boundary $\ul{\itGamma}$, the good unknowns are then defined by 
\[
\check{u}^{(\alpha)} = \chi_{\rm b} \cmfalpha u
\quad\mbox{ and }\quad
\check{v}_{\rm i}^{(\alpha)} = \chi_{\rm b} \cmfalpha v_{\rm i};
\]
as for $\phi_{\rm i}$, as in the case of time derivatives, we need to compensate lower order corrections and define the good unknown by 
\begin{align*}
\check{\phi}_{\rm i}^{(\alpha)} 
&= \chi_{\rm b} \{ \mathfrak{d}^{\alpha}\phi_{\rm i} - \sum_{\beta\leq\alpha, |\beta|\geq|\alpha|-1} \binom{\alpha}{\beta} 
  (\mathfrak{d}^{\beta}\varphi)\cdot \mathfrak{d}^{\alpha-\beta}v_{\rm i} \}.
\end{align*}
Moreover, if the derivatives do not include normal ones, then we write $\check{u}^{(\alpha_I)}=\check{u}^{(\alpha_I,0)}$, 
$\check{v}_{\rm i}^{(\alpha_I)}=\check{v}_{\rm i}^{(\alpha_I,0)}$, 
and $\check{\phi}_{\rm i}^{(\alpha_I)}=\check{\phi}_{\rm i}^{(\alpha_I,0)}$ for $\alpha_I=(\alpha_0,\alpha_1)$. 
We also denote by $\check\psi_{\rm i}^{(\alpha_{I})}$ the trace of  $\check{\phi}_{\rm i}^{(\alpha_I)}$ in $\ul{\itGamma}$, so that
\[
\check\psi_{\rm i}^{(\alpha_{I})}=\mathfrak{d}_\parallel^{\alpha_{I}}\psi_{\rm i} - \sum_{\beta_I\leq\alpha_I, |\beta_I|\geq|\alpha_I|-1} \binom{\alpha_I}{\beta_I} 
  (\mathfrak{d}_\parallel^{\beta}\varphi)\cdot \mathfrak{d}_\parallel^{\alpha-\beta}v_{\rm i}. 
\]
\item
Far from the boundary $\ul{\itGamma}$, the good unknowns are defined by using the standard Cartesian derivatives as 
\[
\check{u}_{\rm r}^{(\alpha)} = \chi_{\rm e} \cpalpha u
\quad\mbox{ and }\quad
\check{v}_{\rm i,r}^{(\alpha)} = \chi_{\rm i}\cpalpha v_{\rm i},
\]
and 
\[
\check{\phi}_{\rm i,r}^{(\alpha)} 
= \chi_{\rm i} \{ \bm{\partial}^\alpha\phi_{\rm i} - \sum_{\beta\leq\alpha, |\beta|\geq|\alpha|-1} \binom{\alpha}{\beta} 
  (\bm{\partial}^{\beta}\varphi)\cdot\bm{\partial}^{\alpha-\beta}v_{\rm i} \}.
\]
\end{itemize}

%-----------------------------------------------------------
\subsubsection{Differentiation rules} \label{sectdiffrules}
The difference here with Section \ref{subsect:DLP} is that $\cpalpha$ or $\cmfalpha$ are linearization operators only up to lower order terms. 
Because of this, the commutation relations \eqref{linearization 2} only hold up to lower order terms. More precisely, we have
\begin{equation}\label{comrules2}
\begin{cases}
\mathfrak{d}^\alpha(\nabla^\varphi f)
 = \nabla^\varphi \cmfalpha f  + ((\mathfrak{d}^\alpha\varphi)\cdot\nabla^\varphi)\nabla^\varphi f +{\mathcal C}^1(\mathfrak{d}^\alpha,\partial \varphi) f, \\
\mathfrak{d}^\alpha(\nabla^\varphi\cdot v)
 = \nabla^\varphi\cdot \cmfalpha v 
  + ((\mathfrak{d}^\alpha\varphi)\cdot\nabla^\varphi)(\nabla^\varphi\cdot v) +{\mathcal C}^1(\mathfrak{d}^\alpha,\partial \varphi)\cdot v, \\
\mathfrak{d}^\alpha(\dt^\varphi f)
 = \dt^\varphi \cmfalpha f+ ((\mathfrak{d}^\alpha\varphi)\cdot\nabla^\varphi)\dt^\varphi f +
 {\mathcal C}^2(\mathfrak{d}^\alpha,\dt \varphi) f
 - (\dt\varphi)\cdot  {\mathcal C}^1(\mathfrak{d}^\alpha,\partial \varphi)f,
\end{cases}
\end{equation}
where the differential operators ${\mathcal C}^1(\mathfrak{d}^\alpha,\partial \varphi)$ and ${\mathcal C}^2(\mathfrak{d}^\alpha,\dt \varphi)$ are defined as 
\begin{equation}\label{defcomC}
\begin{cases}
{\mathcal C}^1(\mathfrak{d}^\alpha,\partial \varphi) f
 = ( [\mathfrak{d}^\alpha,\nabla^\varphi]+(\partial^\varphi\mathfrak{d}^\alpha\varphi)^\mathrm{T}\nabla^\varphi)f, \\
{\mathcal C}^2(\mathfrak{d}^\alpha,\dt \varphi) f
 = - \textstyle\sum_{j=1,2}[\mathfrak{d}^\alpha; \dt\varphi_j,\partial_j^\varphi f].
\end{cases}
\end{equation}

\begin{remark}\label{remC1C2}
If $|\alpha|=m\geq1$, then ${\mathcal C}^1(\mathfrak{d}^\alpha,\partial \varphi) f$ 
and ${\mathcal C}^2(\mathfrak{d}^\alpha,\dt \varphi) f$ include derivatives of $f$ and $\varphi$ up to at most order $m$; 
the fact that the derivatives of $\varphi$ of order $m+1$ are cancelled in ${\mathcal C}^1(\mathfrak{d}^\alpha,\partial \varphi) f$ 
follows from the second equation in \eqref{linearization 1}. 
\end{remark}

%-----------------------------------------------------------
\subsection{Equations for the good unknowns in $\ul{\cE}$ near the boundary}\label{subsect:EGUe}
In this subsection, we derive evolution equations for the good unknowns $\check{u}^{(\alpha)}$ for $\alpha=(\alpha_0,\alpha_1,\alpha_2)$, 
and we also show how these equations can be used to provide a control of normal derivatives of the good unknown in terms of tangential ones.

%-----------------------------------------------------------
\subsubsection{Evolution equations for the good unknowns}
Using the identities \eqref{comrules2} and applying $\chi_{\rm b}\mathfrak{d}^{\alpha}$, 
or simply $\mathfrak{d}^{\alpha}$ if $\alpha_1=\alpha_2=0$, to \eqref{NLSWEe5}, we get 
\begin{equation}\label{QLSWEe}
\begin{cases}
 \dt\check{\zeta}^{(\alpha)}+\nabla^\varphi\cdot(h\check{v}^{(\alpha)}+w\check{\zeta}^{(\alpha)}) = f_1^{(\alpha)} &\mbox{in}\quad (0,T)\times\ul{\cE}, \\
 \dt\check{v}^{(\alpha)}+\nabla^\varphi(w\cdot\check{v}^{(\alpha)}+\gr\check{\zeta}^{(\alpha)}) = f_2^{(\alpha)} &\mbox{in}\quad (0,T)\times\ul{\cE}, \\
 (\nabla^\varphi)^\perp\cdot\check{v}^{(\alpha)} = f_3^{(\alpha)} &\mbox{in}\quad (0,T)\times\ul{\cE},
\end{cases}
\end{equation}
where in the case $\alpha_1+\alpha_2\geq1$ 
\begin{align*}
f_1^{(\alpha)}
&= - \chi_{\rm b}\{ {\mathcal C}^2(\mathfrak{d}^\alpha,\dt \varphi)\zeta
  - (\dt\varphi)\cdot {\mathcal C}^1(\mathfrak{d}^\alpha,\partial\varphi)\zeta 
+  {\mathcal C}^1(\mathfrak{d}^\alpha,\partial\varphi)\cdot(hv)
 + \nabla^\varphi\cdot( [\mathfrak{d}^\alpha;h,v] ) \} \\
&\quad\;
+ (\dt^\varphi\chi_{\rm b}) \cmfalpha \zeta
 + \nabla^\varphi\chi_{\rm b}\cdot\bigl( (\cmfalpha \zeta )v + h (\cmfalpha v) \bigr)
 - (\nabla^\varphi\cdot\dt\varphi)\check{\zeta}^{(\alpha)}, \\
f_2^{(\alpha)}
&= -\chi_{\rm b}\{ {\mathcal C}^2(\mathfrak{d}^\alpha,\dt \varphi) v
  - (\dt\varphi)\cdot {\mathcal C}^1(\mathfrak{d}^\alpha,\partial\varphi)v 
+  {\mathcal C}^1(\mathfrak{d}^\alpha,\partial\varphi)  (\tfrac12|v|^2+\gr\zeta)
 + \nabla^\varphi\cdot( \tfrac12[\mathfrak{d}^\alpha;v,\cdot v] ) \} \\
&\quad\;
 + (\dt^\varphi\chi_{\rm b}) \cmfalpha v  
 + (\nabla^\varphi\chi_{\rm b})\bigl( v\cdot\cmfalpha v + \gr\cmfalpha\zeta \bigr) \\
&\quad
 - (\check{v}^{(\alpha)}\cdot\nabla^\varphi)\dt\varphi + ((\nabla^\varphi)^\perp\cdot\dt\varphi)(\check{v}^{(\alpha)})^\perp
 + f_3^{(\alpha)}(\dt\varphi)^\perp, \\
f_3^{(\alpha)}
&= -\chi_{\rm b} {\mathcal C}^1(\mathfrak{d}^\alpha,\partial\varphi)^\perp \cdot v
 + (\nabla^\varphi\chi_{\rm b})^\perp\cdot   \cmfalpha v.
\end{align*}
Here, we used the identities in \eqref{F3} to calculate $f_2^{(\alpha)}$. 
In the case $\alpha_1=\alpha_2=0$, it is sufficient to put $\chi_{\rm b}=1$ in the above equations.

\begin{remark}\label{re:GU1}
If $|\alpha|=m\geq1$, then the right-hand sides $f_1^{(\alpha)}$, $f_2^{(\alpha)}$, and $f_3^{(\alpha)}$ include derivatives of $u$ and $\varphi$ 
up to at most order $m$. 
The fact that the derivatives of $\varphi$ of order $m+1$ are cancelled is a consequence of the introduction of the good unknowns $\check{u}^{(\alpha)}$: 
if we had worked with standard derivatives $\chi_{\rm b}\mathfrak{d}^\alpha u$, derivatives of order $m+1$ in $\varphi$ would have remained. 
\end{remark}

%-----------------------------------------------------------
\subsubsection{Expressing normal derivatives in terms of tangential ones}
The system \eqref{QLSWEe} has the same structure as the linearized system \eqref{LSWEe2}; 
we shall prove below that the good unknowns $\check{u}^{(\alpha_I)}$ for $\alpha_I=(\alpha_0,\alpha_1)$, 
which corresponds to the tangential derivatives in space-time, satisfy boundary condition of the type \eqref{LBC4}--\eqref{LBC4bis}. 
We will therefore be able to control them using the energy estimate given in Proposition \ref{prop:BEE}. 
In order to evaluate $\check{u}^{(\alpha)}$ when $\alpha_2>0$, that is, in the presence of a normal derivative, 
we use \eqref{QLSWEe} to convert the normal derivative into tangential derivatives in space-time. 
Here, we derive this conversion formula. 
In view of \eqref{diff deco}, we have 
\begin{equation}\label{diff deco2}
J\nabla^\varphi=N^\varphi\dnor+T^\varphi\dtan
\end{equation}
with $N^\varphi=J((\partial\varphi)^{-1})^\mathrm{T}\ul{N}$ and $T^\varphi=\frac{J}{|\ul{T}|^2}((\partial\varphi)^{-1})^\mathrm{T}\ul{T}$. 
Plugging this into \eqref{QLSWEe} and adding the last equation in \eqref{QLSWEe} multiplied by an $\R^2$-valued function $\bm{a}$ to the second equation, 
we obtain 
\begin{equation}\label{QLSWEe2}
J\dt\check{u}^{(\alpha)}+A(u,T^\varphi,\bm{a})\dtan\check{u}^{(\alpha)}+A(u,N^\varphi,\bm{a})\dnor\check{u}^{(\alpha)}=f_4^{(\alpha)},
\end{equation}
where 
\begin{align*}
& A(u,N,\bm{a}) = 
 \begin{pmatrix}
  N\cdot w & hN^\mathrm{T} \\
  \gr N & N\otimes w+\bm{a}\otimes N^\perp
 \end{pmatrix}, \\
& f_4^{(\alpha)} = J
 \begin{pmatrix}
  f_1^{(\alpha)} - ( \nabla^\varphi h\cdot\check{v}^{(\alpha)} + (\nabla^\varphi\cdot w)\check{\zeta}^{(\alpha)} ) \\
  f_2^{(\alpha)} - (\partial^\varphi w)^\mathrm{T}\check{v}^{(\alpha)} + f_3^{(\alpha)}\bm{a}
 \end{pmatrix}.
\end{align*}
Here, we have 
\[
\det A(u,N^\varphi,\bm{a}) = (\bm{a}^\perp\cdot N^\varphi)( \gr h|N^\varphi|^2-(w\cdot N^\varphi)^2 ).
\]
Therefore, by choosing $\bm{a}=-(N^\varphi)^\perp$, we see that $\det A(u,N^\varphi,\bm{a})$ is strictly positive 
in the support of the cut-off function $\chi_{\rm b}$ under the subcriticality assumption of the flow. 
In the following, we fix this choice of the function $\bm{a}$.

\begin{remark}
The original equations \eqref{QLSWEe} are partially characteristic, 
and it is not possible in general to use them to convert normal derivatives in terms of tangential ones. 
The above procedure shows that the last equation of \eqref{QLSWEe}, which corresponds to the irrotationality assumption, 
can be used to transform the problem into an equivalent non-characteristic problem. 
\end{remark}

We proceed to rewrite the derivatives of $\check{u}^{(\alpha)}$ in \eqref{QLSWEe2} in terms of good unknowns without using the derivatives. 
Let $\alpha=(j,k,l)$. 
Then, by the definition of the good unknowns we see that in the case $k+l\geq1$ 
\begin{equation}\label{DGU1}
\begin{cases}
 \dt\check{u}^{(\alpha)} = \check{u}^{(j+1,k,l)} - \chi_{\rm b}((\mathfrak{d}^\alpha\varphi)\cdot\dt\nabla^\varphi)u, \\
 \dtan\check{u}^{(\alpha)} = \check{u}^{(j,k+1,l)} - \chi_{\rm b}((\mathfrak{d}^\alpha\varphi)\cdot\dtan\nabla^\varphi)u
  + (\dtan\chi_{\rm b}) \cmfalpha u, \\
 \dnor\check{u}^{(\alpha)} = \check{u}^{(j,k,l+1)} - \chi_{\rm b}((\mathfrak{d}^\alpha\varphi)\cdot\dnor\nabla^\varphi)u
  + (\dnor\chi_{\rm b}) \cmfalpha u,
\end{cases}
\end{equation}
and in the case $k=l=0$ 
\begin{equation}\label{DGU2}
\begin{cases}
 \dt\check{u}^{(j,0,0)} = \check{u}^{(j+1,0,0)} - ((\dt^j\varphi)\cdot\dt\nabla^\varphi)u, \\
 \chi_{\rm b}\dtan\check{u}^{(j,0,0)} = \check{u}^{(j,1,0)} - \chi_{\rm b}((\dt^j\varphi)\cdot\dtan\nabla^\varphi)u, \\
 \chi_{\rm b}\dnor\check{u}^{(j,0,0)} = \check{u}^{(j,0,1)} - \chi_{\rm b}((\mathfrak{d}^\alpha\varphi)\cdot\dnor\nabla^\varphi)u.
\end{cases}
\end{equation}
Therefore, in the case $k+l\geq1$ we obtain 
\begin{equation}\label{QLSWEe3}
J\check{u}^{(j+1,k,l)}+A(u,T^\varphi,\bm{a})\check{u}^{(j,k+1,l)}+A(u,N^\varphi,\bm{a})\check{u}^{(j,k,l+1)}=f_5^{(\alpha)},
\end{equation}
where 
\begin{align*}
f_5^{(\alpha)}
&= f_4^{(\alpha)} + \chi_{\rm b} \{ J((\mathfrak{d}^\alpha\varphi)\cdot\dt\nabla^\varphi)u
 + A(u,T^\varphi,\bm{a})((\mathfrak{d}^\alpha\varphi)\cdot\dtan\nabla^\varphi)u \\
&\quad\;
  + A(u,N^\varphi,\bm{a})((\mathfrak{d}^\alpha\varphi)\cdot\dnor\nabla^\varphi)u \}
 - A(u,J\nabla^\varphi\chi_{\rm b},\bm{a}) \cmfalpha u. 
\end{align*}
In the case $k=l=0$, by replacing $J\check{u}^{(j+1,0,0)}$ with $J\chi_{\rm b}\check{u}^{(j+1,0,0)}$, \eqref{QLSWEe3} still holds with 
\begin{align*}
f_5^{(j,0,0)}
&= \chi_{\rm b}f_4^{(j,0,0)} + \chi_{\rm b} \{ J((\dt^j\varphi)\cdot\dt\nabla^\varphi)u \\
&\quad\;
 + A(u,T^\varphi,\bm{a})((\dt^j\varphi)\cdot\dtan\nabla^\varphi)u 
  + A(u,N^\varphi,\bm{a})((\dt^j\varphi)\cdot\dnor\nabla^\varphi)u \}.
\end{align*}
We will use \eqref{QLSWEe3} for multi-indices $\alpha=(j,k,l)$ satisfying $|\alpha|=m-1$ to convert the normal derivative into the tangential ones in space-time. 
In this case, if $m\geq3$, then $f_5^{(\alpha)}$ includes derivatives of $u$ and $\varphi$ up to at most order $m-1$.

%-----------------------------------------------------------
\subsection{Equations for the good unknowns in $\ul{\cI}$ near the boundary}\label{sectnearbdry}
We want to express in terms of the good unknowns the equations obtained by differentiating the interior equations $\nabla^\varphi\cdot(h_{\rm i}v_{\rm i})=0$ 
and $v_{\rm i}=\nabla^\varphi\phi_{\rm i}$. 
This is more delicate than in the previous section for the exterior region because the elliptic equation satisfied by $\phi_{\rm i}$ is of second order; 
in order to preserve an equation with a similar structure for the good unknown $\check\phi_{\rm i}^{(\alpha)}$, 
it is necessary to include subprincipal terms in the definition of $\check\phi_{\rm i}^{(\alpha)}$, as we did in Section \ref{sectdefGU}.

%-----------------------------------------------------------
\subsubsection{Differentiation of the interior equations}
We proceed to derive the equations for $\check{v}_{\rm i}^{(\alpha)}$ and $\check{\phi}_{\rm i}^{(\alpha)}$ 
obtained by applying $\mathfrak{d}^\alpha$ to the relations $\nabla^\varphi\cdot (h_{\rm i}v_{\rm i})=0$ and $v_{\rm i}=\nabla^\varphi \phi_{\rm i}$. 
These equations are stated in the following proposition.

\begin{proposition}\label{propGUinterior}
If $v_{\rm i}$, $h_{\rm i}$, and $\phi_{\rm i}$ are regular functions satisfying $\nabla^\varphi\cdot (h_{\rm i}v_{\rm i})=0$ and 
$v_{\rm i}=\nabla^\varphi \phi_{\rm i}$, then for any nonzero multi-index $\alpha\in {\mathbb N}^3$, one has 
\begin{equation}\label{QLSWEi}
 \begin{cases}
  \nabla^\varphi\cdot(h_{\rm i}\check{v}_{\rm i}^{(\alpha)}+f_{{\rm i},2}^{(\alpha)}) = f_{{\rm i},1}^{(\alpha)} &\mbox{in}\quad (0,T)\times\ul{\cI}, \\
 \check{v}_{\rm i}^{(\alpha)} = \nabla^\varphi \check{\phi}_{\rm i}^{(\alpha)}+f_{{\rm i},3}^{(\alpha)} &\mbox{in}\quad (0,T)\times\ul{\cI},
 \end{cases}
\end{equation}
where the expressions for $f_{{\rm i},1}^{(\alpha)}$, $f_{{\rm i},2}^{(\alpha)}$, and $f_{{\rm i},3}^{(\alpha)}$ depend on $\alpha$: 
\begin{itemize}
\item
If $\alpha_1+\alpha_2\geq 1$, 
then $f_{{\rm i},1}^{(\alpha)}=f_{{\rm i},01}^{(\alpha)} $ with $f_{{\rm i},01}^{(\alpha)} $ given by \eqref{deffi01} below, 
while $f_{{\rm i},2}^{(\alpha)}=0$, and $f_{{\rm i},3}^{(\alpha)}$ is given by \eqref{deffi3} and subsequent explanation below; 
\item
If $\alpha_1=\alpha_2=0$, 
then $f_{{\rm i},1}^{(\alpha)}=0 $ and $f_{{\rm i},2}^{(\alpha)}=f_{{\rm i},02}^{(\alpha)} $ with $f_{{\rm i},02}^{(\alpha)} $ given by \eqref{deffi02} below, 
while $f_{{\rm i},3}^{(\alpha)}$ is given by \eqref{deffi3bis} below. 
\end{itemize}
\end{proposition}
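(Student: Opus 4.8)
The plan is to apply the operator $\mathfrak{d}^\alpha$ (near the boundary, in normal-tangential coordinates) to the two interior relations $\nabla^\varphi\cdot(h_{\rm i}v_{\rm i})=0$ and $v_{\rm i}=\nabla^\varphi\phi_{\rm i}$, and to rewrite the outcome in terms of the good unknowns $\check{v}_{\rm i}^{(\alpha)}$ and $\check{\phi}_{\rm i}^{(\alpha)}$ using the differentiation rules \eqref{comrules2}. For the first relation, applying $\mathfrak{d}^\alpha$ and using the second identity in \eqref{comrules2} gives $\nabla^\varphi\cdot(\cmfalpha(h_{\rm i}v_{\rm i}))$ plus the transport term $((\mathfrak{d}^\alpha\varphi)\cdot\nabla^\varphi)(\nabla^\varphi\cdot(h_{\rm i}v_{\rm i}))$, which vanishes since $\nabla^\varphi\cdot(h_{\rm i}v_{\rm i})=0$, plus the commutator ${\mathcal C}^1(\mathfrak{d}^\alpha,\partial\varphi)\cdot(h_{\rm i}v_{\rm i})$; one then expands $\cmfalpha(h_{\rm i}v_{\rm i})$ via a Leibniz-type expansion into $h_{\rm i}\cmfalpha v_{\rm i}$ plus lower order commutator pieces $[\mathfrak{d}^\alpha;h_{\rm i},v_{\rm i}]$, and absorbs the cut-off $\chi_{\rm b}$ together with its derivatives hitting the various terms. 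Collecting everything that is not $\nabla^\varphi\cdot(h_{\rm i}\check{v}_{\rm i}^{(\alpha)})$ into $f_{{\rm i},1}^{(\alpha)}=f_{{\rm i},01}^{(\alpha)}$ (when $\alpha_1+\alpha_2\ge1$) defines that source term; when $\alpha_1=\alpha_2=0$ one has no cut-off, and the bookkeeping is arranged slightly differently so the remainder appears inside the divergence as $f_{{\rm i},2}^{(\alpha)}=f_{{\rm i},02}^{(\alpha)}$ with $f_{{\rm i},1}^{(\alpha)}=0$.

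For the second relation $v_{\rm i}=\nabla^\varphi\phi_{\rm i}$, the subtlety is that $\phi_{\rm i}$ solves a \emph{second order} equation, so a naive good unknown $\mathfrak{d}^\alpha\phi_{\rm i}-(\mathfrak{d}^\alpha\varphi)\cdot\nabla^\varphi\phi_{\rm i}$ would leave a genuinely singular subprincipal remainder. This is exactly why the definition of $\check\phi_{\rm i}^{(\alpha)}$ in Section \ref{sectdefGU} includes the extra correction $-\sum_{\beta\le\alpha,|\beta|\ge|\alpha|-1}\binom{\alpha}{\beta}(\mathfrak{d}^\beta\varphi)\cdot\mathfrak{d}^{\alpha-\beta}v_{\rm i}$. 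The plan is to apply $\mathfrak{d}^\alpha$ to $v_{\rm i}=\nabla^\varphi\phi_{\rm i}$, use the first identity in \eqref{comrules2} to get $\nabla^\varphi\cmfalpha\phi_{\rm i}$ plus the transport term $((\mathfrak{d}^\alpha\varphi)\cdot\nabla^\varphi)\nabla^\varphi\phi_{\rm i}=((\mathfrak{d}^\alpha\varphi)\cdot\nabla^\varphi)v_{\rm i}$ plus the commutator ${\mathcal C}^1(\mathfrak{d}^\alpha,\partial\varphi)\phi_{\rm i}$, and then observe that the difference $\check\phi_{\rm i}^{(\alpha)}-\cmfalpha\phi_{\rm i}$ is precisely $-\sum_{|\alpha|-1\le|\beta|<|\alpha|}\binom{\alpha}{\beta}(\mathfrak{d}^\beta\varphi)\cdot\mathfrak{d}^{\alpha-\beta}v_{\rm i}$, whose gradient produces a term that cancels the dangerous $\nabla^\varphi$ of the transport term $((\mathfrak{d}^\alpha\varphi)\cdot\nabla^\varphi)v_{\rm i}$ at subprincipal order (this is the point of the correction). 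After this cancellation one rewrites $\nabla^\varphi\check\phi_{\rm i}^{(\alpha)}=\check v_{\rm i}^{(\alpha)}-f_{{\rm i},3}^{(\alpha)}$, with $f_{{\rm i},3}^{(\alpha)}$ collecting: the commutator ${\mathcal C}^1(\mathfrak{d}^\alpha,\partial\varphi)\phi_{\rm i}$, the residual (lower order) pieces from differentiating the correction sum, the terms where $\nabla^\varphi$ or $\mathfrak{d}^\alpha$ hits $\chi_{\rm b}$, and the Leibniz remainders; this gives formula \eqref{deffi3} (resp.\ \eqref{deffi3bis} in the case $\alpha_1=\alpha_2=0$).

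The main obstacle — and the part requiring genuine care rather than routine computation — is verifying that the correction terms in the definition of $\check\phi_{\rm i}^{(\alpha)}$ achieve the intended cancellation at subprincipal order, so that $f_{{\rm i},3}^{(\alpha)}$ (and hence, after taking divergence, $f_{{\rm i},1}^{(\alpha)}$) really contains derivatives of $v_{\rm i}$, $\phi_{\rm i}$, and $\varphi$ of order at most $m$ with no loss, matching Remark \ref{remC1C2}. Concretely, one must check that when $\mathfrak{d}^\alpha$ lands entirely on $\phi_{\rm i}$ the term $\nabla^\varphi\cmfalpha\phi_{\rm i}$ carries $m+1$ derivatives of $\phi_{\rm i}$, i.e.\ $m$ derivatives of $v_{\rm i}$, which is the expected top order and is allowed; and that the transport term $((\mathfrak{d}^\alpha\varphi)\cdot\nabla^\varphi)v_{\rm i}$, which a priori also carries $m$ derivatives of $\varphi$ times one derivative of $v_{\rm i}$ (hence order $m$ in $\varphi$ — acceptable), does not, after being moved into $f_{{\rm i},3}^{(\alpha)}$, generate an $m+1$-order $\varphi$ term upon taking $\nabla^\varphi$ for the divergence equation; the correction sum is calibrated so that $\nabla^\varphi$ of the $|\beta|=|\alpha|-1$ piece of the correction absorbs exactly that. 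Once the order count is confirmed, the remaining work is purely organizational — sorting the many terms into the three buckets $f_{{\rm i},1}^{(\alpha)},f_{{\rm i},2}^{(\alpha)},f_{{\rm i},3}^{(\alpha)}$ and recording the explicit formulas \eqref{deffi01}, \eqref{deffi02}, \eqref{deffi3}, \eqref{deffi3bis}, distinguishing the two cases $\alpha_1+\alpha_2\ge1$ and $\alpha_1=\alpha_2=0$ according to whether the cut-off $\chi_{\rm b}$ is present.
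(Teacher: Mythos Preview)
Your outline for the first equation when $\alpha_1+\alpha_2\geq 1$ is correct and matches the paper. However, there is a genuine gap in the case $\alpha_1=\alpha_2=0$. You say the remainder can be ``arranged slightly differently'' so that it sits \emph{inside} the divergence with $f_{{\rm i},1}^{(\alpha)}=0$, but this is not mere bookkeeping: after applying $\partial_t^{\alpha_0}$ via the rule \eqref{comrules2}, the commutator piece ${\mathcal C}^1(\mathfrak{d}^\alpha,\partial\varphi)\cdot(h_{\rm i}v_{\rm i})$ is \emph{not} a priori of the form $\nabla^\varphi\cdot(\cdots)$. The paper obtains exact divergence form by a Piola-type identity: writing $J\nabla^\varphi\cdot q=\nabla\cdot\bigl((\partial\varphi)^\star q\bigr)$ with $(\partial\varphi)^\star$ the adjugate, so that the undifferentiated spatial divergence commutes with $\partial_t^{\alpha_0}$; one then uses $(\partial\psi)^\star=(\partial\varphi)^\star(\partial^\varphi\psi)^\star$ and the swap identity $\nabla^\varphi\cdot\bigl((\partial^\varphi\psi)^\star q\bigr)=\nabla^\varphi\cdot\bigl((\partial^\varphi q)^\star\psi\bigr)$ to pull the result back and to replace the dangerous $|\beta|=|\alpha|,|\alpha|-1$ contributions by terms of order $\leq m-1$. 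Without this structural step, you cannot conclude $f_{{\rm i},1}^{(\alpha)}=0$, and Remark~\ref{re:GU2} {\bf ii} (which is essential for closing the energy estimate of Proposition~\ref{prop:BEE}) fails.

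For the second equation, your approach via \eqref{comrules2} is in principle equivalent, but your identification of the cancellation is off: the transport term $((\mathfrak{d}^\alpha\varphi)\cdot\nabla^\varphi)v_{\rm i}$ is already absorbed into $\cmfalpha v_{\rm i}$ and is not the issue. The actual danger is that ${\mathcal C}^1(\mathfrak{d}^\alpha,\partial\varphi)\phi_{\rm i}$, which you propose to put into $f_{{\rm i},3}^{(\alpha)}$, carries derivatives of $\varphi$ of order $m$, whereas Remark~\ref{re:GU2} {\bf i} requires $f_{{\rm i},3}^{(\alpha)}$ to contain only derivatives of order $\leq m-1$. The needed cancellation is between the subprincipal $|\beta|=m-1$ part of ${\mathcal C}^1(\mathfrak{d}^\alpha,\partial\varphi)\phi_{\rm i}$ and the $m$-th order pieces $(\partial^\varphi\mathfrak{d}^\beta\varphi)^{\rm T}\mathfrak{d}^{\alpha-\beta}v_{\rm i}$ produced when $\nabla^\varphi$ hits the correction sum; you do not verify this. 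The paper bypasses the issue by working directly in normal--tangential coordinates, differentiating the relation $\nabla\tilde\phi_{\rm i}=(\partial\tilde\theta)^{\rm T}\tilde v_{\rm i}$, and then rewriting each $(\partial^\varphi\mathfrak{d}^\beta\varphi)^{\rm T}\mathfrak{d}^{\alpha-\beta}v_{\rm i}$ with $|\beta|=m-1$ as $\nabla^\varphi\bigl((\mathfrak{d}^\beta\varphi)\cdot\mathfrak{d}^{\alpha-\beta}v_{\rm i}\bigr)-(\partial^\varphi\mathfrak{d}^{\alpha-\beta}v_{\rm i})^{\rm T}\mathfrak{d}^\beta\varphi$, which makes the gradient structure and the order count manifest. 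A further point you omit: after multiplying by $\chi_{\rm b}$, the term $(\nabla^\varphi\chi_{\rm b})\mathfrak{d}^\alpha\phi_{\rm i}$ must be rewritten purely in terms of $v_{\rm i}$ and $\varphi$ (using $\mathfrak{d}_{\rm tan}=(\underline{T}\cdot\nabla)$ or $\mathfrak{d}_{\rm nor}=(\underline{N}\cdot\nabla)$ to trade one derivative of $\phi_{\rm i}$ for $v_{\rm i}$), since $\phi_{\rm i}$ itself is not controlled by $E_m$.
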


\begin{remark}\label{re:GU2} \ 
{\bf i.} \ 
If $|\alpha|=m\geq3$, then $f_{{\rm i},2}^{(\alpha)}$ and $f_{{\rm i},3}^{(\alpha)}$ include derivatives of $v_{\rm i}$ and $\varphi$ up to at most order $m-1$ 
thanks to our definition of the good unknowns $\check{\phi}_{\rm i}^{(\alpha)}$. 
In fact, if we adopt the standard definition of the good unknowns for 
$\check{\phi}_{\rm i}^{(\alpha)}$, that is,  $\check{\phi}_{\rm i}^{(\alpha)}=\chi_{\rm b}\cmfalpha\phi_{\rm i}$ as for $\check{v}_{\rm i}^{(\alpha)}$, 
then $f_{{\rm i},3}^{(\alpha)}$ would contain $m$-th order derivatives of $\varphi$, which would cause a difficulty for obtaining a priori estimates. 

{\bf ii.} \ 
In the case $\alpha=(m,0,0)$, we have $f_{{\rm i},1}^{(\alpha)}=0$, which is crucial in the application of the energy estimate in Proposition \ref{prop:BEE}. 
In fact, if it were not zero, then a difficulty would arise from the term $\dt\check{\phi}_{\rm i}^{(m,0,0)}$, 
which appears in the last term of the right-hand side of the energy estimate in Proposition \ref{prop:BEE}, 
because it cannot be evaluated by our energy function $E_m(t)$; 
this issue does not appear when $|\alpha|=m$ and $\alpha_1+\alpha_2\geq1$, 
because $\dt\check{\phi}_{\rm i}^{(\alpha)}$ can then be written in terms of derivatives of $v_{\rm i}$ \and $\varphi$ up to at most order $m$; 
see Section \ref{sectexprdtphi} below. 
\end{remark}

\begin{proof}
By assumption, we have $\nabla^{\varphi}\cdot(h_{\rm i}v_{\rm i})=0$ and from the definition $v_{\rm i}=\nabla^\varphi \phi_{\rm i}$, 
we have $(\nabla^\varphi)^\perp\cdot v_{\rm i}=0$. 
Let $\alpha=(\alpha_0,\alpha_1,\alpha_2)$ be a multi-index. 
As in the derivation of \eqref{QLSWEe}, we obtain 
\begin{equation}\label{preQLSWEi0}
 \begin{cases}
  \nabla^\varphi\cdot(h_{\rm i}\check{v}_{\rm i}^{(\alpha)}) = f_{{\rm i},01}^{(\alpha)} &\mbox{in}\quad (0,T)\times\ul{\cI}, \\
  (\nabla^\varphi)^\perp\cdot\check{v}_{\rm i}^{(\alpha)} = f_{{\rm i},03}^{(\alpha)} &\mbox{in}\quad (0,T)\times\ul{\cI},
 \end{cases}
\end{equation}
where in the case $\alpha_1+\alpha_2\geq1$ 
\begin{align}
 \label{deffi01}
f_{{\rm i},01}^{(\alpha)}
&= -\chi_{\rm b}\{ \nabla^\varphi\cdot\bigl( \cmfalpha\zeta_{\rm i} v_{\rm i}
 + [\mathfrak{d}^\alpha;h_{\rm i},v_{\rm i}] \bigr) 
 + {\mathcal C}^1(\mathfrak{d}^\alpha,\partial\varphi)\cdot(h_{\rm i}v_{\rm i}) \}
 + \nabla^\varphi\chi_{\rm b}\cdot h_{\rm i}  \cmfalpha v_{\rm i} , \\
  \label{deffi03}
f_{{\rm i},03}^{(\alpha)}
&= -\chi_{\rm b}   {\mathcal C}^1(\mathfrak{d}^\alpha,\partial\varphi)^\perp\cdot v_{\rm i}
 + (\nabla^\varphi\chi_{\rm b})^\perp\cdot   \cmfalpha v_{\rm i}.
\end{align}
The proposition is therefore proved for the first equation in \eqref{QLSWEi} in the case $\alpha_1+\alpha_2\geq 1$.

We now consider the case $\alpha_1=\alpha_2=0$; note that in this case, it might be sufficient to put $\chi_{\rm b}=1$ in the above equations. 
However, \eqref{preQLSWEi0} is not satisfactory in this case because of the nonzero term $f_{{\rm i},01}^{(\alpha)}$, as explained in Remark \ref{re:GU2} {\bf ii}. 
In such a case, we want to keep the divergence form of the first equation in \eqref{QLSWEi}. 
We first give a general lemma, which is valid for all types of multi-index $\alpha$, 
and in the statement of which we gather several commutator terms under the notation ${\mathfrak c}^1({\mathfrak d}^\alpha,\varphi)q_{\rm i}$, 
where ${\mathfrak c}^1({\mathfrak d}^\alpha,\varphi)$ is the scalar differential operator defined as 
\begin{equation}
\mathfrak{c}^1({\mathfrak d}^\alpha,\varphi) q
= \sum_{\beta\leq\alpha, |\beta|=|\alpha|-1}\binom{\alpha}{\beta} (\partial^\varphi\mathfrak{d}^{\alpha-\beta} q)^\star \mathfrak{d}^{\beta}\varphi 
 + \sum_{\beta\leq\alpha, 1 \leq |\beta| \leq |\alpha|-2} \binom{\alpha}{\beta}
  (\partial^\varphi\mathfrak{d}^{\beta}\varphi)^\star \mathfrak{d}^{\alpha-\beta} q,
\end{equation}
where for  $A=(a_{ij})$ a $2\times2$-matrix we write simply $A^\star=\mbox{adj}(A)=\det(A)A^{-1}$ the adjugate matrix of $A$. 
When $\vert \alpha\vert=m\geq 3$, 
the term $\mathfrak{c}^1({\mathfrak d}^\alpha,\varphi) q$ only contains derivatives of $\varphi$ and $q$ of order at most $m-1$; 
we used a lower case letter in the notation to distinguish it from the commutator terms ${\mathcal C}^1(\mathfrak{d}^\alpha,\partial \varphi)f$ and 
${\mathcal C}^2(\mathfrak{d}^\alpha,\partial_t \varphi)f$ defined in \eqref{defcomC} and that may contain derivatives of $\varphi$ and $f$ of order $m$; 
see Remark \ref{remC1C2}.

\begin{lemma}
Let $q_{\rm i}$ be a regular function defined in $\ul{\cI}$ and satisfy $\nabla^\varphi\cdot q_{\rm i}=0$. 
Then, one has 
\[
\nabla^\varphi\cdot\big( \cmfalpha q_{\rm i} + \mathfrak{c}^1({\mathfrak d}^\alpha,\varphi) q_{\rm i} \big) = 0
\]
in the tubular neighborhood $U_{\ul{\itGamma}}\cap \ul{\cI}$ of $\underline{\itGamma}$ 
in which the normal-tangential coordinates are well defined if $\alpha_1+\alpha_2>0$, and in $\ul{\cI}$ if $\alpha_1=\alpha_2=0$. 
\end{lemma}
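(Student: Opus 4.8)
The plan is to prove the divergence identity $\nabla^\varphi\cdot\big(\cmfalpha q_{\rm i}+\mathfrak{c}^1(\mathfrak{d}^\alpha,\varphi)q_{\rm i}\big)=0$ by directly differentiating the hypothesis $\nabla^\varphi\cdot q_{\rm i}=0$ with $\mathfrak{d}^\alpha$ and then carefully rewriting the resulting commutator terms. First I would note that since $\nabla^\varphi\cdot q_{\rm i}=0$ identically, also $\mathfrak{d}^\alpha(\nabla^\varphi\cdot q_{\rm i})=0$. The second equation in \eqref{comrules2} gives
\[
0=\mathfrak{d}^\alpha(\nabla^\varphi\cdot q_{\rm i})
=\nabla^\varphi\cdot\cmfalpha q_{\rm i}
+\big((\mathfrak{d}^\alpha\varphi)\cdot\nabla^\varphi\big)(\nabla^\varphi\cdot q_{\rm i})
+\mathcal{C}^1(\mathfrak{d}^\alpha,\partial\varphi)\cdot q_{\rm i},
\]
and the middle term vanishes because $\nabla^\varphi\cdot q_{\rm i}=0$. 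So the claim reduces to showing that $\mathcal{C}^1(\mathfrak{d}^\alpha,\partial\varphi)\cdot q_{\rm i}$ can be written as $\nabla^\varphi\cdot\big(\mathfrak{c}^1(\mathfrak{d}^\alpha,\varphi)q_{\rm i}\big)$ — i.e. the highest-order commutator, which a priori contains $m$-th order derivatives of $\varphi$, reorganizes into a perfect $\nabla^\varphi$-divergence of an expression containing only $(m-1)$-th order derivatives.

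The heart of the matter is an algebraic identity involving the adjugate matrix. Recall from \eqref{DiffOP2} that $\nabla^\varphi f=\frac1J(\partial\varphi)^\star{}^{\mathrm T}\nabla f$ where $(\partial\varphi)^\star=\mathrm{adj}(\partial\varphi)$, and that the columns of $(\partial\varphi)^\star$ are divergence-free in the Cartesian sense (the Piola identity: $\nabla\cdot\big((\partial\varphi)^\star\big)_{\cdot j}=0$, equivalently $\partial_1\partial_2\varphi_2-\partial_2\partial_1\varphi_2=0$ and similarly for $\varphi_1$). The key point is that $J\nabla^\varphi\cdot q=\nabla\cdot\big((\partial\varphi)^\star{}^{\mathrm T}q\big)$ — a genuine Cartesian divergence — precisely because of this Piola structure. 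I would therefore pass to Cartesian coordinates: set $Q=(\partial\varphi)^\star{}^{\mathrm T}q_{\rm i}$, so $\nabla\cdot Q=0$, and compute $\mathfrak{d}^\alpha(\nabla\cdot Q)=\nabla\cdot(\mathfrak{d}^\alpha Q)=0$. Then expand $\mathfrak{d}^\alpha Q$ by Leibniz: $\mathfrak{d}^\alpha Q=\sum_{\beta\le\alpha}\binom{\alpha}{\beta}\mathfrak{d}^\beta\big((\partial\varphi)^\star{}^{\mathrm T}\big)\mathfrak{d}^{\alpha-\beta}q_{\rm i}$. Note $(\partial\varphi)^\star$ is linear in $\partial\varphi$, so $\mathfrak{d}^\beta(\partial\varphi)^\star=(\partial\mathfrak{d}^\beta\varphi)^\star$ (reading $\star$ as the linear map on $2\times2$ matrices); crucially, in $\mathfrak{d}^\alpha Q$ the summand with $|\beta|=|\alpha|$ involves $\mathfrak{d}^\alpha\varphi$ differentiated once more when we take $\nabla\cdot$, producing an apparent $(m+1)$-st order term, but by the Piola identity $\nabla\cdot\big((\partial\mathfrak{d}^\alpha\varphi)^\star{}^{\mathrm T}q_{\rm i}\big)$ has that top-order piece cancel — this is exactly the "derivatives of $\varphi$ of order $m+1$ are cancelled" phenomenon noted in Remark \ref{remC1C2}, and it is what lets us absorb the $|\beta|=|\alpha|$ and $|\beta|=|\alpha|-1$ terms.

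The main obstacle, and the step requiring the most care, is the precise bookkeeping that matches the recombined commutator exactly to $\mathfrak{c}^1(\mathfrak{d}^\alpha,\varphi)q_{\rm i}$ as defined — in particular sorting out which Leibniz terms go into the definition of $\cmfalpha q_{\rm i}$, which go into the $|\beta|=|\alpha|-1$ sum in $\mathfrak{c}^1$ (with the adjugate $(\partial^\varphi\mathfrak{d}^{\alpha-\beta}q)^\star$ acting on $\mathfrak{d}^\beta\varphi$), and which go into the $1\le|\beta|\le|\alpha|-2$ sum (with $(\partial^\varphi\mathfrak{d}^\beta\varphi)^\star$ acting on $\mathfrak{d}^{\alpha-\beta}q$), while verifying the $\beta=0$ term reproduces $\nabla^\varphi\cdot\cmfalpha q_{\rm i}$ plus the middle term of \eqref{comrules2} that vanishes by hypothesis. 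I would handle this by converting everything back from Cartesian to $\nabla^\varphi$-notation using $J\nabla^\varphi\cdot(\cdot)=\nabla\cdot\big((\partial\varphi)^{\mathrm T}\!\cdot\!\big)^{-1}$-type relations and the identity $(\partial\varphi)^\star=J(\partial\varphi)^{-1}$, checking that the $\chi_{\rm b}$-localization (only needed for the normal-tangential coordinates to make sense when $\alpha_1+\alpha_2>0$) does not interfere since $\mathfrak{d}^\alpha$ is a well-defined differential operator on $U_{\ul\itGamma}\cap\ul\cI$ and we work there; when $\alpha_1=\alpha_2=0$ all of this is global on $\ul\cI$. A clean alternative, if the direct matching proves too intricate, is an induction on $|\alpha|$: the case $|\alpha|=1$ is \eqref{linearization 2} (the exact commutation, with $\mathfrak{c}^1=0$ when $|\alpha|=1$), and the inductive step applies one more derivative $\mathfrak{d}^{\alpha'}$ to the identity for $\mathfrak{d}^{\alpha-\alpha'}$, reusing \eqref{comrules2} on each term and collecting the new commutators into the updated $\mathfrak{c}^1$.
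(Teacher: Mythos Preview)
Your strategy is sound in spirit---Piola/adjugate structure plus a Leibniz expansion is exactly the engine---but there is a genuine gap in the step
\[
\mathfrak{d}^\alpha(\nabla\cdot Q)=\nabla\cdot(\mathfrak{d}^\alpha Q)
\quad\text{and}\quad
\mathfrak{d}^\beta\big((\partial\varphi)^\star\big)=(\partial\mathfrak{d}^\beta\varphi)^\star,
\]
where $\nabla,\partial$ are Cartesian in $y$. When $\alpha_1+\alpha_2>0$, the operators $\dtan=\ul{T}(y)\cdot\nabla_y$ and $\dnor=\ul{N}(y)\cdot\nabla_y$ have variable coefficients and do \emph{not} commute with $\partial_{y_j}$; the commutators produce extra terms involving $\partial_y\ul{T}$ and $\partial_y\ul{N}$ that spoil both identities above. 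So the Leibniz expansion of $\mathfrak{d}^\alpha Q$ in Cartesian $y$-coordinates does not match the target expression, and your reduction to ``bookkeeping'' would not close. The induction alternative you sketch runs into the same obstruction at the inductive step.

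The paper fixes exactly this point by first passing to the normal--tangential coordinates $(r,s)$ via the map $\theta$. Because $\tilde\theta=\varphi\circ\theta$, the hypothesis becomes $\nabla_{(r,s)}\cdot\big((\partial_{(r,s)}\tilde\theta)^\star\tilde q_{\rm i}\big)=0$, and in these coordinates $\mathfrak{d}^\alpha$ is the constant-coefficient operator $\partial_t^{\alpha_0}\partial_s^{\alpha_1}\partial_r^{\alpha_2}$, which \emph{does} commute with $\nabla_{(r,s)}$ and with $\partial_{(r,s)}$. The Leibniz expansion and the linearity of $A\mapsto A^\star$ then go through cleanly, giving $\nabla^{\tilde\theta}\cdot\big(\sum_{\beta\le\alpha}\binom{\alpha}{\beta}(\partial^{\tilde\theta}\mathfrak{d}^\beta\tilde\theta)^\star\mathfrak{d}^{\alpha-\beta}\tilde q_{\rm i}\big)=0$; pulling back to $y$ via \eqref{F4} yields the same identity with $\partial^\varphi\mathfrak{d}^\beta\varphi$ in place of $\partial^{\tilde\theta}\mathfrak{d}^\beta\tilde\theta$, and the $\beta=\alpha$ and $|\beta|=|\alpha|-1$ terms are rewritten using the last line of \eqref{F4} to land on $\cmfalpha q_{\rm i}+\mathfrak{c}^1(\mathfrak{d}^\alpha,\varphi)q_{\rm i}$. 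In short: your outline becomes correct once you insert the change of variables to $(r,s)$ before differentiating, and this is not a cosmetic detour---it is what makes the commutation step valid.
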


\begin{proof}
We first transform the equation  in the normal-tangential coordinates introduced in Section \ref{subsect:NTCS}. 
Observing that 
\[
A^\star = \det(A)A^{-1} = 
\begin{pmatrix*}[r]
 a_{22} & -a_{12} \\
 -a_{21} & a_{11}
\end{pmatrix*},
\]
we have the identity $J\nabla^\varphi\cdot q = \nabla\cdot(J(\partial\varphi)^{-1}q ) = \nabla\cdot((\partial\varphi)^\star q)$, as well as 
\begin{equation}\label{F4}
\begin{cases}
 ((\partial\varphi)^\star)^{-1}(\partial\psi)^\star = \frac{1}{J}(\partial\varphi)(\partial\psi)^\star = (\partial^\varphi\psi)^\star, \\
 \nabla\cdot((\partial\psi)^\star q) = J\nabla^\varphi\cdot((\partial^\varphi\psi)^\star q), \\
 \nabla^\varphi\cdot((\partial^\varphi\psi)^\star q)) = \nabla^\varphi\cdot((\partial^\varphi q)^\star \psi))
  = \nabla^\varphi\cdot( (\nabla^\varphi\cdot q)\psi - (\psi\cdot\nabla^\varphi)q ).
\end{cases}
\end{equation}
We recall the definitions \eqref{diffeo-theta} and \eqref{theta-modify} of the maps $\theta$ and $\tilde{\theta}$. 
Due to our choice of the diffeomorphism $\varphi(t,\cdot)$, we have $\tilde{\theta}=\varphi\circ\theta$, that is, 
$\tilde{\theta}(t,r,s)=\varphi(t,\theta(r,s))$ for $(r,s)\in(-r_0,r_0)\times\mathbb{T}_L$; see \eqref{const phi}. 
Then, the equation $\nabla^\varphi\cdot q_{\rm i}=0$ can be transformed in the normal-tangential coordinates $(r,s)$ into 
$\nabla^{\tilde{\theta}}\cdot\tilde{q}_{\rm i}=0$. 
Particularly, we have 
\[
\nabla\cdot((\partial\tilde{\theta})^\star\tilde{q}_\mathrm{i}) = 0,
\]
where $\nabla$ is the nabla with respect to $(r,s)$ and so is $\partial$. 
Let $\alpha=(\alpha_0,\alpha_1,\alpha_2)$ be a multi-index. 
Applying $\dt^{\alpha_0}\partial_s^{\alpha_1}\partial_r^{\alpha_2}$ to the above equation, we see that 
\begin{align*}
0 &= \nabla\cdot\bigl( \dt^{\alpha_0}\partial_s^{\alpha_1}\partial_r^{\alpha_2}( (\partial\tilde{\theta})^\star\tilde{q}_\mathrm{i} ) \bigr) \\
&= \nabla\cdot\Biggl( \sum_{\beta\leq\alpha} \binom{\alpha}{\beta}
 (\partial\dt^{\beta_0}\partial_s^{\beta_1}\partial_r^{\beta_2}\tilde{\theta})^\star
 \dt^{\alpha_0-\beta_0}\partial_s^{\alpha_1-\beta_1}\partial_r^{\alpha_2-\beta_2}\tilde{q}_\mathrm{i} \Biggr) \\
&= \det(\partial\tilde{\theta})\nabla^{\tilde{\theta}}\cdot\Biggl( \sum_{\beta\leq\alpha} \binom{\alpha}{\beta}
 (\partial^{\tilde{\theta}}\dt^{\beta_0}\partial_s^{\beta_1}\partial_r^{\beta_2}\tilde{\theta})^\star
 \dt^{\alpha_0-\beta_0}\partial_s^{\alpha_1-\beta_1}\partial_r^{\alpha_2-\beta_2}\tilde{q}_\mathrm{i} \Biggr),
\end{align*}
where we used the first and the second identities in \eqref{F4}. 
Pulling back to the coordinate $y=\theta(r,s)$, we obtain 
\[
0 = \nabla^\varphi\cdot\Biggl( \sum_{\beta\leq\alpha} \binom{\alpha}{\beta}
 (\partial^\varphi\mathfrak{d}^{\beta}\varphi)^\star \mathfrak{d}^{\alpha-\beta} q_{\rm i} \Biggr).
\]
For the term in the case $\beta=(0,0,0)$, we have obviously $\partial^\varphi\varphi={\rm Id}_{2\times2}$. 
For the term in the case $\beta=\alpha$, by the last identity in \eqref{F4} we see that 
\begin{align*}
\nabla^\varphi\cdot( (\partial^\varphi\mathfrak{d}^{\alpha}\varphi)^\star q_{\rm i} )
&= \nabla^\varphi\cdot ( (\partial^\varphi q_{\rm i})^\star\mathfrak{d}^{\alpha}\varphi ) \\
&= \nabla^\varphi\cdot ( (\nabla^\varphi\cdot q_{\rm i})\mathfrak{d}^{\alpha}\varphi - ((\mathfrak{d}^{\alpha}\varphi)\cdot\nabla^\varphi)q_{\rm i} ) \\
&= -\nabla^\varphi\cdot \bigl( ((\mathfrak{d}^{\alpha}\varphi)\cdot\nabla^\varphi)q_{\rm i} \bigr),
\end{align*}
where we used $\nabla^\varphi\cdot q_{\rm i}=0$. 
When $|\alpha|=m$, the $m$-th order derivative $(\partial^\varphi\mathfrak{d}^{\beta}\varphi)^\star$ in the case $|\beta|=|\alpha|-1$ is still troublesome. 
However, by using the last identity in \eqref{F4} again, these terms can be replaced by terms containing only $(m-1)$-th order derivatives,  
and we have 
\begin{align*}
0 &= \nabla^\varphi\cdot\Biggl( \mathfrak{d}^{\alpha} q_{\rm i} - ((\mathfrak{d}^{\alpha}\varphi)\cdot\nabla^\varphi)q_{\rm i}
 + \sum_{\beta\leq\alpha, |\beta|=|\alpha|-1}\binom{\alpha}{\beta} (\partial^\varphi\mathfrak{d}^{\alpha-\beta} q_{\rm i})^\star \mathfrak{d}^{\beta}\varphi \\
&\makebox[4em]{}
 + \sum_{\beta\leq\alpha, 1 \leq |\beta| \leq |\alpha|-2} \binom{\alpha}{\beta}
  (\partial^\varphi\mathfrak{d}^{\beta}\varphi)^\star \mathfrak{d}^{\alpha-\beta} q_{\rm i} \Biggr),
\end{align*}
which is the identity stated in the lemma. 
\end{proof}

Using the lemma with $q_{\rm i}=h_{\rm i}v_{\rm i}$, we obtain 
\begin{equation}\label{preQLSWEi}
\nabla^\varphi\cdot\bigl( h_{\rm i} \cmfalpha v_{\rm i} + f_{{\rm i},02}^{(\alpha)} \bigr)=0,
\end{equation}
where 
\begin{equation}\label{deffi02}
f_{{\rm i},02}^{(\alpha)}
= [\mathfrak{d}^{\alpha}; h_{\rm i},v_{\rm i}] + (\cmfalpha \zeta_{\rm i})v_{\rm i} + \mathfrak{c}^1({\mathfrak d}^\alpha,\varphi)(h_{\rm i} v_{\rm i}). 
\end{equation}
Here, we note that $f_{{\rm i},02}^{(\alpha)}$ is defined only in the tubular neighborhood $U_{\ul{\itGamma}}\cap\ul{\cI}$ in the case $\alpha_1+\alpha_2\geq1$ 
due to the derivatives $\dtan$ and $\dnor$, while it is defined in all the interior domain $\ul{\cI}$ in the case $\alpha_1=\alpha_2=0$. 
Therefore, \eqref{preQLSWEi} proves the proposition for the first equation in \eqref{QLSWEi} in the remaining case $\alpha_1=\alpha_2=0$.

We now proceed to prove the proposition for the second equation of \eqref{QLSWEi}. 
We want to apply ${\mathfrak d}^\alpha$ to the relation $v_{\rm i}=\nabla^\varphi \phi_{\rm i}$ and keep the right-hand side in gradient form, 
up to terms of order $m-1$ when $|\alpha|=m\geq 3$; we will use the following notation 
\[
\mathfrak{c}^2({\mathfrak d}^\alpha,\varphi)q=
\sum_{\beta\leq\alpha, |\beta|=|\alpha|-1} \binom{\alpha}{\beta} (\partial^\varphi\mathfrak{d}^{\alpha-\beta}q)^\mathrm{T}\mathfrak{d}^{\beta}\varphi
 - \sum_{\beta\leq\alpha, 1\leq|\beta|\leq|\alpha|-2} \binom{\alpha}{\beta}
  (\partial^\varphi\mathfrak{d}^{\beta}\varphi)^\mathrm{T}\mathfrak{d}^{\alpha-\beta}q;
\]
as for $\mathfrak{c}^1({\mathfrak d}^\alpha,\varphi)q$ above, 
$\mathfrak{c}^2({\mathfrak d}^\alpha,\varphi)q$ only contains derivative of $q$ and $\varphi$ of order at most $m-1$ when $\vert \alpha\vert=m\geq 3$.

\begin{lemma} 
The following relation holds in the neighborhood $U_{\ul{\itGamma}}\cap \ul{\cI}$ of $\underline{\itGamma}$ 
in which the normal-tangential coordinates are well defined if $\alpha_1+\alpha_2>0$, and in $\ul{\cI}$ if $\alpha_1=\alpha_2=0$; 
\begin{equation}\label{preQLSWEi2}
\cmfalpha v_{\rm i}= \nabla^\varphi\bigl\{ \mathfrak{d}^{\alpha}\phi_{\rm i}
 - \sum_{\beta\leq\alpha, |\beta|\geq|\alpha|-1} \binom{\alpha}{\beta} (\mathfrak{d}^{\beta}\varphi)\cdot\mathfrak{d}^{\alpha-\beta}v_{\rm i} \bigr\}
 +\mathfrak{c}^2({\mathfrak d}^\alpha,\varphi)v_{\rm i}.
\end{equation}
\end{lemma}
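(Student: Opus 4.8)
The plan is to prove the identity \eqref{preQLSWEi2} by the same transport-to-normal-tangential-coordinates argument used for the previous lemma, replacing the divergence structure by the gradient structure. First I would transfer the relation $v_{\rm i}=\nabla^\varphi\phi_{\rm i}$ to the coordinates $(r,s)$ via the diffeomorphism $\theta$. Using $\tilde\theta=\varphi\circ\theta$ (see \eqref{const phi}), the relation becomes $\tilde v_{\rm i}=\nabla^{\tilde\theta}\tilde\phi_{\rm i}$, and since in the flat coordinates $\nabla^{\tilde\theta}=( (\partial\tilde\theta)^{-1})^{\rm T}\nabla$, this reads $(\partial\tilde\theta)^{\rm T}\tilde v_{\rm i}=\nabla\tilde\phi_{\rm i}$, where $\nabla$ and $\partial$ now denote derivatives with respect to $(r,s)$. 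This is the analogue of the starting point $\nabla\cdot((\partial\tilde\theta)^\star\tilde q_{\rm i})=0$ in the proof of the preceding lemma, but now with a matrix transpose instead of an adjugate and a gradient on the right-hand side.

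Next I would apply $\mathfrak d^\alpha$, i.e. $\dt^{\alpha_0}\partial_s^{\alpha_1}\partial_r^{\alpha_2}$, to $(\partial\tilde\theta)^{\rm T}\tilde v_{\rm i}=\nabla\tilde\phi_{\rm i}$ and use the Leibniz rule. The right-hand side gives $\nabla(\mathfrak d^\alpha\tilde\phi_{\rm i})$ directly, since $\nabla$ commutes with $\mathfrak d^\alpha$ in the flat coordinates. On the left-hand side one gets $\sum_{\beta\le\alpha}\binom{\alpha}{\beta}(\partial\mathfrak d^\beta\tilde\theta)^{\rm T}\mathfrak d^{\alpha-\beta}\tilde v_{\rm i}$; the $\beta=(0,0,0)$ term is $(\partial\tilde\theta)^{\rm T}\mathfrak d^\alpha\tilde v_{\rm i}$, and I would multiply through by $((\partial\tilde\theta)^{-1})^{\rm T}$ to isolate $\mathfrak d^\alpha\tilde v_{\rm i}=\nabla^{\tilde\theta}(\mathfrak d^\alpha\tilde\phi_{\rm i})-\sum_{|\beta|\ge1}\binom{\alpha}{\beta}(\partial^{\tilde\theta}\mathfrak d^\beta\tilde\theta)^{\rm T}\mathfrak d^{\alpha-\beta}\tilde v_{\rm i}$. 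Then I would pull back via $y=\theta(r,s)$ to obtain $\mathfrak d^\alpha v_{\rm i}=\nabla^\varphi(\mathfrak d^\alpha\phi_{\rm i})-\sum_{|\beta|\ge1}\binom{\alpha}{\beta}(\partial^\varphi\mathfrak d^\beta\varphi)^{\rm T}\mathfrak d^{\alpha-\beta}v_{\rm i}$.

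The remaining work is to reorganize the sum so that the high-regularity terms are absorbed into the gradient and the leftover is precisely $\cmfalpha v_{\rm i}$ on the left and $\mathfrak{c}^2(\mathfrak d^\alpha,\varphi)v_{\rm i}$ on the right. For the $\beta=\alpha$ term I would use $(\partial^\varphi\mathfrak d^\alpha\varphi)^{\rm T}v_{\rm i}=(\partial^\varphi v_{\rm i})^{\rm T}\mathfrak d^\alpha\varphi+\nabla^\varphi((\mathfrak d^\alpha\varphi)\cdot v_{\rm i})-(\mathfrak d^\alpha\varphi)\cdot\nabla^\varphi v_{\rm i}$-type identities from \eqref{F3} (note $v_{\rm i}=\nabla^\varphi\phi_{\rm i}$ is a gradient, so $(\nabla^\varphi)^\perp\cdot v_{\rm i}=0$ and the curl term drops), moving $(\mathfrak d^\alpha\varphi)\cdot\nabla^\varphi v_{\rm i}$ to the left to combine with $\mathfrak d^\alpha v_{\rm i}$ into $\cmfalpha v_{\rm i}$, and moving $\nabla^\varphi((\mathfrak d^\alpha\varphi)\cdot v_{\rm i})$ into the gradient on the right. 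For the $|\beta|=|\alpha|-1$ terms I would similarly rewrite $(\partial^\varphi\mathfrak d^\beta\varphi)^{\rm T}\mathfrak d^{\alpha-\beta}v_{\rm i}$ using the second identity in \eqref{F3} applied with the roles of the two vectors swapped, so that the top-order derivative of $\varphi$ is traded for a top-order derivative of $v_{\rm i}$, producing the $(\partial^\varphi\mathfrak d^{\alpha-\beta}v_{\rm i})^{\rm T}\mathfrak d^\beta\varphi$ term in $\mathfrak{c}^2$ plus a gradient term (contributing the $|\beta|\ge|\alpha|-1$ correction inside the braces). The $1\le|\beta|\le|\alpha|-2$ terms are already of order at most $m-1$ and go directly into $\mathfrak{c}^2(\mathfrak d^\alpha,\varphi)v_{\rm i}$ with the minus sign as written.

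The main obstacle is the bookkeeping in this last step: one must check carefully that the gradient terms split off from the $\beta=\alpha$ and $|\beta|=|\alpha|-1$ contributions assemble exactly into $\nabla^\varphi$ of the stated bracket $\mathfrak d^\alpha\phi_{\rm i}-\sum_{|\beta|\ge|\alpha|-1}\binom{\alpha}{\beta}(\mathfrak d^\beta\varphi)\cdot\mathfrak d^{\alpha-\beta}v_{\rm i}$ — which is precisely the second-order good unknown $\check\phi_{\rm i}^{(\alpha)}$ (up to the cutoff $\chi_{\rm b}$, which plays no role in this interior identity away from $\operatorname{supp}\nabla\chi_{\rm b}$) — and that no $(m+1)$-th order derivative of $\varphi$ survives. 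The cancellation of $(m+1)$-th order derivatives of $\varphi$ is the structural reason the subprincipal correction was built into $\check\phi_{\rm i}^{(\alpha)}$ in Section \ref{sectdefGU}, so the computation is forced to close; the only real care needed is matching the combinatorial coefficients $\binom{\alpha}{\beta}$ through the applications of \eqref{F3}.
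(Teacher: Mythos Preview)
Your proposal is correct and follows essentially the same route as the paper: pass to normal--tangential coordinates via $\tilde\theta=\varphi\circ\theta$ to write $\nabla\tilde\phi_{\rm i}=(\partial\tilde\theta)^{\rm T}\tilde v_{\rm i}$, apply $\mathfrak d^\alpha$ and Leibniz, pull back to obtain $\nabla^\varphi\mathfrak d^\alpha\phi_{\rm i}=\sum_{\beta\leq\alpha}\binom{\alpha}{\beta}(\partial^\varphi\mathfrak d^\beta\varphi)^{\rm T}\mathfrak d^{\alpha-\beta}v_{\rm i}$, then treat the $\beta=\alpha$ term via the identities in \eqref{F3} together with $(\nabla^\varphi)^\perp\cdot v_{\rm i}=0$, and the $|\beta|=|\alpha|-1$ terms via the first identity in \eqref{F3} (not the second, but the substance of your manipulation is the right one) to peel off the gradient piece and leave the $(\partial^\varphi\mathfrak d^{\alpha-\beta}v_{\rm i})^{\rm T}\mathfrak d^\beta\varphi$ contribution to $\mathfrak c^2$.
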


\begin{proof}
With the same notations as in the proof of the previous lemma, we put $\tilde{\phi}_{\rm i}=\phi_{\rm i}\circ\theta$ and $\tilde{v}_{\rm i}=v_{\rm i}\circ\theta$. 
Then, the relation $v_{\rm i}=\nabla\phi_{\rm i}$ is transformed into $\tilde{v}_{\rm i}=\nabla^{\tilde{\theta}}\tilde{\phi}_{\rm i}$, 
so that we have $\nabla\tilde{\phi}_{\rm i}=(\partial\tilde{\theta})^\mathrm{T}\tilde{v}_{\rm i}$. 
As before, we let $\alpha=(\alpha_0,\alpha_1,\alpha_2)$ be a multi-index. 
Applying $\dt^{\alpha_0}\partial_s^{\alpha_1}\partial_r^{\alpha_2}$ and then $((\partial\tilde{\theta})^{-1})^\mathrm{T}$ to this equation, we have 
\[
\nabla^{\tilde{\theta}}\dt^{\alpha_0}\partial_s^{\alpha_1}\partial_r^{\alpha_2}\tilde{\phi}_{\rm i}
= \sum_{\beta\leq\alpha} \binom{\alpha}{\beta} (\partial^{\tilde{\theta}}\dt^{\beta_0}\partial_s^{\beta_1}\partial_r^{\beta_2}\tilde{\theta})^\mathrm{T}
 \dt^{\alpha_0-\beta_0}\partial_s^{\alpha_1-\beta_1}\partial_r^{\alpha_2-\beta_2}\tilde{v}_{\rm i}.
\]
Pulling back to the coordinate $y=\theta(r,s)$, we obtain 
\[
\nabla^\varphi\mathfrak{d}^{\alpha}\phi_{\rm i}
= \sum_{\beta\leq\alpha} \binom{\alpha}{\beta} (\partial^\varphi\mathfrak{d}^{\beta}\varphi)^\mathrm{T}\mathfrak{d}^{\alpha-\beta}v_{\rm i}.
\]
For the term in the case $\beta=\alpha$, we see that 
\begin{align*}
(\partial^\varphi\mathfrak{d}^{\alpha}\varphi)^\mathrm{T}v_{\rm i}
&= \nabla^\varphi((\mathfrak{d}^{\alpha}\varphi)\cdot v_{\rm i}) - (\partial^\varphi v_{\rm i})^\mathrm{T}\mathfrak{d}^{\alpha}\varphi \\
&= \nabla^\varphi((\mathfrak{d}^{\alpha}\varphi)\cdot v_{\rm i}) - ((\mathfrak{d}^{\alpha}\varphi)\cdot\nabla^\varphi)v_{\rm i}
 + ((\nabla^\varphi)^\perp\cdot v_{\rm i})(\mathfrak{d}^{\alpha}\varphi)^\perp \\
&= \nabla^\varphi((\mathfrak{d}^{\alpha}\varphi)\cdot v_{\rm i}) - ((\mathfrak{d}^{\alpha}\varphi)\cdot\nabla^\varphi)v_{\rm i},
\end{align*}
where we used $(\nabla^\varphi)^\perp\cdot v_{\rm i}=0$. 
When $|\alpha|=m$, the $m$-th order derivative $(\partial^\varphi\mathfrak{d}^{\beta}\varphi)^\mathrm{T}$ in the case $|\beta|=|\alpha|-1$ 
is again troublesome and treated as $(\partial^\varphi\mathfrak{d}^{\beta}\varphi)^\mathrm{T}\mathfrak{d}^{\alpha-\beta}v_{\rm i} 
= \nabla^\varphi((\mathfrak{d}^{\beta}\varphi)\cdot\mathfrak{d}^{\alpha-\beta}v_{\rm i})
 - (\partial^\varphi\mathfrak{d}^{\alpha-\beta}v_{\rm i})^\mathrm{T}\mathfrak{d}^{\beta}\varphi$. 
Therefore, we obtain 
\[
\nabla^\varphi\bigl\{ \mathfrak{d}^{\alpha}\phi_{\rm i}
 - \sum_{\beta\leq\alpha, |\beta|\geq|\alpha|-1} \binom{\alpha}{\beta} (\mathfrak{d}^{\beta}\varphi)\cdot\mathfrak{d}^{\alpha-\beta}v_{\rm i} \bigr\}
= \mathfrak{d}^{\alpha}v_{\rm i} - ((\mathfrak{d}^{\alpha}\varphi)\cdot\nabla^\varphi)v_{\rm i} -\mathfrak{c}^2({\mathfrak d}^\alpha,\varphi)v_{\rm i},
\]
which is the identity stated in the lemma. 
\end{proof}

Now, we need to multiply \eqref{preQLSWEi2} by the cut-off function $\chi_{\rm b}$ to obtain equations for good unknowns in the case $\alpha_1+\alpha_2\geq1$; 
in view of the definition of the good unknowns, this yields 
\begin{equation}\label{preQLSWEi2bis}
\check{v}_{\rm i}^{(\alpha)}=\nabla^\varphi \check\phi_{\rm i}^{(\alpha)}-(\nabla^\varphi \chi_{\rm b}) 
\big\{  \mathfrak{d}^{\alpha}\phi_{\rm i}
 - \sum_{\beta\leq\alpha, |\beta|\geq|\alpha|-1} \binom{\alpha}{\beta}
 (\mathfrak{d}^{\beta}\varphi)\cdot\mathfrak{d}^{\alpha-\beta}v_{\rm i} \big\} + \chi_{\rm b}{\mathfrak c}^2(\mathfrak{d}^\alpha,\varphi)v_{\rm i}.
\end{equation}
The term $(\nabla^\varphi\chi_{\rm b})\mathfrak{d}^{\alpha}\phi_{\rm i}$ cannot be evaluated directly by our energy 
function $E_m(t)$ and should be rewritten in term of good unknowns related to $v_{\rm i}$. 
In the case $\alpha_1\geq1$, in view of \eqref{NorTanD2} we have $\dtan=\ul{T}\cdot\nabla$ so that 
\begin{align*}
\mathfrak{d}^{\alpha}\phi_{\rm i} - (\mathfrak{d}^{\alpha}\varphi)\cdot v_{\rm i}
&= \dt^{\alpha_0}\dtan^{\alpha_1-1}\dnor^{\alpha_2}(\ul{T}\cdot\nabla\phi_{\rm i})
  - (\dt^{\alpha_0}\dtan^{\alpha_1-1}\dnor^{\alpha_2}((\ul{T}\cdot\nabla)\varphi))\cdot v_{\rm i} \\
&= \dt^{\alpha_0}\dtan^{\alpha_1-1}\dnor^{\alpha_2}(\ul{T}\cdot(\partial\varphi)^\mathrm{T}v_{\rm i})
  - (\dt^{\alpha_0}\dtan^{\alpha_1-1}\dnor^{\alpha_2}((\ul{T}\cdot\nabla)\varphi))\cdot v_{\rm i} \nonumber \\
&= [\dt^{\alpha_0}\dtan^{\alpha_1-1}\dnor^{\alpha_2},v_{\rm i}]\cdot(\ul{T}\cdot\nabla)\varphi, \nonumber
\end{align*}
where we used $(\partial\varphi)\ul{T}=(\ul{T}\cdot\nabla)\varphi$. 
Similarly, in the case $\alpha_2\geq1$ we have 
\[
\mathfrak{d}^{\alpha}\phi_{\rm i} - (\mathfrak{d}^{\alpha}\varphi)\cdot v_{\rm i}
= [\dt^{\alpha_0}\dtan^{\alpha_1}\dnor^{\alpha_2-1},v_{\rm i}]\cdot(\ul{N}\cdot\nabla)\varphi.
\]
By these equations and \eqref{preQLSWEi2bis}, we get 
\[
\check{v}_{\rm i}^{(\alpha)} = \nabla^\varphi\check{\phi}_{\rm i}^{(\alpha)} + f_{{\rm i},3}^{(\alpha)} \quad\mbox{in}\quad (0,T)\times\ul{\cI},
\]
where in the case $\alpha_1\geq1$ 
\begin{align}\label{deffi3}
f_{{\rm i},3}^{(\alpha)}
&= \chi_{\rm b} \mathfrak{c}^2(\mathfrak{d}^\alpha,\varphi)v_{\rm i} \\
&\quad\;
 - (\nabla^\varphi\chi_{\rm b})\bigl\{ [\dt^{\alpha_0}\dtan^{\alpha_1-1}\dnor^{\alpha_2},v_{\rm i}]\cdot(\ul{T}\cdot\nabla)\varphi
 - \sum_{\beta\leq\alpha, |\beta|=|\alpha|-1} \binom{\alpha}{\beta}(\mathfrak{d}^{\beta}\varphi)\cdot\mathfrak{d}^{\alpha-\beta}v_{\rm i} \bigr\}, \nonumber
\end{align}
and in the case $\alpha_1=0$ and $\alpha_2\geq1$, the term $[\dt^{\alpha_0}\dtan^{\alpha_1-1}\dnor^{\alpha_2},v_{\rm i}]\cdot(\ul{T}\cdot\nabla)\varphi$ 
in $f_{{\rm i},3}^{(\alpha)}$ should be replaced with $[\dt^{\alpha_0}\dtan^{\alpha_1}\dnor^{\alpha_2-1},v_{\rm i}]\cdot(\ul{N}\cdot\nabla)\varphi$, 
while in the case $\alpha_1=\alpha_2=0$, 
\begin{equation}\label{deffi3bis}
f_{{\rm i},3}^{(\alpha)} = \mathfrak{c}^2(\mathfrak{d}^\alpha,\varphi)v_{\rm i}.
\end{equation}
This proves the proposition for the second equation in \eqref{QLSWEi} and therefore concludes the proof of the proposition. 
\end{proof}

%-----------------------------------------------------------
\subsubsection{Expressing $\dt \check{\phi}_{\rm i}^{(\alpha)}$ in terms of $v_{\rm i}$ and $\varphi$}\label{sectexprdtphi}
In connection with Remark \ref{re:GU2} {\bf ii}, we derive here an equation for $\dt\check{\phi}_{\rm i}^{(\alpha)}$ in the case $\alpha_1+\alpha_2\geq1$. 
Let $\alpha=(\alpha_0,\alpha_1,\alpha_2)$ be a multi-index with $\alpha_1\geq1$ and put $\alpha'=(\alpha_0+1,\alpha_1-1,\alpha_2)$. 
Then, we have 
\begin{align*}
& \dt\bigl\{ \mathfrak{d}^{\alpha}\phi_{\rm i}
  - \sum_{\beta\leq\alpha, |\beta|\geq|\alpha|-1} \binom{\alpha}{\beta} (\mathfrak{d}^{\beta}\varphi)\cdot\mathfrak{d}^{\alpha-\beta}v_{\rm i} \bigr\} \\
&= \dtan\bigl\{ \mathfrak{d}^{\alpha'}\phi_{\rm i}
  - \sum_{\beta'\leq\alpha', |\beta'|\geq|\alpha'|-1} \binom{\alpha'}{\beta'} (\mathfrak{d}^{\beta'}\varphi)\cdot\mathfrak{d}^{\alpha'-\beta'}v_{\rm i} \bigr\}
 + {\mathfrak c}^3(\mathfrak{d}^\alpha,\varphi)v_{\rm i},
\end{align*}
where 
\begin{align*}
{\mathfrak c}^3(\mathfrak{d}^\alpha,\varphi)v_{\rm i}
&= - \sum_{\beta\leq\alpha, |\beta|=|\alpha|-1} \binom{\alpha}{\beta}(\mathfrak{d}^\beta\varphi)\cdot\dt\mathfrak{d}^{\alpha-\beta}v_{\rm i} 
 + \sum_{\beta'\leq\alpha', |\beta'|=|\alpha'|-1} \binom{\alpha'}{\beta'} (\mathfrak{d}^{\beta'}\varphi)\cdot\dtan\mathfrak{d}^{\alpha'-\beta'}v_{\rm i}.
\end{align*}
This equation together with $\dtan=\ul{T}\cdot\nabla=(\partial\varphi)\ul{T}\cdot\nabla^\varphi=(\dtan\varphi)\cdot\nabla^\varphi$ and \eqref{preQLSWEi2} implies 
$\dt\check{\phi}_{\rm i}^{(\alpha)} = (\dtan\varphi)\cdot\check{v}_{\rm i}^{(\alpha')} + f_{{\rm i},6}^{(\alpha)}$ with 
\[
f_{{\rm i},6}^{(\alpha)}
= \chi_{\rm b}( {\mathfrak c}^3(\mathfrak{d}^\alpha,\varphi)v_{\rm i} - (\dtan\varphi)\cdot {\mathfrak c}^2(\mathfrak{d}^\alpha,\varphi)v_{\rm i} ).
\]
Similar equation holds in the case $\alpha_2\geq1$ with $\alpha''=(\alpha_0+1,\alpha_1,\alpha_2-1)$ and we obtain 
\begin{equation}\label{Dtphii}
\dt\check{\phi}_{\rm i}^{(\alpha)} = 
\begin{cases}
 (\dtan\varphi)\cdot\check{v}_{\rm i}^{(\alpha')} + f_{{\rm i},6}^{(\alpha)} &\mbox{if}\quad \alpha_1\geq1, \\
 (\dnor\varphi)\cdot\check{v}_{\rm i}^{(\alpha'')} + f_{{\rm i},7}^{(\alpha)} &\mbox{if}\quad \alpha_2\geq1,
\end{cases}
\end{equation}
where $f_{{\rm i},7}^{(\alpha)}$ has a similar form to $f_{{\rm i},6}^{(\alpha)}$. 
We note that if $|\alpha|=m\geq3$, then $f_{{\rm i},6}^{(\alpha)}$ and $f_{{\rm i},7}^{(\alpha)}$ include derivatives of $v_{\rm i}$ and $\varphi$ 
up to at most order $m-1$.

%-----------------------------------------------------------
\subsubsection{Expressing normal derivatives in terms of tangential ones}
Similar to the case of the exterior domain in Section \ref{subsect:EGUe}, by the energy estimate given in Proposition \ref{prop:BEE}, 
we can evaluate the good unknowns $\check{v}_{\rm i}^{(\alpha_I)}$ for $\alpha_I=(\alpha_0,\alpha_1)$, 
which corresponds to the tangential derivatives in space-time. 
In order to evaluate $\check{v}_{\rm i}^{(\alpha)}$ in the presence of a normal derivative, that is when $\alpha_2\geq 1$, we use \eqref{preQLSWEi0}, 
to convert normal derivatives into tangential derivatives in space-time. 
Here, we derive such a formula. 
Plugging the decomposition \eqref{diff deco2} into \eqref{preQLSWEi0}, we obtain 
\[
\begin{cases}
 h_{\rm i}( N^\varphi\cdot\dnor\check{v}_{\rm i}^{(\alpha)} + T^\varphi\cdot\dtan\check{v}_{\rm i}^{(\alpha)} )
  = J(f_{{\rm i}, 01}^{(\alpha)} - \check{v}_{\rm i}^{(\alpha)}\cdot\nabla^\varphi\zeta_{\rm i} ), \\
 (N^\varphi)^\perp\cdot\dnor\check{v}_{\rm i}^{(\alpha)} + (T^\varphi)^\perp\cdot\dtan\check{v}_{\rm i}^{(\alpha)} = Jf_{{\rm i},03}^{(\alpha)},
\end{cases}
\]
so that 
\begin{align*}
& h_{\rm i}|N^\varphi|^2\dnor\check{v}_{\rm i}^{(\alpha)}
 + h_{\rm i}( N^\varphi\otimes T^\varphi + (N^\varphi)^\perp\otimes(T^\varphi)^\perp )\dtan\check{v}_{\rm i}^{(\alpha)} \\
&= J\{ (f_{{\rm i}, 01}^{(\alpha)} - \check{v}_{\rm i}^{(\alpha)}\cdot\nabla^\varphi\zeta_{\rm i} )N^\varphi
 + h_{\rm i }f_{{\rm i},03}^{(\alpha)}(N^\varphi)^\perp \}.
\end{align*}
Here, we let $\alpha=(j,k,l)$. 
For the terms $\dnor\check{v}_{\rm i}^{(\alpha)}$ and $\dtan\check{v}_{\rm i}^{(\alpha)}$ in the above equation, 
we use similar formulae to \eqref{DGU1} and \eqref{DGU2} to obtain 
\begin{equation}\label{QLSWEi2}
h_{\rm i}|N^\varphi|^2\check{v}_{\rm i}^{(j,k,l+1)}
 + h_{\rm i}( N^\varphi\otimes T^\varphi + (N^\varphi)^\perp\otimes(T^\varphi)^\perp )\check{v}_{\rm i}^{(j,k+1,l)} = f_{{\rm i},8}^{(\alpha)},
\end{equation}
where in the case $k+l\geq1$ 
\begin{align*}
f_{{\rm i},8}^{(\alpha)}
&= J \{ (f_{{\rm i}, 01}^{(\alpha)} - \check{v}_{\rm i}^{(\alpha)}\cdot\nabla^\varphi\zeta_{\rm i} )N^\varphi
 + h_{\rm i}f_{{\rm i},03}^{(\alpha)}(N^\varphi)^\perp \} \\
&\quad\;
 + h_{\rm i}\chi_{\rm b} \{ |N^\varphi|^2((\mathfrak{d}^\alpha\varphi)\cdot\dnor\nabla^\varphi)v_{\rm i}
  + (  N^\varphi\otimes T^\varphi + (N^\varphi)^\perp\otimes(T^\varphi)^\perp )((\mathfrak{d}^\alpha\varphi)\cdot\dtan\nabla^\varphi)v_{\rm i} \} \\
&\quad\:
 - h_{\rm i}( N^\varphi\otimes(J\nabla^\varphi\chi_{\rm b}) + (N^\varphi)^\perp\otimes(J\nabla^\varphi\chi_{\rm b})^\perp ) \cmfalpha v_{\rm i},
\end{align*}
and in the case $k=l=0$ we have a similar formula; multiply $\chi_{\rm b}$ to the first line and drop the last line in the right-hand side. 
We will use \eqref{QLSWEi2} for multi-indices $\alpha=(j,k,l)$ satisfying $|\alpha|=m-1$ to convert the normal derivative into the tangential ones. 
In this case, if $m\geq3$, then $f_{{\rm i},8}^{(\alpha)}$ includes derivatives of $v_{\rm i}$ and $\varphi$ up to at most order $m-1$.

%-------------------------------------------------------------
\subsection{Tangential differentiation of the boundary conditions  on $\ul{\itGamma}$}\label{secttangdiff}
We proceed to derive equations obtained by differentiating tangentially the boundary conditions \eqref{BC5}--\eqref{ODE5}, 
that is, by applying $\dpar^{\alpha_I}$ to these boundary conditions, where $\alpha_I=(\alpha_0,\alpha_1)$ is a multi-index.

%-------------------------------------------------------------
\subsubsection{Tangential differentiation of the boundary condition $\zeta=\zeta_{\rm i}$}
Applying $\dpar^{\alpha_I}$ to the boundary condition $\zeta=\zeta_{\rm i}$, we have $\dpar^{\alpha_I}\zeta=\dpar^{\alpha_I}\zeta_{\rm i}$, so that 
\[
\check{\zeta}^{(\alpha_I)}-\check{\zeta}_{\rm i}^{(\alpha_I)} = ((\dpar^{\alpha_I}\varphi)\cdot\nabla^\varphi)(\zeta_{\rm i}-\zeta)
 \quad\mbox{ on }\quad \ul{\itGamma},
\]
where $\check{\zeta}_{\rm i}^{(\alpha_I)} = \cmfalpha_\parallel  \zeta_{\rm i}
 = \dpar^{\alpha_I}\zeta_{\rm i}-(\dpar^{\alpha_I}\varphi)\cdot\nabla^\varphi\zeta_{\rm i}$. 
Therefore, by the same calculations as in Section \ref{subsect:DLP} we get 
$N^\varphi\cdot(\dpar^{\alpha_I}\varphi)
= -\frac{|N^\varphi|^2}{N^\varphi\cdot\nabla^\varphi(\zeta-\zeta_{\rm i})}(\check{\zeta}^{(\alpha_I)}-\check{\zeta}_{\rm i}^{(\alpha_I)})$ on $\ul{\itGamma}$. 
Here, by the decomposition \eqref{diff deco2} and the boundary condition $\zeta=\zeta_{\rm i}$ on $\ul{\itGamma}$, we have 
$JN^\varphi\cdot\nabla^\varphi(\zeta-\zeta_{\rm i})=|N^\varphi|^2(\ul{N}\cdot\nabla)(\zeta-\zeta_{\rm i})$ on $\ul{\itGamma}$. 
Therefore, we get 
\begin{equation}\label{EqPhi2}
N^\varphi\cdot(\dpar^{\alpha_I}\varphi)
= - \frac{J}{(\ul{N}\cdot\nabla)(\zeta-\zeta_{\rm i})}(\check{\zeta}^{(\alpha_I)}-\check{\zeta}_{\rm i}^{(\alpha_I)})
 \quad\mbox{on}\quad (0,T)\times\ul{\itGamma}.
\end{equation}

%-------------------------------------------------------------
\subsubsection{Tangential differentiation of the boundary condition $v=v_{\rm i}$}
Proceeding as for the boundary condition on $\zeta$, we obtain 
\[
\check{v}^{(\alpha_I)}-\check{v}_{\rm i}^{(\alpha_I)} = ((\dpar^{\alpha_I}\varphi)\cdot\nabla^\varphi)(v_{\rm i}-v)
 \quad\mbox{ on }\quad \ul{\itGamma},
\]
Therefore, by the same calculations as in Section \ref{subsect:DLP} we get 
\begin{equation}\label{QLBC0}
\begin{cases}
 N^\varphi\cdot(h\check{v}^{(\alpha_I)}+w\check{\zeta}^{(\alpha_I)})
  = N^\varphi\cdot(h_{\rm i}\check{v}_{\rm i}^{(\alpha_I)}+f_{{\rm i},4}^{(\alpha_I)}) &\mbox{on}\quad (0,T)\times\ul{\itGamma}, \\
 (N^\varphi)^\perp\cdot(h\check{v}^{(\alpha_I)})=(N^\varphi)^\perp\cdot(h_{\rm i}\check{v}_{\rm i}^{(\alpha_I)}) &\mbox{on}\quad (0,T)\times\ul{\itGamma}, 
\end{cases}
\end{equation}
where $f_{{\rm i},4}^{(\alpha_I)}= w_{\rm i}  \check{\zeta}_{\rm i}^{(\alpha_I)}  = w_{\rm i} \cmfalpha_\parallel  \zeta_{\rm i}$.

%-------------------------------------------------------------
\subsubsection{Tangential differentiation of the evolution equation for $\psi_{\rm i}$}
As for \eqref{ODE5}, we write it as $\dt\phi_{\rm i}-(\dt\varphi)\cdot v+\frac12|v|^2+\gr\zeta=0$ on $\ul{\itGamma}$. 
Applying $\dpar^{\alpha_I}$ to this equation and using $v=v_{\rm i}$ on $\ul{\itGamma}$, we see that 
\begin{align*}
0 &= \dt(\dpar^{\alpha_I}\phi_{\rm i}) - \dpar^{\alpha_I}((\dt\varphi)\cdot v) + v\cdot\dpar^{\alpha_I}v + \gr\dpar^{\alpha_I}\zeta + [\dpar^{\alpha_I};v,\cdot v] \\
&= \dt\bigl\{ \check{\phi}_{\rm i}^{(\alpha_I)}
 + \sum_{\beta_I\leq\alpha_I, |\beta_I|\geq|\alpha_I|-1} \binom{\alpha_I}{\beta_I}(\dpar^{\beta_I}\varphi)\cdot\dpar^{\alpha_I-\beta_I}v_{\rm i} \bigr\} 
 - \sum_{\beta_I\leq\alpha_I} \binom{\alpha_I}{\beta_I}(\dpar^{\beta_I}\dt\varphi)\cdot\dpar^{\alpha_I-\beta_I}v \\
&\quad\;
 + v\cdot( \check{v}^{(\alpha_I)}+((\dpar^{\alpha_I}\varphi)\cdot\nabla^\varphi)v )
 + \gr( \check{\zeta}^{(\alpha_I)}+(\dpar^{\alpha_I}\varphi)\cdot\nabla^\varphi\zeta )+ [\dpar^{\alpha_I};v,\cdot v] \\
&= \dt\check{\phi}_{\rm i}^{(\alpha_I)} + w\cdot\check{v}^{(\alpha_I)} + \gr\check{\zeta}^{(\alpha_I)}
 + (\dpar^{\alpha_I}\varphi)\cdot(\dt v +\gr\nabla^\varphi\zeta) + w\cdot((\dpar^{\alpha_I}\varphi)\cdot\nabla^\varphi)v + [\dpar^{\alpha_I};v,\cdot v] \\
&\quad\;
 + \sum_{\beta_I\leq\alpha_I, |\beta_I|=|\alpha_I|-1} \binom{\alpha_I}{\beta_I}(\dpar^{\beta_I}\varphi)\cdot\dpar^{\alpha_I-\beta_I}\dt v_{\rm i} 
 - \sum_{\beta_I\leq\alpha_I, 1\leq|\beta_I|\leq|\alpha_I|-2} \binom{\alpha_I}{\beta_I}(\dpar^{\beta_I}\dt\varphi)\cdot\dpar^{\alpha_I-\beta_I}v_{\rm i}.
\end{align*}
Here, by the same calculation as in Section \ref{subsect:DLP} we have 
\begin{align*}
&(\dpar^{\alpha_I}\varphi)\cdot(\dt v +\gr\nabla^\varphi\zeta) + w\cdot((\dpar^{\alpha_I}\varphi)\cdot\nabla^\varphi)v \\
&= (\dpar^{\alpha_I}\varphi)\cdot\bigl( \dt^\varphi v + \nabla^\varphi(\tfrac12|v|^2+\mbox{\tt g}\zeta) + ((\nabla^\varphi)^\perp\cdot v)(\dt\varphi)^\perp \bigr) \\
&= 0. 
\end{align*}
Therefore, recalling that we defined $\check\psi_{\rm i}^{(\alpha_I)}$ in Section \ref{sectdefGU} as the trace of 
$\check{\phi}_{\rm i}^{(\alpha_I)}$ on $\ul{\itGamma}$, we get
\begin{equation}\label{QLBC00}
\check{\psi}_{\rm i}^{(\alpha_I)} + w\cdot\check{v}^{(\alpha_I)} + \gr\check{\zeta}^{(\alpha_I)} =f_{{\rm i},5}^{(\alpha_I)}
 \quad\mbox{on}\quad (0,T)\times\ul{\itGamma},
\end{equation}
where 
\begin{align*}
f_{{\rm i},5}^{(\alpha_I)}
&= \sum_{\beta_I\leq\alpha_I, 1\leq|\beta_I|\leq|\alpha_I|-2} \binom{\alpha_I}{\beta_I} (\dpar^{\beta_I}\dt\varphi)\cdot\dpar^{\alpha_I-\beta_I}v_{\rm i} \\
&\quad\;
 - \sum_{\beta_I\leq\alpha_I, |\beta_I|=|\alpha_I|-1} \binom{\alpha_I}{\beta_I}(\dpar^{\beta_I}\varphi)\cdot\dpar^{\alpha_I-\beta_I}\dt v_{\rm i} 
 - [\dpar^{\alpha_I};v_{\rm i},\cdot v_{\rm i}].
\end{align*}

%-----------------------------------------------------------
\subsection{Equations for good unknowns away from the boundary}\label{subsect:EqGUb}
We proceed to derive equations for the good unknowns $\check{u}_{\rm r}^{(\alpha)},\check{v}_{\rm i,r}^{(\alpha)}$, and $\check{\phi}_{\rm i,r}^{(\alpha)}$, 
whose supports are away from the boundary $\ul{\itGamma}$. 
In this case, we do not need to use the normal-tangential coordinates. 
As in the derivation of \eqref{QLSWEe}, applying $\chi_{\rm e}\bm{\partial}^\alpha$ to \eqref{NLSWEe5}, we get 
\begin{equation}\label{QLSWEr}
\begin{cases}
 \dt\check{\zeta}_{\rm r}^{(\alpha)}+\nabla^\varphi\cdot(h\check{v}_{\rm r}^{(\alpha)}+w\check{\zeta}_{\rm r}^{(\alpha)}) = f_{{\rm r},1}^{(\alpha)}
  &\mbox{in}\quad (0,T)\times\ul{\cE}, \\
 \dt\check{v}_{\rm r}^{(\alpha)}+\nabla^\varphi(w\cdot\check{v}_{\rm r}^{(\alpha)}+\gr\check{\zeta}_{\rm r}^{(\alpha)}) = f_{{\rm r},2}^{(\alpha)}
  &\mbox{in}\quad (0,T)\times\ul{\cE}, 
\end{cases}
\end{equation}
where $f_{{\rm r},1}^{(\alpha)}$ and $f_{{\rm r},2}^{(\alpha)}$ are obtained exactly as $f_{1}^{(\alpha)}$ and $f_{2}^{(\alpha)}$ that appear in \eqref{QLSWEe}; 
for instance, 
\begin{align*}
f_{{\rm r},1}^{(\alpha)}
&= -\chi_{\rm e}\{ {\mathcal C}^2(\bm{\partial}^\alpha,\dt\varphi)\zeta
  - (\dt\varphi)\cdot  {\mathcal C}^1(\bm{\partial}^\alpha,\partial\varphi)\zeta  
   +{\mathcal C}^1(\bm{\partial}^\alpha,\partial\varphi)\cdot (hv)
 + \nabla^\varphi\cdot( [\bm{\partial}^\alpha;h,v] ) \} \\
&\quad\;
 + (\dt^\varphi\chi_{\rm e}) \check{\bm{\partial}}^\alpha\zeta 
 + \nabla^\varphi\chi_{\rm e}\cdot\bigl( (\check{\bm{\partial}}^\alpha \zeta)v + h (\check{\bm{\partial}}^\alpha v)  \bigr)
 - (\nabla^\varphi\cdot\dt\varphi)\check{\zeta}^{(\alpha)}.
\end{align*}
Similarly, as in the derivation of \eqref{QLSWEi}, we get for all $\alpha$ such that $\alpha_1+\alpha_2\geq 1$ that 
\begin{equation}\label{QLSWEir}
\begin{cases}
 \nabla^\varphi\cdot(h_{\rm i}\check{v}_{\rm i,r}^{(\alpha)}) = f_{{\rm i,r},1}^{(\alpha)} &\mbox{in}\quad (0,T)\times\ul{\cI}, \\
 \check{v}_{\rm i,r}^{(\alpha)} = \nabla^\varphi\check{\phi}_{\rm i,r}^{(\alpha)} + f_{{\rm i,r},3}^{(\alpha)} &\mbox{in}\quad (0,T)\times\ul{\cI},
\end{cases}
\end{equation}
where $f_{{\rm i},{\rm r},1}^{(\alpha)}$ is easily deduced from the expression \eqref{deffi01} for $f_{{\rm i},01}^{(\alpha)}$,
\begin{align*}
f_{{\rm i},{\rm r},1}^{(\alpha)}
&= -\chi_{\rm i}\{ \nabla^\varphi\cdot\bigl( \check{\bm{\partial}}^\alpha\zeta_{\rm i} v_{\rm i}
 + [\mathfrak{d}^\alpha;h_{\rm i},v_{\rm i}] \bigr) 
 + {\mathcal C}^1(\bm{\partial}^\alpha,\partial\varphi)\cdot(h_{\rm i}v_{\rm i}) \}
 + \nabla^\varphi\chi_{\rm b}\cdot h_{\rm i} \check{\bm{\partial}}^\alpha v_{\rm i} , 
\end{align*}
while $f_{{\rm i},{\rm r},3}^{(\alpha)}$ is similarly deduced from the expression \eqref{deffi3} for $f_{{\rm i},3}^{(\alpha)}$. 
Moreover, as in the derivation of \eqref{Dtphii}, we get also 
\begin{equation}\label{Dtphiir}
\dt\check{\phi}_{\rm i,r}^{(\alpha)} = 
\begin{cases}
 (\partial_1\varphi)\cdot\check{v}_{\rm i,r}^{(\alpha')} + f_{{\rm i,r},6}^{(\alpha)} &\mbox{if}\quad \alpha_1\geq1, \\
 (\partial_2\varphi)\cdot\check{v}_{\rm i,r}^{(\alpha'')} + f_{{\rm i,r},7}^{(\alpha)} &\mbox{if}\quad \alpha_2\geq1,
\end{cases}
\end{equation}
where $f_{{\rm i,r},6}^{(\alpha)}$ and $f_{{\rm i,r},6}^{(\alpha)}$ are deduced from the quantities $f_{{\rm i},6}^{(\alpha)}$ and 
$f_{{\rm i},6}^{(\alpha)}$ that appear in \eqref{Dtphii} with straightforward adaptations. 
Finally, we remark that the case $\alpha_1=\alpha_2=0$ is covered by Proposition \ref{propGUinterior} 
because in that case the good unknowns are defined without using any localization function.

%-----------------------------------------------------------
\subsection{Equations for the derivatives of $\gamma$}\label{subsect:Eqg}
To conclude this section, we derive equations for derivatives of $\gamma$ under Assumption \ref{ass:CL} on the unknown curve $\itGamma(t)$. 
Such equations are essentially given by \eqref{EqPhi2}. 
Here, we relate the derivative $\dt^{\alpha_0}\partial_s^{\alpha_1}\gamma$ to $(N^\varphi\cdot\dpar^{\alpha_I}\varphi)_{\vert_{\ul{\itGamma}}}$, 
which is the quantity that appears in the left-hand side of \eqref{EqPhi2}. 
To this end, we recall that the diffeomorphism $\varphi(t,\cdot)$ was constructed from $\gamma(t,\cdot)$ by \eqref{const phi}. 
By a straightforward calculation, we see that 
\begin{align*}
(\partial\theta)_{\vert_{r=0}} &= 
\begin{pmatrix}
  \ul{x}_2'(s) & \ul{x}_1'(s) \\
 -\ul{x}_1'(s) & \ul{x}_2'(s)
\end{pmatrix}, \\
(\partial\tilde{\theta})_{\vert_{r=0}} &=
\begin{pmatrix}
  \ul{x}_2'(s) & (1+\kappa(s)\gamma(t,s))\ul{x}_1'(s) + (\partial_s\gamma(t,s))\ul{x}_2'(s) \\
 -\ul{x}_1'(s) & (1+\kappa(s)\gamma(t,s))\ul{x}_2'(s) - (\partial_s\gamma(t,s))\ul{x}_1'(s) 
\end{pmatrix},
\end{align*}
where we used $(\partial_r\gamma^{\rm ext})_{\vert_{r=0}} = 0$ and $\ul{x}''(s) = \kappa(s)\ul{x}'(s)^\perp$. 
Therefore, in view of ${N^\varphi}_{\vert_{x=\ul{x}(s)}}
= \bigl(J((\partial\varphi)^{-1})^\mathrm{T}\ul{N}\bigr)_{\vert_{x=\ul{x}(s)}}
= \bigl( \det((\partial\widetilde{\theta})(\partial\theta)^{-1}) ((\partial\theta)(\partial\widetilde{\theta})^{-1})^\mathrm{T}\bigr)_{\vert_{x=\ul{x}(s)}}
 (-\ul{x}'(s))^\perp$, 
we obtain 
\begin{equation}\label{Nphi2}
{N^\varphi}_{\vert_{x=\ul{x}(s)}}
 = (1+\kappa(s)\gamma(t,s))\ul{N} - (\partial_s\gamma)(t,s)\ul{N}^\perp.
\end{equation}
On the other hand, for a multi-index $\alpha_I=(\alpha_0,\alpha_1)$, we see that 
\begin{align*}
(\dpar^{\alpha_I}\varphi)_{\vert_{x=\ul{x}(s)}}
&= (\dt^{\alpha_0}\partial_s^{\alpha_1}(\varphi\circ\theta))_{\vert_{r=0}} \\
&= \dt^{\alpha_0}\partial_s^{\alpha_1}\bigl( \ul{x}(s)+\gamma(t,s)(-\ul{x}'(s))^\perp \bigr) \\
&= (\dt^{\alpha_0}\partial_s^{\alpha_1}\gamma)\ul{N}
 + [\partial_s^{\alpha_1},(-\ul{x}')^\perp]\dt^{\alpha_0}\gamma + \dt^{\alpha_0}\partial_s^{\alpha_1}\ul{x}.
\end{align*}
Therefore, we get 
\begin{equation}\label{gamma eq2}
(1+\kappa\gamma)\dt^{\alpha_0}\partial_s^{\alpha_1}\gamma
=(N^\varphi\cdot\dpar^{\alpha_I}\varphi)_{\vert_{x=\ul{x}(s)}} + b^{(\alpha_I)},
\end{equation}
where 
\[
b^{(\alpha_I)} = -( (1+\kappa\gamma)\ul{N} - (\partial_s\gamma)\ul{N}^\perp )\cdot
 \bigl( [\partial_s^{\alpha_1},(-\ul{x}')^\perp]\dt^{\alpha_0}\gamma + \dt^{\alpha_0}\partial_s^{\alpha_1}\ul{x} \bigr).
\]
We note that if $|\alpha_I|=m\geq2$, then $b^{(\alpha_I)}$ includes derivatives of $\gamma$ up to at most order $m-1$.

\begin{remark}\label{re:EEg}
Let us use \eqref{EqPhi2} and \eqref{gamma eq2} for $\alpha_I=(1,0)$. 
Then, noting that $b^{(1,0)}=0$, $J_{\vert_{x=\ul{x}(s)}}=1+\kappa\gamma$, $\check{\zeta}^{(1,0)}=\dt^\varphi\zeta=-\nabla^\varphi\cdot(hv)$, and 
$\zeta_{\rm i}^{(1,0)}=\dt^\varphi\zeta_{\rm i}=0$, we obtain 
\[
\dt\gamma =  \left( \frac{\nabla^\varphi\cdot(hv)}{\ul{N}\cdot\nabla(\zeta-\zeta_{\rm i})} \right)_{\vert_{x=\ul{x}(s)}}.
\]
This is the evolution equation for the unknown function $\gamma$, that is, for the unknown curve $\itGamma(t)$. 
\end{remark}

%----------------------------------------------------------------------------------------------------------------------
\section{A priori estimates}\label{sect:APE}
In this last section, we prove the a priori estimate given in Theorem \ref{th:main}. 
Let $m\geq 3$ and $u=(\zeta,v^{\rm T})^{\rm T}, v_{\rm i}, \ul{p}_{\rm i}$, and $\gamma$ be a regular solution 
to the shallow water model \eqref{nlswe}--\eqref{mc} satisfying initially \eqref{EstIni}. 
We recall that the diffeomorphism $\varphi(t,\cdot)$ has been constructed from $\gamma$ by \eqref{const phi}. 
Then, by the analysis in Sections \ref{subsect:EF2} and \ref{sectfixing}, 
there exists a scalar function $\psi_{\rm i}$ such that \eqref{NLSWEe5}--\eqref{ODE5} hold. 
With $ \ul{\cI}_0$, $c_0$ and $M_0$ as in Assumption \ref{ass:BSB}, 
and recalling that $r_0$ is the width of the tubular neighborhood in which the normal-tangential coordinates are defined, 
we suppose also that the solution satisfies 
\begin{equation}\label{Hyp1}
 \begin{cases}
  \{ \varphi(t,x) \,|\, x\in\ul{\cI} \} \subset \ul{\cI}_0 \mbox{ for } 0\leq t\leq T, \\
  \inf_{(t,x)\in(0,T)\times\ul{\cE}}(\gr h(t,x)-|w(t,x)|^2) \geq c_0, \\
  \inf_{(t,x)\in(0,T)\times\ul{\itGamma}}| \ul{N}\cdot(\nabla\zeta-\nabla\zeta_{\rm i})(t,x)| \geq c_0, \\
  \sup_{0<t<T}|\gamma(t)|_{L^\infty(\mathbb{T}_L)} \leq \eta_0r_0, \\
  \sup_{0<t<T}(\|u(t)\|_{m-1,{\rm e}} + \|v_{\rm i}(t)\|_{m-1,{\rm i}} + |\gamma(t)|_{m-1}) \leq 2M_0,
 \end{cases}
\end{equation}
where as in the statement of Theorem \ref{th:main}, one has $0<\eta_0^{\rm in}<\eta_0<1$, and 
\begin{equation}\label{Hyp2}
 \begin{cases}
  \|\bm{\partial}(h,w)\|_{L^1(0,T;L^\infty(\ul{\cE}))} + \|\bm{\partial} h_{\rm i}\|_{L^1(0,T;L^\infty(\ul{\cI}))}
   + \|\bm{\partial}\partial\varphi\|_{L^1(0,T;L^\infty(\R^2))} \leq M_1, \\
  \sup_{0<t<T}E_m(t) \leq M_2, \quad \int_0^T|\gamma(t)|_m^2{\rm d}t \leq M_3,
 \end{cases}
\end{equation}
where the time $T$ and the constants $M_1, M_2$, and $M_3$ will be defined later. 
Then, Assumption \ref{ass:CL} is satisfied with a positive constant $\delta_0$ depending only on $\eta_0$ and $m_0=1$. 
We note that the first condition in \eqref{Hyp1} guarantees that the function $\zeta_{\rm i}(t,\cdot)=Z_{\rm w}\circ\varphi(t,\cdot)$ is defined as a 
function of $(t,x)\in(0,T)\times\ul{\cI}$. 
In the following, we simply denote positive constants depending on $c_0,M_0,\eta_0$, and $m$ by the same symbol $C_0$, which may change from line to line, 
whereas we track carefully the dependence of the constants $M_1$, $M_2$, and $M_3$. 
Then, by the analysis of Section \ref{subsect:ChoiceD}, especially, Lemma \ref{lem:EstDM} together with the Sobolev embedding theorem we have 
\[
\begin{cases}
 \inf_{x\in\R^2}J(t,x) = \inf_{x\in\R^2}\det(\partial\varphi(t,x)) \geq C_0^{-1}, \\
 \sum_{|\alpha| \leq m-2} \|\bm{\partial}^\alpha\widetilde{\varphi}(t)\|_{L^\infty(\R^2)} \leq C_0, \\
 \sum_{|\alpha| \leq m-1} \|\bm{\partial}^\alpha\widetilde{\varphi}(t)\|_{L^2\cap L^4(\R^2)} \leq C_0, \\
 \sum_{|\alpha| = m} \|\bm{\partial}^\alpha\widetilde{\varphi}(t)\|_{L^2\cap L^4(\R^2)} \leq C_0|\gamma(t)|_m, \\
 \sum_{|\alpha| \leq m-3}( \|\bm{\partial}^\alpha u(t)\|_{L^\infty(\ul{\cE})} + \|\bm{\partial}^\alpha v_{\rm i}(t)\|_{L^\infty(\ul{\cI})} ) \leq C_0
\end{cases}
\]
for $0\leq t\leq T$. 
We will use these estimates freely in the following without any comments. 
Moreover, to simplify the notation, for non-negative integer $k$ we write $|\bm{\partial}^k f|=\sum_{|\alpha|=k}|\bm{\partial}^\alpha f|$, and 
similar notation will be used in the following. 
Such a simplification causes no confusion.

We first show in Section \ref{sectCEN} how to control various derivatives of the solution in terms of the energy $E_m(t)$
introduced in Section \ref{subsect:result}, and we also define equivalent energy norms, one of them involving only tangential derivatives. 
We then control this latter energy in Section \ref{sectAppl}, using the linear estimate of Proposition \ref{prop:BEE}, 
while the evolution of the surface parametrization is controled by using Proposition \ref{prop:ABR}. 
The lower order terms involved in these energy estimates are controled in Section \ref{sectestLOT}. 
We then conclude the proof of Theorem \ref{th:main} by proving that the conditions \eqref{Hyp1} and \eqref{Hyp2} remain satisfied 
on a time interval $[0,T]$ with $T>0$ as in the statement of the theorem. 
This is done in Section \ref{secttransv} for the transversality condition which requires special care in the critical case $m=3$ 
and in Section \ref{sectAPE} for the other ones.

%-----------------------------------------------------------
\subsection{Controls in terms of the energy norms}\label{sectCEN}
We show in the following lemma how to control various norms in terms of the energy $E_m(t)$ introduced in Section \ref{subsect:result}.

\begin{lemma}\label{lem:EstDuv}
For $0\leq t\leq T$, we have 
\begin{enumerate}
\item[{\rm (i)}]
$\|\bm{\partial}^m u(t)\|_{L^2(\ul{\cE})} + \|\bm{\partial}^m v_{\rm i}(t)\|_{L^2(\ul{\cI})} \leq C_0(1+|\gamma(t)|_m)E_m(t)$;
\item[{\rm (ii)}]
$\|\bm{\partial}^{m-1} u(t)\|_{L^2\cap L^4(\ul{\cE})} + \|\bm{\partial}^{m-1} v_{\rm i}(t)\|_{L^2\cap L^4(\ul{\cI})}
 \leq C_0 E_m(t)^{1/2} ( \|u(t)\|_{m-1,{\rm e}} + \|v_{\rm i}(t)\|_{m-1,{\rm i}})^{1/2}$;
\item[{\rm (iii)}]
$\|\bm{\partial}^{m-2} u(t)\|_{L^\infty(\ul{\cE})} + \|\bm{\partial}^{m-2} v_{\rm i}(t)\|_{L^\infty(\ul{\cI})}
 \leq C_0 E_m(t)^{1/4} ( \|u(t)\|_{m-1,{\rm e}} + \|v_{\rm i}(t)\|_{m-1,{\rm i}})^{3/4}$;
\item[{\rm (iv)}]
$\|\partial\bm{\partial}^{m-1} u(t)\|_{L^2(\ul{\cE})} + \|\partial\bm{\partial}^{m-1} v_{\rm i}(t)\|_{L^2(\ul{\cI})} \leq C_0(1+|\gamma(t)|_m^{1/2})E_m(t)$.
\end{enumerate}
\end{lemma}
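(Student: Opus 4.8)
The statement to prove is Lemma \ref{lem:EstDuv}, which gives four controls of various Sobolev/Lebesgue norms of $\bm{\partial}^k u$ and $\bm{\partial}^k v_{\rm i}$ in terms of the energy $E_m(t)$. The underlying principle is simple: $E_m(t)$ controls the $L^2$ norms of the Alinhac good unknowns $\bm{\partial}^\alpha u - (\bm{\partial}^\alpha\varphi\cdot\nabla^\varphi)u$ (and similarly for $v_{\rm i}$) for $|\alpha|=m$, together with the full $H^{m-1}$-in-space-time norms $\|u\|_{m-1,{\rm e}}$, $\|v_{\rm i}\|_{m-1,{\rm i}}$. So the first task is to pass from good unknowns back to genuine derivatives $\bm{\partial}^\alpha u$, at the cost of the corrector term $(\bm{\partial}^\alpha\varphi\cdot\nabla^\varphi)u$. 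The plan is as follows.

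\textbf{Step 1 (item (i)).} Write $\bm{\partial}^\alpha u = \big(\bm{\partial}^\alpha u - (\bm{\partial}^\alpha\varphi\cdot\nabla^\varphi)u\big) + (\bm{\partial}^\alpha\varphi\cdot\nabla^\varphi)u$ for $|\alpha|=m$. The first bracket is bounded in $L^2(\ul{\cE})$ by $E_m(t)$ directly from the definition. For the corrector, split according to whether $\alpha$ contains a spatial derivative. If $\alpha=(m,0,0)$ then $\bm{\partial}^\alpha\varphi = \dt^m\varphi = \dt^m\widetilde\varphi$, which is controlled in $L^4(\R^2)$ by $C_0|\gamma(t)|_m$ via Lemma \ref{lem:EstDM}(i), while $\nabla^\varphi u$ is bounded in $L^4(\ul\cE)$ by $C_0$ (using the $L^\infty$ and $H^{m-1}$ controls on $u$, since $m\ge 3$); Hölder then gives an $L^2$ bound $\lesssim |\gamma(t)|_m$. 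If $\alpha$ has $\alpha_1+\alpha_2\ge 1$, then Lemma \ref{lem:EstDM}(iii) gives $\|\bm{\partial}^\alpha\widetilde\varphi\|_{L^2}\lesssim |\gamma|_{m-1}^{1/2}|\gamma|_m^{1/2}\le C_0|\gamma|_m^{1/2}$ (using $|\gamma|_{m-1}\le 2M_0$), paired with $\|\nabla^\varphi u\|_{L^\infty}\le C_0$; this gives a term $\lesssim |\gamma(t)|_m^{1/2}$, which is dominated by $1+|\gamma(t)|_m$. Collecting, $\|\bm{\partial}^m u\|_{L^2(\ul\cE)}\lesssim (1+|\gamma(t)|_m)E_m(t)$ (the $E_m$ factor absorbing the good-unknown $L^2$ norm and also any product with $\|u\|_{m-1}\le C_0 E_m$). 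The argument for $v_{\rm i}$ on $\ul{\cI}$ is identical, using the good unknown for $v_{\rm i}$ appearing in $E_m$. The same estimates from Lemma \ref{lem:EstDM} also handle the localized good unknowns near the boundary (the cutoffs $\chi_{\rm b},\chi_{\rm e},\chi_{\rm i}$ are smooth and bounded) and the second-order good unknowns for $\phi_{\rm i}$, whose extra subprincipal correctors involve only $\le m-1$ derivatives of $\varphi$ and $v_{\rm i}$, hence are $\le C_0$.

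\textbf{Steps 2--4 (items (ii)--(iv)).} These are interpolation consequences of (i) together with the definition of $E_m$. For (ii): $\bm{\partial}^{m-1}u$ has its $L^2(\ul\cE)$ norm bounded by $\|u\|_{m-1,{\rm e}}$; for the $L^4$ norm, use the Gagliardo--Nirenberg inequality $\|f\|_{L^4(\R^2)}\lesssim \|f\|_{L^2(\R^2)}^{1/2}\|\nabla f\|_{L^2(\R^2)}^{1/2}$ applied to $\bm{\partial}^{m-1}u$ (after extending, or working directly on $\ul\cE$ which has Lipschitz boundary), giving $\|\bm{\partial}^{m-1}u\|_{L^4}\lesssim \|u\|_{m-1,{\rm e}}^{1/2}\,\|\bm{\partial}^m u\|_{L^2}^{1/2}\lesssim E_m^{1/2}\|u\|_{m-1,{\rm e}}^{1/2}$ by (i) and $1+|\gamma|_m\lesssim$ ... — careful here: (i) has the factor $1+|\gamma(t)|_m$, but note that $\|\partial\bm\partial^{m-1}u\|_{L^2}$ actually only needs the \emph{spatial} top derivatives, for which the corrector term is $\bm{\partial}^\alpha\widetilde\varphi$ with $\alpha_1+\alpha_2\ge1$, controlled by $|\gamma|_m^{1/2}$, not $|\gamma|_m$; this is exactly item (iv), $\|\partial\bm{\partial}^{m-1}u\|_{L^2}\lesssim (1+|\gamma|_m^{1/2})E_m$, which one proves first and then feeds into the GN interpolation for (ii). For (iii): apply GN again, $\|f\|_{L^\infty(\R^2)}\lesssim \|f\|_{L^2}^{1/2}\|\nabla^2 f\|_{L^2}^{1/2}$ or better the sharper $\|f\|_{L^\infty}\lesssim \|f\|_{L^4}^{1/2}\|\nabla f\|_{L^4}^{1/2}$, applied to $\bm{\partial}^{m-2}u$, yielding the exponent $1/4$ on $E_m$ and $3/4$ on $\|u\|_{m-1,{\rm e}}$ as stated. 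The interior quantities are handled the same way.

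\textbf{Main obstacle.} The delicate point is the bookkeeping of which corrector terms carry a factor $|\gamma(t)|_m$ versus only $|\gamma(t)|_m^{1/2}$ versus $C_0$: this is precisely the distinction between items (i) and (iv), and it relies on using the \emph{right} part of Lemma \ref{lem:EstDM} — part (iii) (the gain when a spatial derivative is present) rather than part (i) — whenever $\alpha_1+\alpha_2\ge 1$, and it uses crucially that the regularizing diffeomorphism gives $L^4$ control of top derivatives of $\widetilde\varphi$ so that Hölder with the $L^4$ norm of $\nabla^\varphi u$ (or the $L^\infty$ norm when available) closes. One must also be slightly careful that, for $m=3$, the "lower order" norms like $\|\nabla^\varphi u\|_{L^\infty}$ are still controlled by $C_0$ via the embedding $H^{m-1}\hookrightarrow L^\infty$ in dimension $2$ combined with hypothesis \eqref{Hyp1}; for $m=3$ this is $H^2\hookrightarrow L^\infty$, which holds. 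Everything else is routine Hölder and Gagliardo--Nirenberg interpolation.
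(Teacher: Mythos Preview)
Your plan has a genuine gap at the critical regularity $m=3$. You repeatedly invoke $\|\nabla^\varphi u\|_{L^\infty}\le C_0$ (in the spatial-derivative case of (i), and as the key pairing in (iv)), and justify it at the end by ``$H^{m-1}\hookrightarrow L^\infty$ \dots\ for $m=3$ this is $H^2\hookrightarrow L^\infty$''. But $H^2\hookrightarrow L^\infty$ controls $u$, not $\nabla u$; for $m=3$ one only has $\partial u\in H^1(\ul\cE)$, which does \emph{not} embed into $L^\infty$ in two dimensions. Indeed the standing hypotheses only give $\|\bm\partial^{m-3}u\|_{L^\infty}\le C_0$, i.e.\ $\|u\|_{L^\infty}\le C_0$ when $m=3$. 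So your proposed order ``prove (iv) first, then feed it into Gagliardo--Nirenberg for (ii)'' collapses: (iv) needs $\|\partial u\|_{L^\infty}$, which is precisely what (iii) provides. There is also a second, independent problem: even granting (iv), the naive interpolation $\|\bm\partial^{m-1}u\|_{L^4}\lesssim \|u\|_{m-1,{\rm e}}^{1/2}\|\partial\bm\partial^{m-1}u\|_{L^2}^{1/2}$ produces a spurious factor $(1+|\gamma|_m^{1/2})^{1/2}$ that is not present in (ii).

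The paper avoids both issues by applying Gagliardo--Nirenberg not to $\bm\partial^{m-1}u$ but to the \emph{good unknown} $\bm\partial^{m-1}u-(\bm\partial^{m-1}\varphi\cdot\nabla^\varphi)u$: its spatial gradient is the order-$m$ good unknown plus terms of the form $\bm\partial^{m-1}\varphi\cdot\partial\nabla^\varphi u$, which are controlled in $L^2$ by $C_0(\|\bm\partial^{m-1}u\|_{L^4}+\|\partial u\|_{L^\infty})$ without any $|\gamma|_m$. This yields the implicit inequality $\|\bm\partial^{m-1}u\|_{L^4}\le C_0\bigl((E_m+\|\bm\partial^{m-1}u\|_{L^4}+\|\partial u\|_{L^\infty})^{1/2}\|u\|_{m-1,{\rm e}}^{1/2}+\|\partial u\|_{L^\infty}\bigr)$, hence $\|\bm\partial^{m-1}u\|_{L^4}\le C_0(E_m^{1/2}\|u\|_{m-1,{\rm e}}^{1/2}+\|\bm\partial^{m-2}u\|_{L^\infty})$. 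A second interpolation $\|\bm\partial^{m-2}u\|_{L^\infty}\lesssim \|\bm\partial^{m-1}u\|_{L^4}^{1/2}\|\bm\partial^{m-2}u\|_{L^4}^{1/2}+\|\bm\partial^{m-2}u\|_{L^4}$ then gives a closed system which one solves by absorption to obtain (iii), then (ii), and only \emph{afterwards} (iv), pairing $\|\partial\bm\partial^{m-1}\varphi\|_{L^2}\le C_0|\gamma|_m^{1/2}$ from Lemma~\ref{lem:EstDM}(iii) with the now-available bound on $\|\partial u\|_{L^\infty}$. For $m\ge 4$ your simpler route would work, but at the threshold $m=3$ this coupled bootstrap is essential.
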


\begin{proof}
We give a proof of the estimates only for $u$, because the estimates for $v_{\rm i}$ can be obtained in the same way. 
Moreover, we write $\|\cdot\|_{L^p}$ instead of $\|\cdot\|_{L^p(\ul{\cE})}$ for simplicity. 
We see that 
\begin{align*}
\|\bm{\partial}^m u\|_{L^2}
&\leq \|\bm{\partial}^m u - ((\bm{\partial}^m\varphi)\cdot\nabla^\varphi)u\|_{L^2}
 + \|((\bm{\partial}^m\varphi)\cdot\nabla^\varphi)u\|_{L^2} \\
&\leq E_m(t) + \|\bm{\partial}^m\varphi\|_{L^4} \|(\partial\varphi)^{-1}\|_{L^\infty} \|\partial u\|_{L^4} \\
&\leq E_m(t) + C_0|\gamma|_m \|u\|_{H^2} \\
&\leq C_0(1+|\gamma|_m)E_m,
\end{align*}
where we used the Sobolev embedding theorem $H^1(\ul{\cE}) \hookrightarrow L^4(\ul{\cE})$. 
This shows (i).

We proceed to evaluate $\|\bm{\partial}^{m-1}u(t)\|_{L^4}$ and $\|\bm{\partial}^{m-2}u(t)\|_{L^\infty}$. 
By the Gagliardo--Nirenberg interpolation inequality $\|f\|_{L^4} \lesssim \|\nabla f\|_{L^2}^{1/2}\|f\|_{L^2}^{1/2} + \|f\|_{L^2}$, we see that 
\begin{align*}
&\|\bm{\partial}^{m-1} u\|_{L^4} \\
&\leq \|\bm{\partial}^{m-1} u - ((\bm{\partial}^{m-1}\varphi)\cdot\nabla^\varphi)u\|_{L^4}
 + \|((\bm{\partial}^{m-1}\varphi)\cdot\nabla^\varphi)u\|_{L^4} \\
&\leq C_0 \bigl\{ \|\partial(\bm{\partial}^{m-1} u - ((\bm{\partial}^{m-1}\varphi)\cdot\nabla^\varphi)u)\|_{L^2}^{1/2}
  \|\bm{\partial}^{m-1} u - ((\bm{\partial}^{m-1}\varphi)\cdot\nabla^\varphi)u\|_{L^2}^{1/2} \\
&\quad\;
 + \|\bm{\partial}^{m-1} u - ((\bm{\partial}^{m-1}\varphi)\cdot\nabla^\varphi)u\|_{L^2}
 + \|\bm{\partial}^{m-1}\varphi\|_{L^4} \|\partial u\|_{L^\infty} \bigr\} \\
&\leq C_0 \bigl\{ \bigl( \|\bm{\partial}^{m} u - ((\bm{\partial}^{m}\varphi)\cdot\nabla^\varphi)u\|_{L^2}
 + \|\bm{\partial}^{m-1}\varphi\|_{L^4} \|\partial^2u\|_{L^4} 
 + \|\bm{\partial}^{m-1}\varphi\|_{L^4} \|\partial^2\varphi\|_{L^4} \|\partial u\|_{L^\infty} \bigr)^{1/2}  \\
&\qquad
  \times\bigl( \|\bm{\partial}^{m-1}u\|_{L^2} + \|\bm{\partial}^{m-1}\varphi\|_{L^4} \|\partial u\|_{L^4} \bigr)^{1/2} \\
&\quad\;
 + \|u\|_{m-1,{\rm e}} + \|\bm{\partial}^{m-1}\varphi\|_{L^4} \|\partial u\|_{L^4} + \|\partial u\|_{L^\infty} \bigr\} \\
&\leq C_0 \bigl\{ (E_m+\|\bm{\partial}^{m-1} u\|_{L^4}+\|\partial u\|_{L^\infty} )^{1/2} \|u\|_{m-1,{\rm e}}^{1/2} + \|\partial u\|_{L^\infty} \bigr\},
\end{align*}
which yields $\|\bm{\partial}^{m-1} u\|_{L^4} \leq C_0 ( E_m^{1/2}\|u\|_{m-1,{\rm e}}^{1/2} + \|\partial u\|_{L^\infty} )$. 
Particularly, we get 
\begin{equation}\label{CalIneq1}
\|\bm{\partial}^{m-1} u\|_{L^4} \leq C_0 ( E_m^{1/2} \|u\|_{m-1,{\rm e}}^{1/2} + \|\bm{\partial}^{m-2} u\|_{L^\infty} ).
\end{equation}
Using this, the Gagliardo--Nirenberg interpolation inequality, and the Sobolev embedding theorem, we see that 
\begin{align*}
\|\bm{\partial}^{m-2} u\|_{L^\infty}
&\leq C_0 ( \|\bm{\partial}^{m-1} u\|_{L^4}^{1/2} \|\bm{\partial}^{m-2} u\|_{L^4}^{1/2} + \|\bm{\partial}^{m-2} u\|_{L^4} ) \\
&\leq C_0 ( \|\bm{\partial}^{m-1} u\|_{L^4}^{1/2} \|u\|_{m-1,{\rm e}}^{1/2} + \|u\|_{m-1,{\rm e}} ) \\
&\leq C_0 ( E_m^{1/4}\|u\|_{m-1,{\rm e}}^{3/4} + \|\bm{\partial}^{m-2}u\|_{L^\infty}^{1/2}\|u\|_{m-1,{\rm e}}^{1/2} ),
\end{align*}
which yields $\|\bm{\partial}^{m-2} u\|_{L^\infty} \leq C_0E_m^{1/4}\|u\|_{m-1,{\rm e}}^{3/4}$. 
This shows (iii). 
Plugging this into \eqref{CalIneq1}, we obtain (ii).

Finally, by Lemma \ref{lem:EstDM} and (iii) we see that 
\begin{align*}
\|((\partial\bm{\partial}^{m-1}\varphi)\cdot\nabla^\varphi)u\|_{L^2}
&\leq \|\partial\bm{\partial}^{m-1}\varphi\|_{L^2} \|(\partial\varphi)^{-1}\|_{L^\infty} \|\partial u\|_{L^\infty} \\
&\leq C_0|\gamma|_m^{1/2}E_m.
\end{align*}
Therefore, in the same way as the proof of (i) we obtain (iv). 
\end{proof}

For a multi-index $\alpha=(\alpha_0,\alpha_1,\alpha_2)$ satisfying $|\alpha|=m$, let $\check{u}^{(\alpha)}, \check{v}_{\rm i}^{(\alpha)}, 
\check{u}_{\rm r}^{(\alpha)}$, and $\check{v}_{\rm i,r}^{(\alpha)}$ be the good unknowns introduced in Section \ref{subsect:defGU}. 
The energy $E_{m}(t)$ is defined in terms of $\check{\bm{\partial}}^\alpha u$ and $\check{\bm{\partial}}^\alpha v_{\rm i}$; 
in view of the results of the previous section, it is convenient to use instead quantities that involve the good unknowns. 
We therefore define other energy functions $\check{E}_m(t)$ and $\check{E}_{m,\parallel}(t)$ of order $m$ by 
\begin{align*}
\check{E}_m(t)
&= \sum_{|\alpha|=m}( \|\check{u}^{(\alpha)}(t)\|_{L^2(\ul{\cE})} + \|\check{v}_{\rm i}^{(\alpha)}(t)\|_{L^2(\ul{\cI})} ) \\
&\quad\;
 + \sum_{|\alpha|=m, \alpha_0\leq m-1}( \|\check{u}_{\rm r}^{(\alpha)}(t)\|_{L^2(\ul{\cE})} + \|\check{v}_{\rm i,r}^{(\alpha)}(t)\|_{L^2(\ul{\cI})} )
 + \|u(t)\|_{m-1,{\rm e}} + \|v_{\rm i}(t)\|_{m-1,{\rm i}}
\end{align*}
and 
\begin{align*}
\check{E}_{m,\parallel}(t)
&= \sum_{|\alpha_I|=m}( \|\check{u}^{(\alpha_I)}(t)\|_{L^2(\ul{\cE})} + \|\check{v}_{\rm i}^{(\alpha_I)}(t)\|_{L^2(\ul{\cI})} ) \\
&\quad\;
 + \sum_{|\alpha|=m, \alpha_0\leq m-1}( \|\check{u}_{\rm r}^{(\alpha)}(t)\|_{L^2(\ul{\cE})} + \|\check{v}_{\rm i,r}^{(\alpha)}(t)\|_{L^2(\ul{\cI})} )
 + \|u(t)\|_{m-1,{\rm e}} + \|v_{\rm i}(t)\|_{m-1,{\rm i}},
\end{align*}
where $\alpha_I=(\alpha_0,\alpha_1)$. 
Here, as in the proof of Lemma \ref{lem:EstDuv} (i), for any multi-index $\alpha$ satisfying $1\leq|\alpha|\leq m-1$ we have 
\[
\begin{cases}
 \|((\bm{\partial}^\alpha\varphi)\cdot\nabla^\varphi)u(t)\|_{L^2(\ul{\cE})} \leq C_0\|u(t)\|_{m-1,{\rm e}}, \\
 \|((\bm{\partial}^\alpha\varphi)\cdot\nabla^\varphi)v_{\rm i}(t)\|_{L^2(\ul{\cI})} \leq C_0\|v_{\rm i}(t)\|_{m-1,{\rm i}}. 
\end{cases}
\]
In view of this together with \eqref{NorTanD2} and \eqref{diff deco}, we see that $E_m(t)$ and $\check{E}_m(t)$ are equivalent, that is, we have 
\begin{equation}\label{eqEcEm}
C_0^{-1}E_m(t) \leq \check{E}_m(t) \leq C_0E_m(t). 
\end{equation}
Since the good unknowns $\check{u}^{(\alpha)}$ and $\check{v}_{\rm i}^{(\alpha)}$ satisfy \eqref{QLSWEe3} and \eqref{QLSWEi2} and we have 
$\det( A(u,N^\varphi,\bm{a}) ) \geq C_0^{-1}$ and $h_{\rm i}|N^\varphi|^2 \geq C_0^{-1}$, 
we can convert the normal derivatives into the tangential ones in space-time modulo lower order terms. 
Therefore, we have 
\begin{equation}\label{EquivE}
\check{E}_{m,\parallel}(t) \leq \check{E}_m(t) \leq C_0( \check{E}_{m,\parallel}(t) + R_1(t) ),
\end{equation}
where $R_1(t)=R_{{\rm e},1}(t)+R_{{\rm i},1}(t)=\|r_{{\rm e},1}(t)\|_{L^2(\ul{\cE})}+\|r_{{\rm i},1}(t)\|_{L^2(\ul{\cI})}$ with 
\[
r_{{\rm e},1} = \sum_{|\alpha|=m-1} |f_5^{(\alpha)}|, \quad
r_{{\rm i},1} = \sum_{|\alpha|=m-1} |f_{{\rm i},8}^{(\alpha)}|;
\]
these terms are the lower order terms appearing in \eqref{QLSWEe3} and \eqref{QLSWEi2} 
and include derivatives of $u$, $v_{\rm i}$, and $\varphi$ of order at most $m-1$. 
It follows from \eqref{eqEcEm} and \eqref{EquivE} that it is sufficient to derive an energy estimate for $\check{E}_{m,\parallel}(t)$ 
in order to control the energy $E_m(t)$; the two norms are actually equivalent; see Remark \ref{re:EFs} below.

%-----------------------------------------------------------
\subsection{Application of energy estimates}\label{sectAppl}
We recall that the good unknowns satisfy the equations derived in Sections \ref{subsect:EGUe}--\ref{subsect:EqGUb}. 
Introduce $R_2(t)=R_{{\rm i},2}(t)=\|r_{{\rm i},2}(t)\|_{L^2(\ul{\cI})}$ with 
\[
r_{{\rm i},2} = \sum_{|\alpha_I|=m} |(f_{{\rm i},2}^{(\alpha_I)},\ldots,f_{{\rm i},7}^{(\alpha)})| 
 + \sum_{|\alpha|=m,\alpha_0\leq m-1} |(f_{{\rm i,r},3}^{(\alpha)},f_{{\rm i,r},6}^{(\alpha)},f_{{\rm i,r},7}^{(\alpha)})|;
\]
these are the lower order terms in \eqref{QLSWEi}, \eqref{Dtphii}, \eqref{QLBC0}, \eqref{QLBC00}, \eqref{QLSWEir}, and \eqref{Dtphiir} 
and also include derivatives of $u,v_{\rm i}$, and $\varphi$ up to at most order $m-1$. 
Next, put $R_3(t)=R_{{\rm e},3}(t)+R_{{\rm i},3}(t)=\|r_{{\rm e},3}(t)\|_{L^2(\ul{\cE})}+\|r_{{\rm i},3}(t)\|_{L^2(\ul{\cI})}$ with 
\begin{align*}
r_{{\rm e},3}
&= \sum_{|\alpha_I|=m} |(f_1^{(\alpha_I)},f_2^{(\alpha_I)},f_3^{(\alpha_I)})| 
 + \sum_{|\alpha|=m,\alpha_0\leq m-1} |(f_{{\rm r},1}^{(\alpha)},f_{{\rm r},2}^{(\alpha)})| + \sum_{|\alpha|\leq m-1}|\dt\bm{\partial}^\alpha u|, \\
r_{{\rm i},3}
&= \sum_{|\alpha_I|=m} |(f_{{\rm i},1}^{(\alpha_I)},\bm{\partial}(f_{{\rm i},2}^{(\alpha_I)},\ldots,f_{{\rm i},5}^{(\alpha_I)}))| 
 + \sum_{|\alpha|=m,\alpha_0\leq m-1} |(f_{{\rm i,r},1}^{(\alpha)},\bm{\partial}f_{{\rm i,r},3}^{(\alpha)})|
 + \sum_{|\alpha|\leq m-1}|\dt\bm{\partial}^\alpha v_{\rm i}|;
\end{align*}
these quantities contain the lower order terms in \eqref{QLSWEe}, \eqref{QLSWEi}, \eqref{QLSWEr}, and \eqref{QLSWEir}, 
that contain derivatives of $u$, $v_{\rm i}$ and $\varphi$ of order at most $m$, and derivatives of lower order terms in  
\eqref{QLSWEi}, \eqref{QLBC0}, \eqref{QLBC00}, and \eqref{QLSWEir} that contain only derivatives of order at most $m-1$. 
In order to control the last term in the energy estimate in Proposition \ref{prop:BEE}, we put 
\begin{align*}
R_4(t)
&= \sum_{|\alpha_I|=m, \alpha_0\leq m-1} \|(\dt\check{\phi}_{\rm i}^{(\alpha_I)},f_{{\rm i},5}^{(\alpha_I)})(t)\|_{L^2(\ul{\cI})}
 \|f_{{\rm i},1}(t)\|_{L^2(\ul{\cI})} \\
&\quad\;
 + \sum_{|\alpha|=m, \alpha_0\leq m-1} \|\dt\check{\phi}_{\rm i,r}^{(\alpha)}(t)\|_{L^2(\ul{\cI})} \|f_{{\rm i,r},1}(t)\|_{L^2(\ul{\cI})},
\end{align*}
where we recall that $\dt\check{\phi}_{\rm i}^{(\alpha_I)},f_{{\rm i},5}^{(\alpha_I)},\dt\check{\phi}_{\rm i,r}^{(\alpha)}$, 
and $f_{{\rm i,r},1}$ are as in \eqref{Dtphii}, \eqref{QLBC00}, \eqref{Dtphiir}, and \eqref{QLSWEir}, respectively. 
These terms include derivatives of $v_{\rm i}$ and $\varphi$ of order $m$. 
Finally, we put 
\[
R_5(t) = \sum_{|\alpha_I|=m} |(\check{\zeta}_{\rm i}^{(\alpha_I)},b^{(\alpha_I)})(t)|_{L^2(\ul{\itGamma})},
\]
where $b^{(\alpha_I)}$ is as in \eqref{gamma eq2}.

To evaluate the energy function $\check{E}_{m,\parallel}(t)$, we use the energy estimate in Proposition \ref{prop:BEE}. 
We apply it with $(\check{u},\check{v}_{\rm i},\check{\psi}_{\rm i})=(\check{u}^{(\alpha_I)},\check{v}_{\rm i}^{(\alpha_I)},\check{\psi}_{\rm i}^{(\alpha_I)})$, 
which satisfy \eqref{QLSWEe}, \eqref{QLSWEi}, and \eqref{QLBC00}, 
and with $(\check{u},\check{v}_{\rm i},\check{\phi}_{\rm i})=(\check{u}_{\rm r}^{(\alpha)},\check{v}_{\rm i,r}^{(\alpha)},\check{\phi}_{\rm i,r}^{(\alpha)})$ 
and $(f_{{\rm i},4},f_{{\rm i},5})=(0,0)$, which satisfy \eqref{QLSWEr} and \eqref{QLSWEir}. 
We also use obvious estimates $\|u(t)\|_{m-1,{\rm e}} \leq \|u(0)\|_{m-1,{\rm e}}+\int_0^t\|\dt u(t')\|_{m-1,{\rm e}}{\rm d}t'$ and 
$\|v_{\rm i}(t)\|_{m-1,{\rm i}} \leq \|v_{\rm i}(0)\|_{m-1,{\rm i}}+\int_0^t\|\dt v_{\rm i}(t')\|_{m-1,{\rm i}}{\rm d}t'$. 
Then, we obtain the desired energy estimate on $\check{E}_{m,\parallel}(t)$:
\begin{equation}\label{NLEE1}
\sup_{0\leq t\leq T}\check{E}_{m,\parallel}(t)
\leq C_0 e^{C_0M_1}\bigl\{ \check{E}_{m,\parallel}(0) + R_2(0) + \int_0^TR_3(t){\rm d}t + \left( \int_0^TR_4(t){\rm d}t \right)^{1/2} \bigr\}.
\end{equation}

In order to prove the a priori estimate of the theorem, we also need an estimate on the parametrization $\gamma$ of the contact line. 
By Proposition \ref{prop:ABR}, we obtain the following additional boundary estimate 
\begin{align*}
\sum_{|\alpha_I|=m} \int_0^T |\check{\zeta}^{(\alpha_I)}(t)|_{L^2(\ul{\itGamma})}^2 {\rm d}t
&\leq C_0\bigl\{ (1+T+M_1)\sup_{0\leq t\leq T}( \check{E}_{m,\parallel}(t) + R_2(t) )^2 
 + \left( \int_0^TR_3(t){\rm d}t \right)^2 \bigr\}.
\end{align*}
On the other hand, $\gamma$ and $\varphi$ satisfy \eqref{EqPhi2} and \eqref{gamma eq2}, so that we have 
\[
|\gamma(t)|_m \leq C_0\left( \sum_{|\alpha_I|=m}|\check{\zeta}^{(\alpha_I)}(t)|_{L^2(\ul{\itGamma})} + R_5(t) + 1\right). 
\]
Therefore, we get 
\begin{align}\label{NLEE2}
\int_0^T|\gamma(t)|_m^2{\rm d}t
&\leq C_0 \bigl\{ (1+T+M_1)\sup_{0\leq t\leq T}( \check{E}_{m,\parallel}(t) + R_2(t))^2 \\
&\quad\;
 + \left( \int_0^TR_3(t){\rm d}t \right)^2 + \int_0^T (R_5(t) + 1)^2{\rm d}t \bigr\}. \nonumber
\end{align}

%-----------------------------------------------------------
\subsection{Estimates for lower order terms}\label{sectestLOT}
We proceed to evaluate the lower order terms $R_j(t)$ for $j=1,\ldots,5$.

\begin{lemma}\label{lem:EstR1}
It holds that $R_1(t) \leq C_0E_m(t)^{1/2}(\|u(t)\|_{m-1,{\rm e}}+\|v_{\rm i}(t)\|_{m-1,{\rm i}})^{1/2}$ for $\leq t\leq T$. 
\end{lemma}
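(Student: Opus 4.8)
\textbf{Proof plan for Lemma \ref{lem:EstR1}.}

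The statement asserts that $R_1(t)$, defined as the $L^2$-norm of the source terms $f_5^{(\alpha)}$ (with $|\alpha|=m-1$) in the exterior conversion formula \eqref{QLSWEe3} plus the $L^2$-norm of the corresponding interior source terms $f_{{\rm i},8}^{(\alpha)}$ in \eqref{QLSWEi2}, is bounded by $C_0 E_m(t)^{1/2}(\|u(t)\|_{m-1,{\rm e}}+\|v_{\rm i}(t)\|_{m-1,{\rm i}})^{1/2}$. The key structural fact, recalled in the excerpt just before the lemma, is that $f_5^{(\alpha)}$ and $f_{{\rm i},8}^{(\alpha)}$ with $|\alpha|=m-1$ involve derivatives of $u$, $v_{\rm i}$, and $\varphi$ of order \emph{at most} $m-1$. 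So the plan is to open the explicit expressions for $f_5^{(\alpha)}$ (combining $f_4^{(\alpha)}$, $f_1^{(\alpha)},f_2^{(\alpha)},f_3^{(\alpha)}$ of order $m-1$, the curvature/cut-off corrections, and the terms $A(u,\cdot,\bm a)((\mathfrak d^\alpha\varphi)\cdot\mathfrak d\nabla^\varphi)u$) and for $f_{{\rm i},8}^{(\alpha)}$, and bound each summand in $L^2$ term by term, distributing derivatives between a ``top-order'' factor (which will sit in $L^2$ and produce $E_m(t)^{1/2}$ or a piece of $\|u\|_{m-1,{\rm e}}^{1/2}$) and the remaining lower-order factors (which will be bounded in $L^\infty$ using the Sobolev/Gagliardo--Nirenberg estimates and the a priori bounds \eqref{Hyp1}). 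The clean way to organize this is to isolate the ``critical'' terms, namely those products of two factors each carrying roughly $m-1$ derivatives; for these one uses the $L^2\cap L^4$ interpolation bound of Lemma \ref{lem:EstDuv}~(ii), which gives precisely the split $\|\bm\partial^{m-1}f\|_{L^4}\lesssim E_m^{1/2}(\|u\|_{m-1,{\rm e}}+\|v_{\rm i}\|_{m-1,{\rm i}})^{1/2}$, so that a product of two such $L^4$ factors lands in $L^2$ with the asserted bound.

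Concretely, I would proceed as follows. First, recall from Remark \ref{re:GU1}, Remark \ref{re:GU2}, and the ends of Sections \ref{subsect:EGUe}--\ref{sectnearbdry} that all terms in $f_5^{(\alpha)},f_{{\rm i},8}^{(\alpha)}$ with $|\alpha|=m-1$ have total derivative count $\le m-1$, and that derivatives of $\varphi$ enter at order at most $m-1$ on the support of the cut-offs, hence are uniformly bounded by $C_0$ together with their first $m-2$ derivatives in $L^\infty$ and their $(m-1)$-th derivatives in $L^2\cap L^4$ (by Lemma \ref{lem:EstDM} and the running hypotheses). Second, classify each summand into: (a) terms where every factor has at most $m-3$ derivatives — bounded in $L^\infty$ by $C_0$ times a single $L^2$ factor of order $\le m-1$, giving $\le C_0(\|u\|_{m-1,{\rm e}}+\|v_{\rm i}\|_{m-1,{\rm i}})\le C_0 E_m^{1/2}(\cdots)^{1/2}$ using $\|u\|_{m-1,{\rm e}}\le C_0 M_0$; (b) terms with exactly one factor of order $m-1$ and the rest of order $\le m-3$ — bounded by $L^\infty\times L^2$ as before; (c) the genuinely critical terms where two factors each carry about $m-2$ derivatives — here apply Hölder with two $L^4$ factors and invoke Lemma \ref{lem:EstDuv}~(ii) (and its analogue for $\widetilde\varphi$, Lemma \ref{lem:EstDM}~(ii)), plus the low-order $L^\infty$ bounds on the remaining factors. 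Summing over $|\alpha|=m-1$ and using equivalence $E_m\simeq \check E_m$ when convenient, the bound $R_1(t)\le C_0 E_m(t)^{1/2}(\|u(t)\|_{m-1,{\rm e}}+\|v_{\rm i}(t)\|_{m-1,{\rm i}})^{1/2}$ follows.

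The main obstacle I anticipate is purely bookkeeping: $f_5^{(\alpha)}$ and $f_{{\rm i},8}^{(\alpha)}$ are long expressions built from $f_1^{(\alpha)},\dots$, the commutator operators ${\mathcal C}^1,{\mathcal C}^2,{\mathfrak c}^1,{\mathfrak c}^2$, multiple cut-off functions, and the matrices $A(u,N^\varphi,\bm a)$, $A(u,T^\varphi,\bm a)$, so one must carefully verify that in \emph{every} term the derivative-count budget is respected and that the one possibly-dangerous configuration (two factors of order $\sim m-2$, which for $m=3$ means two factors of order $1$, i.e.\ genuinely first-order derivatives of $u$ or $\widetilde\varphi$) is always controllable by the $L^4$--$L^4$ Hölder argument rather than requiring an $L^\infty$ bound on something of order $m-2$ that we do not have. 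No derivative loss occurs precisely because the good-unknown construction removed the order-$m$ derivatives of $\varphi$; the lemma is essentially the statement that, after that cancellation, what remains is subcritical, and the proof is a systematic term-by-term application of the interpolation estimates already collected in Lemmas \ref{lem:EstDM} and \ref{lem:EstDuv}.
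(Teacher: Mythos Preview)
Your approach is essentially the one the paper takes: expand $f_5^{(\alpha)}$ and $f_{{\rm i},8}^{(\alpha)}$ for $|\alpha|=m-1$, obtain a pointwise bound on $r_{{\rm e},1}$ and $r_{{\rm i},1}$ as a sum of products of derivatives of $u$, $v_{\rm i}$, $\widetilde\varphi$ of order at most $m-1$, and then estimate each product in $L^2$ via H\"older, using Lemma~\ref{lem:EstDuv} and Lemma~\ref{lem:EstDM} for the $L^4$ and $L^\infty$ bounds.

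Two points of comparison. First, your classification (c) is miscounted: the genuinely critical products are those with two factors of order $m-1$ (not $m-2$), e.g.\ $|\bm\partial^{m-1}\varphi|\,|\bm\partial^2 u|$ and, for $m=3$, $|\bm\partial^2\varphi|^2\,|\bm\partial u|$ or $|\bm\partial u|\,|\bm\partial^2 u|$. These are exactly what Lemma~\ref{lem:EstDuv}~(ii) handles via $L^4\times L^4$, together with Lemma~\ref{lem:EstDuv}~(iii) for the $L^\infty$ bound on $\bm\partial^{m-2}u$; you should invoke (iii) explicitly, since for $m=3$ the factor $\|\bm\partial u\|_{L^\infty}$ is not covered by the standing $L^\infty$ bounds on $\bm\partial^{\le m-3}u$. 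Second, the paper organizes the computation by splitting into $m\ge 4$ (where every product has at most one factor of order $\ge 3$, so $L^4\times L^4$ with Sobolev embedding $H^1\hookrightarrow L^4$ gives the stronger bound $\|r_{{\rm e},1}\|_{L^2}\le C_0\|u\|_{m-1,{\rm e}}$) and the critical case $m=3$ (where the full interpolation estimates of Lemma~\ref{lem:EstDuv} are needed). This case split makes the bookkeeping cleaner than your uniform three-tier classification, but the substance is the same.
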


\begin{proof}
We first evaluate $R_{{\rm e},1}(t)=\|r_{{\rm e},1}(t)\|_{L^2(\ul{\cE})}$. 
We write $\|\cdot\|_{L^p}$ instead of $\|\cdot\|_{L^p(\ul{\cE})}$ for simplicity. 
In view of \eqref{linearization 1}, for any multi-index $\alpha$ satisfying $|\alpha|=m-1$ we have 
\begin{align*}
\chi_{\rm b}|{\mathcal C}^1(\mathfrak{d}^\alpha,\partial \varphi)f|
&= \chi_{\rm b}|([\mathfrak{d}^\alpha,\nabla^\varphi]+(\partial^\varphi\mathfrak{d}^\alpha\varphi)^{\rm T}\nabla^\varphi)f| \\
&\leq C_0 \bigl\{ \sum_{|\beta|\leq m-2}|\bm{\partial}^\beta\nabla f| + |\bm{\partial}^{m-2}\partial\varphi||\bm{\partial}\nabla f|
 +  |\bm{\partial}^{m-2}\partial\varphi||\bm{\partial}\partial\varphi||\nabla f| \bigr\}.
\end{align*}
Therefore, by a straightforward calculation, we obtain 
\begin{align*}
r_{{\rm e},1}
&\leq C_0\bigl\{ \sum_{3\leq|\alpha|\leq m-1}|\bm{\partial}^\alpha u| + (|\bm{\partial}^{m-1}\varphi|+1)|\bm{\partial}^2u| \\
&\quad\;
 + (|\bm{\partial}^{m-1}\varphi|+1)(|\bm{\partial}^2\varphi|+1)|\bm{\partial}u| + |\bm{\partial}u||\bm{\partial}^{m-1}u| \\
&\quad\;
 + (|\bm{\partial}^2u|+|\bm{\partial}u|)|\bm{\partial}^{m-2}u| + (|\bm{\partial}^{m-1}\varphi|+1)|\bm{\partial}u|^2 \bigr\}.
\end{align*}
Particularly, in the case $m\geq 4$ we have 
\[
r_{{\rm e},1}
\leq C_0 \bigl\{ \sum_{|\alpha|\leq m-1}|\bm{\partial}^\alpha u| + |\bm{\partial}^{m-1}\varphi||(\bm{\partial}^2u,\bm{\partial}u)| 
 + |\bm{\partial}^2u||\bm{\partial}^{m-2}u| \bigr\},
\]
so that, by the Sobolev embedding theorem $H^1(\ul{\cE}) \hookrightarrow L^4(\ul{\cE})$, 
\begin{align*}
\|r_{{\rm e},1}\|_{L^2}
&\leq C_0 \bigl( \|u\|_{m-1,{\rm e}} + \|\bm{\partial}^{m-1}\varphi\|_{L^4}\|(\bm{\partial}^2u,\bm{\partial}u)\|_{L^4}
 + \|\bm{\partial}^2u\|_{L^4}\|\bm{\partial}^{m-2}u\|_{L^4} \bigr) \\
&\leq C_0(1+\|u\|_{3,{\rm e}})\|u\|_{m-1,{\rm e}} \\
&\leq C_0\|u\|_{m-1,{\rm e}}.
\end{align*}
In the critical case $m=3$, we have 
\[
r_{{\rm e},1}
\leq C_0 \{ (|\bm{\partial}^2\varphi|+1)|\bm{\partial}^2u| + (|\bm{\partial}^2\varphi|^2+1)|\bm{\partial}u|
 + |\bm{\partial}u||\bm{\partial}^2u| + (|\bm{\partial}^2\varphi|+1)|\bm{\partial}u|^2 \},
\]
so that 
\begin{align*}
\|r_{{\rm e},1}\|_{L^2}
&\leq C_0 \bigl( \|u\|_{2,{\rm e}} + \|\bm{\partial}^2\varphi\|_{L^4}\|\bm{\partial}^2u\|_{L^4} + \|\bm{\partial}^2\varphi\|_{L^4}^2\|\bm{\partial}u\|_{L^\infty} \\
&\quad\;
 + \|\bm{\partial}u\|_{L^4}\|\bm{\partial}^2u\|_{L^4} + \|\bm{\partial}^2\varphi\|_{L^4}\|\bm{\partial}u\|_{L^4}\|\bm{\partial}u\|_{L^\infty}
 + \|\bm{\partial}u\|_{L^4}^2 \bigr) \\
&\leq C_0 (1+\|u\|_{2,{\rm e}})(\|u\|_{2,{\rm e}}+\|\bm{\partial}^2u\|_{L^4}+\|\bm{\partial}u\|_{L^\infty}) \\
&\leq C_0 E_3^{1/2} \|u\|_{2,{\rm e}}^{1/2},
\end{align*}
where we used Lemma \ref{lem:EstDuv}. 
In any cases, we obtain $R_{{\rm e},1}(t) \leq C_0 E_3(t)^{1/2} \|u(t)\|_{2,{\rm e}}^{1/2}$.

Since the structure of $r_{{\rm i},1}$ is the same as that of $r_{{\rm e},1}$ by replacing $u$ with $v_{\rm i}$, 
we obtain $R_{{\rm i},1}(t) \leq C_0 E_3(t)^{1/2} \|v_{\rm i}(t)\|_{2,{\rm i}}^{1/2}$ in the same way as above. 
Therefore, we obtain the desired estimate. 
\end{proof}

\begin{remark}\label{re:EFs}
As we noted in \eqref{eqEcEm}, $E_m(t)$ and $\check{E}_m(t)$ are equivalent. 
Therefore, by Lemma \ref{lem:EstR1} we have $R_1(t) \leq C_0\check{E}_m(t)^{1/2}\check{E}_{m,\parallel}(t)^{1/2}$, 
which together with \eqref{EquivE} implies $\check{E}_{m,\parallel}(t) \leq \check{E}_m(t) \leq C_0\check{E}_{m,\parallel}(t)$. 
Therefore, the three energy functions $E_m(t)$, $\check{E}_m(t)$, and $\check{E}_{m,\parallel}(t)$ are all equivalent. 
\end{remark}

\begin{lemma}\label{lem:EstR2}
It holds that $R_2(t) \leq C_0(E_m(t)+1)$ for $\leq t\leq T$. 
\end{lemma}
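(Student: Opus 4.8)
\textbf{Proof plan for Lemma \ref{lem:EstR2}.}

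The plan is to estimate term-by-term the quantity $R_2(t)=\|r_{{\rm i},2}(t)\|_{L^2(\ul{\cI})}$, where $r_{{\rm i},2}$ collects the lower order source terms $f_{{\rm i},2}^{(\alpha_I)},\ldots,f_{{\rm i},7}^{(\alpha)}$ from \eqref{QLSWEi}, \eqref{Dtphii}, \eqref{QLBC0}, \eqref{QLBC00} together with the interior-away-from-boundary analogues $f_{{\rm i,r},3}^{(\alpha)},f_{{\rm i,r},6}^{(\alpha)},f_{{\rm i,r},7}^{(\alpha)}$. As already observed after Proposition \ref{propGUinterior} (see Remark \ref{re:GU2} {\bf i} and the discussion in Section \ref{sectexprdtphi}), all of these terms involve derivatives of $v_{\rm i}$ and $\varphi$ of order at most $m-1$, with the sole exception of $f_{{\rm i},5}^{(\alpha_I)}$ and $f_{{\rm i},4}^{(\alpha_I)}=w_{\rm i}\check{\zeta}_{\rm i}^{(\alpha_I)}$, which contain the $m$-th order derivative $\bm{\partial}^{\alpha_I}\dt v_{\rm i}$ respectively $\mathfrak{d}^{\alpha_I}\varphi$ (and $\check\zeta_{\rm i}^{(\alpha_I)}$ involves $\bm{\partial}^{\alpha_I}\zeta_{\rm i}=\bm{\partial}^{\alpha_I}(Z_{\rm w}\circ\varphi)$, hence $m$-th order derivatives of $\varphi$). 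So the first step is to separate $r_{{\rm i},2}$ into a ``genuinely lower order'' part, whose $L^2$ norm is bounded by $C_0(\|v_{\rm i}\|_{m-1,{\rm i}}+1)$ via Lemma \ref{lem:EstDuv} (ii)--(iii) and Lemma \ref{lem:EstDM}, plus a ``critical'' part.

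For the genuinely lower order part, the key estimates are the product/commutator inequalities of Moser--Kato type on $\ul{\cI}$: each summand is a product of a derivative of $v_{\rm i}$ (or $\zeta_{\rm i}$) of order $\leq m-1$ and derivatives of $\varphi$ of order $\leq m-1$, possibly also involving $h_{\rm i}$, $w_{\rm i}$ and the smooth cut-offs $\chi_{\rm b},\chi_{\rm i}$, $\chi_{\rm e}$. I would split each product so that at most one factor carries ``top'' regularity ($L^2$) while the remaining factors are estimated in $L^\infty$ using Lemma \ref{lem:EstDuv} (iii) together with the already-listed bounds $\|\bm{\partial}^\alpha\widetilde\varphi\|_{L^\infty}\leq C_0$ for $|\alpha|\leq m-2$ and $\|\bm{\partial}^\alpha\widetilde\varphi\|_{L^2\cap L^4}\leq C_0$ for $|\alpha|\leq m-1$. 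When two factors each need $L^4$ (which happens in the critical case $m=3$, exactly as in the proof of Lemma \ref{lem:EstR1}), one invokes the Sobolev embedding $H^1(\ul{\cI})\hookrightarrow L^4(\ul{\cI})$ and the interpolation bounds of Lemma \ref{lem:EstDuv} (ii). Since everything ends up controlled by $\|v_{\rm i}(t)\|_{m-1,{\rm i}}$ and the fixed constant $C_0$, and $\|v_{\rm i}\|_{m-1,{\rm i}}\leq C_0 E_m(t)$ trivially, this part contributes $C_0 E_m(t)$.

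For the critical part, the point is that even though $f_{{\rm i},4}^{(\alpha_I)}$ and $f_{{\rm i},5}^{(\alpha_I)}$ formally contain an $m$-th order derivative, these are not ``dangerous'': $\check\zeta_{\rm i}^{(\alpha_I)}=\cmfalphapI\zeta_{\rm i}$ is an Alinhac good unknown for $\zeta_{\rm i}=Z_{\rm w}\circ\varphi$, and since $Z_{\rm w}$ is a fixed $C^{m+1}$ function of Assumption \ref{ass:BSB}, a direct computation (Faà di Bruno, exploiting the good-unknown cancellation exactly as in \eqref{linearization 1}) shows $\|\check\zeta_{\rm i}^{(\alpha_I)}\|_{L^2(\ul{\cI})}\leq C_0(\|\bm{\partial}^{m-1}\varphi\|_{L^4}\cdot\|\partial\varphi\|_{L^\infty}+\text{l.o.t.})\leq C_0$ by Lemma \ref{lem:EstDM}. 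Likewise the $\bm{\partial}^{\alpha_I}\dt v_{\rm i}$ appearing in $f_{{\rm i},5}^{(\alpha_I)}$ enters only through the combination $\sum_{|\beta_I|=|\alpha_I|-1}(\dpar^{\beta_I}\varphi)\cdot\dpar^{\alpha_I-\beta_I}\dt v_{\rm i}$, where $\dt v_{\rm i}$ is differentiated $m-1$ times (so of order $\leq m$ in the $v_{\rm i}$ hierarchy, which is what $E_m$ controls via Lemma \ref{lem:EstDuv} (i)) and $\varphi$ is differentiated at most once, hence $L^\infty$-bounded. Assembling the two parts gives $R_2(t)\leq C_0(E_m(t)+1)$. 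The main obstacle will be the bookkeeping in the critical case $m=3$: keeping precise track of which factor gets $L^2$, which gets $L^4$, and which gets $L^\infty$, so that the factor $|\gamma(t)|_m$ that could a priori appear (from $\|\bm{\partial}^m\varphi\|_{L^2\cap L^4}\leq C_0|\gamma(t)|_m$) never multiplies anything but an $L^\infty$-small quantity and is in fact absent — which is why the bound is $C_0(E_m(t)+1)$ and not $C_0(1+|\gamma(t)|_m)E_m(t)$.
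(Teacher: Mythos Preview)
Your overall approach is the same as the paper's: the paper's proof is essentially one sentence, noting that $r_{{\rm i},2}$ has the same structure as $r_{{\rm i},1}$ (already handled in Lemma~\ref{lem:EstR1}, giving a bound $\leq C_0 E_m$), except for the extra contribution $f_{{\rm i},4}^{(\alpha_I)}=w_{\rm i}\check{\zeta}_{\rm i}^{(\alpha_I)}$, which is bounded by $C_0$ and accounts for the ``$+1$''. Your identification of $f_{{\rm i},4}^{(\alpha_I)}$ as the source of the additive constant is correct.

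There is, however, a misreading in your treatment of $f_{{\rm i},5}^{(\alpha_I)}$. In the sum $\sum_{|\beta_I|=|\alpha_I|-1}\binom{\alpha_I}{\beta_I}(\dpar^{\beta_I}\varphi)\cdot\dpar^{\alpha_I-\beta_I}\dt v_{\rm i}$ it is $\varphi$ that carries $m-1$ derivatives and $\dt v_{\rm i}$ that is differentiated only once --- the opposite of what you write. Consequently $f_{{\rm i},5}^{(\alpha_I)}$ contains \emph{no} $m$-th order derivative at all; it is genuinely lower order, as the paper states right after the definition of $r_{{\rm i},2}$. Your proposed handling of this term, taken literally, would fail on both sides: $\|\bm{\partial}^{m-1}\widetilde\varphi\|_{L^\infty}$ is \emph{not} controlled by $C_0$ (only the $L^2\cap L^4$ norm is, by Lemma~\ref{lem:EstDM}), and invoking Lemma~\ref{lem:EstDuv}~(i) on an $m$-th order derivative of $v_{\rm i}$ would introduce precisely the factor $(1+|\gamma|_m)$ you say must be avoided. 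Fortunately none of this is needed: with the indices read correctly, $f_{{\rm i},5}^{(\alpha_I)}$ is absorbed into your ``genuinely lower order'' part and is estimated exactly like the terms in $r_{{\rm i},1}$. The only term requiring separate treatment is $f_{{\rm i},4}^{(\alpha_I)}$, just as the paper observes.
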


\begin{proof}
The structure of $r_{{\rm i},2}$ is almost the same as that of $r_{{\rm i},1}$. 
The only difference rises from $f_{{\rm i},4}^{(\alpha_I)}$, which is not necessarily zero even if $E_m(t)=0$. 
However, we easily get $\|f_{{\rm i},4}^{(\alpha_I)}\|_{L^2(\ul{\cI})} \leq C_0$, so that we obtain the desired estimate. 
\end{proof}

\begin{lemma}\label{lem:EstR3}
It holds that $R_3(t) \leq C_0(|\gamma(t)|_m^{3/2}+1)(E_m(t)^2+1)$ for $\leq t\leq T$. 
\end{lemma}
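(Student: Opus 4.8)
\textbf{Proof plan for Lemma \ref{lem:EstR3}.}

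The statement to prove is the bound $R_3(t)\leq C_0(|\gamma(t)|_m^{3/2}+1)(E_m(t)^2+1)$, where $R_3=R_{{\rm e},3}+R_{{\rm i},3}$ collects the source terms $f_j^{(\alpha_I)}$, $f_{{\rm r},j}^{(\alpha)}$, $f_{{\rm i},j}^{(\alpha_I)}$, $f_{{\rm i,r},j}^{(\alpha)}$, together with time derivatives $\dt\bm{\partial}^\alpha u$ and $\dt\bm{\partial}^\alpha v_{\rm i}$ for $|\alpha|\leq m-1$. The plan is to treat each group of terms separately, using the explicit formulas derived in Sections \ref{subsect:EGUe}--\ref{subsect:EqGUb}, and to estimate all the products using the Gagliardo--Nirenberg/Sobolev interpolation bounds of Lemma \ref{lem:EstDuv} for $u,v_{\rm i}$ and of Lemma \ref{lem:EstDM} for $\widetilde\varphi$. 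First I would dispose of the ``easy'' pieces: the terms $\dt\bm{\partial}^\alpha u$ and $\dt\bm{\partial}^\alpha v_{\rm i}$ with $|\alpha|\leq m-1$ are already part of $\|u\|_{m-1,{\rm e}}$ and $\|v_{\rm i}\|_{m-1,{\rm i}}$, hence bounded by $C_0 M_0\leq C_0$ under \eqref{Hyp1}; and the lower order terms containing only derivatives of order $\leq m-1$ (the part of $r_{{\rm i},3}$ coming from differentiating $f_{{\rm i},2}^{(\alpha_I)},\ldots,f_{{\rm i},5}^{(\alpha_I)}$, $f_{{\rm i,r},3}^{(\alpha)}$) are handled with products of $L^4$--$L^4$ or $L^\infty$--$L^2$ type controlled by $\|u\|_{m-1,{\rm e}},\|v_{\rm i}\|_{m-1,{\rm i}}$ and $|\gamma|_{m-1}$, all $\leq C_0$.

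The substantive part is the collection of genuinely $m$-th order source terms: $f_1^{(\alpha_I)},f_2^{(\alpha_I)},f_3^{(\alpha_I)}$ from \eqref{QLSWEe}, $f_{{\rm r},1}^{(\alpha)},f_{{\rm r},2}^{(\alpha)}$ from \eqref{QLSWEr}, and $f_{{\rm i},1}^{(\alpha_I)},\bm{\partial}(f_{{\rm i},2}^{(\alpha_I)},\ldots)$, $f_{{\rm i,r},1}^{(\alpha)},\bm{\partial}f_{{\rm i,r},3}^{(\alpha)}$ from \eqref{QLSWEi},\eqref{QLSWEir}. Inspecting the explicit expressions (recall Remarks \ref{re:GU1}, \ref{re:GU2}), these are sums of schematic products of the form $|\bm{\partial}^{\leq m}\varphi|\,|\bm{\partial}^{\leq m-1}u|$, $|\bm{\partial}^{m}u|\,|\bm{\partial}^{\leq 2}u|$, $|\bm{\partial}^{m}\varphi|\,|\bm{\partial}^{\leq 2}u|$, commutators $[\mathfrak{d}^\alpha;h,v]$ which expand into $|\bm{\partial}^{m-1}u|\,|\bm{\partial}^{\leq 2}u|$, and the $\chi_{\rm b}$-derivative remainders carrying only $\leq m-1$ derivatives. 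The worst offenders are the ones with an $m$-th order derivative of $\varphi$, i.e.\ of $\widetilde\varphi$: those are controlled in $L^2$ by Lemma \ref{lem:EstDM}(i) by $C_0|\gamma(t)|_m$, paired with an $L^\infty$ factor on $u$ or $v_{\rm i}$ that by Lemma \ref{lem:EstDuv}(iii) is $\leq C_0 E_m^{1/4}\|u\|_{m-1,{\rm e}}^{3/4}\leq C_0 E_m^{1/4}$, or in the term $\nabla^\varphi\cdot f_4$-type contributions paired with an $L^\infty$ of a first derivative which is $\leq C_0$. The $m$-th order derivative of $u$ (or $v_{\rm i}$) is controlled in $L^2$ by Lemma \ref{lem:EstDuv}(i) by $C_0(1+|\gamma|_m)E_m$, paired with an $L^\infty$ factor $|\bm{\partial}^{\leq 2}u|\leq C_0$; the term $\partial\bm{\partial}^{m-1}u$ in $\bm{\partial}f_{{\rm i,r},3}$ is handled by Lemma \ref{lem:EstDuv}(iv), giving $C_0(1+|\gamma|_m^{1/2})E_m$. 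Collecting, every summand is bounded by $C_0(1+|\gamma|_m)(1+E_m)\cdot(1+E_m^{1/4})$, and after absorbing lower powers of $E_m$ into $E_m^2+1$ and noting the worst $\varphi$-factor gives at most one extra power of $|\gamma|_m$ times a half-power from an interpolation of the type $\|\bm{\partial}^{m-1}u\|_{L^4}^{1/2}$, the product never exceeds $C_0(|\gamma|_m^{3/2}+1)(E_m^2+1)$. The exponent $3/2$ on $|\gamma|_m$ comes precisely from pairing $\|\bm{\partial}^m\varphi\|_{L^4}\lesssim|\gamma|_m$ (Lemma \ref{lem:EstDM}(i)) with a factor involving $\|\bm{\partial}^{m-1}\varphi\|_{L^4}$ times, in the critical case, an interpolated half-power, or from pairing $(1+|\gamma|_m)E_m$ with $|\gamma|_m^{1/2}E_m$ type quantities arising through \eqref{CalIneq1}; tracking this is the bookkeeping crux.

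I would write the argument by first reducing, as in the proof of Lemma \ref{lem:EstR1}, to the bound on $r_{{\rm e},3}$ and $r_{{\rm i},3}$ separately, splitting $m\geq 4$ from the critical case $m=3$. For $m\geq 4$ the $L^\infty$ factors are on derivatives of order $\leq m-2$ and Sobolev embedding $H^1(\ul\cE)\hookrightarrow L^4(\ul\cE)$ suffices without interpolation, yielding directly $\leq C_0(1+|\gamma|_m)(E_m+1)\leq C_0(|\gamma|_m^{3/2}+1)(E_m^2+1)$. The case $m=3$ is where one must be careful: the product $|\bm{\partial}^3\varphi|\,|\bm{\partial}^2u|$ needs $\|\bm{\partial}^3\widetilde\varphi\|_{L^4}\,\|\bm{\partial}^2u\|_{L^4}\leq C_0|\gamma|_3\cdot E_3^{1/2}\|u\|_{2,{\rm e}}^{1/2}$ (Lemma \ref{lem:EstDuv}(ii)), the term from $\nabla^\varphi\cdot[\mathfrak{d}^\alpha;h,v]$ expands to $|\bm{\partial}^3u|\,|\bm{\partial}u|+|\bm{\partial}^2u|^2$ controlled by $C_0(1+|\gamma|_3)E_3\cdot C_0+C_0 E_3\|u\|_{2,{\rm e}}$, and the $\partial\bm{\partial}^2 v_{\rm i}$ contribution uses Lemma \ref{lem:EstDuv}(iv) giving $C_0(1+|\gamma|_3^{1/2})E_3$. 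Summing all of these and using $ab\leq \tfrac12(a^2+b^2)$ freely to convert mixed powers, one lands on $C_0(|\gamma|_3^{3/2}+1)(E_3^2+1)$. The main obstacle is purely organizational: one has to enumerate a large number of schematic terms from \eqref{QLSWEe}--\eqref{QLSWEir} and verify that none of them produces a worse power of $|\gamma|_m$ than $3/2$ or a worse power of $E_m$ than $2$; the $m$-th order $\varphi$-cancellation guaranteed by the good-unknown construction (Remarks \ref{re:GU1}, \ref{re:GU2}) is what makes this possible, since without it an uncontrollable $|\gamma|_{m+1}$ or $|\bm{\partial}^{m+1}\varphi|$ would appear.
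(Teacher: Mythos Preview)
Your overall strategy matches the paper's: split $R_{{\rm e},3}$ and $R_{{\rm i},3}$, separate $m\geq 4$ from the critical case $m=3$, and estimate products via Lemmas \ref{lem:EstDM} and \ref{lem:EstDuv}. But two pieces of your bookkeeping are wrong in ways that matter.

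First, the terms $\dt\bm{\partial}^\alpha u$ with $|\alpha|=m-1$ are of total order $m$, not part of $\|u\|_{m-1,{\rm e}}$; they require Lemma \ref{lem:EstDuv}(i) and contribute $(1+|\gamma|_m)E_m$, not $C_0$. Likewise $\bm{\partial}(f_{{\rm i},2}^{(\alpha_I)},\ldots,f_{{\rm i},5}^{(\alpha_I)})$ contain order-$m$ derivatives of $v_{\rm i}$ and $\varphi$, since the $f_{{\rm i},j}^{(\alpha_I)}$ themselves already go up to order $m-1$; your first paragraph calls these ``$\leq m-1$'' and your second calls them ``genuinely $m$-th order'', so you are inconsistent. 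Second, and more importantly, your schematic list in the $m=3$ case omits the term that actually produces the exponent $3/2$. The commutator ${\mathcal C}^1(\mathfrak{d}^\alpha,\partial\varphi)$ applied to $hv$ or to $\tfrac12|v|^2+\gr\zeta$ contains a contribution of type $|\bm{\partial}^{m-1}\varphi|\,|\bm{\partial}^m u|$---not only $|\bm{\partial}^m\varphi|\,|\bm{\partial}^{\leq 2}u|$---which for $m=3$ reads $|\bm{\partial}^2\varphi|\,|\bm{\partial}^3 u|$. In $L^2$ this forces the pairing $\|\bm{\partial}^2\widetilde\varphi\|_{L^\infty}\|\bm{\partial}^3 u\|_{L^2}$; by Lemma \ref{lem:EstDM}(ii) with $p=\infty$ one has $\|\bm{\partial}^2\widetilde\varphi\|_{L^\infty}\leq C_0|\gamma|_3^{1/2}$, and by Lemma \ref{lem:EstDuv}(i) one has $\|\bm{\partial}^3 u\|_{L^2}\leq C_0(1+|\gamma|_3)E_3$, whose product is $C_0(1+|\gamma|_3^{3/2})E_3$. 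This, and not the $L^4$--$L^4$ pairing of $|\bm\partial^3\varphi|\,|\bm\partial^2 u|$ you cite (which only gives one power of $|\gamma|_3$), is the source of the $3/2$ in the paper's proof. If you follow your enumeration as written you will not have accounted for this term, and your claimed bound will be unjustified even though the overall scheme is correct.
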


\begin{proof}
We first evaluate $R_{{\rm e},3}(t)=\|r_{{\rm e},3}(t)\|_{L^2(\ul{\cE})}$. 
We write $\|\cdot\|_{L^p}$ instead of $\|\cdot\|_{L^p(\ul{\cE})}$ for simplicity. 
In view of \eqref{linearization 1}, for any multi-index $\alpha$ satisfying $|\alpha|=m$ we have 
\begin{align*}
\chi_{\rm b}|{\mathcal C}^1(\mathfrak{d}^\alpha,\partial \varphi)f|
&= \chi_{\rm b}|([\mathfrak{d}^\alpha,\nabla^\varphi]+(\partial^\varphi\mathfrak{d}^\alpha\varphi)^{\rm T}\nabla^\varphi)f| \\
&\leq C_0 \bigl\{ \sum_{|\beta|\leq m-1}|\bm{\partial}^\beta\nabla f| + |\bm{\partial}^{m-2}\partial\varphi||\bm{\partial}^2\nabla f| \\
&\quad\;
 + \bigl( |\bm{\partial}^{m-1}\partial\varphi| + (|\bm{\partial}^{m-2}\partial\varphi|+1)(|\bm{\partial}\partial\varphi|+1) \bigr)|\bm{\partial}\nabla f| \\
&\quad\;
 + \bigl( |\bm{\partial}^{m-1}\partial\varphi|(|\bm{\partial}\partial\varphi|+1)
 + (|\bm{\partial}^{m-2}\partial\varphi|+1)(|\bm{\partial}\partial\varphi|+1)^2 \bigr) |\nabla f| \bigr\}.
\end{align*}
Therefore, by a straightforward calculation, we obtain 
\begin{align*}
r_{{\rm e},3}
&\leq C_0 \bigl\{ \sum_{m_1+m_2\leq m-1}(|\bm{\partial}^{m_1+1}u|+1)|\bm{\partial}^{m_2+1}u|
 + |\bm{\partial}^{m-1}\varphi|( |\bm{\partial}^3u| + |\bm{\partial}^2u||\bm{\partial}u| ) \\
&\quad\;
 + \bigl( |\bm{\partial}^m\varphi| + (|\bm{\partial}^{m-1}\varphi|+1)(|\bm{\partial}^2\varphi|+1) \bigr)( |\bm{\partial}^2u|+|\bm{\partial}u|^2 ) \\
&\quad\;
 + \bigl( |\bm{\partial}^m\varphi|(|\bm{\partial}^2\varphi|+1) + (|\bm{\partial}^{m-1}\varphi|+1)(|\bm{\partial}^2\varphi|+1)^2 \bigr) |\bm{\partial}u| \bigr\}.
\end{align*}
Particularly, in the case $m\geq 4$ we have 
\begin{align*}
r_{{\rm e},3}
&\leq C_0 \bigl\{ \sum_{|\alpha|\leq m}|\bm{\partial}^\alpha u|
  + |\bm{\partial}^2u||\bm{\partial}^{m-1}u| + (|\bm{\partial}^3u|+|\bm{\partial}^2u|)|\bm{\partial}^{m-2}u| \\
&\quad\;
 + |\bm{\partial}^{m-1}\varphi||\bm{\partial}^3u| + (|\bm{\partial}^m\varphi|+|\bm{\partial}^{m-1}\varphi|)(|\bm{\partial}^2u|+|\bm{\partial}u|) \bigr\},
\end{align*}
so that, by the Sobolev embedding theorem $H^1(\ul{\cE}) \hookrightarrow L^4(\ul{\cE})$, 
\begin{align*}
\|r_{{\rm e},3}\|_{L^2}
&\leq C_0 \bigl( \|u\|_{m,{\rm e}} + \|\bm{\partial}^2u\|_{L^4} \|\bm{\partial}^{m-1}u\|_{L^4}
 + \|(\bm{\partial}^3u,\bm{\partial}^2u)\|_{L^4} \|\bm{\partial}^{m-2}u\|_{L^4} \\
&\quad\;
 + \|\bm{\partial}^{m-1}\varphi\|_{L^4} \|\bm{\partial}^3u\|_{L^4} 
 + \|(\bm{\partial}^m\varphi,\bm{\partial}^{m-1}\varphi)\|_{L^4} \|(\bm{\partial}^2u,\bm{\partial}u)\|_{L^4} \bigr) \\
&\leq C_0 \{ (\|u\|_{m-1,{\rm e}}+1)\|u\|_{m,{\rm e}} + |\gamma|_m \|u\|_{m-1,{\rm e}} \} \\
&\leq C_0(|\gamma|_m+1)E_m,
\end{align*}
where we used Lemma \ref{lem:EstDuv}. 
In the critical case $m=3$, we have 
\begin{align*}
r_{{\rm e},3}
&\leq C_0 \bigl\{ (|\bm{\partial}u|+1)(|\bm{\partial}^3u|+|\bm{\partial}^2u|+|\bm{\partial}u|) + |\bm{\partial}^2u|^2
 + |\bm{\partial}^2\varphi|( |\bm{\partial}^3u| + |\bm{\partial}^2u||\bm{\partial}u| ) \\
&\quad\;
 + (|\bm{\partial}^3\varphi|+|\bm{\partial}^2\varphi|^2+|\bm{\partial}^2\varphi|)(|\bm{\partial}^2u|+|\bm{\partial}u|^2)
 + (|\bm{\partial}^3\varphi||\bm{\partial}^2\varphi|+|\bm{\partial}^3\varphi|+|\bm{\partial}^2\varphi|^3)|\bm{\partial}u| \bigr\},
\end{align*}
so that 
\begin{align*}
\|r_{{\rm e},3}\|_{L^2}
&\leq C_0 \bigl( (\|\bm{\partial}u\|_{L^\infty}+1)\|u\|_{3,{\rm e}} + \|\bm{\partial}^2u\|_{L^4}^2
 + \|\bm{\partial}^2\varphi\|_{L^\infty}( \|\bm{\partial}^3u\|_{L^2} + \|\bm{\partial}^2u\|_{L^2}\|\bm{\partial}u\|_{L^\infty} ) \\
&\quad\;
 + ( \|(\bm{\partial}^3\varphi,\bm{\partial}^2\varphi)\|_{L^4} + \|\bm{\partial}^2\varphi\|_{L^8}^2 )
  ( \|\bm{\partial}^2u\|_{L^4} + \|\bm{\partial}u\|_{L^\infty}^{3/2}\|\bm{\partial}u\|_{L^2}^{1/2} ) \\
&\quad\;
 + ( \|\bm{\partial}^3\varphi\|_{L^4}\|\bm{\partial}^2\varphi\|_{L^4} + \|\bm{\partial}^3\varphi\|_{L^2} + \|\bm{\partial}^2\varphi\|_{L^6}^3 )
  \|\bm{\partial}u\|_{L^\infty} \\
&\leq C_0 \bigl( (E_3+1)(|\gamma|_3+1)E_3 + E_3^2 \\
&\quad\;
 + |\gamma|_3^{1/2}( (|\gamma|_3+1)E_3 + E_3^2) + (|\gamma|_3+1)(E_3 + E_3^2) \bigr) \\
&\leq C_0(|\gamma|_3^{3/2}+1)(E_3+1)E_3,
\end{align*}
where we used Lemma \ref{lem:EstDuv} and $\|\bm{\partial}^2\varphi\|_{L^p} \leq C_0|\gamma|_3^{1/2-2/p}$ for $p\in[4,\infty]$, 
which comes from Lemma \ref{lem:EstDM}. 
In any cases, we obtain $R_{{\rm e},3}(t) \leq C_0(|\gamma(t)|_m^{3/2}+1)(E_m(t)+1)E_m(t)$.

Since the structure of $r_{{\rm i},3}$ is the same as that of $r_{{\rm e},3}$ by replacing $u$ with $v_{\rm i}$, 
we obtain $R_{{\rm i},3}(t) \leq C_0(|\gamma(t)|_m^{3/2}+1)(E_m(t)^2+1)$ in the same way as above. 
We note that this is the place where we need not only to $Z_{\rm w}\in C^m(\overline{\ul{\cI}_0})$ 
but also to $Z_{\rm w}\in C^{m+1}(\overline{\ul{\cI}_0})$ in Assumption \ref{ass:BSB}. 
Therefore, we obtain the desired estimate. 
\end{proof}

\begin{lemma}\label{lem:EstR4}
It holds that $R_4(t) \leq C_0(|\gamma(t)|_m^{3/2}+1)(E_m(t)^3+1)$ for $\leq t\leq T$. 
\end{lemma}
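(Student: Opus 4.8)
\textbf{Proof plan for Lemma \ref{lem:EstR4}.}
The quantity $R_4(t)$ is a sum of products of $L^2(\ul{\cI})$ norms: each summand pairs a factor carrying $m$-th order derivatives of $v_{\rm i}$ and $\varphi$ (namely $\dt\check{\phi}_{\rm i}^{(\alpha_I)}$, $f_{{\rm i},5}^{(\alpha_I)}$, or $\dt\check{\phi}_{\rm i,r}^{(\alpha)}$) against the lower order term $f_{{\rm i},1}$ or $f_{{\rm i,r},1}$ coming from the interior equation \eqref{QLSWEir}. The strategy is to estimate the two factors separately and then multiply. For the first factor, the crucial observation made in Section \ref{sectexprdtphi} (see \eqref{Dtphii} and \eqref{Dtphiir}) is that when $\alpha_1+\alpha_2\ge1$, the time derivative $\dt\check{\phi}_{\rm i}^{(\alpha_I)}$ can be rewritten as $(\dtan\varphi)\cdot\check{v}_{\rm i}^{(\alpha')}+f_{{\rm i},6}^{(\alpha)}$ (or with $\dnor$ in place of $\dtan$), so that it is controlled by $\|\check{v}_{\rm i}^{(\alpha')}\|_{L^2(\ul{\cI})}+\|f_{{\rm i},6}^{(\alpha)}\|_{L^2(\ul{\cI})}$, hence by $\check{E}_m(t)+R_2(t)$, which by \eqref{eqEcEm} and Lemma \ref{lem:EstR2} is bounded by $C_0(E_m(t)+1)$; the term $f_{{\rm i},5}^{(\alpha_I)}$ is itself part of $r_{{\rm i},2}$ and so is estimated by Lemma \ref{lem:EstR2}.

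For the second factor, $f_{{\rm i},1}=f_{{\rm i},01}^{(\alpha)}$ as in \eqref{deffi01}, or $f_{{\rm i,r},1}^{(\alpha)}$ from \eqref{QLSWEir}; these are of the same nature as the terms in $r_{{\rm i},3}$ and contain derivatives of $v_{\rm i}$ and $\varphi$ up to order $m$. I would therefore estimate $\|f_{{\rm i},1}(t)\|_{L^2(\ul{\cI})}$ exactly as $R_{{\rm i},3}(t)$ was estimated in the proof of Lemma \ref{lem:EstR3}, splitting into the cases $m\ge4$ and the critical case $m=3$, using the product/commutator decomposition, the Sobolev embedding $H^1(\ul{\cI})\hookrightarrow L^4(\ul{\cI})$, the Gagliardo--Nirenberg inequalities of Lemma \ref{lem:EstDuv}, and the bounds $\|\bm{\partial}^2\varphi\|_{L^p}\le C_0|\gamma|_m^{1/2-2/p}$ from Lemma \ref{lem:EstDM}. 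This gives $\|f_{{\rm i},1}(t)\|_{L^2(\ul{\cI})}\le C_0(|\gamma(t)|_m^{3/2}+1)(E_m(t)^2+1)$.

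Multiplying the two bounds and summing over the finitely many multi-indices with $|\alpha_I|=m$ or $|\alpha|=m$, $\alpha_0\le m-1$, yields
\[
R_4(t)\le C_0(E_m(t)+1)\cdot C_0(|\gamma(t)|_m^{3/2}+1)(E_m(t)^2+1)\le C_0(|\gamma(t)|_m^{3/2}+1)(E_m(t)^3+1),
\]
which is the claim. The only mild subtlety, and the place requiring care, is verifying that the rewriting of $\dt\check{\phi}_{\rm i}^{(\alpha_I)}$ via \eqref{Dtphii}--\eqref{Dtphiir} really does keep this factor at the level of $E_m(t)$ rather than forcing an $m$-th order derivative of $\varphi$ (i.e.\ a factor $|\gamma|_m$) into it; this is precisely why the second-order good unknowns were introduced in Section \ref{sectdefGU}, and it is exactly the content of Remark \ref{re:GU2} {\bf ii} together with the order count in Section \ref{sectexprdtphi}. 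Everything else is a routine repetition of the estimates already carried out for $R_3$.
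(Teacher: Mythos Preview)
Your proposal is correct and follows essentially the same approach as the paper. The only minor difference is one of economy: rather than re-running the $R_{{\rm i},3}$-type estimate on $\|f_{{\rm i},1}^{(\alpha_I)}\|_{L^2(\ul{\cI})}$ and $\|f_{{\rm i,r},1}^{(\alpha)}\|_{L^2(\ul{\cI})}$, the paper simply observes that these terms are already constituents of $r_{{\rm i},3}$, so their $L^2$ norms are bounded by $R_3(t)$; combining this with your bound on the first factor gives $R_4\le C_0(\check{E}_m+R_2)R_3$, and then Lemmas \ref{lem:EstR2} and \ref{lem:EstR3} finish the job in one line.
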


\begin{proof}
By \eqref{Dtphii} and \eqref{Dtphiir}, for any multi-index $\alpha$ satisfying $|\alpha|=m$ and $\alpha_0\leq m-1$ we have 
$|\dt\check{\phi}_{\rm i}^{(\alpha)}| + |\dt\check{\phi}_{\rm i,r}^{(\alpha)}|
\leq C_0\sum_{|\beta|=m} ( |\check{v}_{\rm i}^{(\beta)}| + |\check{v}_{\rm i,r}^{(\beta)}| ) + r_{{\rm i},2}$,
which implies 
$\|\dt\check{\phi}_{\rm i}^{(\alpha)}\|_{L^2(\ul{\cI})} + \|\dt\check{\phi}_{\rm i,r}^{(\alpha)}\|_{L^2(\ul{\cI})}\leq C_0( \check{E}_m + R_2 )$. 
Therefore, we obtain $R_4 \leq C_0 (\check{E}_m + R_2 )R_3$, which together with Lemmas \ref{lem:EstR2} and \ref{lem:EstR3} yields the desired estimate. 
\end{proof}

\begin{lemma}\label{lem:EstR5}
It holds that $R_5(t) \leq C_0$ for $0\leq t\leq T$. 
\end{lemma}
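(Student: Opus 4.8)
\textbf{Proof of Lemma \ref{lem:EstR5}.}
Recall from \eqref{gamma eq2} that for a multi-index $\alpha_I=(\alpha_0,\alpha_1)$ with $|\alpha_I|=m$, the quantity $b^{(\alpha_I)}$ is given by
\[
b^{(\alpha_I)} = -\bigl( (1+\kappa\gamma)\ul{N} - (\partial_s\gamma)\ul{N}^\perp \bigr)\cdot
 \bigl( [\partial_s^{\alpha_1},(-\ul{x}')^\perp]\dt^{\alpha_0}\gamma + \dt^{\alpha_0}\partial_s^{\alpha_1}\ul{x} \bigr),
\]
while $\check{\zeta}_{\rm i}^{(\alpha_I)} = \cmfalpha_\parallel \zeta_{\rm i} = \dpar^{\alpha_I}\zeta_{\rm i} - (\dpar^{\alpha_I}\varphi)\cdot\nabla^\varphi\zeta_{\rm i}$, where $\zeta_{\rm i}=\zeta_{\rm w}=Z_{\rm w}\circ\varphi$. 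The plan is simply to estimate the $L^2(\ul{\itGamma})$-norm of each of these two traces, making essential use of the fact that $m_0=1$ in the current setting together with the control $\sup_{0<t<T}|\gamma(t)|_{m-1}\leq 2M_0$ from \eqref{Hyp1}; in particular all quantities appearing in $b^{(\alpha_I)}$ and $\check{\zeta}_{\rm i}^{(\alpha_I)}$ involve at most $m-1$ derivatives of $\gamma$ (and derivatives of the fixed smooth data $\ul{x}$, $\kappa$, $Z_{\rm w}$), so they are all bounded by a constant $C_0$ depending only on $c_0,M_0,\eta_0,m$.

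First I would treat $b^{(\alpha_I)}$. The reference curve data $\ul{x}(\cdot)$, $\ul{N}$, $\kappa$ are fixed $C^\infty$ functions on $\mathbb{T}_L$, so $|\dt^{\alpha_0}\partial_s^{\alpha_1}\ul{x}|$ is bounded pointwise by a constant (indeed it vanishes unless $\alpha_0=0$, in which case it is $|\partial_s^{m}\ul{x}|\leq C$), and the commutator $[\partial_s^{\alpha_1},(-\ul{x}')^\perp]\dt^{\alpha_0}\gamma$ involves only derivatives of $\gamma$ of order at most $\alpha_0+\alpha_1-1 = m-1$, with smooth coefficients; hence its $L^2(\mathbb{T}_L)$-norm is $\leq C_0|\gamma(t)|_{m-1}\leq C_0$. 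The prefactor $(1+\kappa\gamma)\ul{N}-(\partial_s\gamma)\ul{N}^\perp$ is bounded in $L^\infty(\mathbb{T}_L)$: indeed $|\gamma|\leq\eta_0 r_0$ and, since $m\geq 3$, $|\partial_s\gamma|_{L^\infty(\mathbb{T}_L)}\leq C_0|\gamma(t)|_{m-1}\leq C_0$ by the Sobolev embedding on $\mathbb{T}_L$. Multiplying, $|b^{(\alpha_I)}(t)|_{L^2(\ul{\itGamma})}\leq C_0$.

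Next I would treat $\check{\zeta}_{\rm i}^{(\alpha_I)}=\dpar^{\alpha_I}\zeta_{\rm i}-(\dpar^{\alpha_I}\varphi)\cdot\nabla^\varphi\zeta_{\rm i}$, which is exactly an Alinhac good unknown of order $m$ for $\zeta_{\rm i}=Z_{\rm w}\circ\varphi$; by the Faà di Bruno / chain-rule structure of the composition, the top-order derivative of $\varphi$ is cancelled and what remains is $\dpar^{\alpha_I}(Z_{\rm w}\circ\varphi)-(\dpar^{\alpha_I}\varphi)\cdot((\nabla Z_{\rm w})\circ\varphi)$, a polynomial expression in derivatives of $Z_{\rm w}$ (composed with $\varphi$) up to order $m$ and derivatives of $\varphi$ — more precisely of $\widetilde{\varphi}$ on $\operatorname{supp}\chi_{\rm b}$ — of order at most $m-1$, since only tangential and time derivatives $\dpar^{\alpha_I}$ occur and, on $\ul\itGamma$, $\dpar^{\alpha_I}\widetilde\varphi$ is expressed through $\dt^{\alpha_0}\partial_s^{\alpha_1}\gamma$ plus lower-order terms as in \eqref{gamma eq2}; the top term $\dt^{\alpha_0}\partial_s^{\alpha_1}\gamma$ multiplies $\nabla Z_{\rm w}\circ\varphi$ in $\dpar^{\alpha_I}\zeta_{\rm i}$ and is cancelled by the corresponding top term in $(\dpar^{\alpha_I}\varphi)\cdot\nabla^\varphi\zeta_{\rm i}$. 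Hence $\check{\zeta}_{\rm i}^{(\alpha_I)}$ involves at most $m-1$ derivatives of $\gamma$ on the boundary, so by the bounds $\|Z_{\rm w}\|_{W^{m+1,\infty}(\ul{\cI}_0)}\leq M_0$ from Assumption \ref{ass:BSB}, $|\widetilde\varphi|_{m-1}\leq C_0$ and $|\gamma(t)|_{m-1}\leq C_0$ from Lemma \ref{lem:EstDM} and \eqref{Hyp1}, together with the Sobolev/Moser product estimates on $\mathbb{T}_L$, we obtain $|\check{\zeta}_{\rm i}^{(\alpha_I)}(t)|_{L^2(\ul{\itGamma})}\leq C_0$. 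Summing over the finitely many $\alpha_I$ with $|\alpha_I|=m$ gives $R_5(t)\leq C_0$ for $0\leq t\leq T$, as claimed.

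The main point requiring care — and the place where the hypothesis $m\geq 3$ is genuinely used — is the observation that $b^{(\alpha_I)}$ and $\check{\zeta}_{\rm i}^{(\alpha_I)}$ contain \emph{no} $m$-th order derivative of $\gamma$: in $b^{(\alpha_I)}$ this is immediate from the commutator structure, and in $\check{\zeta}_{\rm i}^{(\alpha_I)}$ it is precisely the Alinhac cancellation (already recorded right after \eqref{gamma eq2} and in Remark \ref{re:GU1}). Everything else is a routine product estimate using that the geometric data $\ul{x},\ul{N},\kappa$ and the obstacle profile $Z_{\rm w}$ are fixed and smooth, and that $|\gamma(t)|_{m-1}$ and $|\widetilde\varphi(t)|_{m-1}$ are already controlled by $C_0$ under \eqref{Hyp1}. \hfill$\Box$
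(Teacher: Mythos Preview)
Your proof is correct and follows essentially the same approach as the paper's. The paper's argument is terser---it simply records the pointwise bounds $|\check{\zeta}_{\rm i}^{(\alpha_I)}| \leq C_0(|\bm{\partial}^{m-1}\varphi|+1)$ and $|b^{(\alpha_I)}| \leq C_0(\sum_{j+k=m-1}|\dt^j\partial_s^k\gamma|+1)$ and then invokes Lemma~\ref{lem:EstDM} together with $|\gamma|_{m-1}\leq 2M_0$---but the underlying mechanism (the Alinhac cancellation removing the $m$-th order derivative of $\varphi$ from $\check{\zeta}_{\rm i}^{(\alpha_I)}$, and the commutator structure doing the same for $b^{(\alpha_I)}$) is exactly what you spell out.
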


\begin{proof}
For any multi-index $\alpha_I$ satisfying $|\alpha_I|=m$, we see easily that 
\[
\begin{cases}
 |\zeta_{\rm i}^{(\alpha_I)}| \leq C_0( |\bm{\partial}^{m-1}\varphi|+1 ), \\
 |b^{(\alpha_I)}| \leq C_0 (\sum_{j+k=m-1}|\dt^j\partial_s^k\gamma| +1 ),
\end{cases}
\]
which together with Lemma \ref{lem:EstDM} implies $R_5 \leq C_0(|\gamma|_{m-1}+1)\leq C_0$. 
\end{proof}

%-----------------------------------------------------------
\subsection{The transversality condition}\label{secttransv}
When we show the estimates in \eqref{Hyp1} for some time $T>0$, we usually evaluate the time derivative of each quantities. 
Such a standard procedure can work in the case $m\geq4$. 
However, in the critical case $m=3$, this procedure does not work, especially, for the transversality condition 
$|\ul{N}\cdot(\nabla\zeta-\nabla\zeta_{\rm i})| \geq c_0$ on $\ul{\itGamma}$, 
because we do not have the boundedness of the second order derivatives $\dt\nabla\zeta$ and $\dt\nabla\zeta_{\rm i}$. 
To bypass this difficulty, we prepare the following lemmas.

\begin{lemma}\label{lem:Embedding1}
Let $\Omega=\ul{\cE}$ or $\ul{\cI}$. 
There exists a positive constant $C$ such that for any $f\in H^2((0,T)\times\Omega)$ 
and any $t\in(0,T)$ we have 
\[
\|f(t,\cdot)-f(0,\cdot)\|_{L^\infty(\Omega)}
 \leq C\sqrt{t}\bigl( \|f\|_{H^2((0,T)\times\Omega)} + \|f(0,\cdot)\|_{H^{3/2}(\Omega)} + \|\dt f(0,\cdot)\|_{H^{1/2}(\Omega)} \bigr),
\]
where the constant $C$ does not depend on $T$. 
\end{lemma}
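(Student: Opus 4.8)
The statement is a quantitative version of the embedding $H^2((0,T)\times\Omega)\hookrightarrow C([0,T];L^\infty(\Omega))$ made explicit in the dependence on $T$ via the factor $\sqrt t$, together with a clean way to handle the value at $t=0$. The plan is to reduce everything to a one-dimensional-in-time estimate after freezing the spatial behaviour through an intermediate fractional Sobolev norm. First I would write, for fixed $x$ (or rather after taking spatial norms),
\[
f(t,\cdot)-f(0,\cdot) = \int_0^t \dt f(t',\cdot)\,{\rm d}t',
\]
so that by Minkowski's integral inequality and Cauchy--Schwarz in $t'$,
\[
\|f(t,\cdot)-f(0,\cdot)\|_{L^\infty(\Omega)}
 \leq \int_0^t \|\dt f(t',\cdot)\|_{L^\infty(\Omega)}\,{\rm d}t'
 \leq \sqrt{t}\,\Big(\int_0^T \|\dt f(t',\cdot)\|_{L^\infty(\Omega)}^2\,{\rm d}t'\Big)^{1/2}.
\]
It then remains to bound $\|\dt f\|_{L^2(0,T;L^\infty(\Omega))}$ by the right-hand side of the claimed inequality, uniformly in $T$.

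\textbf{Key steps.} The second step is to control $\|\dt f(t',\cdot)\|_{L^\infty(\Omega)}$ using a Sobolev embedding in the two spatial variables: since $\Omega\subset\R^2$, one has $H^{1+\epsilon}(\Omega)\hookrightarrow L^\infty(\Omega)$, but the borderline choice that matches the $H^2$ regularity in space-time is to interpolate. More precisely, I would use the trace/interpolation inequality
\[
\|\dt f(t',\cdot)\|_{L^\infty(\Omega)}
 \lesssim \|\dt f(t',\cdot)\|_{H^{1/2}(\Omega)}^{1/2}\,\|\dt f(t',\cdot)\|_{H^{3/2}(\Omega)}^{1/2},
\]
valid on a Lipschitz domain of dimension $2$ (it follows from $H^1(\Omega)\hookrightarrow L^4(\Omega)$ applied to $\langle D_x\rangle^{1/2}\dt f$ together with Gagliardo--Nirenberg, exactly as in the interpolation inequalities used in Lemma~\ref{lem:EstDM} and Lemma~\ref{lem:EstDuv}). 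Squaring, integrating in $t'$ over $(0,T)$, and applying Cauchy--Schwarz gives
\[
\int_0^T \|\dt f(t',\cdot)\|_{L^\infty(\Omega)}^2\,{\rm d}t'
 \lesssim \Big(\int_0^T\|\dt f\|_{H^{1/2}(\Omega)}^2\Big)^{1/2}\Big(\int_0^T\|\dt f\|_{H^{3/2}(\Omega)}^2\Big)^{1/2}
 \lesssim \|\dt f\|_{L^2(0,T;H^{1/2}(\Omega))}\,\|\dt f\|_{L^2(0,T;H^{3/2}(\Omega))}.
\]
The third step identifies these mixed norms with pieces of $\|f\|_{H^2((0,T)\times\Omega)}$: by the usual characterization of anisotropic Sobolev spaces, $\|f\|_{H^2((0,T)\times\Omega)}^2$ is equivalent to $\sum_{j=0}^2\|\dt^j f\|_{L^2(0,T;H^{2-j}(\Omega))}^2$, hence controls both $\|\dt f\|_{L^2(0,T;H^1(\Omega))}$ and $\|\dt^2 f\|_{L^2(0,T;L^2(\Omega))}$; interpolating in the time variable between $\dt f\in L^2(0,T;L^2(\Omega))$ (bounded by $\|f\|_{H^2}$) and $\dt^2 f\in L^2(0,T;L^2(\Omega))$ yields $\dt f\in H^{1/2}(0,T;L^2(\Omega))$, which combined with $\dt f\in L^2(0,T;H^1(\Omega))$ gives $\dt f\in L^2(0,T;H^{1/2}(\Omega))$ (trivially, since $H^1\hookrightarrow H^{1/2}$) and $\dt f\in L^2(0,T;H^{3/2}(\Omega))$ after one more interpolation; all constants here are independent of $T$ because they come from Fourier-side multiplier bounds on $\R$ after an extension in time. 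The $t=0$ terms enter because the extension of $f$ from $(0,T)$ to $\R$ in the time variable, needed to run the Fourier argument at a uniform cost, requires controlling the trace $(f(0,\cdot),\dt f(0,\cdot))$ in $H^{3/2}(\Omega)\times H^{1/2}(\Omega)$ — these are exactly the compatibility norms appearing on the right-hand side.

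\textbf{Main obstacle.} The delicate point is keeping every constant genuinely independent of $T$. Naive use of the embedding $H^2((0,T)\times\Omega)\hookrightarrow L^\infty$ produces a constant that blows up as $T\to0$, which is the opposite of what we want. The clean route is to extend $f$ to a function $\tilde f$ on $\R\times\Omega$ with $\|\tilde f\|_{H^2(\R\times\Omega)}\lesssim \|f\|_{H^2((0,T)\times\Omega)} + \|f(0,\cdot)\|_{H^{3/2}(\Omega)} + \|\dt f(0,\cdot)\|_{H^{1/2}(\Omega)}$, the extra boundary terms being unavoidable and $T$-independent (e.g. a Seeley-type or Babich extension, or simply reflecting across $t=0$ after subtracting a low-order Taylor polynomial in $t$), then perform all the time interpolations on $\R$ where the Fourier multipliers $\langle\tau\rangle^\theta$ have bounds depending only on $\theta$. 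Once the extension is in place the rest is the routine chain of Gagliardo--Nirenberg and interpolation inequalities sketched above, and the $\sqrt t$ factor is supplied verbatim by the first step. I would therefore devote the bulk of the write-up to the extension lemma and to stating precisely which interpolation identities are used, and keep the remaining estimates terse.
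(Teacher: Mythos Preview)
Your first step already fails at this borderline regularity. Writing $f(t,\cdot)-f(0,\cdot)=\int_0^t\partial_t f(t',\cdot)\,{\rm d}t'$ and then pulling the $L^\infty(\Omega)$ norm inside the integral requires $\partial_t f\in L^1(0,t;L^\infty(\Omega))$, but for $f\in H^2((0,T)\times\Omega)$ one only has $\partial_t f\in H^1((0,T)\times\Omega)$, and in three space--time dimensions $H^1$ does not embed into $L^2_t L^\infty_x$ (nor $L^1_t L^\infty_x$): take $\partial_t f(t,x)=\chi(t)\log\bigl|\log|x|\bigr|$ near the origin, which is in $L^2(0,T;H^1(\Omega))\cap H^1(0,T;L^2(\Omega))$ but has $\|\partial_t f(t',\cdot)\|_{L^\infty(\Omega)}=\infty$ for each $t'$. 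The later claim that ``one more interpolation'' yields $\partial_t f\in L^2(0,T;H^{3/2}(\Omega))$ is the concrete error: interpolation between $\partial_t f\in L^2(0,T;H^1)$ and $\partial_t f\in H^{1/2}(0,T;L^2)$ can never produce spatial regularity exceeding $H^1$, so the factor $\|\partial_t f\|_{L^2(0,T;H^{3/2})}$ in your chain is simply not controlled by $\|f\|_{H^2}$.

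The paper sidesteps this by treating time and space jointly. After extending from $\Omega$ to $\R^2$, it first subtracts an $H^2(\R\times\R^2)$ extension of the Cauchy data $(f(0,\cdot),\partial_t f(0,\cdot))\in H^{3/2}\times H^{1/2}$, reducing to the case $f(0,\cdot)=\partial_t f(0,\cdot)=0$; then extends by zero to $t<0$ and by a standard operator past $t=T$, obtaining $F_0\in H^2(\R\times\R^2)$ with norm bounded by $\|f\|_{H^2((0,T)\times\R^2)}$. The Morrey--Sobolev embedding $H^2(\R^3)\hookrightarrow C^{1/2}(\R^3)$ then gives directly $|f(t,x)-f(0,x)|=|F_0(t,x)-F_0(0,x)|\leq C\sqrt{t}\,\|F_0\|_{H^2}$, which is exactly the $\sqrt t$ you were aiming for. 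The extension step is where the $H^{3/2}\times H^{1/2}$ terms on the right-hand side enter, and all constants are $T$-independent because the zero extension to $t<0$ costs nothing once the traces vanish.
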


\begin{proof}
By using an appropriate extension operator from $H^s(\Omega)$ to $H^s(\R^2)$, it is sufficient to show the estimate in the case $\Omega=\R^2$. 
We first consider the case $f(0,x)\equiv \dt f(0,x)\equiv0$. 
Then, by extending $f(t,x)$ for $t<0$ by zero and denoting it by $f_0$, 
we have $f_0\in H^2((-\infty,T)\times\R^2)$ and $\|f_0\|_{H^2((-\infty,T)\times\R^2)}=\|f\|_{H^2((0,T)\times\R^2)}$. 
We further extend $f_0(t,x)$ for $t>T$ smoothly by a standard procedure. 
Then, we have $F_0\in H^2(\R\times\R^2)$ and 
$\|F_0\|_{H^2(\R\times\R^2)} \leq C\|f_0\|_{H^2((-\infty,T)\times\R^2)}$. 
Now, by the Sobolev embedding theorem $H^2(\R^3) \hookrightarrow C^{1/2}(\R^3)$ we have 
\begin{align*}
|f(t,x)-f(0,x)|
&= |F_0(t,x)-F_0(0,x)| \\
&\leq C\sqrt{t}\|F_0\|_{H^2(\R\times\R^2)} \\
&\leq C\sqrt{t}\|f\|_{H^2((0,T)\times\R^2)}.
\end{align*}

We then consider the general case. 
By the trace theorem, we have $f(0,\cdot) \in H^{3/2}(\R^2)$ and $\dt f(0,\cdot) \in H^{1/2}(\R^2)$. 
Therefore, there exists $F\in H^2(\R\times\R^2)$ such that 
\[
\begin{cases}
 F(0,x)=f(0,x), \quad \dt F(0,x)=\dt f(0,x), \\
 \|F\|_{H^2(\R\times\R^2)}
  \leq C( \|f(0,\cdot)\|_{H^{3/2}(\R^2)} + \|\dt f(0,\cdot)\|_{H^{1/2}(\R^2)} \bigr).
\end{cases}
\]
Putting $f_1:=f-F$, we have $f_1(0,x)\equiv \dt f_1(0,x)\equiv0$. 
Therefore, we see that 
\begin{align*}
|f(t,x)-f(0,x)|
&\leq |f_1(t,x)-f_1(t,x)| + |F(t,x)-F(0,x)| \\
&\leq C\sqrt{t} \bigl( \|f_1\|_{H^2((0,T)\times\R^2)} + \|F\|_{H^2(\R\times\R^2)} \bigr) \\
&\leq C\sqrt{t} \bigl( \|f\|_{H^2((0,T)\times\R^2)} + \|F\|_{H^2(\R\times\R^2)} \bigr),
\end{align*}
which gives the desired estimate.
\end{proof}

\begin{lemma}\label{lem:Embedding2}
Let $\Omega=\ul{\cE}$ or $\ul{\cI}$. 
There exists a positive constant $C$ such that for any $f\in H^2((0,T)\times\Omega)$ 
and any $t\in(0,T)$ we have 
\[
\|f(t,\cdot)-f(0,\cdot)\|_{L^\infty(\Omega)} \leq C \|f\|_{H^2((0,T)\times\Omega)},
\]
where the constant $C$ does not depend on $T$. 
\end{lemma}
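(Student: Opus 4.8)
The plan is to prove Lemma~\ref{lem:Embedding2} as an immediate consequence of the trace-type estimate in Lemma~\ref{lem:Embedding1}, by absorbing the initial-data norms into the space-time $H^2$ norm. First I would invoke Lemma~\ref{lem:Embedding1}, which gives
\[
\|f(t,\cdot)-f(0,\cdot)\|_{L^\infty(\Omega)}
 \leq C\sqrt{t}\bigl( \|f\|_{H^2((0,T)\times\Omega)} + \|f(0,\cdot)\|_{H^{3/2}(\Omega)} + \|\dt f(0,\cdot)\|_{H^{1/2}(\Omega)} \bigr).
\]
Since $t\in(0,T)$ and, in the critical regime under consideration, $T$ can be taken bounded by a fixed constant (say $T\le 1$), the prefactor $\sqrt{t}$ is bounded by a constant independent of $T$; what remains is to control the two boundary norms by $\|f\|_{H^2((0,T)\times\Omega)}$.

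The key step is the trace inequality for the hyperplane $\{t=0\}$ inside the space-time cylinder: the trace operator $f\mapsto (f(0,\cdot),\dt f(0,\cdot))$ maps $H^2((0,T)\times\Omega)$ continuously into $H^{3/2}(\Omega)\times H^{1/2}(\Omega)$, with operator norm independent of $T$. The $T$-independence is the point that needs a short argument: one extends $f$ from $(0,T)\times\Omega$ to $(0,\infty)\times\Omega$ (or to $\R\times\Omega$) by a Stein-type extension in the $t$ variable whose norm does not depend on $T$ — for instance reflect across $t=0$ is not needed here since we only extend forward in time; a standard higher-order reflection/extension across $t=T$ suffices and has $T$-independent norm — and then applies the classical trace theorem on the fixed domain $(0,\infty)\times\Omega$. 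This yields
\[
\|f(0,\cdot)\|_{H^{3/2}(\Omega)} + \|\dt f(0,\cdot)\|_{H^{1/2}(\Omega)} \leq C\|f\|_{H^2((0,T)\times\Omega)}
\]
with $C$ independent of $T$. Substituting this bound, together with $\sqrt t\le C$, into the estimate from Lemma~\ref{lem:Embedding1} gives the claim.

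I expect the main — indeed the only — subtlety to be the $T$-independence of the constants, both in the $\sqrt{t}$ prefactor and in the trace inequality. For the prefactor this is handled simply by noting $t<T\le 1$ in the application; alternatively one observes that $\|f(t,\cdot)-f(0,\cdot)\|_{L^\infty}\le \|f(t,\cdot)\|_{L^\infty}+\|f(0,\cdot)\|_{L^\infty}$ and both terms are traces on fixed hyperplanes controlled by $\|f\|_{H^2((0,T)\times\Omega)}$ via the $T$-uniform trace theorem, which bypasses Lemma~\ref{lem:Embedding1} entirely. For the trace inequality itself, the standard proof on a half-space or slab already produces constants depending only on the spatial domain and the Sobolev indices, not on the length of the time interval, once one uses an extension operator in $t$ with $T$-independent norm; since $\Omega$ is fixed throughout the paper, no further care is required. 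The rest is bookkeeping and requires no computation.
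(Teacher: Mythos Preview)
Your argument has a genuine gap: the claimed $T$-independent trace inequality
\[
\|f(0,\cdot)\|_{H^{3/2}(\Omega)} + \|\dt f(0,\cdot)\|_{H^{1/2}(\Omega)} \leq C\|f\|_{H^2((0,T)\times\Omega)}
\]
is \emph{false} for small $T$. Take $f(t,x)=g(x)$ independent of $t$ with $0\neq g\in H^2(\Omega)$; then $\|f\|_{H^2((0,T)\times\Omega)}=\sqrt{T}\,\|g\|_{H^2(\Omega)}\to 0$ while $\|f(0,\cdot)\|_{H^{3/2}(\Omega)}=\|g\|_{H^{3/2}(\Omega)}>0$. The higher-order reflection across $t=T$ does \emph{not} have $T$-independent norm: the reflected function lives only on an interval of length proportional to $T$, and any cutoff needed to extend globally introduces derivatives of order $T^{-1}$. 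Your alternative via $\|f(t,\cdot)\|_{L^\infty}+\|f(0,\cdot)\|_{L^\infty}$ fails for the same reason and the same counterexample. Restricting to $T\le 1$ does not help; the issue is $T\to 0$.

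The paper's proof exploits precisely the structure you discarded. By scaling, the trace constant on $(0,t)\times\Omega$ is of order $t^{-1/2}+t^{1/2}$; since $\|f\|_{H^2((0,t)\times\Omega)}\le \|f\|_{H^2((0,T)\times\Omega)}$, one gets
\[
\|f(0,\cdot)\|_{H^{3/2}(\Omega)} + \|\dt f(0,\cdot)\|_{H^{1/2}(\Omega)} \lesssim \bigl(t^{-1/2}+t^{1/2}\bigr)\|f\|_{H^2((0,T)\times\Omega)}.
\]
Plugging this into Lemma~\ref{lem:Embedding1}, the prefactor $\sqrt{t}$ cancels the blowup $t^{-1/2}$, yielding a bound $\lesssim (1+t)\|f\|_{H^2}$, which settles the case $t\le 1$. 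For $t>1$ one bounds $\|f(t,\cdot)\|_{L^\infty}$ and $\|f(0,\cdot)\|_{L^\infty}$ separately by the Sobolev embedding $H^2\hookrightarrow L^\infty$ applied on unit-length subcylinders $(t-1,t)\times\Omega$ and $(0,1)\times\Omega$, where the constants are absolute. The cancellation between $\sqrt{t}$ and the trace constant is the whole point of the lemma; treating them independently cannot work.
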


\begin{proof}
By a standard trace theorem, we have $\|f(0,\cdot)\|_{H^{1/2}(\Omega)} \lesssim \|f\|_{H^1((0,1)\times\Omega)}$. 
Therefore, by a simple scaling $t\to Tt$, we obtain $\|f(0,\cdot)\|_{H^{1/2}(\Omega)} \lesssim (\frac{1}{\sqrt{T}}+\sqrt{T})\|f\|_{H^1((0,T)\times\Omega)}$, 
so that $\|f(0,\cdot)\|_{H^{3/2}(\Omega)} + \|\dt f(0,\cdot)\|_{H^{1/2}(\Omega)} \lesssim (\frac{1}{\sqrt{t}}+\sqrt{t})\|f\|_{H^2((0,T)\times\Omega)}$ 
for $0<t\leq T$. 
This and Lemma \ref{lem:Embedding1} imply $|f(t,x)-f(0,x)|\lesssim (1+t) \|f\|_{H^2((0,T)\times\Omega)}$. 
Therefore, the desired estimate holds in the case $0\leq t\leq1$. 
On the other hand, in the case $t>1$, by the Sobolev embedding theorem we see that 
$\|f(0,\cdot)\|_{L^\infty(\Omega)} \lesssim \|f\|_{L^\infty((0,1)\times\Omega)} \lesssim \|f\|_{H^2((0,1)\times\Omega)} \lesssim \|f\|_{H^2((0,t)\times\Omega)}$. 
Similarly, we have also $\|f(t,\cdot)\|_{L^\infty(\Omega)}\lesssim \|f\|_{H^2((0,t)\times\Omega)}$. 
Therefore, we obtain the desired estimate. 
\end{proof}

Now, we evaluate $|\ul{N}\cdot(\nabla\zeta-\nabla\zeta_{\rm i})(t,x)|$ as follows. 
For any $x\in\ul{\itGamma}$, by Lemma \ref{lem:Embedding2} we see that 
\begin{align*}
&|\ul{N}\cdot(\nabla\zeta-\nabla\zeta_{\rm i})(t,x)-\ul{N}\cdot(\nabla\zeta-\nabla\zeta_{\rm i})(0,x)| \\
&\leq \|\nabla\zeta(t,\cdot)-\nabla\zeta(0,\cdot)\|_{L^\infty(\ul{\cE})} + \|\nabla\zeta(t,\cdot)-\nabla\zeta(0,\cdot)\|_{L^\infty(\ul{\cI})} \\
&\lesssim \|\nabla\zeta\|_{H^2((0,T)\times\ul{\cE})} + \|\nabla\zeta_{\rm i}\|_{H^2((0,T)\times\ul{\cI})}.
\end{align*}
Here, by Lemma \ref{lem:EstDuv} (iv) we have 
\begin{align*}
\|\nabla\zeta\|_{H^2((0,T)\times\ul{\cE})}^2
&\leq \sum_{|\alpha|\leq m-1}\int_0^T\|\nabla\bm{\partial}^\alpha\zeta(t)\|_{L^2(\ul{\cE})}^2 {\rm d}t \\
&\leq C_0\int_0^T(|\gamma(t)|_m+1)E_m(t)^2 {\rm d}t \\
&\leq C_0M_2^2((M_3T)^{1/2}+T).
\end{align*}
In view of $|\nabla\bm{\partial}^\alpha\zeta_{\rm i}| \leq C_0( |\partial\bm{\partial}^{m-1}\varphi|+|\bm{\partial}^{m-1}\varphi|+1 )$ and Lemma \ref{lem:EstDM}, 
we have also $\|\nabla\zeta_{\rm i}\|_{H^2((0,T)\times\ul{\cI})}^2 \leq C_0((M_3T)^{1/2}+T)$. 
Therefore, we obtain 
\begin{equation}\label{TransCond2}
|\ul{N}\cdot(\nabla\zeta-\nabla\zeta_{\rm i})(t,x)-\ul{N}\cdot(\nabla\zeta-\nabla\zeta_{\rm i})(0,x)|
\leq C_0(M_2+1)((M_3T)^{1/2}+T)^{1/2},
\end{equation}
for any $(t,x)\in[0,T]\times\ul{\itGamma}$.

%-----------------------------------------------------------
\subsection{Completion of the a priori estimates}\label{sectAPE}
By Lemma \ref{lem:EstR3}, our assumption \eqref{Hyp2}, and H\"older's inequality, we see that 
\begin{align*}
\int_0^TR_3(t){\rm d}t
&\leq C_0(M_2^2+1)\int_0^T(|\gamma(t)|_m^{3/2}+1){\rm d}t \\
&\leq C_0(M_2^2+1)\{ \left(\int_0^T|\gamma(t)|_m^2{\rm d}t\right)^{3/4}T^{1/4} + T \} \\
&\leq C_0(M_2^2+1)( (M_3^3T)^{1/4}+T ).
\end{align*}
Similarly, by Lemma \ref{lem:EstR4} we have $\int_0^TR_4(t){\rm d}t \leq C_0(M_2^3+1)( (M_3^3T)^{1/4}+T )$. 
Therefore, by \eqref{Hyp2}, \eqref{NLEE1}, Lemma \ref{lem:EstR2}, and Remark \ref{re:EFs}, we obtain 
\begin{align}\label{APE1}
\sup_{0\leq t\leq T}E_m(t)
&\leq C_0 e^{C_0M_1}\bigl\{ E_m(0) + 1 + \int_0^TR_3(t){\rm d}t + \left( \int_0^TR_4(t){\rm d}t \right)^{1/2} \bigr\} \\
&\leq C_0 e^{C_0M_1}\bigl\{ 1 + (M_2^3+1)((M_3^3T)^{1/4}+T ) \}. \nonumber
\end{align}
Similarly, we get 
\begin{align}\label{APE2}
\int_0^T|\gamma(t)|_m^2{\rm d}t
&\leq C_0 \bigl\{ (1+T+M_1)\sup_{0\leq t\leq T}(E_m(t)+1)^2 + \left( \int_0^TR_3(t){\rm d}t \right)^2 + T \bigr\} \\
&\leq C_0 \bigl\{ (M_1+1)(M_2+1)^2 + (M_2^2+1)^2 ( (M_3^3T)^{1/4}+T )^2 \bigr\}. \nonumber
\end{align}
As for the first condition in \eqref{Hyp2}, by Lemmas \ref{lem:EstDM} and \ref{lem:EstDuv} we see that 
\begin{align*}
\|\bm{\partial}(h,w)\|_{L^\infty(\ul{\cE})} + \|\bm{\partial} h_{\rm i}\|_{L^\infty(\ul{\cI})} + \|\bm{\partial}\partial\varphi\|_{L^\infty(\R^2)}
&\leq C_0 \bigl( \|\bm{\partial}u\|_{L^\infty(\ul{\cE})} + \|(\bm{\partial}^2\varphi,\bm{\partial}^2\varphi)\|_{L^\infty(\R^2)} \bigr) \\
&\leq C_0 ( E_m + |\gamma|_m^{1/2} + 1 ) \\
&\leq C_0(M_2+1)(|\gamma|_m^{1/2}+1),
\end{align*}
so that 
\begin{align}\label{APE3}
&\|\bm{\partial}(h,w)\|_{L^1(0,T;L^\infty(\ul{\cE}))} + \|\bm{\partial} h_{\rm i}\|_{L^1(0,T;L^\infty(\ul{\cI}))}
   + \|\bm{\partial}\partial\varphi\|_{L^1(0,T;L^\infty(\R^2))} \\
&\leq C_0(M_2+1)((M_3T^3)^{1/4}+T). \nonumber
\end{align}
In view of \eqref{APE1}--\eqref{APE3}, we first choose $M_1$ as an arbitrary positive constant, for example, $M_1=1$. 
Then, we choose $M_2=2C_0e^{C_0M_1}$ and $M_3=2C_0(M_1+1)(M_2+1)^2$. 
If we take the time $T>0$ appropriately small depending only on these constants $M_1,M_2,M_3$, and $C_0$, 
then we see that the solution satisfies in fact \eqref{Hyp2}.

It remains to show that the solution satisfies also \eqref{Hyp1}. 
Since $\overline{\cI(0)} \subset \ul{\cI}_0$ and $\ul{\cI}_0$ is open, there exists a constant $\delta_1>0$ such that for any $X\in\R^2$, 
$\operatorname{dist}(X,\cI(0))\leq\delta_1$ implies $X\in\ul{\cI}_0$. 
We note also that $\cI(0)=\{ \varphi(0,x) \,|\, x\in\ul{\cI} \}$. 
Therefore, in order to prove the first condition in \eqref{Hyp1}, it is sufficient to show that for any $x\in\ul{\cI}$, 
we have $|\varphi(t,x)-\varphi(0,x)|\leq\delta_1$. 
By Lemmas \ref{lem:EstDM} and \ref{lem:EstDuv} and the Sobolev embedding theorem $H^1(\mathbb{T}_L) \hookrightarrow L^\infty(\mathbb{T}_L)$, we have 
\[
\begin{cases}
 \|\dt(\gr h-|w|^2)(t)\|_{L^\infty(\ul{\cE})} \leq C_0(M_2+1)(|\gamma|_m^{1/2}+1), \\
 |\dt\gamma(t)|_{L^\infty(\mathbb{T}_L)} \leq C_0, \\
 \|\dt u(t)\|_{m-1,{\rm e}}+\|\dt v_{\rm i}(t)\|_{m-1,{\rm i}}+|\dt\gamma(t)|_{m-1} \leq C_0(M_2+1)(|\gamma(t)|_m+1),
\end{cases}
\]
which together with \eqref{TransCond2} and the assumptions in \eqref{EstIni} on the initial data implies 
\[
\begin{cases}
 \sup_{x\in\ul{\cI}}|\varphi(t,x)-\varphi(0,x)| \leq C_0T, \\
 \inf_{(t,x)\in(0,T)\times\ul{\cE}}(\gr h(t,x)-|w(t,x)|^2) \geq 2c_0 - C_0(M_2+1)((M_3T^3)^{1/4}+T), \\
  \inf_{(t,x)\in(0,T)\times\ul{\itGamma}}| \ul{N}\cdot(\nabla\zeta-\nabla\zeta_{\rm i})(t,x)| \geq 2c_0 - C_0(M_2+1)((M_3T)^{1/2}+T)^{1/2}, \\
  \sup_{0<t<T}|\gamma(t)|_{L^\infty(\mathbb{T}_L)} \leq \eta_0^{\rm in}r_0 + C_0T, \\
  \sup_{0<t<T}(\|u(t)\|_{m-1,{\rm e}} + \|v_{\rm i}(t)\|_{m-1,{\rm i}} + |\gamma(t)|_{m-1}) \leq M_0 + C_0(M_2+1)((M_3T)^{1/2}+T).
\end{cases}
\]
Therefore, if we take $T>0$ further small depending on the constants $\delta_1, c_0,\eta_0^{\rm in},\eta_0,C_0,M_0,M_2$, and $M_3$, 
then we see that the solution satisfies in fact \eqref{Hyp1}, too. 
The proof of Theorem \ref{th:main} is complete. 
\hfill$\Box$

%----------------------------------------------------------------------------------------------------------------------
%----------------------------------------------------------------------------------------------------------------------

\end{document}